\apptocmd{\sloppy}{\hbadness 10000\relax}{}{}
\begin{document}

\theoremstyle{plain}
\newtheorem{thm}{Theorem}[section]
\newtheorem{cor}[thm]{Corollary}
\newtheorem{lem}[thm]{Lemma}
\newtheorem{prop}[thm]{Proposition}
\newtheorem{conj}[thm]{Conjecture}
\newtheorem{notation}[thm]{Notation}
\theoremstyle{definition}
\newtheorem{defn}[thm]{Definition}
\newtheorem{remark}[thm]{Remark}
\newtheorem{ack}[thm]{Acknowledgements}
\newtheorem{qn}[thm]{Question}

\newcommand{\arXiv}[1]{\href{http://arxiv.org/abs/#1}{arXiv:#1}}
\newcommand{\arxiv}[1]{\href{http://arxiv.org/abs/#1}{#1}}
\newcommand{\e}[1]{\noindent{\textbf{(#1)}}}
\newcommand{\Hqed}{\textpmhg{F}}
\newcommand{\dsb}{\begin{adjustwidth}{2.5em}{0pt}
\begin{footnotesize}}
\newcommand{\dse}{\end{footnotesize}
\end{adjustwidth}}
\newcommand{\ssb}{\begin{adjustwidth}{2.5em}{0pt}}
\newcommand{\sse}{\end{adjustwidth}}
\newcommand{\aryb}{\begin{eqnarray*}}
\newcommand{\arye}{\end{eqnarray*}}
\def\alb#1\ale{\begin{align*}#1\end{align*}}
\newcommand{\eqb}{\begin{equation}}
\newcommand{\eqe}{\end{equation}}
\newcommand{\eqbn}{\begin{equation*}}
\newcommand{\eqen}{\end{equation*}}
\newcommand{\BB}{\mathbb}
\newcommand{\ol}{\overline}
\newcommand{\ul}{\underline}
\newcommand{\op}{\operatorname}
\newcommand{\la}{\langle}
\newcommand{\ra}{\rangle}
\newcommand{\bd}{\mathbf}
\newcommand{\im}{\operatorname{Im}}
\newcommand{\re}{\operatorname{Re}}
\newcommand{\frk}{\mathfrak}
\newcommand{\eqD}{\overset{d}{=}}
\newcommand{\ep}{\epsilon}
\newcommand{\rta}{\rightarrow}
\newcommand{\xrta}{\xrightarrow}
\newcommand{\Rta}{\Rightarrow}
\newcommand{\hookrta}{\hookrightarrow}
\newcommand{\wt}{\widetilde}
\newcommand{\wh}{\widehat} 
\newcommand{\mcl}{\mathcal}
\newcommand{\pre}{{\operatorname{pre}}}
\newcommand{\lrta}{\leftrightarrow}
\newcommand{\av}{a}
\newcommand{\scr}{\mathscr}

\newcommand{\N}{\mathbb{N}}
\newcommand{\E}{\mathbb{E}}
\newcommand{\Z}{\mathbb{Z}}
\newcommand{\R}{\mathbb{R}}
\newcommand{\1}{\mathbbm{1}}
\renewcommand{\P}{\mathbb{P}}

\newcommand{\scott}[1]{{\color{red}{#1}}}
\newcommand{\omer}[1]{{\color{cyan}{#1}}}
\newcommand{\nina}[1]{{\color{blue}{#1}}}

\title{Scaling limits of the Schelling model}

\author{Nina Holden\footnote{Massachusetts Institute of Technology, Cambridge, MA, ninah@math.mit.edu} 
	\and  
	Scott Sheffield\footnote{Massachusetts Institute of Technology, Cambridge, MA, sheffield@math.mit.edu}
}

\date{\today}
\maketitle

\begin{abstract}
The Schelling model, introduced by Schelling in 1969 as a model for residential segregation in cities, describes how populations of
	multiple types self-organize to form homogeneous clusters of one type.
	In this model, vertices in an $N$-dimensional lattice are initially assigned types randomly. As time
	evolves, the type at a vertex $v$ has a tendency to be replaced with
	the most common type within distance $w$ of $v$.  We present the first
	mathematical description of the dynamical scaling limit of this model
	as $w$ tends to infinity and the lattice is correspondingly rescaled.
	We do this by deriving an integro-differential equation for the limiting
	Schelling dynamics and proving almost sure existence and uniqueness of
	the solutions when the initial conditions are described by white
	noise. The evolving fields are in some sense very ``rough'' but we are able to make rigorous sense
	of the evolution. In a key lemma, we show that for certain Gaussian fields
	$h$, the supremum of the occupation density of $h-\phi$ at zero (taken
	over all $1$-Lipschitz functions $\phi$) is almost surely finite, thereby extending a result of Bass and Burdzy. In the
	one dimensional case, we also describe the scaling limit of the
	limiting clusters obtained at time infinity, thereby resolving a conjecture of Brandt, Immorlica, Kamath, and Kleinberg.
\end{abstract}
\begin{figure}[h]
	\begin{center}
	\includegraphics[scale=1.6]{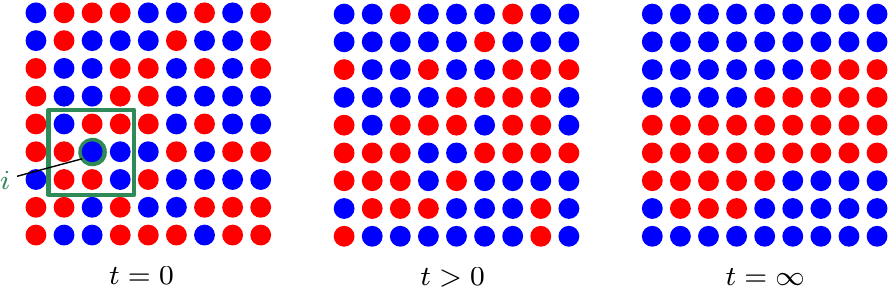}
	\caption{The Schelling model on the two-dimensional torus with two types (red and blue). At time $t=0$ (left) the types of the nodes are chosen uniformly and independently at random. Each node $i$ is associated with a neighborhood (shown in green) and an independent rate one Poisson clock. Every time the clock of a node rings, it updates its type to the most common type in its neighborhood. Eventually we reach a stable configuration (right). }
\end{center}
\end{figure}
\newpage
\tableofcontents

\section{Introduction}

The Schelling model \cite{schel69,schelling71,schel-book} was initially introduced to explain residential segregation in cities, and is one of the earliest and most influential agent-based models studied by economists. Variants of the model have been studied by thousands of researchers within a number of disciplines, e.g.\ social sciences, statistical mechanics, evolutionary game theory, and computer science, see the works referenced below and \cite{Clark91,pw01,lj03,vk07,Pancs07,ss07,dcm08,o08,Gerhold08,gvn09,gblj09,svw09} for a small and incomplete selection of these works. Until recently \cite{schel-klein1,bel-schelling-14,schel-klein2,bel-schelling-tipping,bel-schelling-2d,bel-schelling-minority} most analysis of the model was based either on simulation, non-rigorous analysis or so called ``perturbed'' versions of the model (discussed below).  We will discuss the Schelling model history and give an informal overview of the paper in Sections~\ref{subsec::history} and~\ref{subsec::overview}, and we provide a precise definition of the model in Section~\ref{sec:intro1}.

\subsection{History} \label{subsec::history}
In the original formulation of the model, individuals of two ``types'' occupy a subset of the nodes of a graph, and at random times an individual moves to a free (i.e., unoccupied) position in the graph. Individuals move to locations at which they will have more neighbors of  their own type. 
Schelling showed, using simulations he implemented manually with pennies and dimes on a ruled sheet of paper \cite{schel-book}, that segregation occurs even if the agents have only a weak preference for being in regions with a high density of their own type. His findings have been confirmed later by a huge number of simulations of other researchers, and his findings have strongly influenced debates about the causes of residential segregation \cite{cf08}. The introduction of the model also contributed to Schelling winning the Nobel Memorial Prize in Economics in 2005 \cite{N05}.

The first mathematically rigorous results on the model considered a variant where the dynamics describing the transition between states were ``perturbed'' in the sense that agents have a small probability $p>0$ of acting against their preference \cite{young-schel,zhang-schel}. The perturbed model was analyzed by studying the stationary distribution of the associated Markov chain. In particular, the stochastically stable states, which are states whose stationary
probability is bounded away from zero when $p\rta 0$, were studied. The stochastically stable states are proved to be those which minimize the length of the interface between the two types of individuals (i.e., the number of neighboring pairs containing one individual of each type) so that using the terminology of statistical physics the stochastically stable states correspond to Ising model ground states.

One interesting property of the unperturbed model is that it can be shown to {\em stabilize} almost surely in finite time, i.e., after some point in time the agents stop moving. It has been argued (see e.g.\ \cite{schel-klein2}) that these limiting stable configurations have at least some properties in common with the segregation patterns observed in real cities, e.g.\ since they tend to appear more irregular than the stochastically stable states of the unperturbed model. We will not address real world segregation patterns in this paper.

The first mathematically rigorous analysis of the unperturbed model by Brandt, Immorlica, Kamath, and Kleinberg \cite{schel-klein1} concerns a version of the model on the one-dimensional torus where the neighborhood of a node is given by its nearest $2w+1$ neighbors (including itself) for some $w\in\N$. They prove that the configuration of types in the stable limiting configuration consists of intervals of length at most polynomial in $w$. In \cite{schel-klein2} the authors consider a two-dimensional Schelling model where an agent only changes location if the fraction of his neighbors having the same type as himself is $<1/2-\ep$ for some $\ep\ll 1$. They prove that the expected diameter of the segregated region containing the origin in the final configuration grows at least exponentially in $w^2$. See also \cite{bel-schelling-14,bel-schelling-tipping,bel-schelling-2d,bel-schelling-minority} for recent rigorous results on the unperturbed Schelling model in one, two, and three dimensions. The models studied in these papers have a more general initial configuration of types and more general tolerance parameters than the models in \cite{schel-klein1,schel-klein2} and the current paper. The authors are particularly interested in parameter values which lead to either very high degree of segregation, total takeover of one type, or almost no changes relative to the initial configuration.

We will focus on a variant of the model which we call the {\em single-site-update} Schelling model, briefly explained later in this paragraph, in which individual vertices are updated one at a time. This variant of the model is also the one considered in \cite{bel-schelling-tipping,bel-schelling-2d,schel-klein2}. Some of the other papers mentioned above consider the {\em pair-swapping} Schelling model, where two individuals of different types will swap positions with each other if this leads to both nodes having more neighbors of their own type. In both variants of the model all the nodes of the considered graph are occupied, i.e., there are no free or unoccupied nodes. In the single-site-update Schelling model unsatisfied individuals {\em change} types, instead of swapping with each other. That is, one picks a random {\em individual} and allows that individual to change type if desired, instead of picking a {\em pair} of individuals and asking them to swap locations if desired.  In the single-site-update version, the number of vertices of a given type is not constant. Instead, one imagines that there is a larger ``outside world'' beyond the graph being considered, and that when a vertex changes type, it corresponds to an individual within the configuration swapping location with someone from the ``outside world.'' The single-site-update evolution is essentially equivalent to the pair-swapping model evolution in a setting with an  ``outside world'' region (disconnected from the main lattice graph under consideration) that contains a large number of unsatisfied individuals of each type.  We will focus on the single-site-update variant in this paper because it is cleaner mathematically (one only has to deal with one individual at a time when making updates), but we will explain at the end of Section \ref{sec:mainres} that our first main result (Theorem \ref{thm2}) also holds in the pair-swapping setting.

\subsection{Overview} \label{subsec::overview}
We study an unperturbed, single-site-update version of the Schelling model on an $N$-dimensional lattice with $M\geq 2$ different types. A node is unsatisfied if the most common type in its neighborhood differs from its current type, and the size of the neighborhood is described by a constant $w\in\N$. Adapting the vocabulary of majority dynamics, we call the type of the node the \emph{opinion} of the node. At time zero, each node is assigned an opinion uniformly and independently at random.  Each node is associated with an independent Poisson clock, and every time the clock of a node rings it updates its opinion to the most common opinion in its neighborhood. In other words, a node changes its opinion when its Poisson clock rings if and only if the node is currently not  satisfied.

We prove a dynamical scaling limit result for the early phase of the Schelling dynamics for any $N\in\N$ and $M\in\{2,3,\dots \}$. We define a vector-valued function $Y^w$ called the normalized bias function; as explained below, $Y^w(z)$ is the vector whose components are (a normalizing constant times) the $M$ opinion densities (minus their expectations) in the radius $w$ box centered at $z$. We prove that $Y^w$ converges in the scaling limit to the solution $Y$ of a differential equation (more precisely, an {\em integro-differential equation}) with Gaussian initial data. We call the function $Y$ the continuum bias function, and we call the associated initial value problem the continuum Schelling model. See Theorem \ref{thm2} and Proposition \ref{prop18}. Solutions of the differential equation are not unique for all choices of initial data. However, we prove existence and uniqueness of solutions for Gaussian initial data.

The basic idea of the argument is to note that even though the initial Gaussian normalized bias function is very rough, the {\em change} in its value from the initial time to a finite later time is a.s.\ a (random) Lipschitz function. Focusing on this difference, we are left with a random ODE in the (more regular) space of Lipschitz functions. To establish existence and uniqueness of the evolution within this space, we wish to apply a variant of the Picard-Lindel\"of theorem, but doing so requires some sort of continuity in the corresponding ODE, which requires us to understand whether there are situations where two coordinates of the normalized bias function are {\em very} close on a large set, so that even small perturbations lead to big changes in the direction the functions are evolving.  It turns out that one can show these situations are unlikely by establishing control on the maximal {\em occupation kernel} corresponding to the intersection of the initial Gaussian function with a Lipschitz function (where the maximum is taken over all functions with Lipschitz norm bounded by a fixed constant). Analogous results for Brownian local time, established by Bass and Burdzy in \cite{bb01}, turn out to be close to what we need, and we are able to adapt the techniques of \cite{bb01} to our higher dimensional setting with a few modifications.

In the special case when $N=1$ and $M=2$ we also prove a scaling limit result for the final configuration of opinions. This confirms a conjecture in \cite{schel-klein1}. More precisely, we show in Theorem \ref{thm1} below that the law on subsets of $\Z$ describing the limiting opinion of each node converges upon rescaling by $w$. The theorem says that if we study the model on the rescaled lattice $w^{-1}\Z$ and let $A\subset w^{-1}\Z$ be the set of nodes for which the limiting opinion is 1, then $A$ converges in law as $w\rta\infty$, viewed as an element in the space of closed subsets of $\R$ equipped with the Hausdorff distance. The scaling limit result is proved by studying the long-time behavior of the continuum bias function $Y$. This is one of the more technically interesting parts of the paper, as a number of tricks are used to rule out anomalous limiting behavior. Our conclusion is that in the limit one obtains a random collection of homogeneous neighborhoods, each of width strictly greater than one. The idea of the proof is to show that if this does not occur, then it will occur if we make a slight perturbation to the initial data, and a delicate analysis of the differential equation is required to show that this is indeed the case. 

After we describe the continuum dynamics and (for $N=1,M=2$) its limiting behavior, we will need to do some additional work to make the connection with the discrete model. We consider two phases separately: First we study the model up to time $Cw^{-N/2}$ for $C\gg1$, and then (for $N=1,M=2$) we consider times larger than $Cw^{-1/2}$. The first phase of the evolution is governed by the differential equation, and we prove that the differential equation predicts the evolution of the discrete model well by bounding the error which accumulates during a short interval of length $\Delta t$. The second phase starts when the solution of the differential equation has almost reached its limiting state with homogeneous intervals. We show that with high probability the homogeneous intervals observed at time $Cw^{-1/2}$ will continue to exist until all nodes have reached their final opinion. Nodes near the boundary between two intervals at time $Cw^{-1/2}$ have approximately half of their neighbors of each opinion, which makes it hard to control the evolution of the bias for these nodes; however, we do manage to show that with high probability each interval does not shrink too much before all nodes have reached their final opinion.

In the remainder of the introduction we will give a precise definition of the Schelling model and state our main results. In Section \ref{sec:welldef} we show existence and uniqueness of solutions of the continuum Schelling model by using results from Section \ref{sec:suploctime}, and in Section \ref{sec:propertiescontinuum} we prove that for the one-dimensional model with $M=2$ the sign of the solution converges almost surely at almost every point. In Section \ref{sec:discrete1} we prove that the continuum Schelling model describes the discrete Schelling model well for small times and large $w$. In Section \ref{sec:discrete2} we conclude the proof of the scaling limit result for the one-dimensional Schelling model. We also prove (proceeding similarly as in \cite{tt14}) that the opinion of each node converges a.s.\ for any $N\in\N$ and $M\geq 2$, and we include a lemma which might be related to the typical cluster size for the limiting opinions in higher dimensions. We conclude the paper with a list of open questions in Section \ref{sec:open}.

\subsection{The Schelling model}
\label{sec:intro1}
We will start by defining the Schelling model on a general simple graph $G$ with vertex set $V(G)$ and edge set $E(G)$. The vertex set $V(G)$ may be infinite, but we assume $G$ is locally finite. Let $M\in\{2,3,4,\dots \}$. Each $i\in V(G)$ is associated with an opinion in $\{1,...,M\}$ and an independent unit rate Poisson clock, i.e., each node is associated with a clock such that the times between two consecutive rings of the clock are distributed as i.i.d.\ unit rate exponential random variables. Let $X(i,t)\in\{1,\dots,M\}$ denote the opinion of node $i$ at time $t\geq 0$, and let $\frk N(i):=\{j\in V(G)\,:\, (i,j)\in E(G) \}\cup\{i\}$ be the neighborhood of $i$. Every time the Poisson clock of a node rings, the node updates its opinion according to the following rules:
\begin{enumerate}
\item[(i)] The node chooses the most common opinion in its neighborhood if this is unique. In other words, we set $X(i,t)=m$ for $m\in\{1,\dots,M \}$ if for all $m'\in\{1,\dots,M \}\setminus\{m \}$ we have $|\{ j\in\mcl N(i)\,:\,X(j,t)=m \}|> |\{ j\in\mcl N(i)\,:\,X(j,t)=m' \}|$.
\item[(ii)] If there is a draw between different opinions, and the current opinion of the node is one of these opinions, the node keeps its current opinion. 
\item[(iii)] If there is a draw between different opinions, and none of these opinions are equal to the current opinion of the node, the new opinion of the node will be chosen uniformly at random from the set of most common opinions in its neighborhood.
\end{enumerate}
Note that almost surely no two Poisson clocks will ring simultaneously, even when $|V(G)|$ is infinite. Furthermore, one can show that for graphs with bounded degree, and for any fixed vertex $i\in V(G)$ and a fixed time $t\in\R_+$, the set of vertices whose initial opinion may have influenced the opinion of $i$ at time $t$, given the times at which the various Poisson clocks were ringing, is almost surely finite, see e.g.\ \cite[Claim 3.5]{tt14}. These two observations imply that the configuration at each time $t$ is a.s.\ determined by the initial configuration and the ring times, along with knowledge about how the draws described in (iii) are resolved.

In this paper, we will consider the Schelling model on a lattice, and we consider scaling limits as the neighborhood size tends to infinity, but we will be fairly flexible about the ``shape'' (circle, square, etc.) of the neighborhood. Let $N\in\N$ be a parameter describing the dimension of the graph, and let $w\in\N$ be a parameter we call the window size. Let $\mcl N=(-1,1)^N$ (or, alternatively, let $\mcl N\subset(-1,1)^N$ be a sphere or some other shape; precise conditions on $\mcl N$ appear below). Define the neighborhood $\frk N(i)$ of an element $i$ of $\Z^N$ by 
\eqbn
\frk N(i) = \{j\in\Z^N\,:\,w^{-1}(j-i)\in\mcl N \}.
\label{eq78}
\eqen 
When the dimension $N$ is greater than one, we will mostly work on a torus whose size is a large constant times the neighborhood size; precisely, for a fixed constant $R\in\{3,4,\dots \}$ and defining the one-dimensional torus $\frk S=\Z / (Rw \Z)$ we will work on the torus $\frk S^N$. We use the torus instead of $\Z^N$ mainly because we do not establish existence and uniqueness of solutions of a particular differential equation for the model on $\Z^N$ when $N\geq 2$. In the remainder of this section we describe the Schelling model in terms of $\frk S^N$ rather than $\BB Z^N$, but we obtain the model on $\BB Z^N$ by repeating the description with $\BB Z^N$ instead of $\frk S^N$. To simplify notation when considering the Schelling model on $\frk S^N$, we identify an element $i\in\BB Z^N$ with its equivalence class in $\frk S^N$.

Assume the initial opinions of the nodes are i.i.d.\ random variables satisfying $\BB P[X(i,0)=m]=M^{-1}$ for all $i\in\frk S^N$ and $m\in\{1,...,M\}$. Throughout the paper we let $\mcl R$ denote the set of rings of the Poisson clocks
\eqb
\mcl R = \{(i,t)\in\frk S^N\times\R_+ \,:\, \text{the clock of node $i\in\frk S^N$ rings at time $t$}\}.
\label{eq47}
\eqe

We define the \emph{bias} of node $i\in\frk S^N$ towards opinion $m\in\{1,...,M\}$ at time $t>0$, to be the sum $\sum_{j\in \frk N(i)} \bd 1_{X(j,t)=m}$. When the clock of a node rings, the node updates its opinion to the opinion towards which it has the strongest bias, with draws resolved as described in (ii)-(iii) above. We say that $i\in\frk S^N$ agrees with the most common opinion in its neighborhood at time $t$ if $\sum_{j\in \frk N(i)} \bd 1_{X(j,t)=X(i,t)}\geq \sum_{j\in \frk N(i)} \bd 1_{X(j,t)=m}$ for any $m=\{1,...,M\}$. In other words, $i$ agrees with the most common opinion in its neighborhood if and only if it would not update its opinion if its Poisson clock were ringing.

In the description of the Schelling model above we considered (for simplicity of the description) the neighborhood $\mcl N=(-1,1)^N$, but our results are proved for more general neighborhoods, since the more general case is not significantly more difficult to analyze. Unless otherwise stated, we will assume that $\mcl N\subset(-1,1)^N$ is an arbitrary open set containing 0, such that $\partial \mcl N$ has upper Minkowski dimension strictly smaller than $N$, and such that the following technical condition is satisfied. If $x_0$ is a unit vector in an arbitrary direction, $\lambda$ denotes Lebesgue measure, and  we let $\mcl N+s x_0$ denote the set $\{ x+sx_0\,:\,x\in\mcl N \}$ for some $s\in\R$, then
\eqb
\liminf_{t\rta 0^+} \frac{1}{t} \lambda\left( (\mcl N+tx_0) \setminus \bigcup_{s\leq 0} (\mcl N +sx_0) \right) >0.
\label{eq65}
\eqe 
The condition \eqref{eq65} will be used in the proof of Lemmas \ref{prop14} and \ref{prop25}. It is obviously satisfied by most of the smooth-boundary regions one would be inclined to consider.  We may for example let $\mcl N=\mcl N_p$, where $\mcl N_p:=\{x\in\R^N\,:\,\|x\|_{p}<1 \}$ is a metric ball for the $\ell^p$ norm for some $p\in[1,\infty]$.

Finally we remark that the first main result of the next section holds for the general $N$-dimensional Schelling model on the torus with $M\in\{2,3,\dots\}$ opinions, while the second main result holds for the torus or the real line for $N=1$, $M=2$, and $\mcl N=\mcl N_\infty$.

\subsection{Main results}
\label{sec:mainres}

Our first main result is a dynamical scaling limit result for the Schelling model. We prove that a function $Y^w$ describing the opinions of the nodes in the early phase (times up to order $w^{-N/2}$) of the Schelling model on the torus, converges in law as $w\rta\infty$. Denote the $N$-dimensional torus of side length $R$ by $\mcl S^N=\mcl S\times\dots\times\mcl S$. Let $\mcl C(\mcl S^N\times\R_+)$ denote the set of continuous real-valued functions on $\mcl S^N\times\R_+$, and let $\mcl C_M(\mcl S^N\times\R_+)$ denote the set of functions which can be written in the form $f=(f_1,\dots,f_M)$ for $f_m\in\mcl C(\mcl S^N\times \R_+)$ and $m=1,\dots,M$. Equip $\mcl C_M(\mcl S^N\times\R_+)$ with the topology of uniform convergence on compact sets. Define the unscaled bias function $\mcl Y_m$ by 
\eqbn
\mcl Y_m(i,t) = \sum_{j\in\frk N(i)}\left(\1_{X(j,t)=m} - \frac{1}{M}\right).
\eqen
Then define the (normalized) bias function $Y^w=(Y^{w}_1,\dots, Y^{w}_M)\in\mcl C_M(\mcl S^N\times\R_+)$ by
\eqb 
Y^{w}_m(x,t) := \frac{1}{w^{N/2}} \mcl Y_m( xw,tw^{-N/2} )
\qquad m\in\{1,...,M\},\quad x\in w^{-1}\Z^N,\quad t\geq 0, 
\label{eq24} 
\eqe
and for $x\not\in w^{-1}\Z^N$ let $Y^{w}_m(x,t)$ be a weighted average of the $\leq 2^N$ points $\wt x\in w^{-1}\Z^N$ which satisfy $\|x-\wt x\|_{\infty}<w^{-1}$
\eqb
Y^{w}_m(x,t) = \sum_{\wt x\in w^{-1}\Z^N\,:\,\|x-\wt x\|_{\infty}<w^{-1}} \left(\prod_{k=1}^{N} (1-w|x_k-\wt x_k|)\right) Y^{w}_m(\wt x,t).
\label{eq60}
\eqe
See Figure \ref{fig-1d}. Note that $Y^w$ encodes the bias of each node towards each opinion $1,\dots,M$. Also note that when defining $Y^w$ we do not only scale space; we also scale time by $w^{-N/2}$. This implies that the convergence result of the following theorem only describes the evolution of the bias in the very beginning (more precisely, up to times of order $O(w^{-N/2})$) of the Schelling model.
\begin{figure}[ht!]
	\begin{center}
		\includegraphics[scale=0.52]{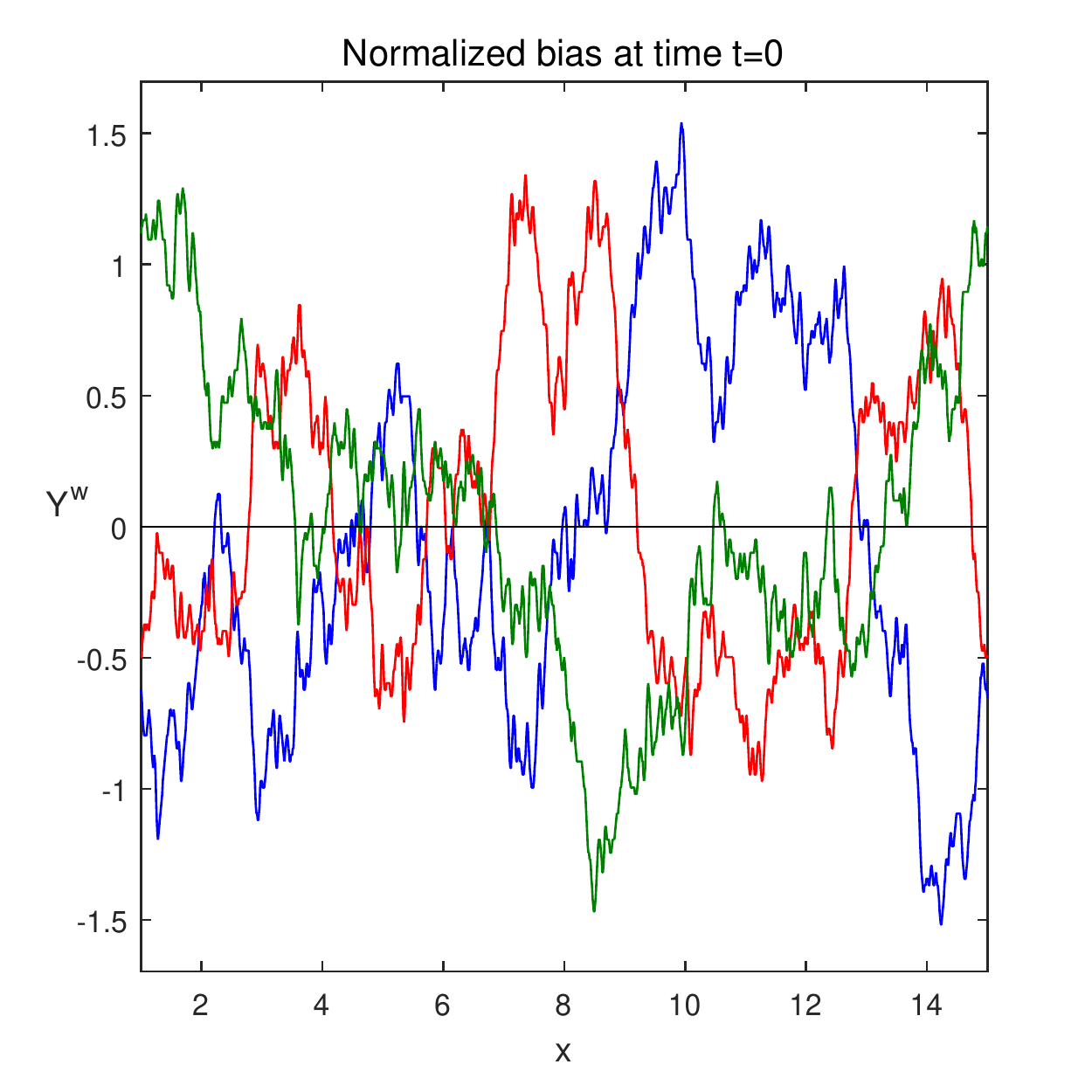} 
		\includegraphics[scale=0.52]{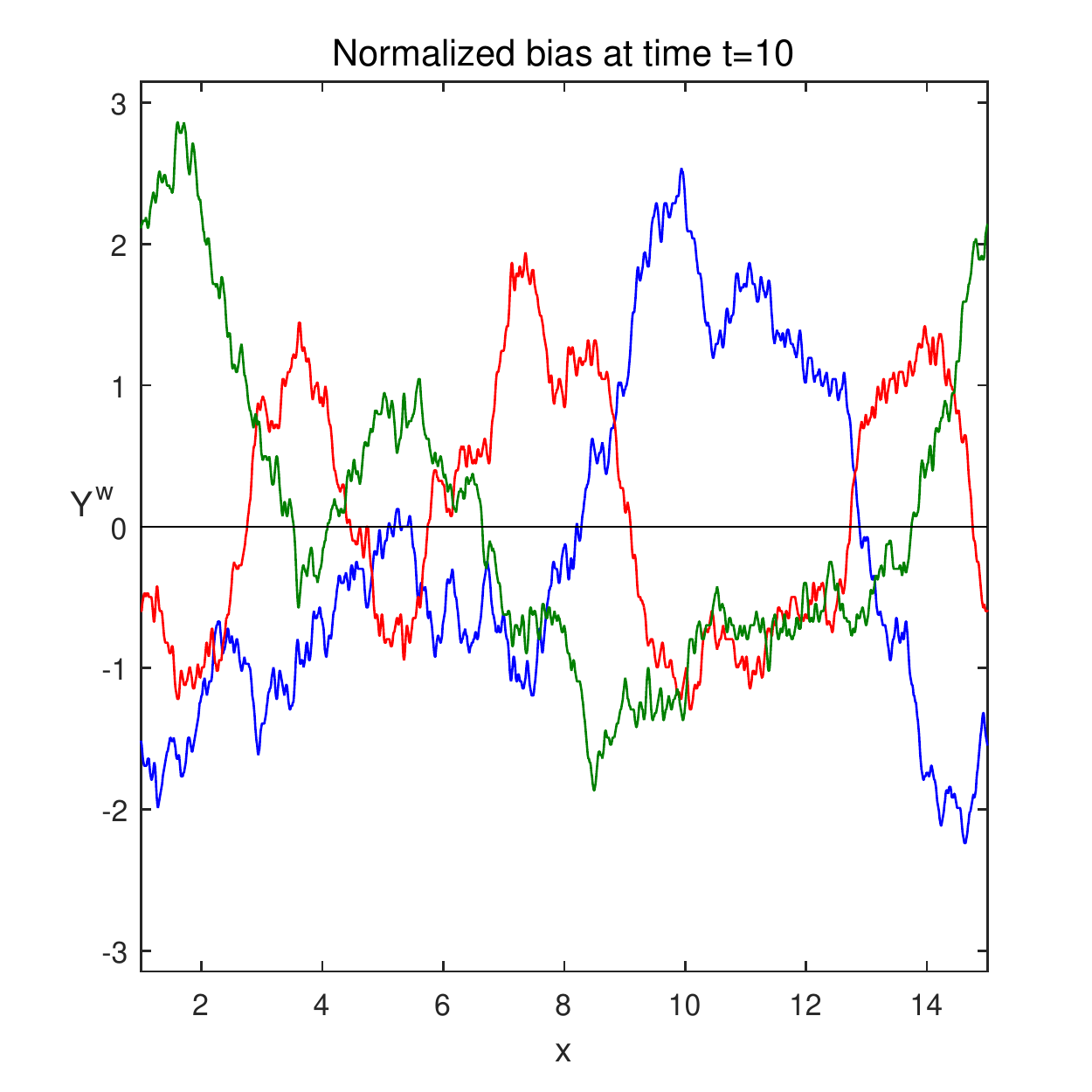} 
		\includegraphics[scale=0.52]{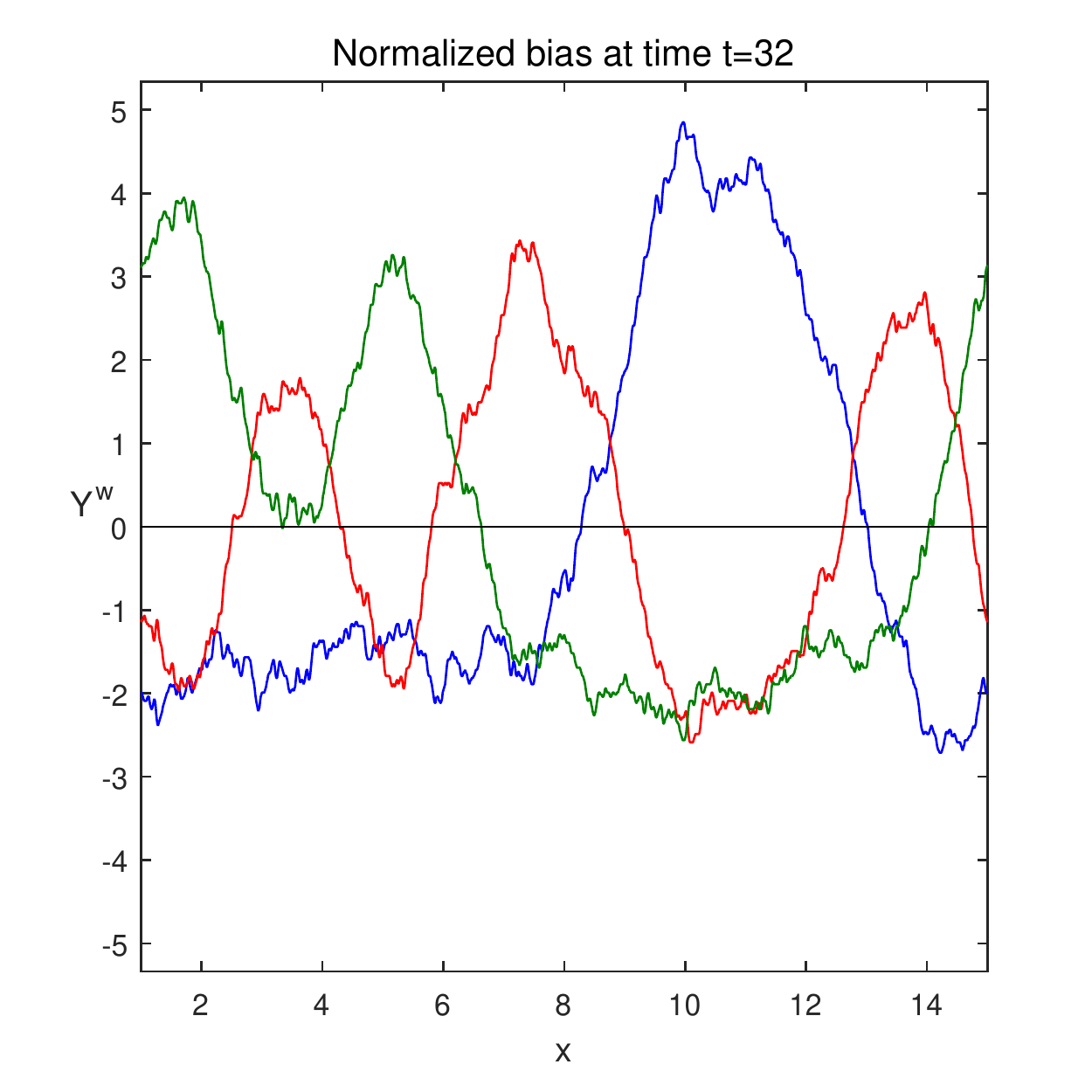} 
		\includegraphics[scale=0.52]{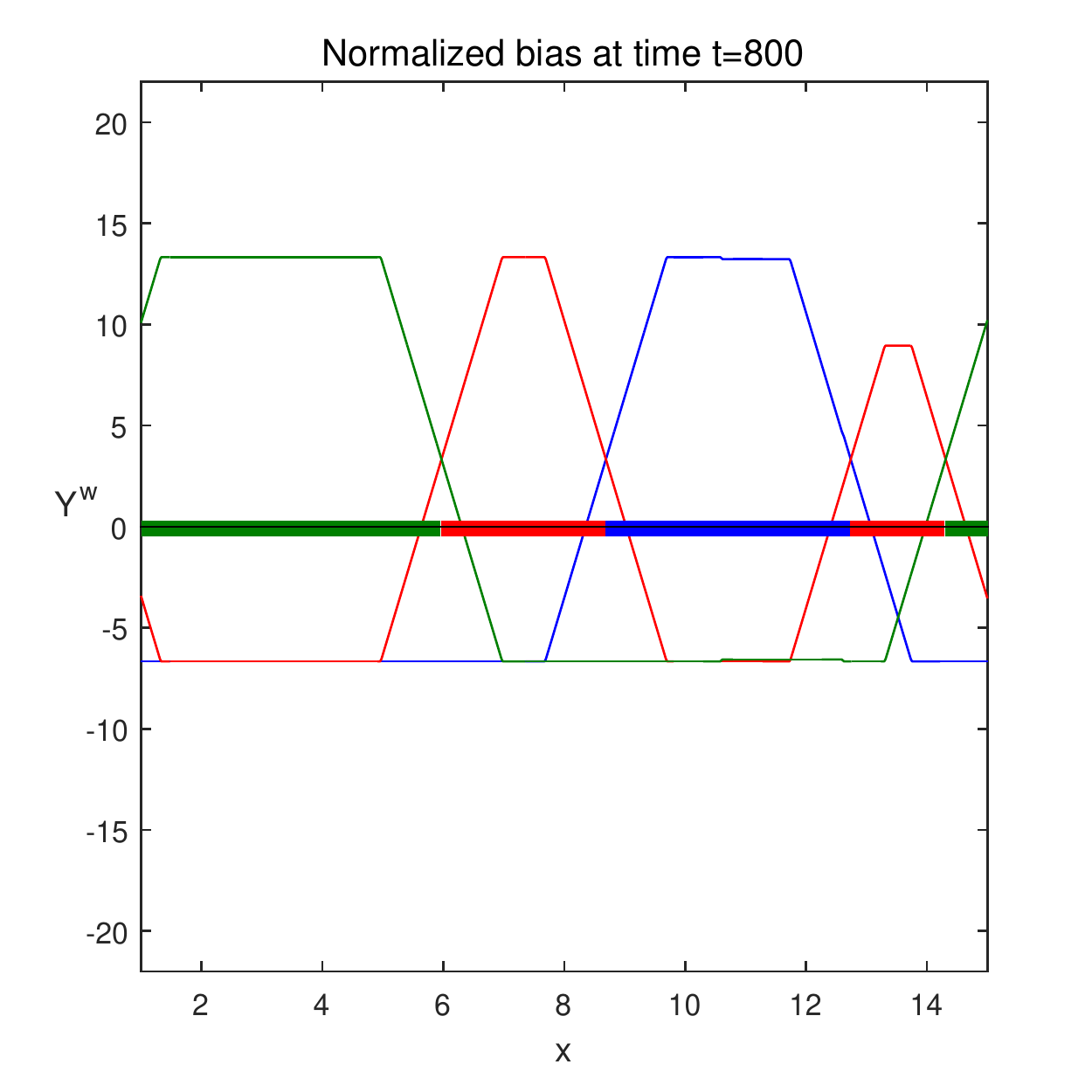} 
		\caption{The graphs show the bias function $Y^w$ at four different times for the Schelling model on the torus of dimension $N=1$ with $M=3$ opinions. Each color in the figure corresponds to one of the $M$ opinions. The initial data of $Y^w$ (upper left figure) converges in law to the Gaussian field $B$ defined in Section \ref{sec:cont} when $w\rta\infty$. The evolution of $Y^w$ can be well approximated by the differential equation \eqref{eq1} on compact time intervals, see Theorem \ref{thm2} and Proposition \ref{prop18}. When we have reached the final configuration of opinions, each function $Y^w_m$, $m=1,2,3$, is piecewise linear with slopes $\pm 2w^{1/2}$ and 0, and values in the interval $[-2w^{1/2}\frac{1}{M},2w^{1/2}(1-\frac{1}{M})]$. Regions in which $Y_m^w$ equals $2w^{1/2}(1-\frac{1}{M})$ correspond to long intervals where the nodes have limiting opinion $m$. For the continuum analog $Y$ of $Y^w$, each function $Y_m$ converges to $\pm\infty$ at almost every point. The thick line at the $x$ axis in the lower right figure represents the limiting opinion of the nodes. The plots are made with window size $w=100$ and torus width $R=14$. } 
		\label{fig-1d}
	\end{center}
\end{figure}
\begin{figure}[ht!]
	\begin{center}
		\includegraphics[scale=0.52]{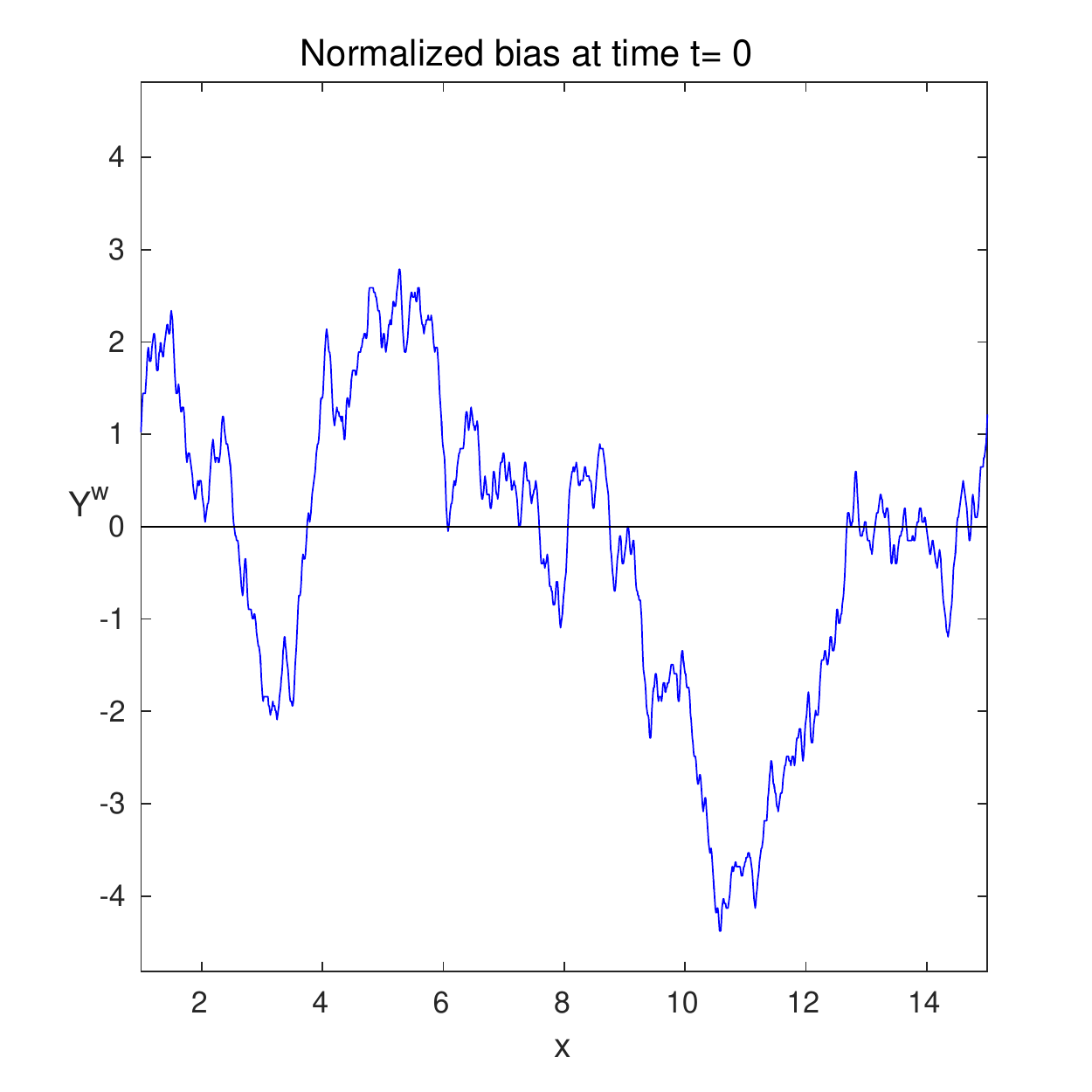} 
		\includegraphics[scale=0.52]{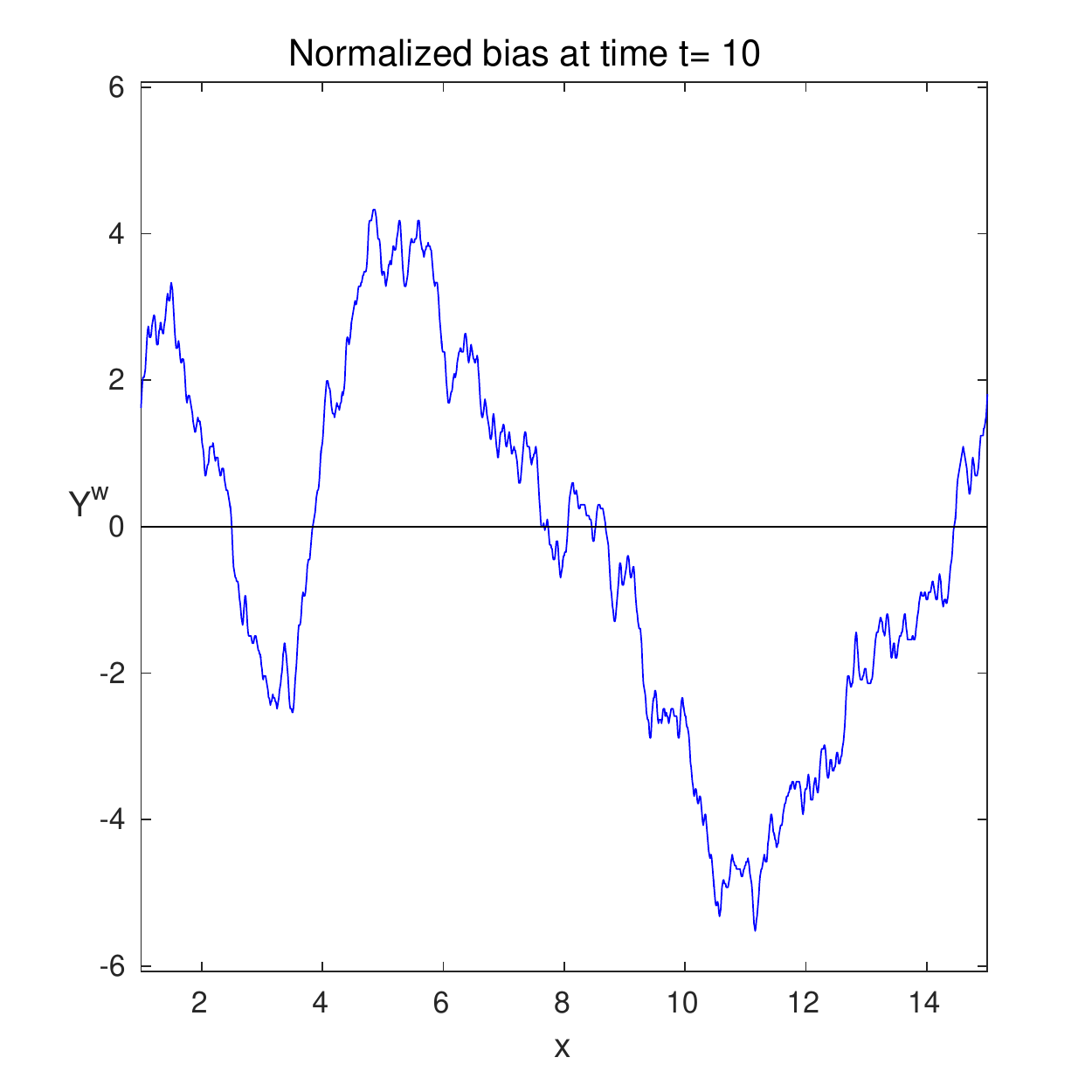} 
		\includegraphics[scale=0.52]{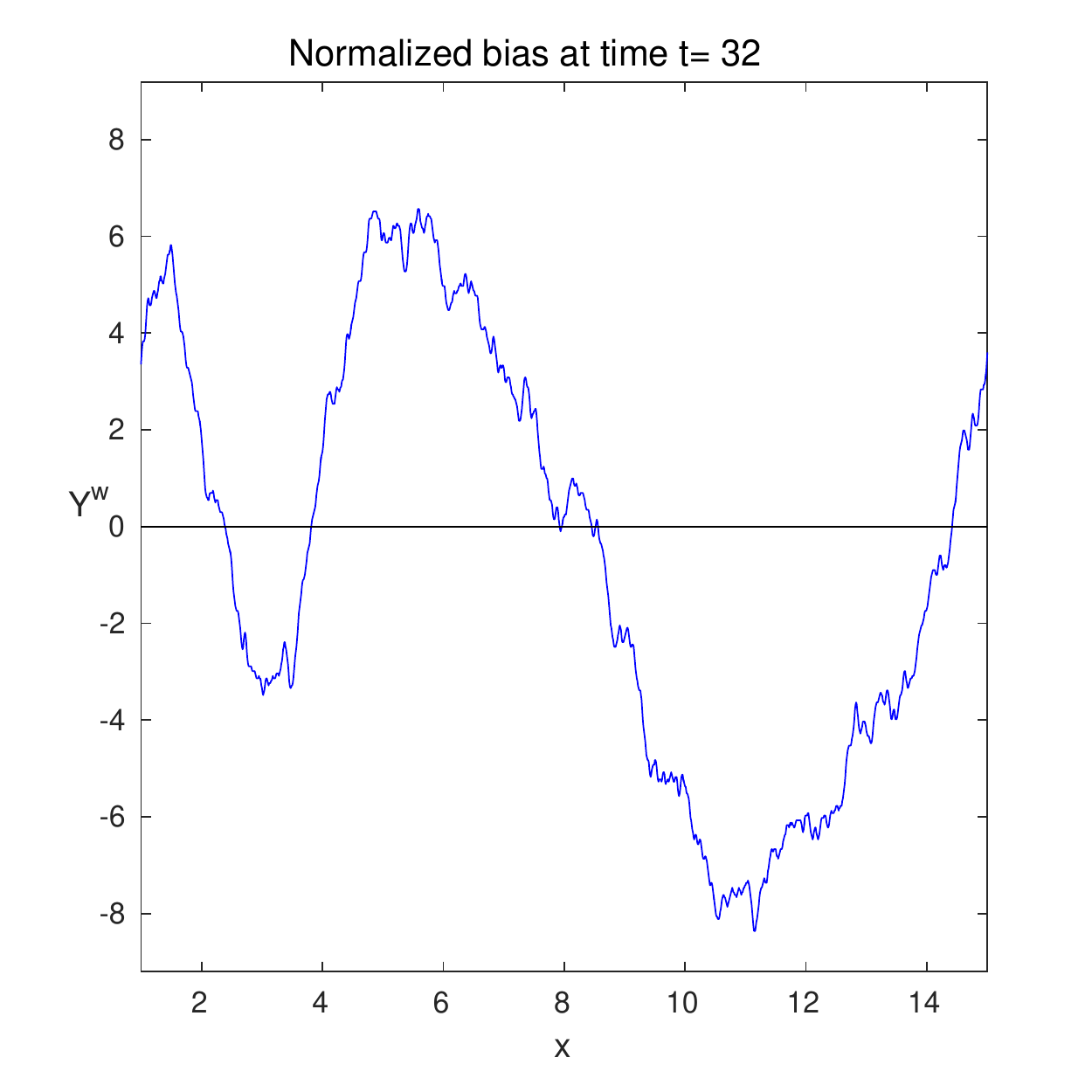} 
		\includegraphics[scale=0.52]{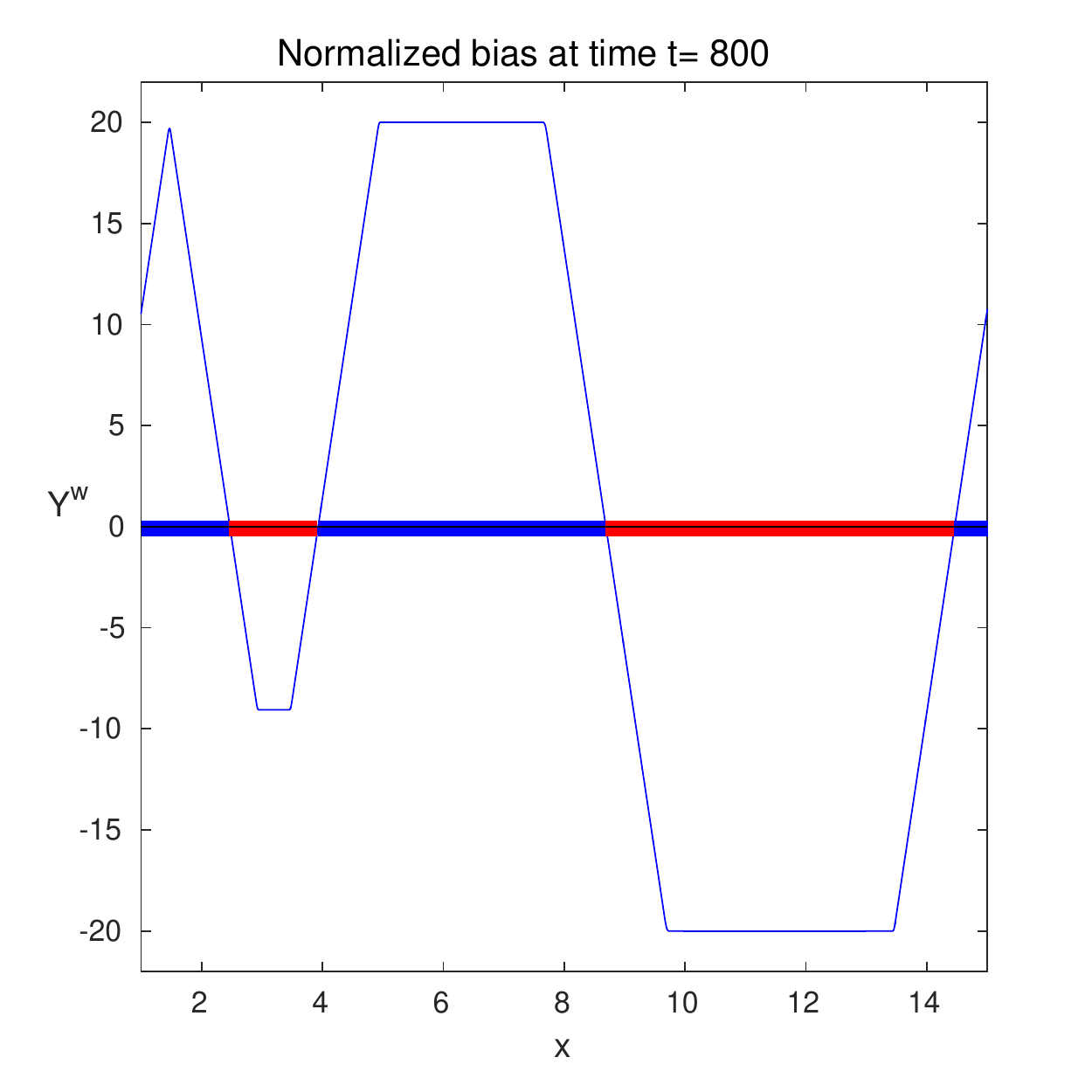} 
		\caption{The graphs show the function $\wh Y^w:=Y^w_1-Y^w_2$ for the Schelling model on the torus of dimension $N=1$ with $M=2$ opinions, which evolves approximately as described by \eqref{eq71}. The plots are made with window size $w=100$ and torus width $R=14$.} 
		\label{fig-1d-M2}
	\end{center}
\end{figure}
\begin{thm}
	In the setting described above, $Y^w$ converges in law in $\mcl C_M(\mcl S^N\times\R_+)$ to a random function $Y$ as $w\rta\infty$.
	\label{thm2}
\end{thm}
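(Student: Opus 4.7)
The plan is to prove convergence via three standard steps: (i) show the initial data $Y^w(\cdot, 0)$ converges in law to a Gaussian field $B$, (ii) establish tightness of $\{Y^w\}_{w\geq 1}$ in $\mcl C_M(\mcl S^N \times \R_+)$, and (iii) identify every subsequential limit with the unique solution $Y$ of the continuum Schelling model of Proposition \ref{prop18}, which by uniqueness forces the entire sequence to converge.

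For step (i), $Y^w_m(x,0)$ is a renormalized sum of i.i.d.\ bounded centered random variables $\1_{X(j,0)=m} - 1/M$ over the roughly $w^N$ lattice sites $j\in\frk N(\lfloor xw\rfloor)$. A multivariate CLT gives convergence of finite-dimensional distributions, and a Kolmogorov criterion applied to fourth-moment bounds on increments $Y^w(x,0)-Y^w(x',0)$ (using the upper Minkowski-dimension hypothesis on $\partial \mcl N$ to control the volume of symmetric differences $(\mcl N + sx_0)\triangle \mcl N$) identifies the limit as the centered Gaussian field whose covariance is, up to the $-1/M$ recentering, the normalized volume of overlap of two translates of $\mcl N$.

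For step (ii), write the evolution of $Y^w_m$ as a sum over Poisson clock rings: each ring at a node $i$ at rescaled time $t$ changes $Y^w_m(x,t)$ by $O(w^{-N/2})$ for $x$ within distance $O(1)$ of $i/w$, and is zero outside. Decompose this sum into a predictable drift plus a compensated martingale. The $w^{-N/2}$ scaling, together with Doob's maximal inequality and a chaining argument over $\mcl S^N\times[0,T]$, controls the martingale component uniformly in $w$ and shows it vanishes in the limit. The drift at $(x,t)$ is, up to small error, an average over $y\in x + w^{-1}\mcl N$ of a function of $Y^w(\cdot,t)$ that selects one of $M$ unit vectors in $\R^M$ according to which coordinate of the bias at node $yw$ is largest, and is therefore uniformly bounded and equicontinuous in time on bounded intervals. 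Combined with the initial-data tightness from (i), this yields tightness in $\mcl C_M(\mcl S^N\times\R_+)$.

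For step (iii), the delicate point is continuity of the drift under weak convergence, since the majority-selecting functional is discontinuous at points where two coordinates of the bias vector are tied. The occupation-kernel bound of Section \ref{sec:suploctime} (the extension of Bass-Burdzy from \cite{bb01} advertised in the abstract) ensures that for each $m \neq m'$ and every $1$-Lipschitz $\phi$ the set $\{x : Y_m(x,t) - Y_{m'}(x,t) = \phi(x)\}$ has Lebesgue measure zero a.s., so the drift functional is a.s.\ continuous along the tight family. Hence any subsequential limit a.s.\ satisfies the integro-differential equation of Proposition \ref{prop18} with Gaussian initial data $B$, and the uniqueness of solutions established in Section \ref{sec:welldef} collapses all subsequential limits to a common law $Y$. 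The main obstacle is exactly this tie-set continuity: without the occupation-kernel bound, the drift could concentrate on an a priori non-negligible set where the update rule is discontinuous, breaking the weak-convergence identification; a secondary technical point is promoting the pointwise $L^2$ martingale bounds to sup-norm tightness on $\mcl C_M$, which is handled by chaining against the Hölder regularity of $Y^w$ produced in (ii).
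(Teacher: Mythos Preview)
Your approach is essentially correct, but it differs from the paper's route. The paper does not argue via tightness plus identification of subsequential limits. Instead, it first constructs the continuum solution $Y$ independently (Theorem~\ref{prop3}), couples the initial data $Y^w(\cdot,0)$ to $B$ via Skorokhod (Lemma~\ref{prop19}), and then runs a direct Gronwall-type error iteration: Lemma~\ref{prop21b} shows that if $E(t_0)=\|Y^w(\cdot,t_0)-Y(\cdot,t_0)\|_{L^\infty}$ is small, then with high probability $E(t_0+\Delta t)\le E(t_0)(1+c\Delta t)+c(\Delta t)^2$, and this is proved by a careful decomposition of the one-step error into seven explicit terms $A_1,\dots,A_7$. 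Iterating over a fine grid of times yields uniform closeness of $Y^w$ and $Y$ on compacts, hence Proposition~\ref{prop18} and then Theorem~\ref{thm2} as an immediate corollary.

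Both approaches hinge on exactly the same key input, namely the occupation-kernel bound (Theorem~\ref{prop6}), and for the same reason: it controls the measure of the near-tie set $\{x:|Y_m(x,t)-Y_{m'}(x,t)|<\delta\}$ uniformly over all Lipschitz perturbations of $B$, which is what makes the plurality functional effectively continuous. In your argument this is what lets you pass to the limit in the drift; in the paper it is what makes $\phi^\omega$ Lipschitz (Proposition~\ref{prop1}) and what bounds the terms $A_1,A_2,A_6,A_7$ in Lemma~\ref{prop21a}. The paper's direct-comparison method yields the stronger statement of almost sure convergence under a coupling (Proposition~\ref{prop18}), which is reused later in the proof of Theorem~\ref{thm1}; your compactness route would give convergence in law and would then need Skorokhod to upgrade. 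Your sketch is slightly loose on spatial tightness (you only argue equicontinuity in time), but this is easily fixed since the integrated drift is Lipschitz in $x$ with constant $O(t)$, and the martingale part vanishes uniformly by the chaining you mention.
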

The theorem is an immediate consequence of Proposition \ref{prop18} in Section \ref{sec:discrete1}, which identifies $Y$ as the solution of a particular differential equation with Gaussian initial data.

Our second main result is a scaling limit result for the final configuration of opinions in the one-dimensional Schelling model. We consider the model on either the torus $\frk S$ or on $\Z$, and have $M=2$ opinions and neighborhood $\mcl N=\mcl N_\infty$. For $R\in\{ 3,4,\dots \}$ let $\mcl S$ be the torus of width $R$, i.e., $\mcl S=\R/\sim$, where $\sim$ is the equivalence relation on $\R$ defined by $x\sim y$ iff $x-y$ is an integer multiple of $R$. By \cite{tt14} (see Proposition \ref{prop30} below for the analogous result for general $M$) the opinion of each node converges almost surely as time goes to infinity, hence each node is associated with a unique opinion in $\{1,2\}$ describing its limiting opinion. 

For $V=\mcl S$ and $V=\R$ define 
\eqbn
\mcl D(V):=\{\mcl A=(A_1,A_2)\,:\,A_m\subset V \text{\,\,is closed for }m=1,2 \}.
\eqen
Equip $\mcl D(V)$ with the topology of convergence of $A_1$ and $A_2$ for the Hausdorff distance on compact sets. The appropriately normalized limiting distribution of opinions in the Schelling model, is a random variable in $\mcl D(V)$. The following theorem says that this random variable converges in law in $\mcl D(V)$ as the window size $w$ converges to $\infty$. In the theorem below we identify $\frk S$ and $\mcl S$ with $\{ 0,\dots,Rw-1 \}$ and $[0,R)$, respectively.
\begin{thm}
	Let $\mcl V=\BB Z$ and $V=\R$, or let $\mcl V=\frk S$ and $V=\mcl S$. Consider the one-dimensional Schelling model on $\mcl V$ as described in Section \ref{sec:intro1} with window size $w\in\BB N$, $M=2$ opinions, and $\mcl N=\mcl N_\infty$. Define $\mcl A^w\in\mcl D(V)$ by
	\eqbn
	\mcl A^w:=(A^w_1,A^w_2),\qquad
	A^w_m := \{jw^{-1}\in V\,:\,j\in\mcl V,\, \lim_{t\rta\infty}X(j,t)=m\}\text{\,\,for\,\,}m=1,2.
	\eqen
	Then $A^w_1\cup A_2^w=\{jw^{-1}\,:\,j\in \mcl V \} $ a.s., and $\mcl A^w$ converges in law as a random variable in $\mcl D(V)$ to a limiting random variable $\mcl A=(A_1,A_2)$. The sets $A_1$ and $A_2$ have disjoint interior and union $V$ a.s., and each set $A_m$ for $m=1,2$ is almost surely the union of at most countably many closed intervals each of length strictly larger than 1.
	\label{thm1}
\end{thm}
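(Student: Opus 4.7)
The plan is to first construct the limit $\mcl A = (A_1,A_2)$ from the long-time behavior of the continuum bias function $Y$ supplied by Theorem \ref{thm2}, and then prove that the discrete late-time opinions converge in law to this limit. The assertion $A_1^w \cup A_2^w = \{jw^{-1}:j\in\mcl V\}$ almost surely reduces to the almost-sure stabilization of opinions cited from \cite{tt14} (and promised as Proposition \ref{prop30}), so the real content is convergence in law of $\mcl A^w$ together with the structural properties of $\mcl A$.

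For the continuum construction I would work with $\wh Y_t := Y_1(\cdot,t) - Y_2(\cdot,t)$ and show, using the integro-differential equation governing $Y$, that $\op{sign}(\wh Y_t(x))$ stabilizes as $t\to\infty$ for Lebesgue-almost every $x$ to a $\{+1,-1\}$-valued limit $\sigma$. I would then define $A_m$ as the closure of $\{\sigma = (-1)^{m+1}\}$, so that $A_1\cup A_2 = V$ and the interiors are disjoint by construction. The crucial structural claim is that every connected component of each $A_m$ has length strictly greater than $1$: since the dynamics at a point $x$ depend only on $\wh Y_t$ in the unit-radius window centered at $x$, a component of length $\leq 1$ would still see nontrivial mass of the opposite opinion from an interior point, and I would argue by contradiction. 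Assuming a short component exists with positive probability, a carefully chosen perturbation of the Gaussian initial data would (by absolute continuity of finite-dimensional projections) yield another evolution with a short component, but one which this time can be shown by explicit ODE analysis to fail to stabilize.

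For the convergence $\mcl A^w\to\mcl A$ I would apply the Skorokhod representation theorem to Theorem \ref{thm2} so that $Y^w\to Y$ uniformly on compact space-time sets almost surely, and then proceed in two phases. In phase one ($t\leq Cw^{-1/2}$) the analysis of Section \ref{sec:discrete1} shows that $Y^w$ is well approximated by $Y$, so for $C$ large the discrete sign pattern at time $Cw^{-1/2}$ matches the (essentially stabilized) continuum pattern outside a small set near $\partial A_m$. In phase two ($t\geq Cw^{-1/2}$) I would argue that each homogeneous discrete interval of length exceeding $w$ present at time $Cw^{-1/2}$ persists, because any node strictly inside such an interval has overwhelming bias and cannot flip, so only boundary nodes can change type. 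A martingale/concentration estimate on the number of Poisson firings at boundary sites then bounds the total boundary motion from $Cw^{-1/2}$ to $\infty$ by $O(1)$ lattice sites, i.e.\ $o(1)$ on the $w^{-1}$ scale. Combining the two phases, every endpoint of each component of $A_m^w$ lies within $o_w(1)$ of a corresponding endpoint of $A_m$, giving Hausdorff convergence on compacts and hence convergence in $\mcl D(V)$.

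The main obstacle is the strictness of the bound $>1$: since $1$ is exactly the window radius, components of length exactly $1$ are the critical configurations, sitting on the boundary between stable and unstable states, and they are not ruled out by any soft compactness or monotonicity argument. Handling them requires the quantitative continuum analysis above combined with an absolute-continuity argument for the Gaussian initial data to show they occupy a null event. A secondary obstacle is phase two: boundary nodes in the discrete model have nearly balanced neighborhoods, so their bias is an inadequate control variable and one must exploit the Poisson structure directly to bound cumulative boundary motion.
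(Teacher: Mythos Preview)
Your overall architecture matches the paper closely: construct $\mcl A$ from the long-time behavior of $Y$ (this is Proposition~\ref{prop20a}), use Proposition~\ref{prop18} to couple discrete and continuum dynamics for the early phase, then handle the late phase separately. Your identification of the two obstacles is also accurate, and your sketch of the perturbation argument for ruling out length-$1$ components is essentially what the paper does in Section~\ref{sec:propertiescontinuum}.

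The genuine gap is in your phase-two argument. Your dichotomy ``interior nodes have overwhelming bias and cannot flip, so only boundary nodes can change type'' is not correct as stated. At time $Cw^{-1/2}$ the interval is \emph{not} homogeneous in opinion --- only a $O(w^{-1/2})$ fraction of clocks have rung --- it is only homogeneous in \emph{bias}. A node $i$ whose neighborhood $\frk N(i)$ lies entirely inside the interval does have non-decreasing bias, but this requires $i$ to be at distance $>w$ lattice units from the boundary, so the ``boundary region'' where bias can decrease is $\Theta(w)$ nodes wide, not $O(1)$. Counting Poisson firings at $O(1)$ boundary sites therefore controls nothing. Moreover, the bias at time $Cw^{-1/2}$ is of order $w^{1/2}$, and over order-$1$ real time the bias of a boundary node undergoes order-$w$ random fluctuations, so ``overwhelming'' is relative.

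The paper's resolution (Lemmas~\ref{prop20b} and~\ref{prop21}) is substantially more delicate than your sketch. One tracks not boundary motion but the event that the bias stays strictly positive on a \emph{slowly shrinking} interval: for the endpoint nodes one compares the bias to a random walk with small negative drift started from a large positive value, showing it stays positive up to a small constant time $c_1$; for nodes well inside one uses that a uniformly-more-than-half fraction of neighbors have positive bias to get positive drift. One then iterates this over $\lceil w^{0.02}\rceil$ time intervals of length $w^{-0.01}$, letting the interval shrink by $w^{-0.1}$ at each step (total shrinkage $w^{-0.08}=o(1)$ in rescaled units), until all clocks have rung with high probability. The conclusion is that the final discrete interval contains the continuum interval shrunk by an arbitrarily small $\epsilon$, which suffices for Hausdorff convergence --- the paper does not claim, and likely could not prove, your $O(1)$-lattice-site bound.
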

When we prove the theorem in Section \ref{sec:discrete2} we will describe the limiting random variable $\mcl A$ in terms of the solution $Y$ of the initial value problem mentioned above. 
We will not describe the law of this random variable further, but we remark that if $I$ is defined to be the maximal interval satisfying either $0\in I\subset A_1$ or $0\in I\subset A_2$, then the length of $I$ decays at least exponentially; this holds by Lemma \ref{prop21}, and since the event considered in this lemma holds independently and with uniformly positive probability on each interval $[10k,10k+5]$, $k\in\Z$ (see the proof of \cite[Theorem 1]{schel-klein1} for a similar argument).

\begin{figure}[ht!]
	\begin{center}
		\includegraphics[scale=1.1]{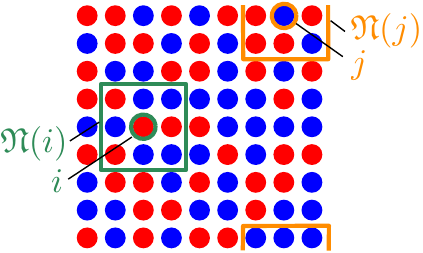} \quad
		\includegraphics[scale=1.1]{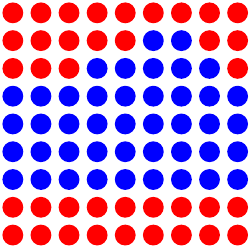} \qquad
		\includegraphics[scale=1.1]{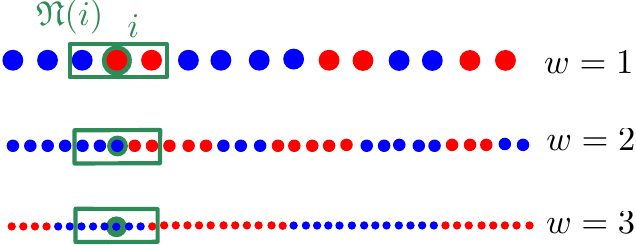} 
		\caption{Left: initial configuration of opinions in the Schelling model on the torus for $N=2$, $M=2$, $R=9$, and $w=1$. Middle: one possible final configuration of opinions with initial data as on the left figure. Observe that all nodes agree with the most common opinion in their neighborhood. Right: final configuration of opinions in the Schelling model on the torus for $N=1$, $M=2$, $R=15$, and $w=1,2,3$.} 
		\label{fig-limit0}
	\end{center}
\end{figure}

\begin{figure}[ht!]
	\begin{center}
		\includegraphics[scale=0.161]{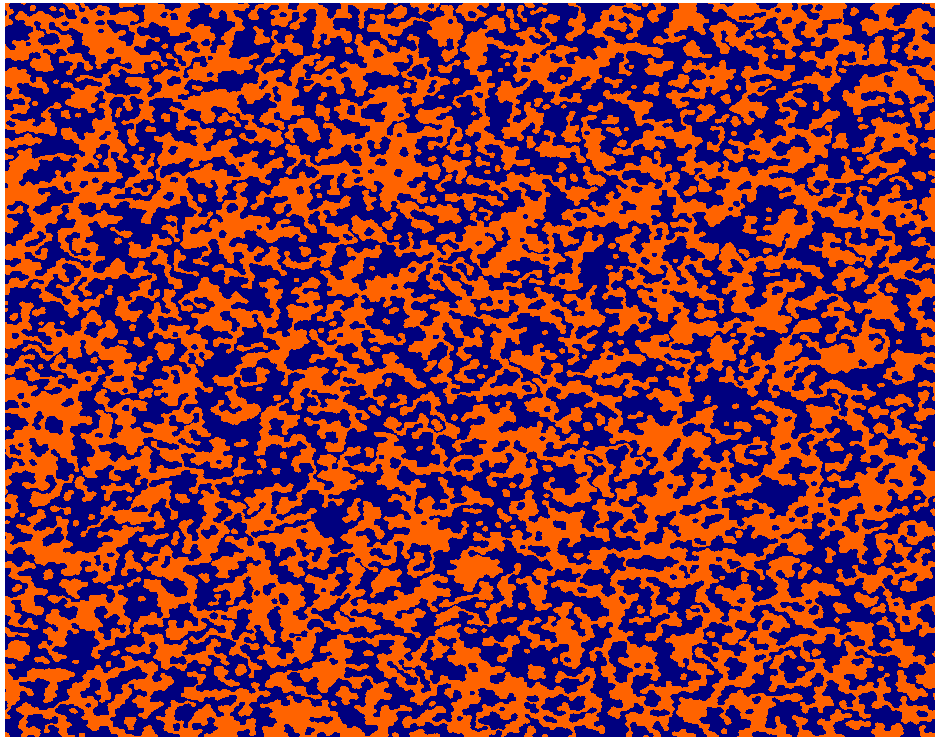} 
		\includegraphics[scale=0.165]{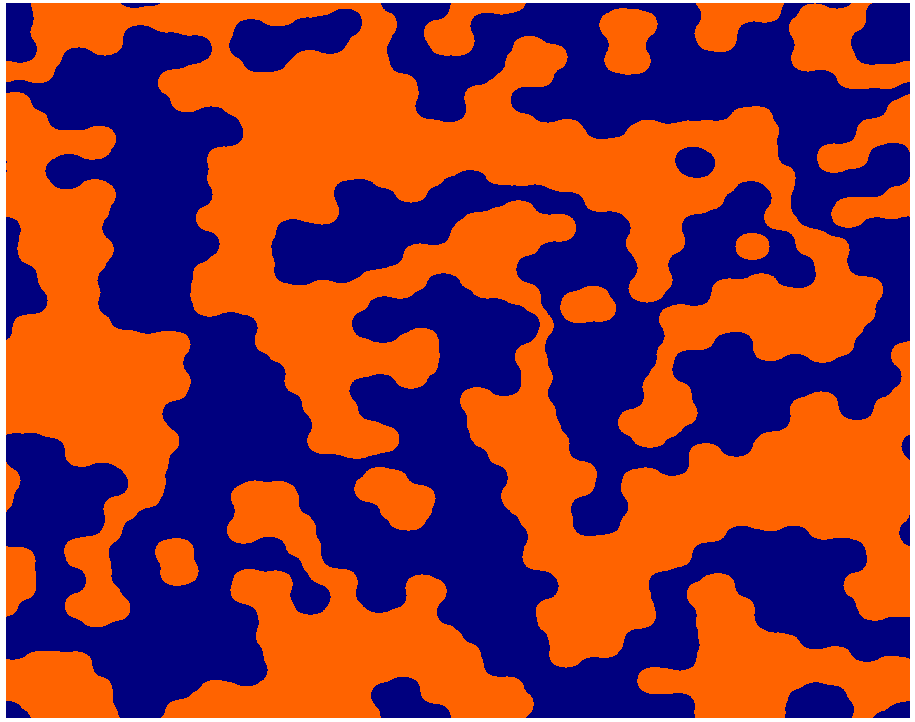} 
		\includegraphics[scale=0.161]{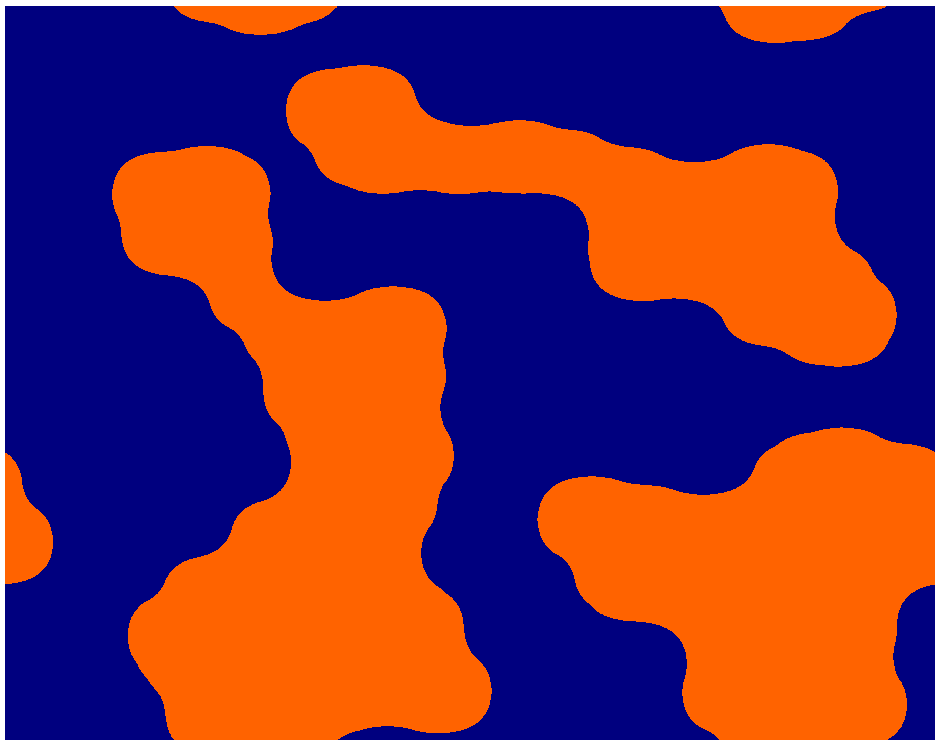} 
		\caption{Final configuration of opinions in the Schelling model on the torus for $N=2$, $M=2$, and a torus width of $4000$ nodes. Left: $w=4$, middle: $w=8$, and right: $w=12$. Simulations by Omer Tamuz. } 
		\label{fig-limit}
	\end{center}
\end{figure}

Other results in the paper of independent interest include Theorem \ref{prop3}, which establishes  existence and uniqueness of the solution of the differential equation mentioned above, and Theorem \ref{prop6}, which says that for a certain family of Gaussian fields the supremum of its occupation kernel on Lipschitz functions is finite.

Finally we remark that our methods can be adapted easily to certain other variants of the Schelling model. For example, we may consider a perturbed variant of the model, where each node acts against its own preference with probability $p\in(0,1)$ every time its Poisson clocks rings, e.g.\ it chooses some opinion uniformly at random from $\{1,\dots,M \}$ instead of changing its opinion to the most common opinion in its neighborhood. In this perturbed model a variant of Theorem \ref{thm2} still holds, but the continuum bias function would evolve slower than with the unperturbed dynamics.

The results above are stated for the single-site-update variant of the Schelling model. In the version of the Schelling model studied in certain other papers, however, the nodes swap opinions rather than changing opinions; equivalently, the nodes have a fixed opinion and they change locations in order to be surrounded by nodes of a similar opinion to themselves. 
In this formulation of the model we would consider a finite grid (e.g.\ the torus), and each time step could consist of choosing two nodes $i,j$ uniformly at random and swapping their opinions if the opinion of node $i$ (resp.\ $j$) equals the most common opinion in the neighborhood of node $j$ (resp.\ $i$). Defining $Y^w$ using \eqref{eq24} and \eqref{eq60}, the continuum approximation $Y$ to $Y^w$ would evolve as described by a particular differential equation with random initial data, i.e., a variant of Theorem \ref{thm2} still holds in this setting. This differential equation also describes certain variants closely related to Schelling's original model, where some nodes are unoccupied and individuals may move to unoccupied sites if they are not satisfied. See the introduction of Section \ref{sec:cont} for more details.

Our results above also extend easily to other lattices than $\Z^N$ and $\frk S^N$ (assuming $\frk N(i)$ is still defined by \eqref{eq78} for each node $i$).

\subsection{Notation}
We will use the following notation:
\begin{itemize}
\item If $a$ and $b$ are two quantities whose values depend on some parameters, we write $a\preceq b$ (resp. $a \succeq b$) if there is a constant $C$ independent of the parameters such that $a \leq C b$ (resp. $a \geq C b$). We write $a \asymp b$ if $a\preceq b$ and $a \succeq b$.  (We will sometimes abuse notation and use the same terminology when $C$ depends on some parameters but not others, but this will be made clear in context.)
\item For any $N\in\BB N$ let $\lambda$ denote the Lebesgue measure on $\BB R^N$ or the torus $\mcl S^N$.
\item For $N\in\N$ and a topological space $V$ let $\mcl C(V^N)$ denote the space of continuous real-valued functions on $V^N$, equipped with the topology of uniform convergence on compact sets. 
\item For $N\in\N$ and either $V=\mcl S$ or $V=\R$ let $\mcl L(V^N)$ denote the space of real-valued Lipschitz continuous functions on $V^N$ equipped with the topology of uniform convergence on compact sets. For $K>0$ define $\mcl L^{K}(V^N)\subset\mcl L(V^N)$ by
\end{itemize}
\eqbn
	\mcl L^{K}(V^N) = \left\{y\in\mcl L(V^N)\,:\,
	\|y\|_{L^\infty}\leq K2^N\text{ and }\forall k\in\{1,...,N\},
	\sup_{\begin{subarray}{c}
			\text{$x,x'\in V^N,x_k\neq x'_k,$}\\
			\text{$x_j=x'_j\forall j\neq k$}
		\end{subarray}}
	\frac{|y(x)-y(x')|}{|x_k-x'_k|}\leq K2^{N-1}\right\}.
\eqen
\begin{itemize}
\item For $M,N\in\N$ and either $V=\mcl S$ or $V=\R$ define $\mcl C_M(V^N)$ (resp. $\mcl L_M(V^N)$, $\mcl L^K_M(V^N)$) to be the space of functions $f=(f_1,...,f_M)$ taking values in $\BB R^M$, such that for each $m\in\{1,...,M\}$ we have $f_m\in \mcl C(V^N)$ (resp. $f_m\in\mcl L(V^N)$, $f_m\in\mcl L^K(V^N)$). 
\item For any topological space $V$ let $\mcl B(V)$ denote the Borel $\sigma$-algebra.
\end{itemize}
See Sections \ref{sec:intro1}, \ref{sec:mainres}, and \ref{sec:cont} for additional notation.

\subsection*{Acknowledgements}
We thank Omer Tamuz for introducing us to the Schelling model, for numerous helpful discussions, for comments on an earlier draft of this paper, and for allowing us to use his simulations in Figure \ref{fig-limit}. We thank Chris Burdzy for our discussion about his paper \cite{bb01}, and we thank Matan Harel for our discussions about Proposition \ref{prop33}. We thank Nicole Immorlica, Robert Kleinberg, Brendan Lucier, and Rad Niazadeh for discussions about this paper and some of their related work \cite{schel-klein1,schel-klein2}.  The first author was supported by a fellowship from the Norwegian Research Council. The second author was supported by NSF grants  DMS 1209044 and DMS 1712862.

\section{The continuum Schelling model}
\label{sec:cont}

In this section we will introduce a differential equation which describes the early phase of the Schelling dynamics when the window size $w$ is large. We call this differential equation with appropriate initial the continuum Schelling model.

The main result of this section is the existence and uniqueness of solutions of the differential equation (Theorem \ref{prop3}), along with a result on the occupation kernel of Gaussian fields (Theorem \ref{prop6}) and some properties of the continuum one-dimensional dynamics (Proposition \ref{prop20a}).

Let $N\in\N$, $M\in\{2,3,\dots \}$, and $R\in\{3,4,\dots \}$, and let $V=\mcl S$ or $V=\mcl \R$. The solution of the differential equation we will define just below is a function $Y=(Y_1,\dots,Y_M)$, $Y_m:V^N\times \R_+\to \BB R$, which is the continuum analog of the function $Y^w$ defined by \eqref{eq24} and \eqref{eq60}. Define the plurality function $p:\BB R^M\mapsto\{0,1,...,M\}$ by 
\eqbn 
p(y)=
\left\{
  \begin{array}{ll}
    m &\text{ if   } y_m>\underset{m'\in\{1,...,M\}\backslash\{m\}}{\max}y_{m'},\\
    0 &\text{ if there is no } m \text{ for which }  y_m>\underset{m'\in\{1,...,M\}\backslash\{m\}}{\max}y_{m'}.
  \end{array}
  \right.
\eqen
Letting $\mcl N\subset (-1,1)^N$ be as in Section \ref{sec:intro1} define the neighborhood of $x\in V^N$ by
\eqbn
\mcl N(x) = \{x'\in V^N\,:\,x'-x\in\mcl N\},
\eqen
where we view $x'-x$ modulo addition of an element in $R\Z^N$ if $V=\mcl S$.
Consider the following differential equation
\eqb
\frac{\partial Y_m}{\partial t}(x,t) = \int_{x'\in\mcl N(x)} (1-M^{-1}) \bd 1_{p(Y(x',t))=m}-M^{-1}\bd 1_{p(Y(x',t))\neq m}\,dx'
\qquad m\in\{1,\dots,M\},\,x\in V^N,\,t\geq 0.
\label{eq1}
\eqe
We will prove in Proposition \ref{prop18} that this differential equation approximates the early phase of the Schelling dynamics well for large window size. 

An $(N,M)$-random field is a random map from $\R^N$ to $\R^M$. Let $B$ be a multivariate Gaussian $(N,M)$-random field (see Section~\ref{sec:intro2}) on the probability space $(\Omega,\mcl F,\P)$ with mean and covariance functions given by $\frk m_m(x)=0$ and
\eqb
\frk C_{m,m'}(x,x') = 
\left\{
  \begin{array}{ll}
  	\frac{M-1}{M^2}\lambda\big(\mcl N(x)\cap\mcl N(x')\big) &\text{ for } m=m',\\
    \frac{-1}{M^2} \lambda\big(\mcl N(x)\cap\mcl N(x')\big)  &\text{ for } m\neq m'.
  \end{array}
  \right.
 \label{eq20}
\eqe
We prove in Lemma \ref{prop19} that $B$ is well-defined as a continuous field.  Although we will not need this formulation, we remark that one way to construct $B$ involves starting with $W= (W_1, W_2, \ldots, W_M)$, where the $W_i$ are i.i.d.\ instances of white noise (each rescaled by $1/\sqrt{M}$) on $N$-dimensional space, and then writing 
$$\wt W_i = W_i - \frac{1}{M} \sum_{i=1}^M W_i,$$ 
so that the $\wt W_i$ sum up to zero a.s.\ and each $\wt W_i$ describes (in a limiting sense) the ``surplus'' of individuals with opinion $i$. We can then let $B(x)$ denote the integral of $\wt W/\sqrt{2}$ over the set $\mcl N(x)$.

Let the initial data of \eqref{eq1} be given by $B$
\eqb
Y(x,0) = B(x),\qquad\forall x\in V^N.
\label{eq2}
\eqe
The following theorem will be proved in Section \ref{sec:welldef}.
\begin{thm}[Existence and uniqueness for \eqref{eq1}, \eqref{eq2}]
Let $V=\mcl S$ and $N\in\N$, or let $V=\R$ and $N=1$. Let $M\in\{2,3,\dots \}$ and $R\in\{3,4,\dots \}$, and let $\mcl N\subset(-1,1)^N$ be as defined in Section \ref{sec:intro1}. Then the initial value problem \eqref{eq1}, \eqref{eq2} a.s.\ has a solution $Y:V^N\times [0,\infty)\to \BB R^M$. This solution can be written as the sum of the function $(x,t)\mapsto B(x)$ and a function $y:V^N\times[0,\infty)\to \BB R^M$ satisfying the following properties. For any $t>0$, we have $y(\cdot,t)\in\mcl L^{t}_M(V^N)$, so that in particular $y(\cdot, 0) = 0$, and $y$ is continuously differentiable in $t$. The solution $Y$ just described is unique in the space of functions satisfying these properties. 
\label{prop3}
\end{thm}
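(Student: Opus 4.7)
The plan is to substitute $Y = B + y$ in \eqref{eq1} and recast the problem as the fixed-point equation
$$y_m(x,t) = \int_0^t F_m(B+y)(x,s)\,ds, \qquad F_m(Z)(x,s) := \int_{\mcl N(x)}\!\bigl[(1-M^{-1})\bd 1_{p(Z(x',s))=m} - M^{-1}\bd 1_{p(Z(x',s))\neq m}\bigr]dx',$$
to be solved in the Lipschitz space $\mcl L^t_M(V^N)$. The a priori regularity is almost automatic: the integrand of $F_m$ is bounded in absolute value by $1-M^{-1}<1$, so $\|F_m(Z)(\cdot,s)\|_\infty \leq \lambda(\mcl N)\leq 2^N$, and $|F_m(Z)(x,s) - F_m(Z)(x',s)|$ is controlled by $\lambda(\mcl N(x)\triangle\mcl N(x'))$, which is $O(|x-x'|)$ thanks to $\mcl N\subset(-1,1)^N$ and the upper Minkowski bound on $\partial\mcl N$. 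Integrating in time from $0$ to $t$, any solution $y$ of the fixed-point equation automatically satisfies $y(\cdot,s)\in\mcl L^s_M(V^N)$ for all $s\leq t$, which is exactly the regularity assertion of the theorem.

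The main obstacle is upgrading these bounds to an $L^\infty$-Lipschitz estimate so as to run a Picard-Lindel\"of argument. The plurality $p$ is discontinuous, so $F$ is not pointwise Lipschitz in $Y$; however, if $y,\wt y\in\mcl L^t_M(V^N)$ and $\delta(s) := \|y(\cdot,s)-\wt y(\cdot,s)\|_\infty$, the plurality can disagree at $x'$ only on the set
$$E_\delta(s) := \bigcup_{i\neq j}\bigl\{x'\in V^N : |(B_i-B_j)(x') + (y_i-y_j)(x',s)| \leq 2\delta(s)\bigr\},$$
giving $|F_m(B+y)(x,s) - F_m(B+\wt y)(x,s)| \leq \lambda(\mcl N(x)\cap E_\delta(s))$. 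Each difference $B_i - B_j$ is (up to a constant rescaling) a Gaussian field of the type covered by Theorem~\ref{prop6}, while $(y_i-y_j)(\cdot,s)$ is Lipschitz with a norm controlled by $t$. Theorem~\ref{prop6} then supplies an a.s.\ finite constant $C$, depending only on the ambient Lipschitz bound, for which $\lambda(\mcl N(x)\cap E_\delta(s)) \leq C\delta(s)$ uniformly in $x$. This is the crux of the argument: without such a Lipschitz-uniform occupation-density bound the discontinuity of $p$ would completely block a Gronwall estimate.

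Plugging the bound into the integral equation gives $\|(Ty)(\cdot,t) - (T\wt y)(\cdot,t)\|_\infty \preceq \int_0^t \delta(s)\,ds$, so for $t$ sufficiently small the map $T: y\mapsto \int_0^{\cdot} F(B+y)(x,s)\,ds$ is a contraction on $\mcl L^t_M(V^N)$, and Banach's fixed point theorem yields existence and uniqueness on a short time interval. Iterating the short-time argument, using that $y(\cdot,s)$ remains Lipschitz with a constant growing only linearly in $s$ (so the extra Lipschitz function that must be absorbed into the perturbation $\phi$ each time we restart stays bounded) and that the Gaussian part of the data is unchanged, produces a unique global solution. Continuous differentiability in $t$ is then read off directly from the fixed-point equation once one verifies that $s\mapsto F(B+y)(x,s)$ is continuous, which follows from the same occupation-density estimate applied to $y(\cdot,s)$ and $y(\cdot,s')$ at nearby times.
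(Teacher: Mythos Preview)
Your argument on the torus $V=\mcl S$ is essentially the paper's: both reduce to showing that $y\mapsto\phi^\omega(y)$ (your $F(B+\cdot)$) is $L^\infty$-Lipschitz on $\mcl L^K_M(\mcl S^N)$, and both obtain this from Theorem~\ref{prop6} via the occupation-time identity $\lambda\{|B_i-B_j-f|\le\delta\}=\int_{-\delta}^\delta\wt\alpha(f+a)\,da\le 2\delta\sup\wt\alpha$. The paper packages the ODE step as an abstract Picard--Lindel\"of theorem (Theorem~\ref{prop4}) rather than a direct contraction, but the content is identical, including the iteration to global time using $y(\cdot,s)\in\mcl L^s_M$ so that on $[0,T]$ one works in a single $\mcl L^K_M$ with $K>T$.

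There is, however, a genuine gap in your treatment of $V=\R$, $N=1$. Theorem~\ref{prop6} is stated and proved only on the compact torus $\mcl S^N$, whereas your contraction estimate requires
\[
\sup_{x\in\R}\,\lambda\bigl(\mcl N(x)\cap\{|B_i-B_j-f|\le\delta\}\bigr)\le C\,\delta
\]
uniformly over $x\in\R$ and over all $f$ in a fixed Lipschitz ball. On $\R$ the field $B_i-B_j$ is stationary and noncompact, and there is no reason the occupation-kernel supremum over all translates of $\mcl N$ should be a.s.\ finite; the paper never claims this, and its proof of (III) in Theorem~\ref{prop6} uses compactness of $\mcl L^K(\mcl S^N)$. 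The paper instead handles $V=\R$ by a separate localization: a.s.\ one can find, on either side of any bounded interval, sub-intervals of length $>1$ on which $x\mapsto p(B(x))$ is constant; since such intervals decouple the dynamics (Lemma~\ref{prop21c}), the problem on the region between them can be transplanted onto a large torus, solved there using the $\mcl S$ case, and the solutions patched together. You should either supply this reduction or else justify a uniform-in-$x$ version of Theorem~\ref{prop6} on $\R$.
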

Note that we have not proved that the initial value problem \eqref{eq1}, \eqref{eq2} is well-defined for $N\geq 2$ and $V=\R$, but we believe that the above theorem also holds in this case. As we will discuss in Section \ref{sec:welldef} there exist initial data for which \eqref{eq1} does not have a unique solution (also when $N=1$ and/or $V=\mcl S$), and solutions of \eqref{eq1} do not in general vary continuously with the initial data.

For any $\omega\in\Omega$ and $m\in\{1,...,M\}$ define the random function $\phi_m^\omega: \mcl C_M(V^N) \rightarrow \mcl L^{1}(V^N)$ by
\eqb
\phi_m^\omega(y):x \mapsto 
\int_{x'\in\mcl N(x)} 
(1-M^{-1})\bd 1_{p(B(x')+y(x'))=m}
-M^{-1}\bd 1_{p(B(x')+y(x'))\neq m}
\,dx',
\quad\forall x\in V^N,\,y\in\mcl C_M(V^N).
\label{eq8}
\eqe
Also define $\phi^\omega:\mcl C_M(V^N)\to \mcl L^{1}_M(V^N)$ by $\phi^\omega=(\phi_1^\omega,...,\phi_M^\omega)$. Note that solving \eqref{eq1}, \eqref{eq2} is equivalent to solving the following initial value problem
\eqb
\begin{split}
\frac{\partial y}{\partial t}(\cdot,t)&= \phi^\omega(y(\cdot,t)),\qquad
t\geq 0,\\
y(x,0)&=0,\qquad x\in V^N.
\end{split}
\label{eq12}
\eqe
We obtain a solution to \eqref{eq1}, \eqref{eq2} by defining $Y(x,t)=y(x,t)+B(x)$ for all $t\geq 0$ and $x\in V^N$. 

We also observe that \eqref{eq1} is equivalent to a single differential equation when $M=2$. Let $\op{sign}:\R\to\{-1,0,1 \}$ denote the sign function, which is defined to be 0 at 0. Defining $\wh Y=Y^1-Y^2$, the initial value problem \eqref{eq1}, \eqref{eq2} is equivalent to
\eqb
\begin{split}
	\frac{\partial \wh Y}{\partial t}(x,t) &= \int_{x'\in\mcl N(x)} \op{sign} \wh Y(x',t)\,dx',\qquad\forall x\in V^N,\,t\geq 0,\\
	\wh Y(x,0)&=\wh B(x),
\end{split}
\label{eq71}
\eqe
where $\wh B$ is the centered Gaussian field with covariances $\op{Cov}(\wh B(x),\wh B(x'))=\lambda(\mcl N(x)\cap\mcl N(x'))$.

Finally, we will briefly state the analog of \eqref{eq1}, \eqref{eq2} for the setting of the pair-swapping variant of the Schelling model. See the end of Section \ref{sec:mainres} for the definition of this model. In this variant of the model the initial data for the continuum approximation $Y:\mcl S^N\times\R_+\to \R^M$ to $Y^w:\mcl S^N\times\R_+\to\R^M$ is still given by \eqref{eq2}, while the differential equation describing the evolution of $Y$ is given by 
\eqb
\begin{split}
	&\frac{\partial Y_m}{\partial t}(x,t) = \int_{x'\in\mcl N(x)} 
	(\lambda(S^N)-\Lambda_m(t))\bd 1_{p(Y(x',t))=m}
	-
	\Lambda_m(t)\bd 1_{p(Y(x',t'))\neq m}
	\,dx',\\
	&m\in\{1,\dots,M\},\,\,\,
	x\in \mcl S^N,\,\,\,
	t\geq 0,\,\,\,
	\Lambda_m(t):=\lambda(\{ x\in \mcl S^N\,:\, p(Y(x,t))= m \}).
\end{split}
\label{eq69}
\eqe 
The main difference between \eqref{eq1} and \eqref{eq69} is that the rate at which $Y$ changes in \eqref{eq69} depends on the {\em overall} fraction $\Lambda_m(t)$ of points with the various biases. Also observe that the integral of $Y_m(\cdot,t)$ is constant in time, which is consistent with the fact that the number of nodes with opinion $m$ is constant. Theorems \ref{thm2} and \ref{prop3} (for the model on the torus) also hold for the pair-swapping Schelling model, and are proved exactly as before. Notice in particular that any solution of \eqref{eq69}, \eqref{eq2} can be written on the same form as the function $Y$ in Theorem \ref{prop3} (except that $\mcl L_M^t(V^N)$ is replaced by $\mcl L_M^{Ct}(V^N)$ for some constant $C>1$), and since $y(\cdot,t)\in\mcl L_M^{Ct}(V^N)$ we can still apply Theorem \ref{prop6} in this setting. The initial value problem \eqref{eq69}, \eqref{eq2} also describes the situation where each node is unoccupied with constant probability in the initial configuration, and individuals (who have a fixed type or opinion) may move to an unoccupied node if this would make them satisfied.

\subsection{Occupation measures of random fields: basic definitions}
\label{sec:intro2}
We now give a short introduction to the theory of occupation measures of random fields, which is used frequently in our study of the continuum Schelling model. We refer to \cite{gh80} for further information.

Let $N,M\in\N$. An $(N,M)$-random field on a probability space $(\Omega,\mcl F,\BB P)$ is a collection $B$ of random variables with values in $\R^M$, which are indexed by an $N$-dimensional vector space $\wh{V}$, i.e.,\ $B=\{ B(x)\,:\,x\in \wh{V}\}$ or $B=(B(x))_{x\in \wh{V}}$. If $M=1$ we say that the field is an $N$-random field. In the remainder of the paper we consider $\wh V=V^N$ for ${V}=\BB R$ or ${V}=\mcl S$, where $S$ is a one-dimensional torus. 

The field $B$ is Gaussian if $M=1$ and if, for every $k\in\BB N$ and $x^1,...,x^k\in V^N$, the random variable $( B(x^1),..., B(x^k))$ is multivariate Gaussian. We say that $B=(B_1,...,B_M)$ is a multivariate $(N,M)$-Gaussian field if, for every $\alpha\in\BB R^M$, the weighted sum $\sum_{m=1}^M \alpha_m B_m$ is a real-valued Gaussian field. By e.g.\ \cite{adlerbook}, a multivariate Gaussian field is uniquely determined by its mean $\frk m=(\frk m_1,...,\frk m_M)$ and covariance matrix ${\frk C}=({\frk C}_{m,m'})_{m,m'\in\{1,...,M\}}$, which satisfy the following relations with $^t$ denoting the transpose of a matrix
\eqbn
\frk m(x) = \BB E[B(x)],\,\,\,\,\,\,
\frk C(x,x') = \BB E\big[( B(x)-\frk m(x))^t( B(x')-\frk m(x'))\big],\qquad x,x'\in\wh V.
\eqen

White noise on $V^N$ for $V=\mcl S$ or $V=\R$ is a collection of random variables $W=(W(X)\,:\,X\in\mcl B(V^N))$ such that (i) $W(X)\sim \mcl N(0,\lambda(X))$ for any $X\in\mcl B(V^N)$, (ii) $W(X_1\cup X_2)=W(X_1)+W(X_2)$ if $X_1,X_2\in\mcl B(V^N)$ and $X_1\cap X_2=\emptyset$, and (iii) $W(X_1)$ and $W(X_2)$ are independent if $X_1,X_2\in\mcl B(V^N)$ and $X_1\cap X_2=\emptyset$. Note that for a fixed set $X\in\mcl B(V^N)$ we can define an $N$-random field $\wh B=\{\wh B(x)\,:\,x\in V^N\}$ by defining $\wh B(x) := W(x+X)$, where $x+X=\{x+x'\,:\,x'\in X\}$ for any $x\in V^N$. We call a field that can be written on this form a moving average Gaussian field. 

The following definition is from \cite[Section 21]{gh80}. See \cite[Theorem 6.3]{gh80} for a proof that the occupation kernel $\alpha$ described below is well-defined when $\mu_X$ is a.s.\ absolutely continuous with respect to Lebesgue measure $\lambda$.

\begin{defn}[Occupation measure and occupation kernel]
	Let $N,M\in\N$, let $V=\mcl S$ or $V=\BB R$, and consider an $(N,M)$-random field $ B=( B_1(x),..., B_M(x))_{x\in V^N}$ on the probability space $(\Omega,\mcl F,\BB P)$. 
	\begin{itemize}
		\item The \emph{occupation measure} $\mu=(\mu_X)_{X\in\mcl B(V^N)}$ is defined by 
		\eqbn
		\mu_X(A) := \lambda(X\cap  B^{-1}(A)),\qquad A\in \mcl B(\BB R^M).
		\eqen
		\item If $\mu_X$ is a.s.\ absolutely continuous with respect to Lebesgue measure $\lambda$ for all $X\in {\mcl B}(V^N)$, let ${\alpha}(a,X)$ denote the Radon-Nikodym derivative of $\mu_X$ with respect to $\lambda$, i.e.,
		\eqb
		\lambda(X\cap B^{-1}(A)) = \int_A \alpha(a,X)\,da,\qquad A\in \mcl B(\BB R^M). 
		\label{eq19}
		\eqe
		Let $\alpha$ be chosen such that $\alpha(\cdot,X)$ is measurable for each fixed $X\in\mcl B(V^N)$, and $\alpha(a,\cdot)$ is a $\sigma$-finite measure on $(V^N,\mcl B(V^N))$ for each $a\in\BB R^M$. We call $\alpha$ the \emph{occupation kernel} of $B$.
		\item For $f:V^N\to\R^M$ let $\alpha(f,\cdot,\cdot)$ denote the occupation kernel (provided it exists) of the field $B-f$.
		\item More generally, for $k\in\{1,...,N-1\}$, $x'=(x'_1,\dots,x'_k)\in V^k$, and $f:V^N\rta\BB R^M$, let $\alpha(x',f,\cdot,\cdot)$ denote the occupation kernel (provided it exists) of the $(N-k,M)$-random field
		\eqbn \big(B(x_1,\dots,x_{N-k},x'_1,\dots,x'_k)-f(x_1,\dots,x_{N-k},x'_1,\dots,x'_k)\big)_{ (x_1,\dots,x_{N-k})\in V^{N-k} }.
		\eqen
	\end{itemize}
	\label{def2}
\end{defn}

\subsection{Supremum of the occupation kernel on Lipschitz functions for moving average Gaussian fields}
\label{sec:suploctime}
In \cite{bb01,bb02}  the authors prove that the supremum on Lipschitz curves of Brownian local time is finite. In this section we will prove a higher-dimensional analog of this result, stated in Theorem \ref{prop6} below. We consider the supremum on Lipschitz functions of the occupation kernel of a particular centered moving average Gaussian field $B$. 

The idea of the proof is to define various $(N+1)$-dimensional boxes on different scales, and bound the number of such boxes intersected by both the graph of $B$ and a Lipschitz function $f$, uniformly over all choices of $f$. On each scale we proceed by using that $f$ is approximately constant, while $B$ fluctuates rapidly. We also prove that if $f$ and $g$ are uniformly close then the occupation kernel of $B$ on $f$ and $g$, respectively, are close with high probability. We start the section by proving existence and basic properties of the occupation kernel of $B$ on any fixed Lipschitz function. 

Theorem \ref{prop6} will imply that for any solution $Y$ of \eqref{eq1}, \eqref{eq2} on $\mcl S^N$ and any $m,m'\in\{1,...,M\}$, $m\neq m'$, the field $Y_m-Y_{m'}$ is close to 0 only for a small subset of $\mcl S^N$ simultaneously. This will help us to prove existence, uniqueness and other properties of solutions to \eqref{eq1}, \eqref{eq2}. Since $B_m-B_{m'}$ has the law of a constant multiple of $B_m$, it will be sufficient to obtain our result for the following real-valued field $B$.
\begin{remark}
We will assume throughout the section that $B=(B(x))_{x\in \mcl S^N}$ is the centered moving average Gaussian field with covariances given by
\eqb
\frk C(x,x') = \lambda\big(\mcl N(x)\cap\mcl N(x')\big),\qquad
x,x'\in\mcl S^N.
\label{eq68}
\eqe
\end{remark}
Observe that $B$ is a moving average Gaussian field as defined in Section \ref{sec:intro2}. The proof of the following theorem uses ideas from \cite{bb01,bb02}. See Section \ref{sec:intro2} for the definition and basic properties of the occupation measure of random fields.
\begin{thm} 
Let $B=(B(x))_{x\in\mcl S^N}$ be the centered Gaussian field with covariances given by \eqref{eq68}, and let $K>0$. For each fixed $f\in \mcl L^K(\mcl S^N)$ the occupation kernel $\alpha(f,\cdot,\cdot)$ of $B-f$ exists almost surely. Furthermore, almost surely there exists a random field $\wt \alpha=\{\wt \alpha(f)\,:\,f\in\mcl L^K(\mcl S^N)\}$ satisfying the following properties.
\begin{enumerate}
\item[(I)] For each fixed $f\in\mcl L^K(\mcl S^N)$, we have $\wt \alpha(f)=\alpha(f,\mcl S^N,0)$ a.s.
\item[(II)] Almost surely, $f\mapsto \wt \alpha(f)$ is continuous on $\mcl L^K(\mcl S^N)$ equipped with the supremum norm.
\item[(III)] Almost surely, $\sup_{f\in \mcl L^K(\mcl S^N)}\wt \alpha(f)<\infty$.
\end{enumerate}
\label{prop6}
\end{thm}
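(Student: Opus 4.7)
The strategy has three parts: existence of the occupation kernel for each fixed Lipschitz function (implying (I)), a uniform-in-$f$ upper bound (implying (III)), and continuity in $f$ (implying (II)). For the first, I note that $B(x)-f(x)$ is Gaussian with variance $\lambda(\mcl N)>0$ at every $x$, so the expected occupation measure $X\mapsto\int_X\P[B(x)-f(x)\in\,\cdot\,]\,dx$ is absolutely continuous with density continuous in the level parameter near $0$. Combined with the H\"older regularity of $B$ — from the identity $\mathrm{Var}(B(x)-B(x'))=\lambda(\mcl N(x)\triangle\mcl N(x'))\preceq\|x-x'\|$ (valid thanks to the hypothesis on $\partial\mcl N$, giving H\"older continuity of every order $<1/2$) — the standard theory from \cite{gh80} yields $\alpha(f,\cdot,\cdot)$ almost surely for each fixed $f\in\mcl L^K(\mcl S^N)$, together with the representation
\[
\alpha(f,\mcl S^N,0)=\lim_{\epsilon\to 0}\frac{1}{2\epsilon}\,\lambda\bigl\{x\in\mcl S^N:|B(x)-f(x)|\le\epsilon\bigr\}.
\]

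The main work is (III). I would follow the Bass--Burdzy strategy of introducing a sequence of dyadic scales $2^{-k}$ and bounding how much of the graph of $B$ lies near the graph of $f$ across multiple scales. Concretely, partition $\mcl S^N$ into dyadic cubes $Q$ of side $2^{-k}$, and for each cube let $N_Q(f,\epsilon):=\lambda\{x\in Q:|B(x)-f(x)|\le\epsilon\}$. On $Q$ the Lipschitz function $f$ varies by at most $CK2^{-k}$, so choosing $\epsilon=\eta_k:=CK2^{-k}$ yields the inclusion
\[
\{x\in Q:|B(x)-f(x)|\le\eta_k\}\subset\{x\in Q:|B(x)-a_Q|\le 2\eta_k\},
\]
where $a_Q:=f(x_Q)$ is the value of $f$ at the center of $Q$. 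Gaussian small-ball and concentration estimates for the smooth field $B$ on a cube of side $2^{-k}$ (using that $B$ fluctuates on the scale $2^{-k/2}$, much larger than $\eta_k$) produce super-polynomial tails $\P[\lambda\{x\in Q:|B(x)-a|\le 2\eta_k\}\ge T\cdot\eta_k\cdot 2^{-kN}]$. A union bound over the $\asymp 2^{kN}$ cubes at scale $k$ and a net of admissible levels $a\in[-K2^N,K2^N]$ of spacing $\eta_k$, followed by Borel--Cantelli, produces a deterministic threshold $T_k$ growing only polylogarithmically in $k$ such that $(2\eta_k)^{-1}\sum_Q N_Q(f,\eta_k)\le T_k$ holds simultaneously for all $f\in\mcl L^K(\mcl S^N)$, almost surely for all sufficiently large $k$. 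Passing to a suitable limit as $k\to\infty$ — where one must verify that the discrete approximations $(2\eta_k)^{-1}\sum_Q N_Q(f,\eta_k)$ dominate $\alpha(f,\mcl S^N,0)$ up to a bounded multiplicative factor — yields (III).

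Property (II) follows because the inclusion $\{x:|B(x)-f(x)|\le\epsilon\}\subset\{x:|B(x)-g(x)|\le\epsilon+\|f-g\|_\infty\}$, combined with the uniform finiteness from (III) applied to $g$ at shifted levels, gives $|\wt\alpha(f)-\wt\alpha(g)|\to 0$ as $\|f-g\|_\infty\to 0$. One then defines $\wt\alpha$ first on a countable dense subset of $\mcl L^K(\mcl S^N)$ (where (I) already holds) and extends by continuity. The main obstacle is property (III), and specifically the uniformity over the uncountable, infinite-dimensional family $\mcl L^K(\mcl S^N)$: this succeeds only because on cubes of side $2^{-k}$ the Lipschitz variation $K2^{-k}$ is dominated by the Gaussian fluctuation scale $2^{-k/2}$, so a single value per cube captures $f$ and the $2^{kN}$-fold union bound is comfortably beaten by Gaussian small-ball concentration. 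The quantitative form of this small-ball estimate in $N$ dimensions, and the verification that the multi-scale telescoping of Bass--Burdzy continues to work when one-dimensional Brownian motion is replaced by the $N$-dimensional moving-average field $B$, constitute the technical content to be adapted from \cite{bb01}.
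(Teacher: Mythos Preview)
Your outline correctly identifies the Bass--Burdzy framework as the right toolkit, and the existence argument for fixed $f$ is fine. However, the core of your approach to (III) contains a genuine gap: the claimed per-cube concentration bound is false.

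You assert super-polynomial tails for
\[
\P\bigl[\lambda\{x\in Q:|B(x)-a|\le 2\eta_k\}\ge T\cdot\eta_k\cdot 2^{-kN}\bigr],
\]
with $T=T_k$ polylogarithmic in $k$. But on a cube $Q$ of side $2^{-k}$, the field $B$ oscillates on scale $2^{-k/2}$, so the event $\{B\text{ visits level }a\text{ in }Q\}$ has probability $\asymp 2^{-k/2}$, and \emph{conditional} on that event the local time $\alpha(a,Q)$ is typically $\asymp 2^{-k(N-1/2)}$, hence $N_Q\asymp \eta_k\cdot 2^{-k(N-1/2)}$, which exceeds your threshold $T\eta_k\cdot 2^{-kN}$ by a factor $\asymp 2^{k/2}/T$. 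Thus the per-cube exceedance probability is $\asymp 2^{-k/2}$, only polynomially small, and the union bound over $2^{kN}$ cubes (and $\asymp 2^{k}$ levels) diverges. The heuristic ``Lipschitz variation $2^{-k}$ is dominated by Gaussian fluctuation $2^{-k/2}$'' is correct but works in the opposite direction: it says the field easily escapes the $\eta_k$-band, so on most cubes $N_Q=0$, while on the $\asymp 2^{k(N-1/2)}$ cubes where $B$ does cross the level, $N_Q$ is much larger than its mean. The occupation is not concentrated cube-by-cube; the smallness of the total comes from the \emph{sparsity of contributing cubes}, which your union bound does not exploit.

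The paper's proof is structured quite differently. It first establishes (Lemma~\ref{prop15} and the iteration for $G_k^1$ in Lemma~\ref{prop16}) that the number of $(N{+}1)$-dimensional dyadic cubes of side $2^{-k}$ meeting both the graph of $B$ and the graph of any $f\in\mcl L^K$ is at most $2^{k(N-1/2+\epsilon)}$; this requires a genuine multi-scale iteration, not a single-scale union bound. Separately (Lemma~\ref{prop10}), a martingale argument along the white-noise filtration gives exponential tails for the \emph{difference} $\alpha(f,I_j)-\alpha(g,I_j)$ on each cube when $\|f-g\|_\infty$ is small, with the right scaling $2^{k(-N+1/4+\epsilon)}$. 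Combining the cube count with the per-cube increment bound and summing gives a telescoping estimate $|\alpha(f^{k+1})-\alpha(f^k)|\preceq 2^{-k(1/4-2\epsilon)}$, uniformly over the countable dense family $\wt{\mcl L}^K$. This yields uniform continuity, hence (II); property (III) then follows from (II) together with compactness of $\mcl L^K(\mcl S^N)$ for the sup norm. In particular, the paper derives (III) \emph{from} (II), whereas you attempt the reverse; your route would require the sharp cube-counting input anyway. Your argument for (II) from (III) via the inclusion $\{|B-f|\le\epsilon\}\subset\{|B-g|\le\epsilon+\|f-g\|_\infty\}$ is also incomplete as stated: dividing by $2\epsilon$ and letting $\epsilon\to 0$ yields an estimate with a factor $\|f-g\|_\infty/\epsilon$, which does not directly give continuity without additional regularity of $a\mapsto\alpha(g,a)$.
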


First we will prove existence and various properties of the occupation kernel for a certain class of Gaussian random fields.

\begin{lem}
Let $K>0$ and $f \in\mcl L^K(\mcl S^N)$. Define $\wt B=B-f$, where $B$ is the centered Gaussian field on $\mcl S^N$ with covariances given by \eqref{eq68}. Then the following holds a.s.
\begin{itemize}
\item[(I)] $\wt B$ has an occupation kernel $\alpha(f,\cdot,\cdot)$.
\item[(II)] For all $k\in\{1,...,N-1\}$ and almost all $x'=(x'_1,...,x'_k)\in\mcl S^k$ the $(N-k)$-field $$x''\mapsto \wt B(x''_1,...,x''_{ N-k},x'_1,...,x'_{k})$$ for $x''=(x''_1,...,x''_{N-k})\in {\mcl S}^{N-k}$, has an occupation kernel $\alpha(x',f,\cdot,\cdot)$. For almost all $a\in\BB R$ the following holds for all $X'\in \mcl B({\mcl S}^k)$ and $X''\in \mcl B({\mcl S}^{N-k})$
\eqb
\alpha(f,a,X''\times X') = \int_{X'} \alpha(x',f,a,X'')\,dx'.
\label{eq13}
\eqe
\end{itemize}
\label{prop14}
\end{lem}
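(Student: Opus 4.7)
The plan is to apply the Berman--Geman--Horowitz criterion for existence of occupation densities of real-valued Gaussian processes, which asserts that if $\iint_{X\times X} \sigma(x,y)^{-1}\,dx\,dy<\infty$, where $\sigma(x,y)^2=\op{Var}(\wt B(x)-\wt B(y))$, then the occupation measure of $\wt B$ on $X$ is a.s.\ absolutely continuous with respect to Lebesgue measure (see e.g.\ \cite[Theorem~22.1]{gh80}). Since the deterministic shift by $f$ does not affect the variance of increments, $\sigma(x,y)^2=\op{Var}(B(x)-B(y))=\lambda(\mcl N(x)\triangle\mcl N(y))$. First I would show that $\sigma(x,y)^2\gtrsim |x-y|$ for $|x-y|$ small: setting $x_0=(y-x)/|y-x|$, the symmetric difference $\mcl N(x)\triangle\mcl N(y)$ contains the "newly covered" region whose Lebesgue measure, by the assumption \eqref{eq65}, is $\gtrsim|y-x|$. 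Combined with the trivial $\sigma(x,y)^2\lesssim 1$ globally, this yields $\iint_{\mcl S^N\times\mcl S^N}\sigma(x,y)^{-1}\,dx\,dy \lesssim \iint |x-y|^{-1/2}\,dx\,dy<\infty$ (the integrand is integrable near the diagonal for every $N\ge1$), proving part (I).

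For part (II), fix $k\in\{1,\dots,N-1\}$ and consider the sliced Gaussian field $\wt B_{x'}(x''):=\wt B(x'',x')$ on $\mcl S^{N-k}$. Its increments have variance $\lambda(\mcl N(x'',x')\triangle\mcl N(y'',x'))$, and by the same argument (applying \eqref{eq65} to a direction $x_0$ lying in the $x''$-slice) this is $\gtrsim |x''-y''|$, so the Berman--Geman--Horowitz criterion again applies. This shows that for each fixed $x'$, the occupation kernel $\alpha(x',f,\cdot,\cdot)$ of $\wt B_{x'}$ exists almost surely. Fubini on the product space $\Omega\times\mcl S^k$ with measure $\P\otimes\lambda$ then upgrades this to: almost surely, for $\lambda$-almost every $x'$, the sliced field admits an occupation kernel. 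The identity \eqref{eq13} now follows by a double application of Fubini: for any Borel $A\subset\R$,
\[
\int_A \alpha(f,a,X''\times X')\,da \;=\; \lambda\!\left((X''\times X')\cap \wt B^{-1}(A)\right) \;=\; \int_{X'}\!\int_A \alpha(x',f,a,X'')\,da\,dx' \;=\; \int_A\!\int_{X'}\alpha(x',f,a,X'')\,dx'\,da,
\]
so the integrands agree for Lebesgue-almost every $a$. To promote this to the stated "almost all $a$, all $X',X''$" quantifier order, I would verify the identity first on a countable generating family (say dyadic rectangles) outside a single null set in $a$, then extend by a monotone class argument, using that both sides of \eqref{eq13} are, for fixed $a$, $\sigma$-finite measures in $(X'',X')$.

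The main obstacle I anticipate is securing the lower bound $\sigma(x,y)^2\gtrsim|x-y|$ \emph{uniformly in the direction} $x_0$; condition \eqref{eq65} supplies a positive liminf for each fixed direction, but the implied constant a priori depends on $x_0$. To obtain a uniform bound I would exploit compactness of the unit sphere together with the assumption that $\partial\mcl N$ has upper Minkowski dimension strictly less than $N$: this lets one cover a thin Minkowski tubular neighborhood of $\partial\mcl N$ by finitely many cylinders transverse to a fixed direction, and a continuity/compactness argument then produces a direction-uniform constant. Once this uniform bound is in hand, both (I) and (II) reduce to routine applications of the Berman criterion and Fubini as sketched above.
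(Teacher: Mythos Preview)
Your approach is correct and essentially matches the paper's. Both arguments reduce to the variance lower bound $\op{Var}(B(x)-B(y))=\lambda(\mcl N(x)\triangle\mcl N(y))\succeq\|x-y\|_1$ obtained from \eqref{eq65}, and then invoke occupation-density criteria from \cite{gh80}. The differences are cosmetic: for part~(I) the paper applies \cite[Theorem~21.12]{gh80} (the one-sided criterion $\liminf_{\ep\to 0}\ep^{-1}\int\P[|\wt B(x')-\wt B(x)|\le\ep]\,dx'<\infty$) rather than the symmetric $\iint\sigma^{-1}$ criterion you cite, but both follow immediately from the same variance bound. For part~(II), the paper does not derive \eqref{eq13} by hand via Fubini and a monotone class argument; it simply cites the ready-made slicing result \cite[Theorem~23.5]{gh80}, after checking the analogous integrability condition on each slice. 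Your derivation is fine but reinvents that theorem. Regarding your flagged obstacle on direction-uniformity of the constant in \eqref{eq65}: the paper does not address this explicitly either---it simply asserts the bound with implicit constant independent of $x,x'$---so your compactness sketch is, if anything, more careful than the paper on this point.
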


\begin{proof}[Proof of Lemma \ref{prop14}]
By \eqref{eq68}, we have Var$(|\wt B(x')-\wt B(x)|)\asymp \lambda\big( \mcl N(x)\Delta \mcl N(x' \big)\big)$, where the implicit constant is independent of $x,x'\in\mcl S^N$ and $\Delta$ denotes symmetric difference. Since $x,x'\in \mcl S^N$, the difference $x-x'$ is only defined as an element in $\R^N$ modulo $R\Z^N$, but we will view $x-x'$ as an element in $\R^N$ by choosing the equivalence class such that $\|x-x'\|_1$ is minimized (with some arbitrary choice of equivalence class in case of draws). 
 By \eqref{eq65} it follows that Var$(|\wt B(x')-\wt B(x)|)\succeq \|x'-x\|_{1}$, where the implicit constant is again independent of $x,x'\in\mcl S^N$, but may depend on all other parameters. Since the probability density function of a standard normal random variable is bounded, for any $x\in\mcl S^N$,
\eqb
\liminf_{\ep\rightarrow 0} \ep^{-1} \int_{\mcl S^N} \BB P[|\wt B(x')-\wt B(x)|\leq\ep]\,dx'
\preceq \int_{\mcl S^N} \|x'-x\|_{1}^{-1/2} \,dx' <\infty.
\label{eq85}
\eqe
By \cite[Theorem 21.12]{gh80} we know that if $\wt B$ is a random field for which the left side of \eqref{eq85} is finite, then $\wt B$ has an occupation kernel a.s. Applying this result concludes our proof of (I).

The same theorem also implies the existence of the occupation kernel $\alpha(x',f,\cdot,\cdot)$ for a.e.\ fixed $x'=(x'_1,...,x'_k)$.
The identity \eqref{eq13} follows by \cite[Theorem 23.5]{gh80}, since for any $k\in\{1,...,N-1 \}$ and $x=(x''_1,\dots,x''_{N-k},x'_1,\dots,x'_k)\in\mcl S^N$, $x''=(x''_1,\dots,x''_{N-k})\in\mcl S^{N-k},x'=(x'_1,\dots,x'_k)\in\mcl S^{k}$,
\eqbn
\begin{split}
	\int_{\mcl S^{N-k}} \sup_{\ep>0} \ep^{-1} 
	\BB P[|\wt B((\wt x_1,\dots,\wt x_{N-k},x'_1,\dots,x'_k))
	-
	\wt B((x''_1,\dots,x''_{N-k},&x'_1,\dots,x'_k))|\leq\ep]
	\,d\wt x\\
	&\preceq 
	\int_{\mcl S^{N-k}} \|\wt x-x''\|_{1}^{-1/2} \,d\wt x<\infty.
\end{split}
\eqen
\end{proof}

For $K>1$ let $\mcl L^{K,k}(\mcl S^N)$ denote the space of functions $f\in\mcl L(\mcl S^N)$ that satisfy the following properties: 
(i) for each $x\in 2^{-k}\Z^N$, $f(x)\in 2^{-k}\Z$,
(ii) for each  $x\not\in 2^{-k}\Z^N$, $f(x)$ is a weighted average of $f$ at the $\leq 2^N$ points $\wt x\in 2^{-k}\Z^N$ which satisfy $\|x-\wt x\|_{\infty}<2^{-k}$, where the weights are defined as in \eqref{eq60},
(iii) $\|f\|_{L^\infty(\mcl S^N)}\leq K2^N+2^{-k}$, and
(iv) if $x,x'\in 2^{-k}\Z^N$ satisfy $\|x-x'\|_{1}=2^{-k}$ then $|f(x)-f(x')|\leq K2^{N-k-1}+2^{2-k}$. Define 
\eqb
\wt{\mcl L}^K(\mcl S^N)=\bigcup_{k\in\BB N} \mcl L^{K,k}(\mcl S^N).
\label{eq70}
\eqe
The following lemma implies that $\wt{\mcl L}^K(\mcl S^N)$ is dense in $\mcl L^K(\mcl S^N)\cup\wt{\mcl L}^K(\mcl S^N)$ for the supremum norm.
\begin{lem} 
For any $f\in \mcl L^K(\mcl S^N)$ and $k\in\BB N$ there is a canonically defined element $f^k\in \mcl L^{K,k}(\mcl S^N)$ such that $\|f-f^k\|_{L^\infty}\leq 2^{-k}$, where the implicit constant is independent of $k$ and $f$, but may depend on $K$ and $k$. 
\label{prop9}
\end{lem}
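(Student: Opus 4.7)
The plan is to build $f^k$ canonically from $f$ by first rounding the values at lattice points to multiples of $2^{-k}$ and then extending by multilinear interpolation. Explicitly, for $x \in 2^{-k}\Z^N$ I would set
\[
f^k(x) := 2^{-k}\bigl\lfloor 2^k f(x) + \tfrac{1}{2}\bigr\rfloor,
\]
which lies in $2^{-k}\Z$ and satisfies $|f^k(x) - f(x)| \leq 2^{-k}/2$; for $x \notin 2^{-k}\Z^N$ I would extend $f^k$ using formula~\eqref{eq60} with $w$ replaced by $2^k$. Conditions (i) and (ii) in the definition of $\mcl L^{K,k}(\mcl S^N)$ then hold by construction.

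To verify the remaining conditions and the approximation bound, I would first record two standard facts. The weights $\prod_{j=1}^N(1 - 2^k|x_j - \wt x_j|)$ in \eqref{eq60} are nonnegative (since $\|x - \wt x\|_\infty < 2^{-k}$) and sum to $1$ over the $\leq 2^N$ lattice corners surrounding $x$, by the telescoping identity $\prod_j\bigl((1 - 2^k|x_j - \wt x_j^{(0)}|) + 2^k|x_j - \wt x_j^{(0)}|\bigr) = 1$; and the coordinatewise Lipschitz bound on $f$ gives $|f(x) - f(\wt x)| \leq K2^{N-1}\|x - \wt x\|_1$ via an $N$-step telescope along axis directions. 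The first fact shows that $f^k(x)$ at a non-lattice point is a convex combination of lattice values, yielding $\|f^k\|_{L^\infty} \leq \|f\|_{L^\infty} + 2^{-k}/2 \leq K2^N + 2^{-k}$, which is property~(iii). For property~(iv), lattice neighbors $x,x'$ at $\ell^1$-distance $2^{-k}$ satisfy $|f(x) - f(x')| \leq K2^{N-k-1}$, and adding the two rounding errors gives $|f^k(x) - f^k(x')| \leq K2^{N-k-1} + 2^{-k} \leq K2^{N-k-1} + 2^{2-k}$. For the approximation bound, I would write
\[
f(x) - f^k(x) = \sum_{\wt x}\Bigl(\prod_{j=1}^N(1 - 2^k|x_j - \wt x_j|)\Bigr)\bigl((f(x) - f(\wt x)) + (f(\wt x) - f^k(\wt x))\bigr)
\]
and bound each summand using $\|x - \wt x\|_1 \leq N2^{-k}$ and the rounding estimate, obtaining $\|f - f^k\|_{L^\infty} \leq KN2^{N-1-k} + 2^{-k}/2 \preceq 2^{-k}$ with an implicit constant depending only on $K$ and $N$.

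The proof is essentially bookkeeping with triangle inequalities and the algebra of multilinear interpolation, and I do not foresee a genuine obstacle. The mildly delicate aspect is that the Lipschitz bound in the definition of $\mcl L^K(\mcl S^N)$ is coordinatewise rather than in $\ell^2$ norm, so one must use the $\ell^1$ diameter of a lattice cell (picking up a factor of $N$); this is comfortably absorbed by the generous slack $2^{2-k}$ in condition~(iv) and the $O(2^{-k})$ target for the approximation bound.
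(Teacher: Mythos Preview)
Your construction is exactly the one in the paper: round $f$ to the nearest multiple of $2^{-k}$ at lattice points and extend by the multilinear interpolation formula~\eqref{eq60}. The paper simply declares the verification of (i)--(iv) and the $L^\infty$ bound ``immediate,'' whereas you have supplied the details carefully and correctly; in particular your observation that the implicit constant depends on $K$ and $N$ is the right reading of the (slightly garbled) statement.
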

\begin{proof}
For each $x\in 2^{-k}\Z^N$ let $f^k(x)$ be the multiple of $2^{-k}$ which is closest to $f(x)$. For $x\not\in 2^{-k}\Z^N$ let $f^k(0)$ be defined such that condition (ii) in the definition of ${\mcl L}^{K,k}(\mcl S^N)$ is satisfied. It is immediate that the properties (i)-(iv) in the definition of ${\mcl L}^{K,k}(\mcl S^N)$ are satisfied.
\end{proof}

By \eqref{eq68} we may couple $B$ with an instance of white noise $W$ on $\mcl S^N$ such that for any $x\in\mcl S$, we have $B(x)=W(\mcl N(x))$. Define a filtration $(\mcl F_t)_{t\in[0,R-2]}$ by
\eqb
\mcl F_t=\sigma( W|_{\mcl B(D_t)} ),\qquad
D_t := \bigcup_{s\in[0,t],\, (x_2,\dots,x_N)\in\mcl S^{N-1}} \mcl N( (s,x_2,\dots,x_N) ).
\label{eq63}
\eqe
Let $0\leq s<t<R-2$ and $(x_2,\dots,x_N)\in\R^{N-1}$. Conditioned on $\mcl F_s$ the random variable $B(t,x_2,\dots,x_N)$ is a Gaussian random variable with expectation $W(\mcl N((t,x_2,\dots,x_N))\cap D_s)$ and variance depending only on $t-s$. Therefore there exists a function $p:(0,R-2)\times\R\times\R$ such that $p(t-s,W(\mcl N((t,x_2,\dots,x_N))\cap D_s),\cdot)$ is the probability density function of $B(t,x_2,\dots,x_N)$ conditioned on $\mcl F_s$. By \eqref{eq68}, and with $x=(t,0,\dots,0)$ and $\wt p(s,a,y):= (2\pi s)^{-1/2} \exp(-|y-a|^2/(2 s))$,
\eqb
p(t,a,y)= \wt p(\lambda(
\mcl N(x)\setminus D_0)
,a,y).
\label{eq56}
\eqe

\begin{lem}
	Let $p$ be given by \eqref{eq56}. For $t\in(0,R-2)$ and $a,y\in\R$,
	\eqb
	p(t,a,y) \preceq t^{-1/2},
	\label{eq58}
	\eqe
	where the implicit constant is independent of $t,a,y$, but may depend on $\mcl N$. Let $\delta\in(0,1/2)$, $a\in\R$, and assume $f,g:[0,R-2]\to\R$ satisfy $\|f-g\|_{L^\infty([0,R-2])}<\delta$. Then
	\eqb
	\int_{0}^{R-2} |p(t,a,f(t)) -p(t,a,g(t))|\,dt \preceq \delta\log(1/\delta),
	\label{eq59} 
	\eqe
	where the implicit constant may depend on $\mcl N$.
	\label{prop25}
\end{lem}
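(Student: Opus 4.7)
The plan is to reduce both bounds to an understanding of $\sigma(t) := \lambda(\mcl N(x) \setminus D_0)$ with $x = (t,0,\ldots,0)$, which controls the density via \eqref{eq56}: $p(t,a,y) = (2\pi\sigma(t))^{-1/2}\exp(-(y-a)^2/(2\sigma(t)))$. The crucial preliminary estimate I would establish is the uniform lower bound $\sigma(t) \succeq t$ for $t \in (0,R-2)$. To prove this, first observe that $\sigma$ is monotone nondecreasing: writing $x_0 = (1,0,\ldots,0)$ and $U = \bigcup_{s \leq 0}(\mcl N + sx_0)$, translation by $-tx_0$ gives $\sigma(t) = \lambda(\mcl N) - \lambda(\mcl N \cap (U - tx_0))$, and since $U - tx_0$ shrinks set-theoretically as $t$ grows, the second term is nonincreasing. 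Condition \eqref{eq65} is precisely the statement $\liminf_{t\to 0^+}\sigma(t)/t > 0$, so there exist $t_0 > 0$ and $c > 0$ with $\sigma(t_0) \geq c t_0$; monotonicity then gives $\sigma(t) \geq c t_0$ for $t \geq t_0$, and since $t \leq R-2$ is bounded, this combines with the small-$t$ behavior to yield $\sigma(t) \succeq t$ on all of $(0,R-2)$.

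Part (I) is immediate: $p(t,a,y) \leq (2\pi\sigma(t))^{-1/2} \preceq t^{-1/2}$. For part (II) I would combine two bounds on the integrand. The trivial bound gives $|p(t,a,f(t)) - p(t,a,g(t))| \leq 2\sup_y p(t,a,y) \preceq t^{-1/2}$. For a sharper Lipschitz-in-$y$ bound, I differentiate to get $\partial_y p(t,a,y) = -(y-a)\sigma(t)^{-1} p(t,a,y)$, so
\[
|\partial_y p(t,a,y)| = \sigma(t)^{-1}\cdot\left|\frac{y-a}{\sqrt{\sigma(t)}}\right|\cdot (2\pi)^{-1/2}\exp\!\left(-\tfrac{(y-a)^2}{2\sigma(t)}\right) \preceq \sigma(t)^{-1} \preceq t^{-1},
\]
since $u \mapsto u e^{-u^2/2}$ is bounded on $[0,\infty)$. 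The mean value theorem then yields $|p(t,a,f(t)) - p(t,a,g(t))| \preceq \delta\, t^{-1}$. Splitting the integral at the threshold $t = \delta^2$ and applying whichever bound is sharper:
\[
\int_0^{R-2} |p(t,a,f(t)) - p(t,a,g(t))|\,dt \preceq \int_0^{\delta^2} t^{-1/2}\,dt + \delta\int_{\delta^2}^{R-2} t^{-1}\,dt \preceq \delta + \delta\log(1/\delta) \preceq \delta\log(1/\delta),
\]
which is the desired bound \eqref{eq59}.

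The main obstacle, modest though it is, lies in the uniform lower bound $\sigma(t) \succeq t$: \eqref{eq65} only gives information as $t \to 0^+$, so without the monotonicity observation one would need a more delicate geometric analysis of how $\mcl N + tx_0$ overlaps with $U$ for intermediate $t$. Once that bound is in place, the Gaussian computations are routine, and the choice of splitting point $\delta^2$ is forced by balancing $\int_0^\tau t^{-1/2}dt \asymp \sqrt{\tau}$ against $\delta\int_\tau^1 t^{-1}dt \asymp \delta\log(1/\tau)$, which are comparable exactly when $\tau \asymp \delta^2$.
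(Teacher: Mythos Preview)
Your argument mirrors the paper's: for \eqref{eq58} both reduce to a lower bound $\sigma(t)\succeq t$ on the conditional variance and invoke \eqref{eq65}, and for \eqref{eq59} the paper simply cites \cite[Lemma~3.4]{bb01} and says ``the exact same argument'' works --- your explicit computation (bound $|p|\preceq t^{-1/2}$, bound $|\partial_y p|\preceq\sigma(t)^{-1}\preceq t^{-1}$, split the integral at $\delta^2$) is precisely that argument. The monotonicity device you use to pass from small $t$ to all of $(0,R-2)$ is a tidy alternative to the paper's appeal to continuity and the crude bound $p\preceq 1$ for $t>2$.

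One step is not quite right, though it is a gap you share with the paper. You assert that \eqref{eq65} ``is precisely the statement $\liminf_{t\to 0^+}\sigma(t)/t>0$'', which amounts to identifying $D_0$ with your $U=\bigcup_{s\le 0}(\mcl N+sx_0)$; your displayed formula $\sigma(t)=\lambda(\mcl N)-\lambda(\mcl N\cap(U-tx_0))$ is really a formula for $\lambda((\mcl N+tx_0)\setminus U)$, not for $\sigma(t)=\lambda((\mcl N+tx_0)\setminus D_0)$. For $N\ge 2$ the set $D_0=\op{proj}_1(\mcl N)\times\mcl S^{N-1}$ is a slab, and for connected $\mcl N$ one checks $(\mcl N+tx_0)\cap U\subset(\mcl N+tx_0)\cap D_0$, so $\sigma(t)$ lies \emph{below} the quantity in \eqref{eq65} and \eqref{eq65} does not bound it from below. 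Concretely, for the Euclidean ball $\mcl N=\mcl N_2$ with $N\ge 2$, the set $(\mcl N+tx_0)\setminus D_0$ is a spherical cap of height $t$, giving $\sigma(t)\asymp t^{(N+1)/2}$ and $p(t,a,a)\asymp t^{-(N+1)/4}$, so \eqref{eq58} actually fails as stated. The paper's one-line ``this follows from \eqref{eq65}'' makes the same leap; for the cube $\mcl N=(-1,1)^N$ (and for $N=1$) the two quantities coincide and both proofs are correct.
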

\begin{proof}
Since $p$ is continuous in $t$ by \eqref{eq56} and since $p(t,a,y)\preceq 1$ for $t>2$ and any $a,y\in\R$, in order to prove \eqref{eq58} it is sufficient to prove $\liminf_{t\rta 0} t^{1/2}p(t,a,y)\preceq 1$. By the explicit formula for $p$ this follows from \eqref{eq65}.

The estimate \eqref{eq59} follows by the exact same argument as for the case $p=\wt p$, which is considered in \cite[Lemma 3.4]{bb01}, except that we use \eqref{eq58} instead of the corresponding estimate for $\wt p$.
\end{proof}

\begin{lem}
Let $d\in (0,1\wedge(R-2)]$, and let $f,g$ be two functions defined on some $N$-dimensional cube $I\subset\mcl S^N$ with side lengths $d$. Assume that $\|f-g\|_{L^\infty(I)}\leq \delta$ for some $\delta>0$. Then for all $b\geq 1$ and with $\alpha(f,I)=\alpha(f,0,I)$ and $\alpha(g,I)=\alpha(g,0,I)$ as in Definition \ref{def2},
\eqbn
\log \BB P\Big[\alpha(f,I)-\alpha(g,I)\geq b d^{(N-3/4)} (\delta\log(\delta^{-1}))^{1/2}\Big] 
\preceq -b,
\eqen
where the implicit constant is independent of $\delta$, 
$f$ and $g$.
\label{prop10}
\end{lem}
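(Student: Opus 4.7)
My plan is to adapt the strategy of \cite{bb01} for Brownian local time to our higher-dimensional Gaussian field setting. The key ingredients will be the filtration $(\mcl F_t)$ from \eqref{eq63} encoding the Markov structure of the moving-average field $B$, the conditional density $p$ from \eqref{eq56}, and the pointwise and $L^1$ estimates on $p$ established in Lemma \ref{prop25}.

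First I will decompose using \eqref{eq13}: writing $I = J \times I'$ with $J = [t_0, t_0 + d] \subset \mcl S$ an interval in the first coordinate and $I' \subset \mcl S^{N-1}$ an $(N-1)$-cube of side $d$, Lemma \ref{prop14} gives
\begin{equation*}
\alpha(f, I) - \alpha(g, I) = \int_{I'} \bigl(\alpha(x', f, 0, J) - \alpha(x', g, 0, J)\bigr) \, dx'.
\end{equation*}
By the Markov property of $B$ with respect to $(\mcl F_t)$, for $s<t$ the conditional density of $B(t, x')$ given $\mcl F_s$ is $p(t - s, W(\mcl N((t, x')) \cap D_s), \cdot)$. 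Hence
\begin{equation*}
\E\bigl[\alpha(x', f, 0, J) - \alpha(x', g, 0, J) \mid \mcl F_{t_0}\bigr] = \int_J \bigl(p(t - t_0, a(t, x'), f(t, x')) - p(t - t_0, a(t, x'), g(t, x'))\bigr)\, dt,
\end{equation*}
where $a(t, x') := W(\mcl N((t, x')) \cap D_{t_0})$. By \eqref{eq59} this integral is bounded by $C\delta \log \delta^{-1}$ uniformly in $x'$, and integrating over $I'$ yields a conditional mean bound of order $d^{N-1} \delta \log \delta^{-1}$.

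To control fluctuations around the mean I will consider the continuous martingale $M_s := \E[\alpha(f, I) - \alpha(g, I) \mid \mcl F_s]$ for $s \in [t_0, t_0 + d]$, whose terminal value recovers the full difference. Its increments come from integrating $\partial_a p$ applied to $f-g$ against the white-noise increments in the thin slab $\{x_1 = s\}$, and so are centered Gaussian conditionally on $\mcl F_s$. Using the pointwise bound $|\partial_a p(t, a, y)| \preceq t^{-1/2}$ (immediate from \eqref{eq56} and \eqref{eq58}) together with the $L^1$ bound \eqref{eq59} and a Cauchy--Schwarz argument along the first coordinate and across the $(N-1)$-slices in $I'$, I expect to obtain $\langle M \rangle_{t_0+d} \preceq d^{2N - 3/2}\, \delta \log \delta^{-1}$. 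An exponential martingale inequality then gives $\P[|M_{t_0+d} - M_{t_0}| \geq x] \leq 2\exp(-cx^2/\langle M\rangle_{t_0+d})$; substituting $x = b\, d^{N - 3/4}(\delta \log \delta^{-1})^{1/2}$ yields an $\exp(-cb^2) \leq \exp(-cb)$ tail for $b \geq 1$, which combined with the mean estimate (a lower-order term absorbed into $x$) gives the lemma.

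The main obstacle I expect is the bracket estimate $\langle M \rangle_{t_0+d} \preceq d^{2N - 3/2}\, \delta \log \delta^{-1}$. Obtaining the exponent $N - 3/4$, rather than the cruder $N - 1/2$ that a straightforward pairing of \eqref{eq59} with $d^{1/2}$ in one coordinate would yield, requires a delicate interpolation between the pointwise bound on $\partial_a p$ and its integrability in the first coordinate, together with the partial decorrelation of the slab white noises along that coordinate. This is the higher-dimensional analog of the one-dimensional Brownian calculation underlying the Bass--Burdzy bound in \cite{bb01}, and faithfully carrying it through for the moving-average Gaussian field $B$ will be the technically delicate step.
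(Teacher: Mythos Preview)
Your setup---the filtration $(\mcl F_t)$, the slice decomposition via \eqref{eq13}, and the invocation of Lemma~\ref{prop25}---matches the paper exactly. The divergence comes at the concentration step, and there your proposal has a genuine gap.

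The paper does \emph{not} work with the conditional-expectation martingale $M_s=\E[\alpha(f,I)-\alpha(g,I)\mid\mcl F_s]$. Instead it considers the \emph{increasing} adapted processes $A_t^f:=\alpha(f,I_t)$ and $A_t^g:=\alpha(g,I_t)$ where $I_t=[0,t]\times[0,d]^{N-1}$, and establishes two uniform bounds valid for \emph{all} $t_1<t_2$ in $[0,d]$:
\begin{itemize}
\item[(a)] $\E[A_{t_2}^f-A_{t_1}^f\mid\mcl F_{t_1}]\preceq d^{N-1/2}$ (and likewise for $g$), from the pointwise estimate \eqref{eq58};
\item[(b)] $\bigl|\E[(A_{t_2}^f-A_{t_2}^g)-(A_{t_1}^f-A_{t_1}^g)\mid\mcl F_{t_1}]\bigr|\preceq d^{N-1}\delta\log\delta^{-1}$, from \eqref{eq59}.
\end{itemize}
It then invokes \cite[Lemma~2.1]{bb01}, a black-box exponential inequality for differences of increasing processes, which converts the pair (a), (b) directly into a tail bound at scale $\bigl(d^{N-1/2}\cdot d^{N-1}\delta\log\delta^{-1}\bigr)^{1/2}=d^{N-3/4}(\delta\log\delta^{-1})^{1/2}$. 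The exponent $N-3/4$ arises as the geometric mean of the two exponents, with no bracket computation needed.

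Your route omits ingredient (a) entirely---you only bound the drift of the difference, and only at the initial time $t_0$ rather than uniformly---and then you acknowledge that the bracket bound $\langle M\rangle\preceq d^{2N-3/2}\delta\log\delta^{-1}$ is the delicate step you have not carried out. That bound is not obtainable from \eqref{eq59} alone; it implicitly requires control of each process separately, which is precisely (a). So the gap is real: without (a) you would get at best $d^{N-1/2}$ in place of $d^{N-3/4}$, and with it you are essentially re-deriving the content of \cite[Lemma~2.1]{bb01}. The cleaner fix is to abandon the bracket approach and cite that lemma directly after establishing (a) and (b).
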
 
\begin{proof}
Assume without loss of generality that $I=[0,d]^N$. Couple $B$ with an instance of white noise $W$ as described above the statement of Lemma \ref{prop25}, and recall the filtration $(\mcl F_t)_t$ and the sets $D_t\subset\R^N$ defined by \eqref{eq63}. For any $t\in[0,d]$ define $I_t=[0,t]\times [0,d]^{N-1}\subset\mcl S^N$, and let $A_t^f=\alpha(f,I_t)$, $A_t^g=\alpha(g,I_t)$, and $\wt A_t = A_t^f-A_t^g$. Then $(A_t^f)_{t\in[0,d]}$, $(A_t^g)_{t\in[0,d]}$, and $(\wt A_t)_{t\in[0,d]}$ are stochastic processes adapted to the filtration $(\mcl F_t)_{t\in[0,d]}$. To simplify notation we write $f(t,x')$ instead of $f(t,x'_1,\dots,x'_{N-1})$ for $t\in\R$ and $x'=(x'_1,\dots,x'_{N-1})\in\R^{N-1}$. Let $t_1,t_2\in [0,d]$ satisfy $t_1<t_2$. By Lemma \ref{prop14} (II) and  \eqref{eq58} of Lemma \ref{prop25},
\eqbn
\begin{split}
\BB E\left[A^f_{t_2}-A^f_{t_1}\,|\,\mcl F_{t_1}\right] 
&=\BB E\left[\int_{[0,d]^{N-1}} \alpha(x', f(\cdot,x'),[t_1,t_2])\,dx'\,\Big|\,\mcl F_{t_1}\right]\\
&=\BB \int_{[0,d]^{N-1}}\int_0^{t_2-t_1} p\Big(s,
\mcl N((t_2,x'_1,\dots,x'_{N-1}))\cap D_{t_1},
f(t_1+s,x')\Big)\,ds\,dx'\\
& \preceq d^{N-1}\int_0^{t_2-t_1} s^{-1/2}\,ds\\
&\preceq d^{N-1/2}.
\end{split}
\eqen
By a similar argument $\BB E[A^g_{t_2}-A^g_{t_1}\,|\,\mcl F_{t_1}] \preceq d^{N-1/2}$. By \eqref{eq59} we get further
\eqbn
\begin{split}
|\BB E[\wt A_{t_2}-\wt A_{t_1}\,|\,\mcl F_{t_1}]| 
&= \left| \BB E\left[\int_{[0,d]^{N-1}}
\alpha\big(x',\wt f(\cdot,x'),[t_1,t_2]\big)-
\alpha\big(x',\wt g(\cdot,x'),[t_1,t_2]\big)\,
dx'\,\,\Big|\,\,\mcl F_{t_1}  \right] \right| \\
&\leq \int_{[0,d]^{N-1}}
\int_0^{t_2-t_1} \Big|
p\Big(s,\mcl N((t_2,x'_1,\dots,x'_{N-1}))\cap D_{t_1},f(t_1+s,x')\Big)\\
&\quad-p\Big(s,\mcl N((t_2,x'_1,\dots,x'_{N-1}))\cap D_{t_1},g(t_1+s,x')\Big)\Big|
\,ds\,dx'\\
& \preceq d^{N-1} \delta\log\delta.
\end{split}
\eqen
The lemma now follows by \cite[Lemma 2.1]{bb01}.
\end{proof}

\begin{lem}
Let $L>1$, $a\in\BB R$, and define $\wt L:=\lceil L^{1/2} \rceil$. For $m\in\{1,...,\wt L\}$ define the $(N+1)$-dimensional rectangle $J^m$ by
\eqbn
J^m = [(m-1)L^{-1}, mL^{-1}]
\times[0,L^{-1}]^{N-1}
\times[a,a+\wt L^{-1}].
\eqen
Let $A$ be the number of rectangles $J^m$ for $m\in\{1,...,\wt L\}$ which intersect the graph $\{(x,B(x))\in\mcl S^N\times\R\,:\,x\in\mcl S^N\}$ of $B$. Then the following estimate holds for all $\xi>1$ and $\ep\in(0,1]$
\eqbn
\BB P[A \geq \wt L^{1/2+\ep}] \preceq L^{-\xi},
\eqen
where the implicit constant is independent of $a$ and $L$, but depends on $\ep$ and $\xi$.
\label{prop15}
\end{lem}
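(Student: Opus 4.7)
My plan is a two-step argument. First, reduce the event ``graph of $B$ hits the rectangle $J^m$'' to the simpler event ``$B(x^m_0)$ lies in a thickened interval $J$'', where $x^m_0$ is the center of $I_m := [(m-1)L^{-1},mL^{-1}]\times[0,L^{-1}]^{N-1}$; the cost is a high-probability oscillation bound from Borell's inequality. Second, bound the tail of the count $A' := |\{m : B(x^m_0)\in J\}|$ using a $k$-th moment estimate that exploits the moving-average white-noise representation $B(x) = W(\mcl N(x))$.

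For the reduction, use Borell together with $\operatorname{Var}(B(x)-B(y)) \asymp \lambda(\mcl N(x)\Delta\mcl N(y)) \preceq \|x-y\|_1$: for each $\xi'>0$ one can choose $C=C(\xi')$ so that $\BB P[\sup_{x\in I_m}|B(x)-B(x^m_0)|>C\sqrt{L^{-1}\log L}] \preceq L^{-\xi'}$. A union bound over the $\wt L$ cubes leaves an exceptional event of probability $\preceq L^{-\xi}$ (taking $\xi'$ large enough) off which no cube has oscillation exceeding $C\sqrt{L^{-1}\log L}$. On the complementary event, any $J^m$ hit by the graph of $B$ forces $B(x^m_0) \in J := [a-C\sqrt{L^{-1}\log L}-\wt L^{-1},\,a+C\sqrt{L^{-1}\log L}+\wt L^{-1}]$, an interval of length $|J|\asymp L^{-1/2}\sqrt{\log L}$. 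Hence $A \leq A'$ on this event, and it suffices to bound $\BB P[A'\geq \wt L^{1/2+\ep}]$.

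For the moment bound, fix an even integer $k$ and write
\[
\BB E[(A')^k] \leq k!\sum_{1\leq m_1<\cdots<m_k\leq \wt L}\BB P\Big[\bigcap_{j=1}^k \{b_j\in J\}\Big] + (\text{lower-order repetition terms}),
\]
where $b_j := B(x^{m_j}_0) = W(\mcl N(x^{m_j}_0))$. Since $\sigma(b_1,\dots,b_{j-1}) \subseteq \sigma(W|_{\bigcup_{i<j}\mcl N(x^{m_i}_0)})$, the conditional distribution of $b_j$ given $b_1,\dots,b_{j-1}$ is Gaussian with variance at least $\lambda(\mcl N(x^{m_j}_0)\setminus\bigcup_{i<j}\mcl N(x^{m_i}_0))$. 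Because each prior center has strictly smaller first coordinate than $x^{m_j}_0$, translating by $-x^{m_{j-1}}_0$ shows this residual region contains the rightward sliver $(\mcl N + t e_1)\setminus \bigcup_{s\leq 0}(\mcl N + s e_1)$ with $t=(m_j-m_{j-1})L^{-1}$, whose measure is $\succeq t$ by \eqref{eq65}. Hence the conditional density of $b_j$ is $\preceq \sqrt{L/(m_j-m_{j-1})}$, and $\BB P[b_j\in J\mid b_1,\dots,b_{j-1}] \preceq |J|\sqrt{L/(m_j-m_{j-1})} \asymp \sqrt{\log L/(m_j-m_{j-1})}$. Chaining with $\BB P[b_1\in J]\preceq |J|$ gives
\[
\BB P\Big[\bigcap_{j=1}^k\{b_j\in J\}\Big] \preceq L^{-1/2}(\log L)^{k/2}\prod_{j=2}^k(m_j-m_{j-1})^{-1/2}.
\]

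Summing over ordered tuples with $\Delta_j := m_j - m_{j-1}$ and using $\sum_{\Delta=1}^{\wt L}\Delta^{-1/2} \preceq \sqrt{\wt L}$ yields $\sum \prod_{j\geq 2}\Delta_j^{-1/2} \preceq \wt L^{(k+1)/2}$, so $\BB E[(A')^k] \preceq C_k(\log L)^{k/2} L^{(k-1)/4}$ for a constant $C_k$. Markov's inequality gives $\BB P[A'\geq \wt L^{1/2+\ep}] \preceq C_k(\log L)^{k/2}L^{-1/4 - \ep k/2}$, which is $\preceq L^{-\xi}$ once $k$ is chosen large enough in terms of $\xi$ and $\ep$. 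The main obstacle is establishing the conditional-variance lower bound $\operatorname{Var}(b_j \mid b_1,\dots,b_{j-1}) \succeq (m_j-m_{j-1})/L$; this is precisely where condition \eqref{eq65} is needed, to guarantee that the new region exposed when adjoining $x^{m_j}_0$ to the previous union of neighborhoods has measure of the expected linear order. Everything else is a standard chaining-and-Markov calculation, with log factors comfortably absorbed by the polynomial gain from choosing $k$ large.
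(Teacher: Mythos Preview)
Your proof is correct and shares the same skeleton as the paper's: reduce ``graph hits $J^m$'' to ``$B(x^m)$ lies in a slightly thickened interval'' via an oscillation bound, then control the count of such events through the white-noise representation and the conditional-variance lower bound coming from \eqref{eq65}. The reduction step differs only cosmetically (you invoke Borell/Dudley, the paper uses a quantitative Kolmogorov--Chentsov estimate).

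The genuine difference is in how the tail of $A'$ is extracted. The paper works with the filtration $(\mcl F_t)$ of \eqref{eq63}, bounds the conditional increments $\BB E[A^{\wt L}-A^m\mid\mcl F_{mL^{-1}}]\preceq \wt L^{1/2+\ep/10}$ via the density estimate \eqref{eq58}, and then applies an exponential martingale inequality (Corollary I.6.12 of \cite{bass95}) to obtain a stretched-exponential tail $\log\BB P[A^{\wt L}>\wt L^{1/2+\ep}]\preceq -\wt L^{\ep/10}$. You instead compute the $k$-th moment of $A'$ directly by chaining the conditional bounds $\BB P[b_j\in J\mid b_1,\dots,b_{j-1}]\preceq(\log L)^{1/2}(m_j-m_{j-1})^{-1/2}$ and apply Markov. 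Your method is more self-contained (no external martingale lemma needed) and the polynomial tail you obtain, with $k$ chosen in terms of $\xi$ and $\ep$, is exactly what the lemma asks for; the paper's route yields a sharper tail than required but leans on the cited inequality. Note also that you condition only on the previous point values $b_1,\dots,b_{j-1}$ rather than the full $\sigma$-algebra $\mcl F_{m_{j-1}L^{-1}}$; this is enough because the residual-region argument gives a lower bound on the conditional variance either way, and it slightly streamlines the bookkeeping.
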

\begin{proof}
Let $E_m$ be the event that the graph of $B$ intersects $J^m$, let $\wt J^m=[(m-1)L^{-1}, mL^{-1}]
\times[0,L^{-1}]^{N-1}$, let $x^m=\big(mL^{-1},0,...,0\big)\in \wt J^m$ and define the event $E'_m$ by $E'_m =\{ |B(x^m)-a|<\wt L^{-1+\ep/10} 
\}$. 
Define the random variable $A^m$ by $A^m=\sum_{m'\leq m} \bd 1_{E'_{m'}}$. By \eqref{eq58}, for $m_2>m_1$,
\eqbn
\BB P[E'_{m_2}\,|\,\mcl F_{m_1L^{-1}}]
\preceq ((m_2-m_1) \cdot L^{-1})^{-1/2} \wt L^{-1+\ep/10} \asymp (m_2-m_1)^{-1/2} \wt L^{\ep/10}.
\eqen
Therefore, for any $m\in\{1,\dots,\wt L\}$,
\eqbn
\BB E[A^{\wt L}-A^{m}\,|\,\mcl F_{mL^{-1}}] \preceq
\sum_{d=1}^{\wt L} d^{-1/2} \wt L^{\ep/10} \preceq \wt L^{1/2+\ep/10}.
\eqen
By applying \cite[Corollary I.6.12]{bass95} to a constant multiple of the sequence $\{A^m/\wt L^{1/2+\ep/10}\}_{1\leq m\leq \wt L}$, we get 
\eqbn
\log \P(A^{\wt L}> \wt L^{1/2+\ep}) \preceq -\wt L^{\ep/10}.
\eqen

For any $x,x'\in\mcl S$ and $b>1$ we have $\E[|B(x)-B(x')|^b]\preceq \|x-x'\|^{b/2}$ for an implicit constant depending on $b$. By a quantitative version of the Kolmogorov-Chentsov theorem as in e.g.\ \cite[Proposition 2.3]{qle3}, the function $B$ is $\gamma$-H\"older continuous with (random) constant $C(\gamma)$ for any $\gamma<1/2$, and $\P[C(\gamma)>C]$ decays faster than any power of $C$. In particular, $\P[C(1/2-\ep/100)>L^{\ep/100}]\preceq L^{-\xi}$ for any $\xi$. Observe that if 
$C(1/2-\ep/100)\leq L^{\ep/100}$
and
$E_m$ occurs, and if $L$ is sufficiently large, then $E'_m$ also occurs since for some $x\in\wt J^m$,
\eqbn
|B(x^m)-a|
\leq |B(x^m)-B(x)|+|B(x)-a|
\leq NL^{\epsilon/100} (L^{-1})^{1/2-\epsilon/100}+\wt L^{-1}
<\wt L^{-1+\epsilon/10}.
\eqen
Therefore 
\eqbn
E_m \subset E'_m \cup \{ C(1/2-\ep/100)>L^{\ep/100} \},
\eqen
so
\eqbn
\{A\geq \wt L^{1/2+\ep} \}\subset \{A^{\wt L}\geq \wt L^{1/2+\ep} \} \cup \{ C(1/2-\ep/100)>L^{\ep/100} \},
\eqen
and the lemma follows by a union bound.
\end{proof}

\begin{lem}
For any $k\in\BB N$ divide $\mcl S^N$ into $N$-dimensional cubes $I_j$, $j=1,...,(R2^{k})^N$, of side length $2^{-k}$, such that the cubes have pairwise disjoint interior. Also divide $\mcl S^N\times\BB R$ into  $(N+1)$-dimensional cubes $I'_{ij}$ of side length $2^{-k}$ for $j=1,...,(R2^{k})^N$ and $i\in \BB Z$, such that the cubes have pairwise disjoint interior. Let $\ep\in(0,1/100)$ and define the three events $G_k^1,G_k^2,G_k^3$ as follows
\begin{itemize}
\item $G_{k}^1$ is the event that for any $g\in\mcl L^K(\mcl S^N)$ the number of cubes $I'_{ij}$ intersecting the graph of both $g$ and $B$, is bounded by ${2}^{k(N-1/2+\ep)}$ for any $g$.
\item $G_k^2$ is the event that for any $f\in \mcl {L}^K(\mcl S^N)$ and $j\in\{1,...,2^{k}\}$, the approximations $f^k$ and $f^{k+1}$ to $f$ defined in Lemma \ref{prop9} satisfy
\eqbn
|\alpha(f^k,I_j)-\alpha(f^{k+1},I_j)| \leq 2^{k(-N+1/4+\ep)}.
\eqen
\item $G_k^3$ is the event that for any $j\in\{1,...,2^k\}$ and any two $f,g\in \mcl L^K(\mcl S^N)$ satisfying $\|f-g\|_{L^\infty}\leq 2^{-k}$, we have
\eqbn
|\alpha(f^k,I_j)-\alpha(g^{k},I_j)| \leq 2^{k(-N+1/4+\ep)}.
\eqen
\end{itemize}
Finally define $G_k$ by 
\eqbn
G_k:=\left(\bigcap_{k'\geq k} G_{k'}^1\right)\cap 
\left(\bigcap_{k'\geq k} G_{k'}^2\right)\cap 
\left(\bigcap_{k'\geq k} G_{k'}^3\right).
\eqen
Then $\BB P[G_k] \rightarrow 1$ as $k\rightarrow\infty$.
\label{prop16}
\end{lem}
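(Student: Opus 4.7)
The plan is to show that $\P[(G_{k'}^a)^c]$ is summable in $k'$ for each $a\in\{1,2,3\}$ separately, and then to conclude by the Borel--Cantelli lemma. The events $G_k^2$ and $G_k^3$ follow from Lemma~\ref{prop10} combined with a union bound, whereas $G_k^1$ is the main obstacle and requires Lemma~\ref{prop15} together with a uniformity argument in $g$.

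For $G_k^2$, Lemma~\ref{prop9} gives $\|f^k-f^{k+1}\|_{L^\infty}\preceq 2^{-k}$, so Lemma~\ref{prop10} applied on $I_j$ with $d=2^{-k}$ and $\delta\asymp 2^{-k}$ yields
$$\P\bigl[|\alpha(f^k,I_j)-\alpha(f^{k+1},I_j)|>b\cdot 2^{-k(N-1/4)}k^{1/2}\bigr]\preceq e^{-cb}.$$
Setting $b=2^{k\ep/2}$ gives an $e^{-c\,2^{k\ep/2}}$ tail on the event in $G_k^2$ for each fixed local data. On a single $I_j$ the pair $(f^k|_{I_j},f^{k+1}|_{I_j})$ is determined by finitely many grid values in $I_j$; the Lipschitz and sup-norm constraints defining $\mcl L^{K,k}$ leave only $O(K^{C_N}2^k)$ distinct local configurations, and there are $R^N 2^{kN}$ cubes $I_j$, so a union bound gives $\P[(G_k^2)^c]\preceq 2^{k(N+1)}e^{-c\,2^{k\ep/2}}$, which is summable in $k$. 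The argument for $G_k^3$ is identical once one notes that $\|f^k-g^k\|_{L^\infty}\preceq 2^{-k}$ whenever $\|f-g\|_{L^\infty}\le 2^{-k}$.

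For $G_k^1$, I would first use that a Lipschitz $g\in\mcl L^K$ has oscillation $\preceq K2^{-k}$ on each $I_j$, so the graph of $g$ enters only $O(K)$ vertical $2^{-k}$-cubes above $I_j$, all lying in a strip of height $O(K2^{-k})$ around $g(x_j^*)$. Combining this with the H\"older estimate $\sup_{I_j}|B-B(x_j^*)|\preceq 2^{-k/2}$ (which follows from the Kolmogorov--Chentsov argument used in the proof of Lemma~\ref{prop15}), the number of cubes $I'_{ij}$ hit by the graphs of both $g$ and $B$ is bounded by $K\cdot|\mcl J(g)|$ with $\mcl J(g):=\{j:|B(x_j^*)-g(x_j^*)|\preceq 2^{-k/2}\}$. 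I would then invoke Lemma~\ref{prop15} with $L=2^k$, partitioning $\mcl S^N$ into rows of $\wt L=2^{k/2}$ consecutive cubes $I_j$ along the $x_1$-direction; along any such row $g(x_j^*)$ varies by only $O(K2^{-k/2})$, so the vertical strips relevant to $\mcl J(g)$ are covered by $O(K)$ translates of a single strip of the form appearing in Lemma~\ref{prop15}. Summing the per-row bound over the $R^N 2^{k(N-1/2)}$ rows yields the desired bound $2^{k(N-1/2+\ep)}$.

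The hardest step is uniformity in $g$: a naive $L^\infty$-net of $\mcl L^K$ at scale $2^{-k}$ is doubly exponential in $2^{kN}$ and would overwhelm any Gaussian tail. The plan is to work row by row, using that each row admits only $\exp(O(k))$ admissible Lipschitz row-profiles $(g(x_j^*))_{j\in\text{row}}$ compatible with $\mcl L^K$; the martingale exponential tail bound underlying the proof of Lemma~\ref{prop15} survives this per-row union bound, and a chaining argument in the spirit of Lemma~\ref{prop10} (applied along a sequence of refining discretizations) passes from the discretized row-profile to a generic $g\in\mcl L^K$. Summability of $\P[(G_k^1)^c]$ then follows, and Borel--Cantelli completes the proof that $\P[G_k]\to 1$.
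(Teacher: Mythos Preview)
Your treatment of $G_k^2$ and $G_k^3$ matches the paper's: Lemma~\ref{prop10} plus a union bound over the $\preceq 2^{k}$ local configurations of $(f^k|_{I_j},f^{k+1}|_{I_j})$ and the $\asymp 2^{kN}$ cubes is exactly what is done there.

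For $G_k^1$ there is a genuine gap. Your per-row application of Lemma~\ref{prop15} gives at most $\wt L^{1/2+\ep}=2^{k(1/4+\ep/2)}$ bad cubes in each row, and summing over the $\asymp 2^{k(N-1/2)}$ rows yields $2^{k(N-1/4+\ep/2)}$, not $2^{k(N-1/2+\ep)}$ as you claim. The paper in fact obtains exactly this weaker exponent $N-1/4+\ep/2$ from a single application of Lemma~\ref{prop15}, and then closes the gap by a multiscale iteration: one uses the bound at scale $\wt L=\lceil L^{1/2}\rceil$ to restrict which rectangles $J_{i'j'}(L)$ at scale $L$ can be hit by the graph of $f$, which improves the exponent to $N-3/8$, then $N-7/16$, and so on down to $N-1/2+\ep$. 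This bootstrapping step is essential and is missing from your outline.

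On the uniformity in $g$: the paper's mechanism is different from, and cleaner than, what you sketch. Rather than union-bounding over discretized Lipschitz profiles, the paper defines a single event $\wt E_L=\bigcap_{i,j}\{A_{ij}<\wt L^{1/2+\ep}\}$ that depends only on $B$; this is a union bound over polynomially many $(i,j)$, and Lemma~\ref{prop15} provides arbitrary polynomial decay for each $A_{ij}$. On $\wt E_L$ the counting argument is then \emph{deterministic} and works for every $g\in\mcl L^K$ simultaneously, because the Lipschitz condition on $g$ only enters through the count of rectangles $J_{i'j'}$ its graph meets. Your claim that a row admits ``$\exp(O(k))$ admissible row-profiles'' is not correct as stated (a row has $2^{k/2}$ grid points), and while one can argue that the coarse-scale profile is essentially constant along a row, the paper's route avoids this discussion entirely.
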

\begin{proof}
It is sufficient to prove that for $b=1,2,3$ it holds that $\lim_{k\rta\infty} \BB P\big[\cap_{k'\geq k} G^b_{k'}\big]=1$. We consider the three cases $b=1,2,3$ separately. All implicit constants may depend on $R$ and $K$, but not on $k$.

Case $b=1$: For any $L\in\BB N$ divide $\mcl S^N\times\BB R$ into $(N+1)$-dimensional cubes $I'_{ij}(L)$ for $i\in\BB Z$ and $j\in \{1,...,(LR)^N\}$, such that the interior of the cubes are disjoint, and each cube has side length $L^{-1}$. Observe that $I'_{ij}=I'_{ij}(2^k)$ with $I'_{ij}$ as in the statement of the lemma.  Define $\wt L:=\lceil L^{1/2} \rceil\in\BB N$. Divide $\mcl S^N\times\BB R$ into $(N+1)$-dimensional rectangles $J_{ij}=J_{ij}(L)$ for $i\in\BB Z$ and $j\in \{1,...,R^N L^{N-1}\wt L\}$, such their interiors are pairwise disjoint, and such that each rectangle is a translation of $J_{i1}:=[0,\wt L^{-1}]\times[0,L^{-1}]^{N-1}\times[0,\wt L^{-1}]$. Assume $I'_{1,1}(L)=[0,L^{-1}]^{N+1}$, and that the projection of $I'_{ij}(L)$ (resp. $J_{ij}$) onto the last coordinate is given by $L^{-1}[i-1,i]$ (resp. $\wt L^{-1}[i-1,i]$).

Consider one of the rectangles $J_{ij}$. Find a cover $\{J_{ij}^m\}_{m=1}^{\wt L}$ of $J_{ij}$ of $(N+1)$-dimensional rectangles whose interiors are disjoint, and each of which is a translation of $[0,L^{-1}]^N\times[0,\wt L^{-1}]$ (note that unless $\wt L^2=L$, a small fraction of the rectangles $J_{ij}^m$ have a non-empty intersection with the complement of $J_{ij}$). Let $A_{ij}=A_{ij}(L)$ denote the number of such rectangles that contain a point of the graph of $B$. By Lemma \ref{prop15} the following holds for all $i,j$ and any $\xi>1$
\eqb
\BB P[A_{ij}\geq \wt L^{1/2+\ep}] \preceq \wt L^{-\xi},
\label{eq17}
\eqe
where the implicit constant depends on $\ep$ and $\xi$. We will now prove that on the event
\eqbn 
\wt E_L :=
\bigcap_{1\leq j\leq R^{N}L^{N-1}\wt L,-KL\leq i< KL} \{A_{ij}<\wt L^{1/2+\ep}\}
\eqen
the number of cubes $I'_{ij}(L)$ intersecting the graph of both $f$ and $B$ is $\preceq L^{(N-1/4+\ep/2)}$. Since $f\in\mcl L^K(\mcl S^N)$ the number of rectangles $J_{i'j'}$ intersecting the graph of $f$ is $\preceq L^{N-1/2}$. Assuming the event $\wt E_L$ occurs, for each such $i',j'$ there are $< \wt L^{1/2+\ep}$ rectangles $J^m_{i'j'}$ intersecting both $J_{i'j'}$ and the graph of $B$. By using $f\in\mcl L^K(\mcl S^N)$ again it follows that, for each $i',j'$, the number of cubes $I'_{ij}(L)$ intersecting both $J_{i'j'}$ and the graph of $B$ and $f$, is $\preceq \wt L^{1/2+\ep}$. Hence the total number of cubes $I'_{ij}(L)$ intersecting the graph of both $B$ and $f$ is $\preceq L^{N-1/2}\times\wt L^{1/2+\ep} \preceq L^{N-1/4+\ep/2}$.

By \eqref{eq17} and  a union bound, in order to complete the proof for the case $b=1$ it is sufficient to prove that, conditioned on the event $\cap_{L\in\N\,:\,L\geq L_0}\wt E_{L}$ for some $L_0\in\BB N$, the number of rectangles $I'_{ij}(L)$ intersecting the graph of both $f$ and $B$ is bounded by $L^{N-1/2+\ep}$ for all sufficiently large $L$. We will proceed by iterations as in the proof of \cite[Proposition 3.3]{bb01}. 

Condition on the event $\cap_{L\in N\,:\,L\geq L_0} \wt E_{L}$, and  choose $L\geq (2L_0)^2$. Since $\wt L>L_0$ the number of cubes $I'_{ij}(\wt L)$ intersecting the graph of both $f$ and $B$ is $<\wt L^{N-1/4+\ep}$. Each of these cubes $I'_{ij}(\wt L)$ is contained in the union of $\wt L^{N-1}$ rectangles $J_{i'j'}(L)$. By the definition of $\wt E_L$ and by $f\in\mcl L^K(\mcl S^N)$, for each fixed $i',j'$, the number of cubes $I'_{i''j''}(L)$ intersecting both $J_{i'j'}(L)$, the graph of $f$ and the graph of $B$, is bounded by $\wt L^{1/2+\ep}$. It follows that the number of cubes $I'_{i''j''}(L)$ intersecting both $I'_{ij}(\wt L)$, the graph of $f$ and the graph of $B$, is $\preceq \wt L^{N-1/2+\ep}$. Further we get that the number of cubes $I'_{i''j''}(L)$ intersecting the graph of both $f$ and $B$ is $\preceq \wt L^{N-1/4+\ep}\times \wt L^{N-1/2+\ep}\asymp L^{N-3/8+\ep}$.

Now choose $L\geq (4L_0)^4$. Since $\wt L\geq (2L_0)^2$ the number of cubes $I'_{ij}(\wt L)$ intersecting the graph of both $f$ and $B$ is $\preceq \wt L^{N-3/8+\ep}$. For fixed $i,j$ the number of cubes  $I'_{i'j'}(L)$ intersecting both $I'_{ij}(\wt L)$, the graph of $f$ and the graph of $B$, is $\preceq \wt L^{N-1/2+\ep}$, so the number of cubes $I'_{i'j'}(L)$ intersecting the graph of both $f$ and $B$ is $\preceq \wt L^{N-3/8+\ep}\times\wt L^{N-1/2+\ep} \asymp L^{N-7/16+\ep}$. By continued iterations it follows that, for sufficiently large $L$, the number of cubes $I'_{i''j''}(L)$ intersecting the graph of both $f$ and $B$ is $\preceq L^{N-1/2+2\ep}$. This completes the proof.

Case $b=2$: By Lemma \ref{prop9} we have $\|f^k-f^{k+1}\|_{L^\infty(\mcl S^N)}\preceq 2^{-k}$.  Therefore, for any $f\in\mcl L^K(\mcl S^N)$ and $j\in\{1,...,2^k\}$, Lemma \ref{prop10} implies that
\eqbn
\log\BB P\big[ |\alpha(f^k,I_j)-\alpha(f^{k+1},I_j)| \geq 2^{k(-N+1/4+\ep)}
\big] \preceq-2^{k\ep/2}.
\eqen
For each fixed $x\in I_{ij}\cap( 2^k\Z^N )$, $f^k(x)$ can take $\preceq 2^{k}$ different values. Conditioned on $f^k(x)$ the number of possible realizations of $f^k|_{I_j}$ and $f^{k+1}|_{I_j}$ is bounded by a constant. It follows that there are $\preceq 2^k$ possibilities for $f^k|_{I_j}$ and $f^{k+1}|_{I_j}$. Since the number of cubes $I_j$ is $\asymp 2^{kN}$, a union bound implies that
\eqbn
\log (1-\BB P[G_k^2]) \preceq - 2^{k\ep/2}. 
\eqen
We conclude by a union bound.

Case $b=3$: We proceed exactly as in the case $b=2$. The result follows by a union bound, Lemma \ref{prop10}, and by observing that the number of possible realizations of $f^k|_{I_j}$ and $g^{k}|_{I_j}$ for each fixed $j$ is $\asymp 2^{k}$.  
\end{proof}

\begin{proof}[Proof of Theorem \ref{prop6}]
We start by proving uniform continuity of $g\mapsto \alpha(g)$ on $\wt{\mcl L}^K(\mcl S^N)$, where $\wt{\mcl L}^K(\mcl S^N)$ is defined by \eqref{eq70}. Let $\eta,\beta>0$, and the choose $\wt k\in\BB N$ sufficiently large such that the event $G_{\wt k}$ of Lemma \ref{prop16} holds with probability at least $1-\eta$. Condition on the event $G_{\wt k}$. Consider any $f,g\in\wt{\mcl L}^K(\mcl S^N)$ such that $\|f-g\|_{L^\infty}\leq 2^{-\wt k}$, and let $f^k,g^k\in\mcl L^{K,k}(\mcl S^N)$ denote the approximations to $f,g$, respectively, defined in Lemma \ref{prop9}. By the definition of the events $G_k^1$ and $G_k^2$,
\eqbn
\begin{split}
|\alpha(f^{k+1})-\alpha(f^{k})| \preceq 2^{k(N-1/2+\ep)}\cdot 2^{k(-N+1/4+\ep)} = 2^{k(-1/4+2\ep)}
\end{split}
\eqen
for all $k\geq \wt k$ and a universal implicit constant. Since $f=f^k$ for sufficiently large $k$ the triangle inequality implies that, after we increase $\wt k$ if necessary, we have $|\alpha(f)-\alpha(f^{\wt k})|\leq\beta/3$. By the same argument $|\alpha(g)-\alpha(g^{\wt k})|\leq\beta/3$. By the definition of $G_{k}^1$ and $G_{k}^3$,
\eqbn
\begin{split}
|\alpha(f^{\wt k})-\alpha(g^{\wt k})| \preceq 2^{\wt k(N-1/2+\ep)}\cdot 2^{\wt k(-N+1/4+\ep)} = 2^{\wt k(-1/4+2\ep)}.
\end{split}
\eqen
Increasing $\wt k$ if necessary, it follows by the triangle inequality that $|\alpha(f)-\alpha(g)|\leq\beta$ with probability at least $1-\eta$ for all $f,g\in\wt{\mcl L}(\mcl S^N)$ satisfying $\|f-g\|_{L^\infty}\leq 2^{\wt k}$. Note that $\wt k$ is a function of $\beta$ and $\eta$, i.e., $\wt k=\wt k(\beta,\eta)$. 

Fix some $\eta>0$ and a sequence $(\beta_m)_{m\in\BB N}$ converging to 0. For any $m\in\BB N$ and any $f,g\in\wt{\mcl L}^K(\mcl S^N)$ satisfying $\|f-g\|_{L^\infty} \leq 2^{-\wt k(\beta_m,\eta 2^{-m})}$ we have $|\alpha(f)-\alpha(g)|\leq\beta_m$ with probability at least $1-\eta 2^{-m}$. By a union bound it holds with probability at least $1-\eta$ that $|\alpha(f)-\alpha(g)|\leq \beta_m$ for all $m\in\BB N$ and all $f,g\in\wt{\mcl L}^K(\mcl S^N)$ satisfying $\|f-g\|_{L^\infty} \leq 2^{-\wt k(\beta_m,\eta 2^{-m})}$. Since the choice of $\eta$ was arbitrary this implies uniform continuity of $g\mapsto\alpha(g)$.

Define $\wt\alpha$ to be the restriction to $\mcl L^K(\mcl S^N)$ of the continuous extension of $\alpha$ from $\wt{\mcl L}^K(\mcl S^N)$ to $\mcl L^K(\mcl S^N)\cup \wt{\mcl L}^K(\mcl S^N)$. Part (II) of the theorem follows by the definition of $\wt{\alpha}$ and uniform continuity of $\alpha$ on $\wt{\mcl L}^K(\mcl S^N)$. Part (III) of the theorem follows by using part (II) and that $\mcl L^K(\mcl S^N)$ is compact for the supremum norm.

Finally we will prove part (I). Let $f\in\mcl L^K(\mcl S^N)$, and for each $k\in\N$ let $f^k$ be as in Lemma \ref{prop9}. By Lemma \ref{prop10} and the Borel-Cantelli lemma we can find an increasing sequence $\{k_l\}_{l\in\BB N}$, $k_l\in\BB N$, such that $\alpha(f^{k_l})\rightarrow \alpha(f)$ a.s.\ as $l\rightarrow\infty$. By part (II) we have $\alpha(f^k)\rightarrow \wt\alpha(f)$ a.s.\ as $k\rightarrow\infty$. Part (I) now follows by the triangle inequality.
\end{proof}

\subsection{Existence and uniqueness of solutions of the continuum Schelling model}
\label{sec:welldef}
In this section we will prove Theorem \ref{prop3}, i.e., we will prove existence and uniqueness of solutions of the initial value problem \eqref{eq1}, \eqref{eq2}. 

First we will see that the theorem does not hold for all choices of initial data, i.e., there exist initial data for which \eqref{eq1} does not have a unique solution. Furthermore, solutions of \eqref{eq1} do not in general vary continuously with the initial data. We will illustrate these properties of \eqref{eq1} by considering the model for $N=1$, $M=2$, $\mcl N=\mcl N_\infty$, and $V=\mcl S$. Let $\wh Y$ be as in \eqref{eq71}. The initial data $\wh Y(\cdot,0)\equiv\ep$ and $\wh Y(\cdot,0)\equiv-\ep$ for $\ep>0$, give solutions $\wh Y(x,t)=\ep+2t$ and $\wh Y(x,t)=-\ep-2t$, respectively, so the solution does not vary continuously with the initial data. As an example of initial data for which Theorem \ref{prop3} does not hold, assume $n:=3R/4\in\BB N$, and define periodic initial data by
\eqbn
\wh Y(x,0)
= \left\{
 \begin{array}{ll}
 -1+3(x-4k/3) &\text{ for } x \in \frac 23 [2k,2k+1),\, k\in\{0,...,n-1\},\\
 1-3(x-4k/3-2/3)  &\text{ for } x \in \frac 23 [2k+1,2k+2),\, k\in\{0,...,n-1\}.
 \end{array}
  \right.
\eqen
Then we have for all $t\in [0,3/2)$,
\eqbn
\frac{\partial \wh Y}{\partial t}(x,t)
= \left\{
 \begin{array}{ll}
 2/3-2(x-4k/3) &\text{ for } x \in \frac 23 [2k,2k+1),\, k\in\{0,...,n-1\},\\
 -2/3+2(x-4k/3-2/3)  &\text{ for } x \in \frac 23 [2k+1,2k+2),\, k\in\{0,...,n-1\}.
 \end{array}
  \right.
\eqen
For $t=3/2$ we have $\wh Y(\cdot,t)\equiv 0$. If we only allow for solutions satisfying \eqref{eq1} for all $t\geq 0$, we have no solutions since \eqref{eq1} is not satisfied at $t=3/2$. If we allow the time derivative not to exist for the single time $t=3/2$, solutions are not unique, e.g.\ $\wh Y(t,x)=2(t-3/2)$ and $\wh Y(t,x)=-2(t-3/2)$ are both solutions for $t\geq 3/2$. We do not encounter these problems for the Gaussian initial data \eqref{eq2}. The problems in the examples above arise since $\wh Y(x,t)$ is close to 0 for many $x\in\mcl S^N$ simultaneously, and Theorem \ref{prop6} implies that this is not the case for the Gaussian initial data.

\begin{lem}
For any $\omega\in\Omega$ let $\phi^\omega$ be defined by \eqref{eq8}. For any $K>0$ the map $\phi^\omega|_{\mcl L_M^{K}(\mcl S^N)}:\mcl L_M^{K}(\mcl S^N)\to \mcl L_M^{1}(\mcl S^N)$ is a.s.\ continuous for the supremum norm.
\label{prop7}
\end{lem}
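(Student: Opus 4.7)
The plan is to fix $y \in \mcl L_M^K(\mcl S^N)$ and a sequence $y_n \to y$ uniformly with $\epsilon_n := \|y_n - y\|_{L^\infty} \to 0$, and show that almost surely $\|\phi^\omega(y_n) - \phi^\omega(y)\|_{L^\infty} \to 0$. The first simplification is algebraic: since $\bd 1_{p(\cdot)\neq m} = 1 - \bd 1_{p(\cdot) = m}$, the integrand in \eqref{eq8} simplifies to $\bd 1_{p(B+y)=m} - M^{-1}$, so
$$\phi_m^\omega(y)(x) - \phi_m^\omega(y_n)(x) = \int_{x' \in \mcl N(x)} \bigl(\bd 1_{p(B(x')+y(x'))=m} - \bd 1_{p(B(x')+y_n(x'))=m}\bigr)\,dx'.$$
Hence $|\phi_m^\omega(y_n)(x) - \phi_m^\omega(y)(x)| \leq \lambda(E_n(x))$ uniformly in $m$ and $x$, where
$$E_n(x) := \{x' \in \mcl N(x) : p(B(x')+y(x')) \neq p(B(x')+y_n(x'))\},$$
so it suffices to establish $\sup_{x} \lambda(E_n(x)) \to 0$ almost surely.

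Next I would record a deterministic stability lemma for the plurality function: if $z_1, z_2 \in \R^M$ satisfy $p(z_1) \neq p(z_2)$, then there exist $m \neq m'$ with $|(z_1)_m - (z_1)_{m'}| \leq 2\|z_1 - z_2\|_{L^\infty}$. If $p(z_1) = 0$ this is immediate from the existence of a tie. Otherwise with $p(z_1) = m$, one picks $m' \neq m$ maximizing $(z_2)_{m'}$, so that $(z_1)_m - (z_1)_{m'} > 0$ while $(z_2)_m - (z_2)_{m'} \leq 0$, and subtracts. Applying this pointwise with $z_1 = B(x')+y(x')$, $z_2 = B(x')+y_n(x')$ gives
$$E_n(x) \subseteq \bigcup_{m \neq m'} \bigl\{x' \in \mcl S^N : |(B_m - B_{m'})(x') - (y_{m'} - y_m)(x')| \leq 2\epsilon_n\bigr\}.$$

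The heart of the argument is to bound the Lebesgue measure of each of the $M(M-1)$ sets on the right using Theorem \ref{prop6}. A direct variance computation from \eqref{eq20} gives $\op{Var}((B_m - B_{m'})(x)) = \tfrac{2}{M}\lambda(\mcl N(x))$ with the spatial structure of \eqref{eq68}, so the rescaled field $\wt B_{m,m'} := \sqrt{M/2}(B_m - B_{m'})$ satisfies exactly the hypotheses of Theorem \ref{prop6}. The function $y_{m'} - y_m$ lies in $\mcl L^{2K}(\mcl S^N)$ by the triangle inequality, and rescaling by $\sqrt{M/2}$ and translating by a constant of modulus at most $2\sqrt{M/2}\,\epsilon_n$ keeps it in some fixed class $\mcl L^{K'}(\mcl S^N)$ for $K' = K'(K,M)$ once $\epsilon_n$ is small. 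Using the defining identity $\lambda(\{|\wt B_{m,m'} - g| \leq \delta\}) = \int_{-\delta}^{\delta} \wt\alpha_{m,m'}(g+a)\,da$ of the occupation kernel from Definition \ref{def2}, together with the uniform bound $\sup_{g \in \mcl L^{K'}} \wt\alpha_{m,m'}(g) \leq C_{m,m'} < \infty$ a.s.\ supplied by Theorem \ref{prop6}(III), one obtains $\lambda(E_n(x)) \leq C\epsilon_n$ uniformly in $x$ for an almost surely finite random constant $C$, proving continuity.

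The main obstacle — and indeed the entire reason Theorem \ref{prop6} is formulated as a uniform bound over Lipschitz functions rather than as a statement about one fixed function — is the third step: without part (III) of Theorem \ref{prop6}, one could not rule out a pathological sequence $y_n$ along which the occupation kernel of $\wt B_{m,m'}$ at $(y_{m'} - y_m)$-translates blows up. Every other ingredient (the algebraic simplification, the stability of $p$, the covariance computation for $B_m - B_{m'}$, and the identity from Definition \ref{def2}) is routine.
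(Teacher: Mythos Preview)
Your approach differs from the paper's and in fact targets the stronger Lipschitz bound of Proposition \ref{prop1}. The paper proves mere continuity here via the box-counting event $G_k^1$ of Lemma \ref{prop16}, which directly gives, on a single full-measure event and uniformly over all $g$ in the relevant Lipschitz class, the estimate $\lambda(\{|\wt B - g| \leq 2\cdot 2^{-k}\}) \preceq 2^{-k(1/2-1/1000)}$; this suffices for continuity but not for a linear bound in $\epsilon$. Only afterwards, in Proposition \ref{prop1}, does the paper invoke Theorem \ref{prop6} (together with Lemma \ref{prop7} itself) to upgrade to Lipschitz.

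There is, however, a quantifier gap in your argument as written. What you call the ``defining identity'' is not: the occupation kernel in Definition \ref{def2} is $\alpha$, and replacing $\alpha(g+a,0,\mcl S^N)$ by $\wt\alpha(g+a)$ requires Theorem \ref{prop6}(I), which holds a.s.\ only for each \emph{fixed} $g$. Since the lemma asserts that a.s.\ $\phi^\omega$ is continuous at \emph{every} $y\in\mcl L_M^K(\mcl S^N)$ simultaneously, you need the bound $\lambda(\{|\wt B - g|\leq\delta\})\leq C\delta$ on a single null-set complement, uniformly in $g$; your argument taken literally only yields ``for each fixed $y$, a.s.\ continuous at $y$''. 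The repair is standard and is precisely why the paper restricts to the countable set $\wt{\mcl L}^{2K}(\mcl S^N)$ in the proof of Proposition \ref{prop1}: establish the identity (hence the bound) for $g$ in this countable dense set, and for arbitrary $g$ use the inclusion $\{|\wt B - g|\leq\delta\}\subseteq\{|\wt B - g_k|\leq\delta+\|g-g_k\|_{L^\infty}\}$ with $g_k\to g$. With that fix your route yields Proposition \ref{prop1} directly and the separate Lemma \ref{prop7} becomes unnecessary.
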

\begin{proof}
By the definition of $\phi^\omega$, for any $\ep>0$ and $y\in \mcl L^{K}(\mcl S^N)$ it holds a.s.\ that 
\eqb
\begin{split}
	&\sup_{\wt y\in \mcl L(\mcl S^N),\,\|\wt y-y\|_{L^\infty}<\ep} \|\phi^\omega(y)-\phi^\omega(\wt y)\|_{L^\infty}\\
	&\qquad\leq \sup_{\wt y\in \mcl L(\mcl S^N),\,\|\wt y-y\|_{L^\infty}<\ep}
	\sum_{\substack{1\leq m,m'\leq M,\\ m\neq m'}} 
	\int_{\mcl S^N} \bd 1_{
	B_{m}(x)+y_{m}(x) \geq B_{m'}(x)+y_{m'}(x);
	B_{m}(x)+\wt y_{m}(x) \leq B_{m'}(x)+\wt y_{m'}(x)
	}\,dx
	 \\
	&\qquad\leq \sum_{\substack{1\leq m,m'\leq M,\\ m\neq m'}} \int_{\mcl S^N} \bd 1_{|(B_{m'}(x)-B_m(x))-(y_{m}(x)-y_{m'}(x))|\leq 2\ep}\,dx.
\end{split}
\label{eq22}
\eqe 
We want to show that a.s., for all $y\in \mcl L^{K}(\mcl S^N)$ the right side of \eqref{eq22} converges to 0 as $\ep\rta 0$. Fix $m,m'\in\{1,...,M\}$, $m\neq m'$. The random field $\wt B(x):=B_{m'}(x)-B_m(x)$ has the law of a constant multiple of $B_m$. Also note that $y_{m}-y_{m'}\in\mcl L^{2K}(\mcl S^N)$. Assume $\ep=2^{-k}$ for $k\in\N$. On the event $G_k^1$ of Lemma \ref{prop16} (with $2K$ instead of $K$, $\wt B$ instead of $B$, and $\ep=1/1000$),
$$\int_{\mcl S^N} \bd 1_{|(B_{m'}(x)-B_m(x))-(y_{m}(x)-y_{m'}(x))|\leq 2\ep}\,dx\preceq 2^{k(N-1/2+1/1000)}\times 2^{-kN}= 2^{-k/2+k/1000}$$ 
for all $y\in \mcl L^{K}(\mcl S^N)$. The lemma now follows by Lemma \ref{prop16}.
\end{proof}

\begin{prop}
For any $\omega\in\Omega$ let $\phi^\omega$ be defined by \eqref{eq8}. For any $K>0$ the map $\phi^\omega|_{\mcl L^K_M(\mcl S^N)}:\mcl L^K_M(\mcl S^N)\to \mcl L^1_M(\mcl S^N)$ is a.s.\ Lipschitz continuous for the supremum norm.
\label{prop1}
\end{prop}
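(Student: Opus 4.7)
The plan is to follow the reduction already carried out at the start of the proof of Lemma \ref{prop7} and then replace the soft continuity bound obtained there with a sharp Lipschitz bound using the occupation kernel estimate of Theorem \ref{prop6}. Concretely, for $y,\wt y\in\mcl L^K_M(\mcl S^N)$ with $\varepsilon:=\|y-\wt y\|_{L^\infty}$, inequality \eqref{eq22} gives the pointwise/deterministic bound
\begin{equation*}
\|\phi^\omega(y)-\phi^\omega(\wt y)\|_{L^\infty}\le \sum_{m\neq m'} \int_{\mcl S^N}\bd 1_{|\wt B_{mm'}(x)-f_{mm'}(x)|\le 2\varepsilon}\,dx,
\end{equation*}
where $\wt B_{mm'}:=B_{m'}-B_m$ and $f_{mm'}:=y_m-y_{m'}$. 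The Lipschitz inequality will follow once each integral is bounded by a (random but $y$-independent) constant times $\varepsilon$.

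Fix a pair $m\neq m'$. The computation $\op{Cov}(\wt B_{mm'}(x),\wt B_{mm'}(x'))=\tfrac{2}{M}\lambda(\mcl N(x)\cap\mcl N(x'))$ shows $\wt B_{mm'}$ is a fixed multiple of the field $B$ of Theorem \ref{prop6}, so Theorem \ref{prop6} applies to it after a trivial rescaling of the Lipschitz class. Writing $\alpha_{mm'}(\cdot,\cdot,\cdot)$ for the occupation kernel of $\wt B_{mm'}-f_{mm'}$ (whose existence is given by Lemma \ref{prop14}(I)), one has
\begin{equation*}
\int_{\mcl S^N}\bd 1_{|\wt B_{mm'}(x)-f_{mm'}(x)|\le 2\varepsilon}\,dx=\int_{-2\varepsilon}^{2\varepsilon}\alpha_{mm'}(f_{mm'},a,\mcl S^N)\,da,
\end{equation*}
and translating the level set gives $\alpha_{mm'}(f_{mm'},a,\mcl S^N)=\alpha_{mm'}(f_{mm'}+a,0,\mcl S^N)$. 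By Theorem \ref{prop6}(I) together with Fubini (in $a$), the right-hand side equals $\int_{-2\varepsilon}^{2\varepsilon}\wt\alpha_{mm'}(f_{mm'}+a)\,da$ a.s.

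Since $y_m,y_{m'}\in\mcl L^K(\mcl S^N)$ we have $f_{mm'}\in\mcl L^{2K}(\mcl S^N)$, and for $\varepsilon\le 1$ and $|a|\le 2\varepsilon$ the shift $f_{mm'}+a$ lies in $\mcl L^{K'}(\mcl S^N)$ with $K'=K'(K,N)$ independent of $y,\wt y,\varepsilon,a$. Theorem \ref{prop6}(III), applied to each $\wt B_{mm'}$, then produces an a.s.\ finite random constant
\begin{equation*}
C:=\max_{m\neq m'}\sup_{g\in\mcl L^{K'}(\mcl S^N)}\wt\alpha_{mm'}(g),
\end{equation*}
and combining the above we obtain $\|\phi^\omega(y)-\phi^\omega(\wt y)\|_{L^\infty}\le 4M^2 C\varepsilon$ a.s.\ whenever $\varepsilon\le 1$. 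For $\varepsilon>1$, since $\phi^\omega$ takes values in $\mcl L^1_M(\mcl S^N)$ the left-hand side is bounded by $2\cdot 2^N$, so the Lipschitz inequality holds trivially after enlarging the constant.

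The main obstacle is the uniformity in $y$: the a.s.\ existence and representation of the occupation kernel in Lemma \ref{prop14} are per fixed $f$, and we need them to combine with Theorem \ref{prop6}(III) into a single a.s.\ bound valid simultaneously for all $f_{mm'}$ arising from $y\in\mcl L^K_M(\mcl S^N)$. This is exactly what the supremum over the class $\mcl L^{K'}(\mcl S^N)$ in Theorem \ref{prop6}(III) provides once $K'$ is chosen depending only on $K$ and $N$, so all the uniformity is absorbed into that theorem.
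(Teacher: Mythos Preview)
Your reduction via \eqref{eq22} and the occupation kernel, and the final bound through Theorem~\ref{prop6}(III), match the paper's argument exactly. The gap is in the last paragraph, where you claim the uniformity in $y$ is ``absorbed'' by Theorem~\ref{prop6}(III). It is not. Theorem~\ref{prop6}(III) bounds $\sup_g\wt\alpha(g)$ uniformly, but the link between $\wt\alpha$ and the quantity you actually need to bound --- namely $\lambda(\{|\wt B_{mm'}-f_{mm'}|\le 2\varepsilon\})$ --- goes through Theorem~\ref{prop6}(I), which only says $\wt\alpha(f)=\alpha(f,0,\mcl S^N)$ for each \emph{fixed} $f$ on a full-measure event depending on $f$. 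Your Fubini step is fine for fixed $f$, but the conclusion $\|\phi^\omega(y)-\phi^\omega(\wt y)\|_{L^\infty}\le 4M^2C\varepsilon$ is then an a.s.\ statement whose exceptional set depends on the pair $(y,\wt y)$. Since $\mcl L^K_M(\mcl S^N)$ is uncountable, you cannot take a union over all such pairs and still get a single null set.

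The paper patches exactly this point: it first invokes Lemma~\ref{prop7} (a.s.\ continuity of $\phi^\omega$ on $\mcl L^K_M$) together with Lemma~\ref{prop9} to reduce the Lipschitz estimate to the \emph{countable} dense class $\wt{\mcl L}^{2K}(\mcl S^N)$ of \eqref{eq70}. On that countable class the per-$f$ a.s.\ identities from Theorem~\ref{prop6}(I) hold simultaneously on a single full-measure event, and then Theorem~\ref{prop6}(III) finishes the job. Your argument becomes correct if you insert this countable-reduction step; without it the proof does not establish Lipschitz continuity as a property of $\phi^\omega$ for almost every fixed $\omega$.
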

\begin{proof}
Recall the set $\wt{\mcl L}^K(\mcl S^N)$ defined by \eqref{eq70}. By Lemmas \ref{prop9} and \ref{prop7} it is sufficient to prove a.s.\ Lipschitz continuity on $\wt{\mcl L}^K(\mcl S^N)$. By \eqref{eq22} it is sufficient to prove that for any $m,m'\in\{1,...,M\}$, $m\neq m'$, and $\wt B=B_m-B_{m'}$,
\eqb
\sup_{f\in \wt{\mcl L}^{2K}(\mcl S^N)}\int_{\mcl S^N} \bd 1_{|\wt B(x)-f(x)|\leq\ep}\,dx \preceq \ep,
\label{eq23}
\eqe
where the implicit constant is independent of $\ep$, but depends on $\wt B$. The occupation times formula \eqref{eq19} implies that a.s.\ for fixed $f$ and with $\alpha(f,\cdot,\cdot)$ denoting the occupation kernel of $\wt B-f$,
\eqbn
\int_{\mcl S^N}\bd 1_{|\wt B(x)-f(x)|\leq\ep}\,dx
=\int_{-\ep}^\ep \alpha(f,a,\mcl S^N)\,da.
\eqen
For fixed $a\in\R$, by the definition of $\alpha$ it holds a.s.\ that $\alpha(f,a,\mcl S^N)=\alpha(f+a,0,\mcl S^N)$. By Theorem \ref{prop6} (I) and with $\wt\alpha$ as in this theorem, we have $\alpha(f+a,0,\mcl S^N)=\wt\alpha(f+a)$ a.s. Therefore, 
\eqbn
\int_{\mcl S^N}\bd 1_{|\wt B(x)-f(x)|\leq\ep}\,dx
=\int_{-\ep}^\ep \wt\alpha(f+a)\,da \leq 
2\ep \sup_{f\in \mcl L^{K+\ep}(\mcl S^N)}\wt \alpha(f),
\eqen
which completes the proof of the lemma upon an application Theorem \ref{prop6} (III).
\end{proof}

We will deduce Theorem \ref{prop3} from the following Banach space version of the theorem known as the Picard-Lindel\"of theorem in the theory of differential equations. The theorem is proved in the same manner as the Picard-Lindel\"of theorem, i.e., by defining a contraction mapping from the integral version of \eqref{eq12}, showing that the Picard iterates converge to a solution, and deducing uniqueness from the contraction property, see e.g.\ \cite[Lemma 4.1.6]{mra02}. The integral in (iii) is the Bochner integral.
\begin{thm}
Let $(\wh{\mcl C},\|\cdot \|)$ be a Banach space, $\wh{\mcl L}\subset\wh{\mcl C}$, $y_0\in\wh{\mcl L}$, $t_0\in\BB R$, $\Delta t>0$, and $I=[t_0-\Delta t,t_0+\Delta t]$. Let $\phi:\wh{\mcl C} \rightarrow \wh{\mcl C}$ be a map satisfying the following properties: 
\begin{itemize}
\item[(i)] $\phi$ is uniformly Lipschitz continuous on the closure of $\wh{\mcl L}$,
\item[(ii)] $\sup_{y\in\wh{\mcl L}}\|\phi(y(s) )\|<\infty$, and
\item[(iii)] $ y_0+\int_{t_0}^t\phi(y)\,ds \in \wh{\mcl L}$ for any $t\in I$ and any continuous curve $(y(s))_{t_0\leq s\leq t}$ with values in $\wh{\mcl L}$.
\end{itemize}
Then there is a unique curve $(y(t))_{t\in I}$, such that $y(t_0)=y_0$, $\frac{\partial y}{\partial t}(t)=\phi(y(t))$, and $y(t)\in \wh{\mcl L}$ for all $t\in I$.
\label{prop4}
\end{thm}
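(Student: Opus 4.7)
The plan is to recast the ODE as the fixed-point equation
\[
(Ty)(t) := y_0 + \int_{t_0}^t \phi(y(s))\,ds
\]
on the space $\mcl X$ of continuous curves $y : I \to \overline{\wh{\mcl L}}$, where the Bochner integral is well defined by (i) and (ii). First I would verify that $T$ maps $\mcl X$ into itself: continuity of $t \mapsto (Ty)(t)$ follows from the uniform bound on $\|\phi\|$ in (ii) and dominated convergence, while $(Ty)(t) \in \overline{\wh{\mcl L}}$ is obtained by first invoking (iii) for curves with values in $\wh{\mcl L}$ and then extending to $\overline{\wh{\mcl L}}$-valued curves by uniform approximation, using the Lipschitz extension of $\phi$ from (i).

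The core step is a Banach fixed-point argument in a Bielecki weighted norm. Let $L$ be the Lipschitz constant of $\phi|_{\overline{\wh{\mcl L}}}$ supplied by (i), and endow $\mcl X$ with
\[
\|y\|_{*} := \sup_{t \in I} e^{-2L|t-t_0|}\|y(t)\|,
\]
which is equivalent to the sup norm (so $\mcl X$ is complete, as $\overline{\wh{\mcl L}}$ is closed in $\wh{\mcl C}$). For $y_1, y_2 \in \mcl X$ and $t \in I$, the Lipschitz bound gives
\[
e^{-2L|t-t_0|}\|(Ty_1)(t) - (Ty_2)(t)\|
\leq L\, e^{-2L|t-t_0|} \left| \int_{t_0}^t e^{2L|s-t_0|} \,ds \right| \|y_1 - y_2\|_{*}
\leq \tfrac{1}{2}\|y_1 - y_2\|_{*},
\]
so $T$ is a strict contraction on $\mcl X$ for the entire interval $I$ at once, and admits a unique fixed point $y^{*} \in \mcl X$.

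Finally, the integral identity $y^{*}(t) = y_0 + \int_{t_0}^t \phi(y^{*}(s))\,ds$ and continuity of $s \mapsto \phi(y^{*}(s))$ (via (i)) yield by the fundamental theorem of calculus for Bochner integrals that $y^{*} \in C^{1}(I, \wh{\mcl C})$ with $\tfrac{\partial y^{*}}{\partial t}(t) = \phi(y^{*}(t))$ and $y^{*}(t_0) = y_0$. To upgrade $y^{*}(t) \in \overline{\wh{\mcl L}}$ to $y^{*}(t) \in \wh{\mcl L}$, I would start the Picard iteration at the constant curve $y_n \equiv y_0 \in \wh{\mcl L}$ and apply (iii) inductively, showing every iterate stays in $\wh{\mcl L}$; since the contraction forces the iterates to converge uniformly to $y^{*}$, and in the intended applications (such as $\wh{\mcl L} = \mcl L^{K}_{M}(\mcl S^{N})$) the set $\wh{\mcl L}$ is closed under uniform convergence, the limit lies in $\wh{\mcl L}$. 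Uniqueness within $\wh{\mcl L}$ is immediate from uniqueness of the fixed point, since any solution satisfies the integral equation. I expect the only mild point of care to be this bookkeeping between $\wh{\mcl L}$ and its closure; once (i) and (iii) are combined as above, the argument is the standard Bielecki--Picard--Lindel\"of recipe with no genuine obstacle.
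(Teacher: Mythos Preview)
Your proposal is correct and follows the same Picard fixed-point strategy as the paper; the only technical difference is that you use a Bielecki weighted norm to obtain a one-shot contraction on all of $I$, whereas the paper argues convergence of the iterates directly via the classical factorial bound $\|y_{n+1}(t)-y_n(t)\|\le C_2C_1^{\,n}|t-t_0|^{n+1}/(n+1)!$ and proves uniqueness by the same estimate against an arbitrary second solution. Your caveat that the limit a priori lies only in $\overline{\wh{\mcl L}}$ and that one needs $\wh{\mcl L}$ closed (as in the intended application $\wh{\mcl L}=\mcl L^K_M(\mcl S^N)$) is actually more explicit than the paper's own write-up, which invokes (iii) at the limiting step without further comment.
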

\begin{proof}
The conditions $\frac{\partial y}{\partial t}=\phi(y(t))$ and $y(t_0)=y_0$ are equivalent to the following 
\eqb
y(t) = y_0 + \int_{t_0}^{t} \phi(y(s))\,ds.
\label{eq21}
\eqe
Define $y_0(t)=y_0$ for all $t\in I$, and for $n\in\BB N$ and $t\in I$ define $y_n(t)$ by induction:
\eqb
y_{n+1}(t) = y_0 + \int_{t_0}^{t} \phi(y_n(s))\,ds.
\label{eq86}
\eqe
By assumption (iii) we have $y_n(t)\in\wh{\mcl L}$ for all $n\in \BB N$ and $t\in I$. Let $C_1\geq 0$ be the Lipschitz constant of $\phi$ on $\wh{\mcl L}$, and let $C_2\in[0,\infty)$ be defined by $C_2:= \sup_{y\in\wh{\mcl L}}\|\phi(y)\|$. Then $\|y_1(t)-y_0\|\leq C_2|t-t_0|$ for all $t\in I$ by \eqref{eq86}. By assumption (i) and induction on $n$, we get further
\eqbn
\|y_{n+1}(t)-y_n(t)\| 
\leq \int_{t_0\wedge t }^{t_0\vee t}\|\phi(y_n(s))-\phi(y_{n-1}(s))\|\,ds
\leq C_2C_1^{n} |t-t_0|^{n+1}/(n+1)!. 
\eqen
Since $\wh{\mcl C}$ is complete it follows that $(y_n(t))_{t\in I}$ converges uniformly to a curve $(y(t))_{t\in I}$, such that $y(t)\in \wh{\mcl C}$ for any $t\in I$. Further, by sending $n\rta\infty$ in \eqref{eq86} it follows by continuity of $\phi$ on the closure of $\wh{\mcl L}$ and by the dominated convergence theorem for the Bochner integral that $y$ satisfies the integral equation \eqref{eq21}. We have $y(t)\in\wh{\mcl L}$ for all $t\in I$ by assumption (iii). This concludes the proof of existence of solutions.

To obtain uniqueness of solutions let $(\wt y(s) )_{s\in I}$ for $\wt y(s)\in\wh{\mcl L}$ be another solution. By \eqref{eq21} we have $\|y_0(t)-\wt y(t)\|\leq C_2|t-t_0|$ for all $t\in I$. By assumption (i) and induction we get further that for any $n\in\BB N$,
\eqbn
\|y_{n}(t)-\wt y(t)\| 
\leq \int_{t_0\wedge t }^{t_0\vee t}\|\phi(y_n(s))-\phi(\wt y(s))\|\,ds
\leq C_2C_1^{n} |t-t_0|^{n+1}/(n+1)!.
\eqen
Letting $n\rightarrow\infty$ it follows that $y=\wt y$.
\end{proof}

Theorem \ref{prop3} follows from Theorem \ref{prop4} applied with $\wh{\mcl C}=\mcl C_M(\mcl S^N)$, $\wh{\mcl L}=\mcl L^K_M(\mcl S^N)$ for some $K>0$, and $\phi=\phi^\omega$ as defined by \eqref{eq8}. 
\begin{proof}[Proof of Theorem \ref{prop3}]
First consider the case when $V=\mcl S$. Assume the assertion of the theorem is not true, and let $\wt t\in[0,\infty)$ be the supremum of times $t\geq 0$ for which \eqref{eq12} has a unique solution in $[0,t]$. Choose an arbitrary $\Delta t>0$, let $K>\wt t+ \Delta t$, and define $t_0=\wt t-(\Delta t)/2$. We will prove that the assumptions of Theorem \ref{prop4} are a.s.\ satisfied with $\phi=\phi^\omega$, $y_0=y(t_0)$, $\wh{\mcl L}=\mcl L_M^{K}(\mcl S^N)$, $\wh{\mcl C}=\mcl C_M(\mcl S^N)$ and $I=[t_0-\Delta t,t_0+\Delta t]$. 
We equip $\wh{\mcl C}$ with the norm $\|f\|=\max_{1\leq m\leq M}\|f_m\|_{L^\infty(\mcl S^N)}$ for $f=(f_1,\dots,f_M)\in \wh{\mcl C}$. 
Condition (i) holds by Proposition \ref{prop1}. 
Condition (ii) is satisfied since $\|\phi^\omega(y)\|\leq 2^N$ for any $y\in \mcl L_M^{K}(\mcl S^N)$ (in fact, this bound holds even for $y\in \mcl C_M(\mcl S^N)$). Finally observe that (iii) holds since for any $t\in I$,
\eqbn
\left\|y(t_0)+\int_{t_0}^t\phi^\omega(y(\cdot,s) )\,ds \right\|
\leq 2^N t_0 + \Delta t 2^N,
\eqen
and since the function $y(t_0)+\int_{t_0}^t\phi^\omega(y(\cdot,s))\,ds$ is Lipschitz continuous with Lipschitz constant at most $2^{N-1}\wt t+2^{N-1}\Delta t$ in each coordinate. Theorem \ref{prop4} now implies that \eqref{eq12} has a unique solution on $I$, which is a contradiction. This completes the proof of the theorem for the case $V=\mcl S$.

Now consider the case $N=1$ and $V=\R$. First we prove uniqueness. It is sufficient to show that given any $\ep>0$ solutions are unique on $[-\ep^{-1},\ep^{-1}]\times\R_+$ with probability at least $1-\ep$. Let $R\in\{3,4,\dots \}$ be sufficiently large such that with probability at least $1-\ep$ there are real numbers $x_i$ (which are random and measurable with respect to $\sigma(B)$) for $i=1,2,3,4$ such that $-R/2+1<x_1<x_2-1<-\ep^{-1}-1<\ep^{-1}+1<x_3+1<x_4<R/2-1$, and such that $x\mapsto p(B(x))$ is constant on the intervals $[x_1,x_2]$ and $[x_3,x_4]$. Consider the Schelling model on the torus $\mcl S$ of width $R$, and let $(\wt B(x))_{x\in[-R/2,R/2]}$ be the initial values. Couple $\wt B$ and $B$ such that $\wt B|_{[-R/2+1,R/2-1]}=B|_{[-R/2+1,R/2-1]}$ a.s., and observe that if $Y:\R\times\R_+$ solves the Schelling model \eqref{eq1}, \eqref{eq2} on $\R$, then $Y|_{[x_2,x_3]}$ is a solution to the Schelling model on $\mcl S$ restricted to $[x_2,x_3]$. Here we use that if $x\mapsto p(B(x))$ is constant on an interval of length $>1$ then $Y$ (resp.\ $\wh Y$) evolves independently to the left and to the right of this interval. By uniqueness of solutions to the Schelling model on $\mcl S$, we obtain uniqueness of solutions to the Schelling model on $\R$.

Existence follows by a similar argument. Let $\ep>0$, $R\in\{3,4,\dots \}$, $\wt B$, and $x_i\in\R$ for $i=1,2,3,4$ be as in the previous paragraph. It is sufficient to prove existence of $Y$ restricted to $[x_2,x_3]$, since the real line a.s.\ can be divided into countably many disjoint intervals, such that each interval either (i) has length $>1$ and is such that $x\mapsto p(B(x))$ is constant on the interval, or (ii) is between two intervals of type (i). If we find a solution on each interval of type (ii) we can get a global solution by concatenating the solution from the different intervals, since $p(Y(x,t))$ is constant for all $t\geq 0$ and all $x$ in an interval of type (i). By existence of solutions to the Schelling model on the torus, we define $Y|_{[x_2,x_3]}$ to be equal to the solution of the Schelling model on the torus restricted to $[x_2,x_3]$, which concludes the proof.
\end{proof}

\subsection{Long-time behavior of the one-dimensional continuum Schelling model}
\label{sec:propertiescontinuum}

The main result in this section is the following proposition. 
\begin{prop} \label{prop::unionofintervals}
Let $V=\R$ or let $V=\mcl S$. Let $Y$ be the solution of the initial value problem \eqref{eq1}, \eqref{eq2} on $V$ with $M=2$, $N=1$ and $\mcl N=\mcl N_\infty$, and for $m=1,2$ define 
\eqbn
A_m := \left\{x\in\BB R\,:\,\lim_{t\rta\infty} p(Y(x,t))=m \right\}.
\eqen
Then $\R=\ol{A_1\cup A_2}$ a.s.,
the boundary $\partial A_1$ is a.s.\ equal to the boundary $\partial A_2$, and this boundary a.s.\ consists of a countable collection of points such that the distance between any two of these points is strictly greater than one $1$. 
\label{prop20a}
\end{prop}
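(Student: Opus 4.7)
Specializing to $M=2$ one works with the single scalar function $\wh Y := Y_1 - Y_2$ satisfying \eqref{eq71} with initial data $\wh B$; write $\wh Y(\cdot,t) = \wh B + \wh y(\cdot,t)$, where $\wh y(\cdot,t) \in \mcl L^{2t}(V)$ by Theorem~\ref{prop3}. The case $V=\R$ reduces to $V=\mcl S$ (for large enough $R$) by the localization argument used at the end of the proof of Theorem~\ref{prop3}: any large interval is almost surely flanked on both sides by intervals of length $>1$ on which $x\mapsto p(\wh B(x))$ is constant, and solutions decouple across such buffer intervals. I therefore work on $\mcl S$ throughout. The argument splits into three parts: (A) for a.e.\ $x$, $\op{sign}\wh Y(x,t)$ converges as $t\to\infty$, giving $\lambda(\mcl S\setminus(A_1\cup A_2))=0$ and hence density; (B) each of $A_1,A_2$ is (up to a null set) a union of open intervals and their common boundary is discrete; (C) consecutive boundary points are separated by distance strictly greater than~$1$.

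\textbf{Step (A): Lyapunov functional.} Set
\[
E(t) := \int_{\mcl S}\int_{\mcl N(x)} \op{sign}\wh Y(x,t)\,\op{sign}\wh Y(x',t)\, dx'\,dx,
\]
bounded in absolute value by $2\lambda(\mcl S)$. Writing $F(t) := \int_{\mcl S} |\wh Y(x,t)|\,dx$, a calculation using \eqref{eq71} yields $F'(t) = E(t)$ and
\[
E'(t) = 2\int_{\mcl S} \delta(\wh Y(x,t))\,\big(\partial_t \wh Y(x,t)\big)^2\, dx \;\ge\;0,
\]
where the positivity is a continuum analogue of the well-known monotonicity of discrete majority dynamics (every sign-flip of $\op{sign}\wh Y(x,\cdot)$ aligns with the neighborhood majority and hence increases the pair-agreement integrand). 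Since $E$ is bounded and non-decreasing, it converges and $\int_0^\infty E'(t)\,dt<\infty$. Combining this with Fubini and Theorem~\ref{prop6} (applied to $\wh y(\cdot,t)$ to bound $\lambda(\{|\wh Y(\cdot,t)|<\epsilon\})$ uniformly in $t$ on compact intervals), one concludes that for a.e.\ $x$ the sign is eventually constant. This proves $\mcl S=\overline{A_1\cup A_2}$.

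\textbf{Step (B): Structure of $A_1$, $A_2$.} The key dynamical fact is a robust invariance: if $\wh Y(\cdot,t_0)>0$ on an interval $J$ of length greater than $2$ and $J' \subset J$ is obtained by shrinking $J$ by $1$ on each side, then $\partial_t \wh Y(x,t_0)=2$ for all $x\in J'$, and iterating gives $\wh Y(x,t) \ge \wh Y(x,t_0) + 2(t-t_0) > 0$ for all $t\ge t_0$ and $x\in J'$. Combining this with Step~(A) and the continuity of $\wh Y(\cdot,t)$, every point $x_0$ for which $\wh Y(x_0,t)$ eventually stays positive lies in the interior of a subinterval entirely contained in $A_1$. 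So (up to null sets) $A_1,A_2$ are open and their common boundary $\partial A_1=\partial A_2$ is discrete, hence at most countable; the equality $\partial A_1=\partial A_2$ is then immediate since both sets together are of full measure.

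\textbf{Step (C) and the main obstacle.} Assume for contradiction that consecutive boundary points $a<b$ satisfy $L := b-a \le 1$ with $(a,b)\subset A_1$ and the adjacent intervals in $A_2$. At the midpoint $m := (a+b)/2$, once $\lambda(\{\wh Y(\cdot,t)>0\}\triangle A_1)$ is small and since the $A_2$-neighbors together cover $\mcl N(m)\setminus(a,b)$, evaluating \eqref{eq71} gives
\[
\partial_t\wh Y(m,t)\;=\;\lambda\big(\mcl N(m)\cap A_1\big)-\lambda\big(\mcl N(m)\cap A_2\big)+o(1)\;\le\;2L-2+o(1)\qquad(t\to\infty).
\]
If $L<1$ this is uniformly strictly negative for large $t$, so $\wh Y(m,t)\to-\infty$, contradicting $m\in A_1$. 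The marginal case $L=1$ is the delicate one and the main obstacle, because then the bound above only gives $\partial_t\wh Y(x,t)\to 0$ for $x$ in the interior of $(a,b)$, not summable decay. The strategy is to extract a Lipschitz pointwise limit $\wh y(\cdot,\infty)$ on $(a,b)$ by combining the above asymptotics with the integrability output from Step~(A), then set $\wh Y(\cdot,\infty):=\wh B+\wh y(\cdot,\infty)$ and apply Theorem~\ref{prop6} to $\wh B$ and the Lipschitz function $-\wh y(\cdot,\infty)$: the almost sure strict positivity of the occupation kernel $\wt\alpha(-\wh y(\cdot,\infty))$ of $\wh B-(-\wh y(\cdot,\infty))$ at $0$ on any subinterval of positive length forces $\{x\in(a,b):\wh Y(x,\infty)<0\}$ to have positive Lebesgue measure, contradicting $(a,b)\subset A_1$. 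This realizes, at the level of the PDE, the authors' ``slight perturbation of the initial data'' idea from the introduction: the roughness of $\wh B$ is itself the perturbation that destabilizes any putative length-$1$ interval. Combining Steps (A)--(C) yields all three conclusions of the proposition.
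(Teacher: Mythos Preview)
Your proposal takes a route different from the paper's and has gaps at the two hardest points. In Step~(A), the formal identity $E'\ge 0$ does not rigorously yield pointwise convergence of $\op{sign}\wh Y(x,\cdot)$: even granting the energy monotonicity, $\int_0^\infty E'\,dt<\infty$ only controls zero-crossings weighted by $(\partial_t\wh Y)^2$, so infinitely many slow crossings at a fixed $x$ are not excluded; your appeal to ``Fubini and Theorem~\ref{prop6}'' is not an argument for this. In Step~(C) the case $L=1$ is the heart of the matter and your argument fails on three counts: (i) Theorem~\ref{prop6} gives $\sup_f\wt\alpha(f)<\infty$, not the strict positivity you invoke; (ii) $\wh y(\cdot,t)\in\mcl L^{2t}(\mcl S)$ has Lipschitz constant $2t\to\infty$, so no fixed-$K$ Lipschitz limit $\wh y(\cdot,\infty)$ is available, and the pointwise asymptotics $\partial_t\wh Y\to 0$ on $(a,b)$ do not furnish one; (iii) even if such a limit existed it would be $\sigma(\wh B)$-measurable, so Theorem~\ref{prop6}(I), which is stated for each \emph{fixed} $f$, does not apply. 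Step~(B) also leans on Step~(A): without a.e.\ convergence of the sign you cannot conclude that $A_1,A_2$ are open up to null sets, and your invariance observation only handles intervals of length $>2$, whereas the relevant statement (Lemma~\ref{prop21c}) needs length $\ge 1$.

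The paper proceeds quite differently. It first proves a deterministic propagation result (Lemmas~\ref{lem::interval1}--\ref{prop24}) using the time-averaged fractions $r(m,J,t)$: if an opinion dominates on a maximal interval of length $\ge 1$, then the opposite opinion dominates on the adjacent unit interval. The critical possibility of a length-exactly-$1$ interval is then ruled out by a perturbation/comparison argument (Lemmas~\ref{prop32}--\ref{prop31}): for two solutions $\wh Y^{\ep_1},\wh Y^{\ep_2}$ with initial data shifted by distinct constants $\ep_1>\ep_2>0$, one tracks $h(t)=\inf_x(\wh Y^{\ep_1}-\wh Y^{\ep_2})$ and the ``overlap length'' $\ell(t)$, and shows $h$ must grow superlinearly unless the desired conclusion already holds for at least one of the two perturbations; since the perturbed initial laws are mutually absolutely continuous with the unperturbed one, the bad event can hold for at most one $\ep$ and hence has probability zero.
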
 

The proposition implies that the complement of $\partial A_1 = \partial A_2$ is a sequence of open intervals, each of length greater than $1$, which alternately belong to $A_1$ and $A_2$. We make no statement about whether the limit does or does not exist at the boundary points themselves.
\begin{remark}
	The reason the proposition is only stated for $M=2$ is that a particular form of monotonicity of \eqref{eq1} holds only for $M=2$. More precisely, if $M=2$, $Y$ solves \eqref{eq1}, \eqref{eq2}, and $\wt Y$ solves \eqref{eq1} with initial data $\wt Y^1(\cdot,0)=B+f$ and $\wt Y^2(\cdot,0)=B-f$ for a strictly positive function $f$ (with $f$ chosen such that $\wt Y$ is well-defined), then $(\wt Y^1-\wt Y^2) - (Y^1-Y^2)$ is a strictly positive function which is increasing in $t$. However, we do believe that the proposition also holds for $M>2$, and if we had established the proposition for all $M$, then Theorem \ref{thm1} would hold for all $M\in\{2,3,\dots\}$. 
	\label{remark1}
\end{remark}

We briefly outline the proof of Proposition~\ref{prop::unionofintervals} before we proceed, and we begin with some notation. We say that an opinion $m$ \emph{dominates} in an interval $J$ in the limit as $t\rta\infty$ if the fraction of $(x,t)\in J\times[0,T]$ for which $p(Y(x,t))=m$ converges to 1 as $T\rta\infty$ (equivalently, $r(m,J,T)\rta 1$ with the notation introduced below).  Intuitively, this means that individuals in the interval $J$ are (regardless of how they started out) increasingly tending to switch their opinions to $m$ in the large $T$ limit.

The first part of the argument is to show that if a certain opinion $m$ dominates in an interval $J$ of length $\geq 1$ in the limit as $t\rta\infty$, and $J$ is not contained in a larger interval satisfying this property, then the interval of length 1 immediately to the right (or left) of $J$ is dominated by some other opinion $m'$ as $t\rta\infty$. 

This result is stated in Lemma \ref{prop24} (which is in turn immediate from Lemmas~\ref{lem::interval1} and~\ref{lem::interval2} below). As explained right after Lemma \ref{prop24}, from this lemma we can deduce a weak variant of Proposition \ref{prop20a} which holds for all $M$.

We conclude the proof of Proposition \ref{prop20a} by a perturbative approach. We show that if the result of Proposition \ref{prop20a} does not hold, then it will hold for a slight perturbation of the initial data which favors one opinion more. We deduce from this that the set of initial data on which the proposition does not hold is exceptional. If we increase the initial bias towards opinion (say) 1 uniformly by $\ep$, then we can find some positive (random) number $\wt\ell(t)$ such that for any $x$ the measure of the set $\{ x'\in\mcl N(x)\,:\, p(Y(x',t))=1 \}$ increases by at least $\wt\ell(t)$. By a detailed analysis of the differential equation we can show that $\inf_{t\geq 0}\wt\ell(t)>0$, which we use to show that there exists at least one interval of length $>1$ on which the bias converges, and further (using Lemma \ref{prop24}) that this property must hold everywhere.

The following lemma is immediate from \eqref{eq1}, and will be used throughout the proof of the proposition.
\begin{lem}
Let $V=\R$ or let $V=\mcl S$. Let $Y$ be a solution of the initial value problem \eqref{eq1}, \eqref{eq2} on $V$ with $N=1$ and $\mcl N=\mcl N_\infty$. Let $t\geq 0$, let $m\in\{1,...,M\}$, and let $J\subset V$ be an interval of length $\geq 1$ such that $p(Y(x,t))=m$ for all $x\in J$. Then $p(Y(x,t'))=m$ for all $t'\geq t$ and $x\in J$.
\label{prop21c}
\end{lem}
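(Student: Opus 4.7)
The plan is to show that once $p(Y(\cdot,t))=m$ on an interval $J$ of length at least one, the ODE \eqref{eq1} forces each difference $(Y_m-Y_{m'})(x,\cdot)$ with $m'\neq m$ to be non-decreasing in time on $J$, and then to propagate this forward by a standard continuation-in-time argument. There is no loss in assuming $J$ is compact: every point of $J$ lies in some compact subinterval $J'\subset J$ of length at least one, and it suffices to prove the lemma with $J'$ in place of $J$.

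The key ODE computation is immediate. Since $N=1$ and $\mcl N=\mcl N_\infty$ we have $\mcl N(x)=(x-1,x+1)$, hence $\lambda(\mcl N(x))=2$, and for every $x\in J$ the hypothesis $|J|\geq 1$ yields $\lambda(\mcl N(x)\cap J)\geq 1$ and $\lambda(\mcl N(x)\setminus J)\leq 1$. Subtracting two copies of \eqref{eq1},
\eqbn
\frac{\partial(Y_m-Y_{m'})}{\partial t}(x,s)=\lambda(\{p(Y(\cdot,s))=m\}\cap\mcl N(x))-\lambda(\{p(Y(\cdot,s))=m'\}\cap\mcl N(x)),
\eqen
which, under the hypothesis $p(Y(\cdot,s))=m$ on all of $J$, is at least $1-1=0$. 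Hence $(Y_m-Y_{m'})(x,\cdot)$ is non-decreasing on $J$ as long as opinion $m$ remains in plurality on all of $J$.

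To finish, set $t_0:=\inf\{s\geq t:\exists\,x\in J,\,p(Y(x,s))\neq m\}$ (with $\inf\emptyset=\infty$) and suppose for contradiction that $t_0<\infty$. On $[t,t_0)$ the hypothesis holds by definition of $t_0$, so the monotonicity above gives $(Y_m-Y_{m'})(x,s)\geq (Y_m-Y_{m'})(x,t)>0$ for $x\in J$ and $m'\neq m$; letting $s\uparrow t_0$ and using continuity of $Y$ in $t$, the same strict inequality holds at $s=t_0$, so $p(Y(x,t_0))=m$ on all of $J$. It remains to rule out an accumulation of failure points at $t_0$ from above. Choose $s_n\downarrow t_0$ and $x_n\in J$ with $p(Y(x_n,s_n))\neq m$; compactness of $J$ gives $x_n\to x^*\in J$ along a subsequence, and pigeonhole yields a single $m'\neq m$ with $(Y_m-Y_{m'})(x_n,s_n)\leq 0$ along a further subsequence. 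Joint continuity of $Y=y+B$ (Theorem \ref{prop3} together with continuity of $B$ from Lemma \ref{prop19}) then forces $(Y_m-Y_{m'})(x^*,t_0)\leq 0$, contradicting the strict positivity just obtained. The only non-routine step is this last limiting argument, and it is precisely why we reduced to compact $J$ at the start: without compactness, pointwise positivity of $Y_m-Y_{m'}$ on $J$ at time $t_0$ need not prevent failures at nearby points immediately afterward.
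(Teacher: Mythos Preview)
Your proof is correct and is precisely the elaboration the paper omits (the paper states only that the lemma ``is immediate from \eqref{eq1}''): you compute $\partial_t(Y_m-Y_{m'})=\lambda(\{p=m\}\cap\mcl N(x))-\lambda(\{p=m'\}\cap\mcl N(x))\ge 0$ on $J$ and run a standard openness-in-time argument via compactness and joint continuity.

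One small caveat: your opening reduction asserts that every point of $J$ lies in a compact subinterval $J'\subset J$ of length at least one. This is true when $|J|>1$ or when $J$ is closed, but fails if $J$ is a non-closed interval of length exactly $1$ (no compact subinterval of length $\ge 1$ fits inside). The paper only ever applies the lemma with closed intervals or intervals of length strictly greater than one, so the gap is immaterial for the applications; if you want the statement in full generality, you could note it separately for that edge case, or simply assume $J$ closed.
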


\begin{remark} For the discrete Schelling model on $\Z$ the final configuration of opinions will always consist of intervals of length at least $w+1$ in which all nodes have the same limiting opinion. This can be seen by the following argument. If there is an interval of length $w+1$ where all nodes have the same opinion $m$, then no nodes in this interval will ever change their opinion. Furthermore, if $i$ is the first node to the right of this interval for which $m$ is not the limiting opinion, then nodes $i,i+1,\dots,i+w$ must all have the same limiting opinion; otherwise node $i$ would not be satisfied in the final configuration. Since we consider the model on $\Z$, there will always be some interval of length $w+1$ where all nodes have the same opinion in the initial configuration. By induction on the nodes to the left and right, respectively, of this interval, it follows that all nodes are contained in an interval of length at least $w+1$ in which all nodes have the same limiting opinion. The two lemmas we will prove next (which imply Lemma \ref{prop24} when combined) are continuum analogs of this result. 
\end{remark}

Lemma \ref{lem::interval1} says that if some opinion $m$ dominates in an interval $I=(a-1,a)$ of length exactly 1, but $p\circ Y$ does not converge pointwise to $m$ in $I$, then there is some opinion $m'\neq m$ which dominates in the interval $\wt I=(a,a+1)$. We give a brief outline of the proof in the simplified setting where $M=2$ and $m=1$.  Observe that for $x\in I$ and $t\gg 1$, we see from \eqref{eq71} that $\wh Y(x,t)$ is approximately equal to $t\big(r(1,I,t)-r(2,\mcl N(x)\setminus I,t)\big)$. If $r(1,I,t)$ is very close to 1, but $\wh Y(x,t)<0$, then we must have $r(2,\mcl N(x)\setminus I,t)$ very close to 1. We can use this to argue existence of $x'$ satisfying $0<x'-a\ll 1$ such that $r(2,x',t)$ is close to 1. By \eqref{eq71} and since $\mcl N(x')$ is approximately equal to $I\cup\wt I$, we see that $\wh Y(x',t)$ is approximately equal to $t\big(r(1,I,t)-r(2,\wt I,t)\big)$. We can deduce from this that $r(2,\wt I,t)$ is close to 1, so opinion 2 dominates in the interval $\wt I$.
\begin{lem}\label{lem::interval1}
Let $Y$ be a solution of \eqref{eq1} with continuous initial data (chosen such that we have existence of solutions of \eqref{eq1}), $N=1$, $\mcl N=\mcl N_\infty$, and either $V=\mcl S$ or $V=\R$. For any $m\in\{1,...,M\}$, an interval $J\subset\R$, and $t\geq 0$, define
\eqbn
r(m,J,t):=\frac{1}{|J| t} \int_0^t \int_J \1_{p(Y(x,t'))=m} \,dx\,dt'.
\eqen
For $a\in\R$ define $I:=(a-1,a)$ and $\wt I:=(a,a+1)$. Assume there exists an $m\in\{1,...,M\}$ such that $\lim_{t\rta\infty}r(m,I,t)=1$, and that there exists $x\in I $ for which the limit $\lim_{t\rta\infty} p(Y(x,t))$ either does not exist or takes a value different from $m$. Then there exists an $m'\in\{1,...,M\}$, $m'\neq m$, such that $\lim_{t\rta\infty} r(m',\wt I,t) = 1$.
\label{prop23}
\end{lem}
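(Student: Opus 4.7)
My plan is to reduce everything to a statement about the time-averaged occupations
$R_{m''}(x,t) := \tfrac{1}{t}\int_0^t \1_{p(Y(x,s))=m''}\,ds$,
and to exploit the rigid geometric fact that $|\mcl N(x)\cap I| = |\mcl N(x)\setminus I| = 1$ for every $x\in I$ (since $|I|=1$ and $\mcl N=(-1,1)$). Integrating \eqref{eq1} in time gives the identity
\begin{equation*}
\frac{Y_{m''}(x,t)}{t} \;=\; \frac{Y_{m''}(x,0)}{t} \;+\; \int_{\mcl N(x)} R_{m''}(x',t)\,dx' \;-\; \tfrac{2}{M}.
\end{equation*}
The hypothesis $r(m,I,t)\to 1$ translates to $R_m(\cdot,t)\to 1$ in $L^1(I)$, and hence $R_{m''}(\cdot,t)\to 0$ in $L^1(I)$ for every $m''\neq m$.

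The hypothesis on the exceptional $x\in I$ yields a sequence $t_n\to\infty$ with $p(Y(x,t_n))\neq m$; a pigeonhole step fixes $m'\neq m$ with $Y_{m'}(x,t_n)\geq Y_m(x,t_n)$ for all $n$. Subtracting two instances of the displayed identity, dividing by $t_n$, and using the $L^1$ limits on $I$, I obtain
\begin{equation*}
\int_{\mcl N(x)\setminus I} R_{m'}(x',t_n)\,dx' \;-\; \int_{\mcl N(x)\setminus I} R_m(x',t_n)\,dx' \;\geq\; 1 - o(1).
\end{equation*}
Combined with $R_{m'}+R_m\leq 1$ pointwise and $|\mcl N(x)\setminus I|=1$, this forces $R_{m'}(\cdot,t_n)\to 1$ in $L^1$ on $\mcl N(x)\setminus I$, and in particular on the sub-interval $[a,x+1)\subset\wt I$. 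Next I select points $x_n'\in(a,a+\epsilon_n)$ with $\epsilon_n\downarrow 0$ and $R_{m'}(x_n',t_n)\to 1$ (such $x_n'$ exist because the set $\{x''\in[a,x+1):R_{m'}(x'',t_n)<1-\eta\}$ has measure $\to 0$, hence cannot eventually contain a fixed neighborhood of $a$). From $R_{m'}(x_n',t_n)\to 1$ I extract times $s_n\in[(1-\eta_n)t_n,t_n]$ with $p(Y(x_n',s_n))=m'$, so $Y_{m'}(x_n',s_n)>Y_m(x_n',s_n)$. Applying the displayed identity at $(x_n',s_n)$, and using that $\mcl N(x_n')$ converges in measure to $I\cup\wt I$ together with $R_m\to 1$, $R_{m'}\to 0$ in $L^1(I)$, this bias inequality yields $\int_{\wt I}R_{m'}(x'',s_n)\,dx'' \geq 1-o(1)$, i.e., $r(m',\wt I,s_n)\to 1$ along the sequence $s_n\to\infty$.

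The final step I plan is the upgrade from this subsequential limit to $r(m',\wt I,t)\to 1$ for all $t$, and this will be the main technical obstacle. My plan is a bootstrap: once $r(m',\wt I,s_n)$ is close to $1$, the displayed identity evaluated at interior points $x''\in\wt I$ shows that $Y_{m'}(x'',s_n)-Y_{m''}(x'',s_n)$ is of order $+s_n$ for every $m''\neq m'$, and since $|\partial_t Y_{m''}|\leq 2$ this strict dominance persists for a time window $[s_n,(1+c)s_n]$ with a constant $c>0$ that is uniform away from $\partial\wt I$. Thus $p(Y(\cdot,s))=m'$ on $(a+\epsilon,a+1-\epsilon)$ for all $s$ in this window, so the numerator of $r(m',\wt I,\cdot)$ grows at rate close to $|\wt I|$ throughout, and $r(m',\wt I,t)$ remains $\geq 1-o(1)$ on the window. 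Iterating across the unbounded sequence $\{s_n\}$, these windows eventually cover all times beyond some $T^*$. The delicate point is that the persistence window shrinks near the endpoints of $\wt I$, where the bias $Y_{m'}-Y_{m''}$ becomes small; to close the gap, I plan to combine the continuity of $Y$ in $x$ with the symmetric information $R_{m'}\to 1$ on $(x-1,a-1]$ derived in the previous paragraph, and apply Lemma~\ref{prop21c} to a shifted interval of length exactly $1$ on which the strict dominance $p=m'$ can be forced at some finite time, freezing the configuration permanently. The tightness of the interval length (exactly $1$ rather than strictly greater) is precisely why this last step requires the most care.
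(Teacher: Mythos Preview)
Your initial steps are sound and close in spirit to the paper's argument: you integrate \eqref{eq1} to get the identity relating $Y_{m''}(x,t)/t$ to the time-averaged occupations $R_{m''}$, and you correctly deduce $r(m',\wt I,s_n)\to 1$ along a sequence $s_n\to\infty$.

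The gap is in your final upgrade from the subsequence $(s_n)$ to all $t$. Two concrete failures:
\begin{itemize}
\item Your persistence claim ``$Y_{m'}(x'',s_n)-Y_{m''}(x'',s_n)$ is of order $+s_n$'' fails for $m''=m$. For $x''\in\wt I$, the neighborhood $\mcl N(x'')$ splits into a piece of $I$ (where $R_{m'}-R_m\to -1$), all of $\wt I$ (where $R_{m'}-R_m\to +1$ along $s_n$), and a piece of $(a+1,a+2)$ over which you have no control. Summing gives only
\[
\frac{(Y_{m'}-Y_m)(x'',s_n)}{s_n}\ \geq\ -(a-x''+1)+1-(x''-a)+o(1)\ =\ o(1),
\]
not a uniformly positive bound. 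So you cannot conclude $p(Y(\cdot,s))=m'$ on any sub-interval of $\wt I$ for $s$ in a window $[s_n,(1+c)s_n]$, and the plan to freeze via Lemma~\ref{prop21c} on a length-$1$ interval collapses (any shift of that interval overlaps either $I$, where $p\approx m$, or $(a+1,a+2)$, where you know nothing).
\item Even if persistence held on $(a+\ep,a+1-\ep)$, the windows $[s_n,(1+c)s_n]$ need not cover $[T^*,\infty)$, since the ratios $s_{n+1}/s_n$ (inherited from the single fixed exceptional point $x$) can be unbounded.
\end{itemize}

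The paper's key observation, which your subsequential approach misses, is that the hypothesis forces exceptional behavior at \emph{every} large time: set $x_t:=\inf\{z\geq a-1+d_\ep:p(Y(z,t))\neq m\}$. If ever $x_t>a+d_\ep$, then $p(Y(\cdot,t))=m$ on an interval of length $>1$, which by Lemma~\ref{prop21c} persists forever and (combined with $r(m,I,t)\to 1$) forces $p(Y(\cdot,t))\to m$ on all of $I$, contradicting the hypothesis. Hence $x_t\in[a-1+d_\ep,a+d_\ep]$ for every large $t$. From this the paper extracts, for every large $t$, a point $x'_t\in(a,a+d'_\ep)$ with $r(m'_t,x'_t,t)>1-d'_\ep$, and then shows directly that the set of times $s$ at which $r(m',\wt I,s)$ is not close to $1$ has density at most $d'_\ep$ in $[0,t]$. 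This density bound (rather than persistence of a pointwise plurality) is what delivers the full limit. Replacing your subsequence by this ``for every $t$'' mechanism is the missing ingredient.
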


\begin{figure}
	\centering
	\includegraphics[scale=1]{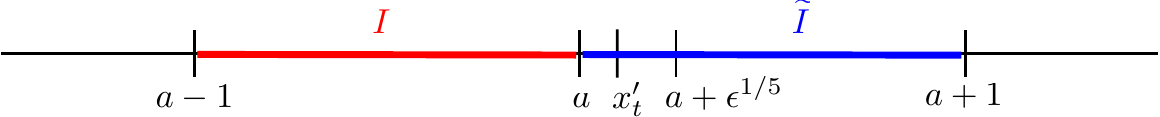}
	\caption{Illustration of objects in the statement and proof of Lemma \ref{prop23}. We assume that opinion $m$ dominates in the interval $I$, in the sense that $r(m,I,t)\rta 1$ as $t\rta\infty$. We also assume that there exists an $x \in I$ for which $p(Y(x,t))$ does not converge to $m$.
Using the latter assumption, we prove that if we pick some sufficiently small $\epsilon>0$ then for {\em every} sufficiently large $t$ (where in particular $t$ is large enough so that $r(m,I,t) > 1 - \epsilon$) there exists an $x_t $ in the slightly translated interval $(a-1+\sqrt{\epsilon}, a +\sqrt{\epsilon})$ such that $p(Y(x_t, t)) \not = m$. Using this, we will then prove existence of $m'\in\{1,\dots,M \}\setminus\{m \}$ and $x'_t\in(a,a+\epsilon^{1/5})$ for all sufficiently large $t>0$, such that $r(m',x'_t,t)>1-\epsilon^{1/5}$. Then we use the existence of $x'_t$ to prove that $\lim_{t\rightarrow\infty}r(m',\wt I,t)=1$. Note that by symmetry, we could have made the analogous argument with $\wt I$ on the left side of $I$, instead of the right side.}
\end{figure}

\begin{proof}
Define $K:=\sup_{x\in I\cup\wt I} \|Y(x,0)\|_{1}$. Let $\ep\in (0,1/10)$, and define $d_\ep:=\ep^{1/2}$ and $d'_\ep:=\ep^{1/5}$. For each $t\geq 0$ we define $x_t\in \R$ by
\eqbn
x_t := \inf\{x\geq a-1+d_\ep\,:\, p(Y(x,t)) \neq m \},
\eqen
and let $m'_t\in\{1,...,M\}\backslash\{m \}$ be such that $Y_m(x_t,t)\leq Y_{m'_t}(x_t,t)$. Note that we can find such an $m'_t$ since $Y$ is continuous.

For $m\in\{1,...,M\}$, $x\in\R$ and $t\geq 0$ define 
\eqb
r(m,x,t) :=\frac 1t \int_0^t \1_{p(Y(x,t'))=m }\,dt'.
\label{eq79}
\eqe
We abuse notation slightly by letting $r$ denote both this function and the function in the statement of the lemma.

We will prove that for all sufficiently large $t\geq 0$ there exists $x'_t\in (a,a+d'_\ep)$ satisfying $r(m'_t,x'_t,t)>1-d'_\ep$. The following relation, which follows directly from \eqref{eq1} and holds for any $m_1,m_2\in\{1,...,M\}$, will be used multiple times throughout the proof of this result
\eqb
\frac 1t\big(Y_{m_1}(x_t,t) - Y_{m_2}(x_t,t)\big)
-\frac 1t\big(Y_{m_1}(x_t,0) - Y_{m_2}(x_t,0)\big)
= r(m_1,\mcl N(x_t),t) - r(m_2,\mcl N(x_t),t).
\label{eq35}
\eqe

We consider the following three cases separately: (I) $a-1+d_\ep\leq x_t< a-1+d'_\ep$, (II) $a-1+d'_\ep\leq x_t< a$, (III) $a\leq x_t\leq a+d_\ep$. One of the cases (I)-(III) must occur by the following argument, i.e., we cannot have $x_t> a+d_\ep$. If $x_t> a+d_\ep$, we would have an interval of length $>1$, such that $p(Y(x,t))=m$ for all $x$ in the interval. Therefore, by Lemma \ref{prop21c} we would have $p(Y(x,t'))=m$ for all $t'\geq t$ and all $x$ in the interval. Since $\lim_{t\rta\infty} r(m,I,t)= 1$, and by the differential equation \eqref{eq1} and Lemma \ref{prop21c}, it would follow that $\lim_{t\rta\infty}p(Y(x,t))=m$ for all $x\in I$, which is a contradiction to the assumptions of the lemma.

First consider cases (I) and (II) defined above. Define  $I_L:=(x_t-1,a-1)$, $I_R:=(a,x_t+1)$ and $d_t:=x_t-(a-1)\geq d_\ep$. Note that $\mcl N(x_t)=\ol{I_L\cup I\cup I_R}$, $|I_R|=d_t$ and $|I_L|=1-d_t$. By \eqref{eq35}, for all sufficiently large $t$ (chosen such that $r(m,I,t)>1-\ep$, which implies $r(m'_t,I,t)<\ep$)
\eqbn
\begin{split}
|I_R|r(m'_t,I_R,t) 
=&\,
\frac 1t \Big(Y_{m'_t}(x_t,t)-Y_{m}(x_t,t)\Big) 
- \frac 1t \Big(Y_{m'_t}(x_t,0)-Y_{m}(x_t,0)\Big)
- r(m'_t,I,t) + r(m,I,t) \\
& - |I_L| r(m'_t,I_L,t) + |I_L|r(m,I_L,t)
  + |I_R| r(m,I_R,t)  \\
>&\, 0 - 2K/t - \ep + (1-\ep) - (1-d_t) + 0 + 0\\
=&\, -2K/t - 2\ep + d_t.
\end{split}
\eqen
If there is no appropriate $x'_t$ and case (I) occurs,
\eqbn
d_t(1- d'_\ep) 
\geq d_t \sup_{x\in[a,a+d_t]} r(m'_t,x,t)
\geq |I_R| r(m'_t,I_R,t)
> -2K/t - 2\ep + d_t,
\eqen
which is a contradiction for sufficiently large $t$, since $d_t d'_\ep \geq 2K/t + 2\ep$ for all large $t$. If there is no appropriate $x'_t$ and case (II) occurs, 
\eqbn
	d_t - (d'_\ep)^2
	\geq d'_\ep \sup_{x\in[a,a+d'_\ep]} r(m'_t,x,t) 
	+ (d_t-d'_\ep) \sup_{x\in[a+d'_\ep ,a+d'_t]} r(m'_t,x,t)
	\geq |I_R| r(m'_t,I_R,t) 
	\geq 2 K/t -2 \ep + d_t,
\eqen
which is a contradiction for sufficiently large $t$, since $d'_\ep d'_\ep \geq K/t + \ep$ for all large $t$. 

In case (III) define $d_t:=x_t-a<d_\ep$, $I_L:=I\cap \mcl N(x_t)=(x_t-1,a)$ and $I_R:=\mcl N(x_t)\backslash I_L=(a,x_t+1)$. Note that $|I_L|=1-d_t$ and $|I_R|=1+d_t$. By \eqref{eq35}, for any sufficiently large $t$ (such that $|I_L|r(m,I_L,t)>1-d_t-\ep$)
\eqbn
\begin{split}
|I_R| r(m'_t,I_R,t) 
=&\,\frac 1t \Big(Y_{m'_t}(x_t,t)-Y_{m}(x_t,t)\Big) 
- \frac 1t \Big(Y_{m'_t}(x_t,0)-Y_{m}(x_t,0)\Big)\\
&- |I_L| r(m'_t,I_L,t) + |I_L|r(m,I_L,t)
+ |I_R| r(m,I_R,t)  \\
\geq&\, -2K/t + 0 - \ep + (1-d_t-\ep) + 0,
\end{split}
\eqen
so if there is no appropriate $x'_t$,
\eqbn
\begin{split}
1 + d_t - (d'_\ep)^2
&= d'_\ep(1-d'_\ep) + (1+d_t-d'_\ep) \\ 
&\geq d'_\ep \sup_{x\in [a,a+d'_\ep]} r(m'_t,x,t) 
+ (1+d_t-d'_\ep) \inf_{x\in I_R\backslash [a,a+d'_\ep]} r(m'_t,x,t) \\
&\geq |I_R| r(m'_t,I_R,t) \\
&\geq -2K/t - 2\ep - d_t + 1,
\end{split}
\eqen
which implies $(d'_\ep)^2 \leq 2K/t+2\ep+ 2d_t$. This is a contradiction for sufficiently large $t$. We conclude that an appropriate $x'_t$ exists for all large $t$ in all cases (I)-(III).

For any $t\geq 0$ define
\eqbn
S^\ep_{t} := \{s\in (K/\ep,\infty)\,:\, r(m'_t,\wt I,s)< 1-3\ep-3d'_\ep,\, r(m,I,s)>1-\ep \}.
\eqen
We will prove that $|S^\ep_{t}\cap[0,t]|/t < d'_\ep$ for all sufficiently large $t>0$. For any $t>0$ sufficiently large such that $x'_t$ exists and $r(m,I,s)>1-\ep$, and for any $s\in S^\ep_{t}$ it follows from \eqref{eq35} that
\eqbn
\begin{split}
\frac 1s \big( Y_m(x'_t,s) - Y_{m'_t}(x'_t,s )\big)
=&\, \frac 1s \big( Y_m(x'_t,0) - Y_{m'_t}(x'_t,0 ) \big)
+ (x'_t-a)r(m,[a+1,x'_t+1],s )\\ 
&- (x'_t-a)r(m'_t,[a+1,x'_t+1],s )
+ r(m,\wt I,s ) - r(m'_t,\wt I,s )\\
&+ (1+a-x'_t) r(m,[x'_t-1,a],s ) - (1+a-x'_t) r(m'_t,[x'_t-1,a],s ) \\
\geq&\, -2K/s + 0 - d'_\ep + 0 - (1-3\ep-3d'_\ep) + (1-\ep-d'_\ep) - \ep \\
>&\, 0,
\end{split}
\eqen
where we used the following estimates to obtain the first inequality 
\eqbn
\begin{split}
	(1+a-x'_t) r(m,[x'_t-1,a],s)
	=  r(m,I,s) - (x'_t-a)r(m,[a-1,x'_t-1],s)
	\geq (1-\ep)-d'_\ep,\\
	(1+a-x'_t) r(m'_t,[x'_t-1,a],s)
	= r(m_t,I,s) - (x'_t-a)r(m_t,[a-1,x'_t-1],s)
	\leq (1 - r(m,I,s))-0
	\leq \ep.
\end{split}
\eqen
Therefore $Y_m(x'_t,s) > Y_{m'_t}(x'_t,s )$ for all sufficiently large $t$ and $s\in S^\ep_{t}$, and it follows from the definition of $r$ that $r(m'_t,x'_t,t) \leq 1- |S^\ep_{t}\cap[0,t]|/t$ for all sufficiently large $t$. Since $1-d'_\ep < r(m'_t,x'_t,t)$ by definition of $x'_t$ it follows that $|S^\ep_{t}\cap[0,t]|/t < d'_\ep$ for all sufficiently large $t$. 

Note that if $\ep$ is sufficiently small, and $t,t'\geq 0$ are such that $m'_t\neq m'_{t'}$, then it follows from the definition of $S^\epsilon_t$ and $S^\epsilon_{t'}$ that $(S^\ep_{t})^c\cap (S^\ep_{t'})^c\subset \{ s\geq 0\,:\,r(m,I,s)\leq 1-\ep \}$. Therefore the estimate $|S^\ep_{t}\cap[0,t]|/t < d'_\ep$ for all sufficiently large $t$ and the assumption $\lim_{s\rta\infty}r(m,I,s)=1$ imply that there is an $m'\in\{1,...,M\}$ such that $m'_t=m'$ for all sufficiently large $t$. Define 
\eqbn
S'_\ep:= \{s\in (K/\ep,\infty)\,:\, r(m',\wt I,s)< 1-3\ep-3d'_\ep \}, 
\eqen
and note that $|S'_{\ep}\cap[0,t]|/t < d'_\ep$ for all sufficiently large $t$.

Let $\wt\ep>0$. In order to complete the proof of the lemma it is sufficient to show that the set of $t'>0$ such that $r(m',\wt I,t')<1-\wt\ep$ is bounded from above. Let $t'>0$ be such that $r(m',\wt I,t')<1-\wt\ep$. Choose $\ep>0$ such that $\wt\ep>10d'_\ep$. By definition of $r$, for any $t\in[(1-\frac{\wt\ep}{10})t',t']$ we have $r(m',\wt I,t)<1-\frac 12 \wt\ep$. By definition of $\ep$ we have $3\ep+3d'_\ep< \frac 12 \wt\ep$, so $r(m',\wt I,t)<1-3\ep-3d'_\ep$ for all $t\in[(1-\frac{\wt\ep}{10})t',t']$. By definition of $S'_\ep$ this implies that $|S'_\ep \cap [0,t']|/t'>\wt\ep/10$. On the other hand we know from the preceding paragraph that $|S'_{\ep}\cap[0,t]|/t < d'_\ep<\wt\ep/10$ for all sufficiently large $t$, which completes the proof of the lemma.
\end{proof}

\begin{lem}\label{lem::interval2}
Let $Y$ be the solution of the initial value problem \eqref{eq1}, \eqref{eq2} for $N=1$, $\mcl N=\mcl N_\infty$, and either $V=\mcl S$ or $V=\R$. Assume $a\in\R$ and $m\in\{1,\dots,M\}$ are such that $\lim_{t\rta\infty}p(Y(x,t))=m$ for all $x\in I:=(a-1,a)$, and that for $x>a$ arbitrarily close to $a$, either the limit $\lim_{t\rta\infty}p(Y(x,t))$ does not exist or $\lim_{t\rta\infty}p(Y(x,t))\neq m$. Then there is an $m'\in\{1,\dots,M\}$, $m'\neq m$, such that, in the notation of Lemma \ref{prop23}, we have $\lim_{t\rta\infty} r(m',\wt I,t)=1$ for $\wt I:=(a,a+1)$.
\label{prop29} 
\end{lem}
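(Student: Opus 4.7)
My plan is to run the proof of Lemma~\ref{prop23} almost verbatim, with only one step requiring the new hypothesis (b') of Lemma~\ref{lem::interval2} in place of the old one. The first observation is that the pointwise convergence $\lim_{t\to\infty} p(Y(x,t)) = m$ for every $x \in I$ implies, by bounded convergence and Cesàro averaging, that $r(m, I, t) \to 1$ (and $r(m'', I, t) \to 0$ for every $m'' \neq m$) as $t \to \infty$. Consequently every time-averaged estimate from the proof of Lemma~\ref{prop23}---which only ever inputs the inequality $r(m, I, s) > 1 - \epsilon$ for large $s$---is still available to us.

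Next I would fix $\epsilon > 0$ small, put $d_\epsilon := \epsilon^{1/2}$ and $d'_\epsilon := \epsilon^{1/5}$, and define $x_t := \inf\{x \geq a - 1 + d_\epsilon : p(Y(x,t)) \neq m\}$ exactly as in the proof of Lemma~\ref{prop23}. The three cases (I) $a-1+d_\epsilon \leq x_t < a-1+d'_\epsilon$, (II) $a-1+d'_\epsilon \leq x_t < a$, (III) $a \leq x_t \leq a + d_\epsilon$ are treated word-for-word by the same computations using \eqref{eq35} and the estimates $r(m'_t, I, t) < \epsilon$, $r(m, I_L, t)|I_L| > 1 - d_t - \epsilon$ etc., producing for each sufficiently large $t$ a point $x'_t \in (a, a + d'_\epsilon)$ and an opinion $m'_t \neq m$ with $r(m'_t, x'_t, t) > 1 - d'_\epsilon$ (where $r(m, x, t)$ is the time-average defined in \eqref{eq79}).

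The one step where the original argument genuinely needs our hypothesis (b') is the exclusion of the case $x_t > a + d_\epsilon$. In Lemma~\ref{prop23} this was ruled out because $p(Y(\cdot, t)) = m$ on the length-$1$ interval $(a-1+d_\epsilon, a+d_\epsilon)$ forces, via Lemma~\ref{prop21c}, $\lim_{t'\to\infty} p(Y(x, t')) = m$ for every $x$ in that interval, contradicting the hypothesized non-convergent point in $I$. In our setting Lemma~\ref{prop21c} still gives $\lim_{t'\to\infty} p(Y(x, t')) = m$ on the same interval, and now in particular on $(a, a + d_\epsilon)$, which directly contradicts our hypothesis (b') applied with $\eta = d_\epsilon$. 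With this replacement, the final bootstrap in the proof of Lemma~\ref{prop23}---introducing the set $S_t^\epsilon$ of bad times, showing $|S_t^\epsilon \cap [0,t]|/t < d'_\epsilon$, deducing $m'_t = m'$ is eventually constant, and concluding $\lim_{t\to\infty} r(m', \wt I, t) = 1$---goes through unchanged, because those arguments depend only on $r(m, I, s) \to 1$ and the existence of the $x'_t$. I expect no real obstacle beyond a careful line-by-line audit to confirm that every appeal to the old non-convergence hypothesis in Lemma~\ref{prop23} was either (i) the exclusion of $x_t > a + d_\epsilon$, which we replaced, or (ii) absorbed into $r(m, I, t) \to 1$, which we already have.
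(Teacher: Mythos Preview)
Your approach is correct and complete: the proof of Lemma~\ref{prop23} indeed invokes its non-convergence hypothesis only at the step excluding $x_t>a+d_\epsilon$, and your replacement via Lemma~\ref{prop21c} and hypothesis (b') is valid (if $x_t>a+d_\epsilon$ then $p(Y(\cdot,t'))=m$ on $(a,a+d_\epsilon)$ for all $t'\ge t$, contradicting (b')); everything else in that proof runs purely on $r(m,I,s)>1-\epsilon$ for large $s$, which you obtain from pointwise convergence by bounded convergence.

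The paper, however, takes a much shorter route that bypasses the entire case analysis of Lemma~\ref{prop23}. It observes that the hypotheses furnish points $x_t\to a$ with $m_t:=p(Y(x_t,t))\neq m$; since $\mcl N(x_t)$ differs from $I\cup\wt I$ only by a set of measure $o_t(1)$, a single application of \eqref{eq35} at $x_t$ yields
\[
0\le \tfrac1t\bigl(Y_{m_t}(x_t,t)-Y_m(x_t,t)\bigr)=r(m_t,I,t)+r(m_t,\wt I,t)-r(m,I,t)-r(m,\wt I,t)+o_t(1),
\]
and since $r(m,I,t)\to 1$ forces $r(m_t,I,t)\to 0$, one reads off $r(m_t,\wt I,t)\to 1$ and $r(m,\wt I,t)\to 0$ directly; eventual constancy of $m_t$ then follows from $\sum_k r(k,\wt I,t)=1$. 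Your route is more laborious but arguably more self-contained, as it does not require justifying the existence of such $x_t$ for \emph{every} large $t$; the paper's route shows that the stronger pointwise hypotheses of Lemma~\ref{lem::interval2} permit a direct argument with none of the $d_\epsilon$--$d'_\epsilon$ bookkeeping.
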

\begin{proof}
For each $t\geq 0$ we can find an $x_t\in \R$ such that $\lim_{t\rta\infty}x_t= a$ and $m_t:=p(Y(x_t,t))\neq m$. By the identity \eqref{eq35} and letting $o_t(1)$ denote a term which converges to 0 as $t\rta\infty$,
\eqbn
\begin{split}
	0 \leq&\, \frac 1t(Y_{m_t}(x_t,t)- Y_{m}(x_t,t))\\
	=&\, \frac 1t(Y_{m_t}(x_t,0)- Y_{m}(x_t,0)) 
	+ |\mcl N(x_t) \cap I|\cdot r(m_t,\mcl N(x_t) \cap I,t)
	+ |\mcl N(x_t) \cap \wt I|\cdot r(m_t,\mcl N(x_t) \cap \wt I,t)\\
	&- |\mcl N(x_t) \cap I|\cdot r(m,\mcl N(x_t) \cap I,t)
	- |\mcl N(x_t) \cap \wt I|\cdot r(m,\mcl N(x_t) \cap \wt I,t)+o_t(1)\\
	= &\, r(m_t,I,t)+r(m_t,\wt I,t)-r(m,I,t)-r(m,\wt I,t)+o_t(1).
\end{split}
\eqen
Since $\lim_{t\rta\infty}r(m,I,t)=1$, which implies $\lim_{t\rta\infty}r(m_t,I,t)=0$, it follows that $\lim_{t\rta\infty}r(m_t,\wt I,t)=1$ and $\lim_{t\rta\infty}r(m,\wt I,t)=0$. Since $\sum_{k=1}^M r(k,\wt I,t)=1$ this implies further that there is an $m'\in\{1,...,M\}$, $m'\neq m$, such that $m_t=m'$ for all sufficiently large $t>0$. It follows that $\lim_{t\rta\infty} r(m',\wt I,t)=1$.
\end{proof}

The following lemma is immediate from Lemmas \ref{prop23} and \ref{prop29}. 
\begin{lem}
	Let $V=\mcl S$ or $V=\R$, and consider the initial value problem \eqref{eq1}, \eqref{eq2} for $N=1$ and $\mcl N=\mcl N_\infty$. Assume $m\in\{1,\dots,M \}$ and $I=[a_1,a_2]\subset\R$ is an interval of length $\geq 1$ such that $\lim_{t\rta\infty} r(m,I,t)=1$, and such that $I$ is not contained in any larger interval satisfying this property. Then there is an $m'\in\{1,\dots,M \}\setminus \{m \}$ such that $\lim_{t\rta\infty} r(m',[a_2,a_2+1],t)=1$.
	\label{prop24}
\end{lem}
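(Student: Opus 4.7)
The plan is to reduce the lemma directly to Lemmas \ref{prop23} and \ref{prop29} by examining the pointwise behaviour of $p\circ Y$ on the length-$1$ subinterval $I^*:=(a_2-1,a_2)$ of $I$. A useful preliminary averaging fact, which I would verify first, is that whenever $r(m,J,t)\to 1$ and $J'\subset J$ is a subinterval, then $r(m,J',t)\to 1$; this is immediate from the identity $|J|\,r(m,J,t) = |J'|\,r(m,J',t) + |J\setminus J'|\,r(m,J\setminus J',t)$ together with $r\le 1$, which rearranges to $r(m,J',t)\ge \frac{|J|}{|J'|}r(m,J,t) - \frac{|J\setminus J'|}{|J'|}$. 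In particular, $r(m,I^*,t)\to 1$, which is the form needed for the two preceding lemmas.

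If there exists $x\in I^*$ at which $\lim_{t\to\infty} p(Y(x,t))$ either fails to exist or equals some value different from $m$, then all hypotheses of Lemma \ref{prop23} are satisfied with $a=a_2$, and that lemma directly produces the desired $m'\neq m$ with $\lim_{t\to\infty} r(m',(a_2,a_2+1),t)=1$.

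Otherwise $\lim_{t\to\infty} p(Y(x,t))=m$ for every $x\in I^*$, and my plan is to apply Lemma \ref{prop29} with $a=a_2$. The first hypothesis of that lemma is then in place, and I need to verify the second: that for $x>a_2$ arbitrarily close to $a_2$, the limit either fails to exist or differs from $m$. I would argue this by contradiction using the maximality of $I$. If there existed $\delta>0$ with $\lim_{t\to\infty} p(Y(x,t))=m$ for every $x\in(a_2,a_2+\delta)$, then dominated convergence applied to the time Cesaro averages would give $r(m,(a_2,a_2+\delta),t)\to 1$; combined with the hypothesis $r(m,I,t)\to 1$ via the averaging identity above, this would yield $r(m,I\cup(a_2,a_2+\delta),t)\to 1$, contradicting the maximality of $I$. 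Lemma \ref{prop29} then delivers the desired $m'$. I expect no real obstacle beyond this bookkeeping, since the two preceding lemmas already do all the hard analytic work; the only non-automatic step is this use of maximality in the second case to exclude pointwise convergence to $m$ just to the right of $a_2$.
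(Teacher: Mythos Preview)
Your proposal is correct and is exactly the argument the paper has in mind: the paper states that Lemma \ref{prop24} is ``immediate from Lemmas \ref{prop23} and \ref{prop29},'' and your two-case split on the pointwise behaviour of $p\circ Y$ in $(a_2-1,a_2)$, together with the maximality argument in the second case, is precisely how one unpacks that claim. The auxiliary observations you record (restriction of $r(m,\cdot,t)\to1$ to subintervals, and the dominated-convergence step from pointwise convergence of $p\circ Y$ to convergence of $r$) are straightforward and correct.
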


We can deduce a weak version of Proposition \ref{prop20a} for the case $V=\R$ from Lemma \ref{prop24}. This weak version of Proposition \ref{prop20a} is the last result of the section which holds also for $M>2$. By Lemma \ref{prop21c} we know that there will be some interval $I$ satisfying the conditions of Lemma \ref{prop24}. Lemma \ref{prop24} therefore says that all $x\in\R$ will be contained in an interval $J\subset\R$ of length $\geq 1$, such that for some $m'\in\{1,\dots,M \}$, we have $\lim_{t\rta\infty} r(m',J,t)=1$. In particular, both this lemma and Proposition \ref{prop20a} say that the limiting states of the continuum Schelling model on $\R$ can be divided into intervals of length $\geq 1$ such that each interval is associated with a particular limiting opinion. The lemma is weaker than Proposition \ref{prop20a} in two ways: First, we do not prove that each interval has length strictly larger than 1, and second, instead of proving that $p\circ Y$ converges pointwise on the intervals we prove a weaker result expressed in terms of the function $r$. Both these stronger properties are needed when we apply Proposition \ref{prop20a} in our proof of Theorem \ref{thm1}.

For $x,x'\in\mcl S$ we define $|x-x'|$ to be equal to $\inf_{k\in\N}|x-x'-Rk|$ when we identify $\mcl S$ with the interval $[0,R)$.
\begin{lem}
	Consider the initial value problem \eqref{eq71} with $N=1$, $\mcl N=\mcl N_\infty$, and either $V=\mcl S$ or $V=\R$, but with the following perturbed initial data for some $K>0$
	\eqb
	\begin{split}
		\wh Y(x,0) &= \wh B(x) + \ep\lambda([-(K+1),K+1]\cap[x-1,x+1]) \qquad\text{for\,\,\,}V=\R,\\
		\wh Y(x,0) &= \wh B(x) +\ep \qquad\text{for\,\,\,}V=\mcl S.
	\end{split}
	\label{eq64}
	\eqe	
	For $\ep_1>\ep_2>0$ let $\wh Y^{\ep_1}$ and $\wh Y^{\ep_2}$ denote the solution of \eqref{eq71}, \eqref{eq64} with $\ep=\ep_1$ and $\ep=\ep_2$, respectively. For $V=\R$ let $J\subset [-K,K]$ be an interval which may depend on $\wh B$, $\ep_1$, and $\ep_2$, and for $V=\mcl S$ let $J=\mcl S$. For $t\geq 0$ define
	\eqbn
	\begin{split}
		h(t) := \inf\{ \wh Y^{\ep_1}(x,t)- \wh Y^{\ep_2}(x,t) \,:\,x\in J \},\qquad
		\ell(t) = \inf\{ |x-x'|\,:\,x,x'\in J,\,\wh Y^{\ep_1}(x,t)<0,\,\wh Y^{\ep_2}(x,t)>0 \},
	\end{split}
	\eqen
	where the infimum over the empty set is defined to be $\infty$.
	There are $c_0,c_1>0$ depending on $\wh B$, $\ep_1$, and $\ep_2$, such that
	\eqbn
	\begin{split}
		\ell(t) \geq \max\left\{ \frac{h(t)-c_1}{2t}; \min\left\{ c_0; \frac{h(t)}{2.01 t} \right\}  \right\}.
	\end{split}
	\eqen
	\label{prop32}
\end{lem}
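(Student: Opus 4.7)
My plan is to reduce the lower bound on $\ell(t)$ to a spatial Lipschitz estimate on $\wh Y^{\ep_1}(\cdot,t)$ combined with a lower bound on $h(t)$. I rely on two ingredients. First, by the monotonicity of Remark~\ref{remark1} applied to the comparison of two solutions with ordered initial data, the function $f(x,t):=\wh Y^{\ep_1}(x,t)-\wh Y^{\ep_2}(x,t)$ is strictly positive and non-decreasing in $t$, so $h(t)\geq h(0)>0$, where $h(0)=\ep_1-\ep_2$ for $V=\mcl S$ and $h(0)=2(\ep_1-\ep_2)$ for $V=\R$ (using $\lambda([-(K+1),K+1]\cap[x-1,x+1])=2$ for $x\in J\subset[-K,K]$). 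Second, for any solution $\wh Y$ of~\eqref{eq71}, the map $x\mapsto \partial_t\wh Y(x,t)=\int_{\mcl N(x)}\op{sign}(\wh Y(z,t))\,dz$ is Lipschitz in $x$ with constant $2$, since translating the window $[x-1,x+1]$ by a small amount produces a symmetric difference of equal measure while the integrand is bounded by $1$. Integrating in time and observing that the $\ep$-shift in the initial data cancels on $J$ (by the above area computation for $V=\R$, and because the shift is constant for $V=\mcl S$), I obtain
\begin{equation*}
|\wh Y^{\ep_1}(x',t)-\wh Y^{\ep_1}(x,t)|\leq |\wh B(x')-\wh B(x)|+2t\,|x-x'|,\qquad x,x'\in J.
\end{equation*}

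For any admissible pair $(x,x')$---that is, $x,x'\in J$ with $\wh Y^{\ep_1}(x,t)<0$ and $\wh Y^{\ep_2}(x',t)>0$---the inequality $\wh Y^{\ep_1}(x',t)\geq \wh Y^{\ep_2}(x',t)+h(t)>h(t)$ combined with $\wh Y^{\ep_1}(x,t)<0$ gives $\wh Y^{\ep_1}(x',t)-\wh Y^{\ep_1}(x,t)>h(t)$, and substituting into the display above yields
\begin{equation*}
h(t) < |\wh B(x')-\wh B(x)|+2t\,|x-x'|.
\end{equation*}
For the first bound I set $c_1:=2\sup_{z\in[-K,K]}|\wh B(z)|$ (or the analogous supremum over $\mcl S$), almost surely finite by continuity of $\wh B$; this produces $|x-x'|>(h(t)-c_1)/(2t)$, and taking the infimum over admissible pairs gives $\ell(t)\geq (h(t)-c_1)/(2t)$ (vacuous when $h(t)\leq c_1$, since $\ell(t)\geq 0$).

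For the refined bound, I trade the global supremum of $|\wh B|$ for the modulus of continuity of $\wh B$ at small scales. By uniform continuity of $\wh B$ on the compact set $[-K,K]$ (resp.\ $\mcl S$), I choose $c_0>0$ (depending on $\wh B$ and on $h(0)$, hence on $\ep_1,\ep_2$) so that $|\wh B(x')-\wh B(x)|\leq h(0)/201$ whenever $x,x'$ lie in that set with $|x-x'|\leq c_0$; monotonicity then gives $|\wh B(x')-\wh B(x)|\leq h(t)/201$ for all such pairs. If $\ell(t)<c_0$, I pick a sequence of admissible pairs with $|x-x'|\to\ell(t)$ and note that eventually $|x-x'|<c_0$, so the key inequality improves to $h(t)<h(t)/201+2t\,|x-x'|$, i.e., $|x-x'|>(200/201)h(t)/(2t)=h(t)/(2.01\,t)$. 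Thus either $\ell(t)\geq c_0$ or $\ell(t)\geq h(t)/(2.01\,t)$, which is the desired minimum. The main technical point is verifying the Lipschitz constant $2t$ and the cancellation of the $\ep$-shift on $J$; both follow by unpacking definitions.
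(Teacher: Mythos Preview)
Your proposal is correct and follows essentially the same approach as the paper: both arguments decompose $\wh Y^{\ep_1}(\cdot,t)$ on $J$ into $\wh B$ plus a $2t$-Lipschitz function (using that the $\ep$-shift is constant on $J$), and then bound $\ell(t)$ from below via the gap $h(t)\geq h(0)>0$. The only noteworthy difference is in the refined bound: the paper invokes an explicit $1/3$-H\"older estimate on $\wh B$, setting $c_0=\big((\ep_1-\ep_2)/(1000c_2)\big)^3$ with $c_2$ the H\"older constant, whereas you appeal directly to uniform continuity of $\wh B$ on the compact set to choose $c_0$ so that $|\wh B(x')-\wh B(x)|\leq h(0)/201$; your version is marginally cleaner and equally valid.
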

\begin{proof}
 For all $x\in J$ we have $\wh Y^{\ep_1}(x,0)\geq\wh Y^{\ep_2}(x,0)+(\ep_1-\ep_2)$. Since the right side of the differential equation \eqref{eq71} is monotone in $Y$, this implies that the function $t\mapsto \wh Y^{\ep_1}(x,t)-\wh Y^{\ep_2}(x,t)$ is increasing for each $x\in J$. 
 
 Define $c_1:=\sup\{|\wh B(x)-\wh B(x')|\,:\,x,x'\in J \}$. Let $\wh y,\wt y:V\times\R_+\to \R$ be such that $\wh Y^{\ep_1}(x,t)=\wh Y^{\ep_1}(x,t)+\wh y(x,t)$ and $\wh Y^{\ep_2}(x,t)=\wh Y^{\ep_2}(x,t)+\wt y(x,t)$, and recall that $\wh y(\cdot,t)$ and $\wt y(\cdot,t)$ are Lipschitz continuous with constant $2t$ by Theorem \ref{prop3}. For $x,x'\in J$ and $t\geq 0$ satisfying $\wh Y^{\ep_2}(x,t)\geq 0$ and $|x-x'|\leq \frac{h(t)-c_1}{2t}$, we have
 \eqbn
 \begin{split}
 	\wh Y^{\ep_2}(x',t) &= (\wh B(x')-\wh B(x)) 
 	+ ( \wh y(x',t) - \wh y(x,t ))
 	+ ( \wh Y^{\ep_1}(x,t)-\wh Y^{\ep_2}(x,t) ) 
 	+ \wh Y^{\ep_2}(x,t)\\
 	&\geq -c_1 - 2t|x-x'| + h(t) + 0 \geq 0,
 \end{split}
 \eqen
 which implies $\ell(t) \geq  \frac{h(t)-c_1}{2t}$. 
 	
 Define $c_0:=\big(\frac{\ep_1-\ep_2}{1000c_2}\big)^3$, where $c_2$ is the $1/3$-H\"older constant for $\wh B$ on $J$, and observe that $c_0\leq \big(\frac{h(t)}{1000c_2}\big)^3$ for all $t\geq 0$, so $c_2 c_0^{1/3}\leq \frac{h(t)}{1000}$. For $x,x'\in J$ satisfying $\wh Y^{\ep_2}(x)\geq 0$ and $|x-x'|\leq \min\big\{ c_0; \frac{h(t)}{2.01 t} \big\}$, we have
 	\eqbn
 	\begin{split}
 		\wh Y^{\ep_1}(x',t) &= (\wh B(x')-\wh B(x)) 
 		+ ( \wh y(x',t) - \wh y(x,t ))
 		+ ( \wh Y^{\ep_1}(x,t)-\wh Y^{\ep_2}(x,t) ) 
 		+ \wh Y^{\ep_2}(x,t)\\
 		&\geq -c_2|x-x'|^{1/3} - 2t|x-x'| + h(t) + 0 \geq -\frac{h(t)}{1000} -2t\frac{h(t)}{2.01 t} + h(t)\geq 0,
 	\end{split}
 	\eqen
which implies $\ell(t) \geq \min\big\{ c_0; \frac{h(t)}{2.01 t} \big\}$. Combining the above two bounds for $\ell(t)$ we obtain the lemma.
\end{proof}

\begin{figure}
	\centering
	\includegraphics[scale=1]{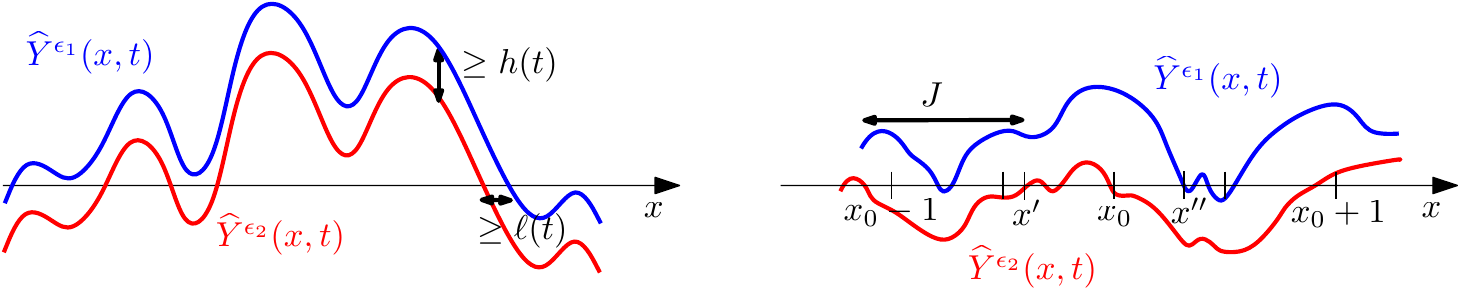}
	\caption{For $\ep_1>\ep_2>0$ we consider two solutions $\wh Y^{\ep_1}$ and $\wh Y^{\ep_2}$ of \eqref{eq71} with initial data perturbed by $\ep_1$ and $\ep_2$, respectively. We prove Proposition \ref{prop20a} by showing that the event considered in the proposition must occur for at least one of $\wh Y^{\ep_1}$ and $\wh Y^{\ep_2}$. Left: Illustration of $h$ and $\ell$ defined in Lemma \ref{prop32}. Right: Illustration of the proof of Proposition \ref{prop20a}, case (b).}
	\label{fig2}
\end{figure}

\begin{lem}
	Consider the initial value problem \eqref{eq71} with $N=1$, $\mcl N=\mcl N_\infty$, and either $V=\mcl S$ or $V=\R$. Let $E$ be the event that (in the notation of Proposition \ref{prop20a}), $\R\setminus (A_1\cup A_2)$ has measure zero, and each set $A_1$ and $A_2$ can be written as the union of intervals of length $>1$. Let $E'$ be the event that at least one of the sets $A_1$ and $A_2$ contains an interval of length $> 1$. Then $\P[E'\cap E^c]=0$.
	\label{prop31}
\end{lem}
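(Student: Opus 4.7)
The plan is to bootstrap from the single interval of length $>1$ guaranteed by $E'$ to the full structure of $E$, using the dominated convergence theorem together with Lemmas \ref{prop24} and \ref{prop32}. Assume $E'$ occurs; without loss of generality, $A_1$ contains an open interval $J=(a,b)$ with $b-a>1$. Since $\lim_{t\to\infty}p(Y(x,t))=1$ for every $x\in J$, dominated convergence gives $r(1,J,t)\to 1$. Let $I_0=(b^-_0,b^+_0)$ be the maximal open interval containing $J$ with $r(1,I_0,t)\to 1$; then $|I_0|\ge b-a>1$.

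Next, Lemma \ref{prop24} applied to $I_0$ yields $r(2,[b^+_0,b^+_0+1],t)\to 1$; let $I_1$ be the maximal interval containing $[b^+_0,b^+_0+1]$ with $r(2,I_1,t)\to 1$. Since $r(1,\cdot,t)+r(2,\cdot,t)\le 1$ always (from $\1_{p=1}+\1_{p=2}\le 1$, and passing to any subinterval), any nonempty intersection $I_0\cap I_1$ would produce a subinterval on which both $r(1,\cdot,t)\to 1$ and $r(2,\cdot,t)\to 1$, which is impossible; hence $I_1=(b^+_0,b^+_1)$ for some $b^+_1\ge b^+_0+1$. Iterating to the right and symmetrically to the left partitions $V$ into a chain $\{I_k\}$ of intervals of length $\ge 1$ with alternating dominant opinion $m_k\in\{1,2\}$ (for $V=\mcl S$ the chain closes up after an even number of steps).

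For each $I_k$ with $|I_k|>1$, pointwise convergence $p(Y(x,t))\to m_k$ holds on the interior of $I_k$. Setting $\delta=(|I_k|-1)/2>0$, for $x$ at distance at least $\delta$ from both endpoints of $I_k$ the neighborhood $\mcl N(x)$ is covered by $I_{k-1}\cup I_k\cup I_{k+1}$ with $|\mcl N(x)\cap I_k|>1$. Using the $r$-convergence on $I_{k-1}$, $I_k$, $I_{k+1}$, the integrand $\int_{\mcl N(x)}\op{sign}\wh Y(y,t)\,dy$ converges in a Cesaro sense to a quantity of sign $(-1)^{m_k-1}$ and magnitude at least $2(|\mcl N(x)\cap I_k|-1)>0$. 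Hence $|\wh Y(x,t)|\to\infty$ linearly, forcing $p(Y(x,t))\to m_k$. In particular the open interior of $I_0$ is contained in $A_1$, and similarly for every $I_k$ of length $>1$.

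The main obstacle is ruling out the degenerate case $|I_k|=1$ exactly for some $k$: in that case the midpoint computation gives zero asymptotic drift and the argument above breaks down. To exclude this, I would apply Lemma \ref{prop32} via a perturbation argument: consider the family $\wh Y^\ep$ with shifted initial data as in \eqref{eq64}, use Lemma \ref{prop32} to see that the boundary points $b^+_k(\ep)$ move continuously and monotonically in $\ep$, so the event $\{|I_k(\ep)|=1\text{ for some }k\}$ occurs for at most countably many $\ep$. A Cameron-Martin/Fubini argument on the Gaussian law of $\wh B$ then gives $\P[\{|I_k|=1\text{ for some }k\}\cap E']=0$. Combined with the previous paragraph, almost surely on $E'$ every $|I_k|>1$, so $\R\setminus(A_1\cup A_2)\subset\{b^+_k\}$ has measure zero and each $A_m$ is a disjoint union of open intervals of length $>1$, which is $E$.
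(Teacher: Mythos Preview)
Your decomposition into the chain of intervals $\{I_k\}$ via Lemma~\ref{prop24}, and the argument that $|I_k|>1$ forces pointwise convergence of $p(Y(\cdot,t))$ on the interior of $I_k$, is correct and essentially reproduces the ``weak version'' of Proposition~\ref{prop20a} discussed in the paper immediately after Lemma~\ref{prop24}. (A small point: you state the conclusion only for the core of width $1$, but the inequality $|\mcl N(x)\cap I_k|>1$ in fact holds for every $x$ in the open interval $I_k$, so pointwise convergence does extend to all of the interior.)

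The real gap is in your treatment of the degenerate case $|I_k|=1$. You invoke Lemma~\ref{prop32} to claim that the endpoints $b_k^+(\ep)$ vary continuously and monotonically in $\ep$, and then conclude that $\{|I_k(\ep)|=1\}$ occurs for at most countably many $\ep$. But Lemma~\ref{prop32} gives estimates on $h(t)$ and $\ell(t)$ for \emph{finite} $t$; it says nothing directly about the $t\to\infty$ limiting interval structure. The endpoints $b_k^+(\ep)$ are defined through an infinite-time limit, and there is no a priori reason they should depend continuously on $\ep$ --- indeed, the very question of whether a length-$1$ interval persists or collapses under perturbation is exactly what is at stake. Moreover, the indexing of the chain $\{I_k(\ep)\}$ is not canonical across different $\ep$ (intervals may merge), so even formulating the monotonicity claim is delicate.

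The paper avoids this issue entirely. Rather than tracking how the limiting structure varies with $\ep$, it fixes two values $\ep_1>\ep_2>0$ and argues directly that the bad events $\wt E(\ep_1)$ and $\wt E(\ep_2)$ are a.s.\ disjoint. On the intersection, one uses Lemma~\ref{prop32} together with the differential equation to show that $h(t)$ grows at least linearly (in fact $h(t)\ge ct$), and then that $\inf_{x\in I}\wh Y^{\ep_2}(x,t)/t\to 0$ forces $\wh Y^{\ep_1}$ to be strictly positive on an interval of length $>1$ containing the origin, contradicting condition (ii) of $\wt E(\ep_1)$. This disjointness, combined with absolute continuity of the perturbed initial data, gives $\P[\wt E(0)]=0$. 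Your Cameron--Martin/Fubini reduction is in the same spirit, but the substantive analytic work --- extracting $t\to\infty$ information from Lemma~\ref{prop32} --- is precisely what you have not done.
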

\begin{proof}
	For $\ep>0$ and fixed $K>1$ consider the initial value problem \eqref{eq71} with perturbed initial data \eqref{eq64}. Let $A_1(\ep)$ and $A_2(\ep)$ be defined just as the sets $A_1$ and $A_2$, respectively, in the statement of Proposition \ref{prop20a}. Let $\wt E(\ep)$ be the event that the following two properties hold: (i) if $V=\mcl S$ then at least one of the sets $A_1$ and $A_2$ contains an interval of length $>1$, and if $V=\R$ then at least one of the sets $A_1\cap[-K,K]$ and $A_2\cap[-K,K]$ contains an interval of length $>1$, and (ii) the origin is not contained in an interval $I$ of length $>1$ such that $\lim_{t\rta\infty} \op{sign} Y(x,t)$ exists and is equal for all $x\in I$.
	Since $K$ was arbitrary and by translation invariance in law in the space variable, in order to complete the proof of the lemma it is sufficient to show that $\P[\wt E(0)]=0$.
	
	First we will reduce the lemma to proving 
	\eqb
	\P[ \wt E(\ep_1)\cap \wt E(\ep_2) ]=0\qquad\forall \ep_1>\ep_2>0.
	\label{eq80} 
	\eqe
	Assume $\P[\wt E(0)]>0$. Observe that the initial data \eqref{eq64} are absolutely continuous with respect to the initial data with $\ep=0$.
	By absolute continuity, we can find a sequence $(\ep_n)_{n\in\N}$ and $p>0$ such that $\ep_{n_1}\neq\ep_{n_2}$ for $n_1\neq n_2$ and $\P[\wt E(\ep_n)]>p$ for all $n\in\N$. If we assume \eqref{eq80} holds this leads to a contradiction, since for any $n\in\N$,
	\eqbn
	\P\left[ \bigcup_{k=1}^n \wt E(\ep_k) \right] = \sum_{k=1}^{n} \P[\wt E(\ep_n)] \geq np,
	\eqen
	which converges to $\infty$ as $n\rta\infty$. We conclude that the lemma will follow once we have established \eqref{eq80}.
	
	Let $\wh Y^{\ep_1}$ (resp.\ $\wh Y^{\ep_2}$) denote the solution of \eqref{eq71} with perturbed initial data for $\ep=\ep_1$ (resp.\ $\ep=\ep_2$). We will assume that both events $\wt E(\ep_1)$ and $\wt E(\ep_2)$ occur, and want to derive a contradiction. For an interval $I\subset V$ and $t\geq 0$ define 
	\eqbn
	r(I,t) := \frac{1}{\lambda(I)t} \int_{0}^{t}\int_I \op{sign}(\wh Y^{\ep_2}(x,t'))\,dx\,dt'.
	\eqen
	By Lemma \ref{prop24} and by absolute continuity of the initial data, we know that on the event $\wt E(\ep_2)$ there is almost surely an interval $I$ of length $\geq 1$ containing the origin, such that either $\lim_{t\rta\infty}r(I,t)=1$ or $\lim_{t\rta\infty}r(I,t)=-1$.

	Without loss of generality we assume $\lim_{t\rta\infty}r(I,t)=-1$; the case $\lim_{t\rta\infty}r(I,t)=1$ can be treated similarly. We also assume that $I$ is the maximal open interval satisfying this property, and let $a$ be the left end-point of the interval. By Lemma \ref{prop24}, $\lim_{t\rta\infty}r([a-1,a],t)=1$.
	
	If $\lambda(I)>1$, it is immediate from the definition of $r$ and \eqref{eq71} that $\lim_{t\rta\infty}\op{sign} \wh Y^{\ep_2}(x,t)=-1$ for all $x\in I$, which contradicts (ii) in the definition of $E(\ep_2)$. Therefore we will assume $\lambda(I)=1$. To conclude the proof of the lemma it is sufficient to derive a contradiction to the occurrence of $E(\ep_1)$.
	
	Let $J=[a-1/2,a+1/2]$, and let $h$ and $\ell$ be as in Lemma \ref{prop32}. Let $\mcl T\subset\R_+$ be the set of times $t\geq 0$ for we can find a $x'\in[a-1/10,a+1/10]$ such that $\wh Y^{\ep_2}$ takes both positive and negative values arbitrarily close to $x'$. Since $\lim_{t\rta\infty}r([a,a+1/2],t)=-1$ and $\lim_{t\rta\infty}r([a-1/2,a],t)=1$, we have $\lim_{t\rta\infty}\lambda(\mcl T\cap[0,t])/t\rta 1$. Also observe that for $t\in\mcl T$,
	\eqbn 
	\lambda(\{x\in J\,:\,\wh Y^{\ep_1}(x,t)>0,\,\wh Y^{\ep_2}(x,t)<0 \})\geq\ell(t)\wedge (1/4).
	\eqen
	By \eqref{eq71} we get further that for all $t\in\mcl T$,
	\eqbn 
	\frac{dh}{dt}(t)\geq 
	2\lambda(\{x\in J\,:\,\wh Y^{\ep_1}(x,t)>0,\,\wh Y^{\ep_2}(x,t)<0 \})\geq
	(2\ell(t))\wedge (1/2).
	\eqen
	 Since $\ell(t)\geq \min\left\{ c_0; \frac{h(t)}{2.01 t} \right\}$, this implies that $\lim_{t\rta\infty}h(t)=\infty$.
	
	By the lower bound for $\ell$ in Lemma \ref{prop32} we also have $\frac{dh}{dt}(t)\geq 2\ell(t)\geq \frac{h(t)-c_1}{t}$ for all $t\in\mcl T$, and since $\lim_{t\rta\infty}h(t)=\infty$ this implies that $h(t)\geq ct$ for some random constant $c>0$ and all $t\geq 0$. Lemma \ref{prop24} implies that $\lim_{t\rta\infty}r([a-1,a]\cup[a+1,a+2],t)=1$. By this result, \eqref{eq71}, and  $\lim_{t\rta\infty}r(I,t)= -1$,
	\eqbn
	\inf_{x\in I} \frac{1}{t} \wh Y^{\ep_2}(x,t) 
	\geq \inf_{x\in I} \frac{1}{t} \wh Y^{\ep_2}(x,0)
	+ r(I,t) + \inf_{I'\subset [a-1,a]\cup[a+1,a+2]\,:\,\lambda(I')=1} r(I',t)
	\rta 0\text{\,\,\,\,\,as\,\,\,}t\rta\infty.
	\eqen
	It follows that $\inf_{x\in I}\wh Y^{\ep_1}(x,t)\geq \inf_{x\in I}\wh Y^{\ep_1}(x,t)+h(t)>0$ for all sufficiently large $t$. This implies that $\inf_{x\in [a-1,a+2]}\wh Y^{\ep_1}(x,t)>0$ for all sufficiently large $t$, which is a contradiction to the condition (ii) in the definition of $E'(\ep_1)$.
\end{proof}

\begin{proof}[Proof of Proposition \ref{prop20a}]
	For the case when $V=\R$ the proposition follows immediately from Lemma \ref{prop31}, since  $\P[E']=1$ by Lemma \ref{prop21c}.
	
	Let $V=\mcl S$. For $\ep>0$ let $E'(\ep)$ be the event of Lemma \ref{prop31}, but for the perturbed initial data. It is sufficient to prove that for any $\ep_1>\ep_2>0$, we have $\P[E'(\ep_1)^c\cap E'(\ep_2)^c]=0$, since this implies that $\P[(E')^c]=0$ (see the argument in the second paragraph in the proof of Lemma \ref{prop31} for a similar argument). We assume that both $E'(\ep_2)^c$ and $E'(\ep_1)^c$ occur, and will derive a contradiction.
	
	First we will argue that the following inequality holds
	\eqb
	\frac{dh}{dt} \geq 4\ell(t).
	\label{eq82}
	\eqe
	By \eqref{eq71} we have
	\eqb
	\frac{dh}{dt} \geq 
	2 \inf_{x\in\mcl S} \lambda( \{ x'\in\mcl N(x)\,:\,\wh Y^{\ep_1}(x',t)>0,\,\wh Y^{\ep_2}(x',t)<0 \} ).
	\label{eq83}
	\eqe
	Since $E'(\ep_1)^c$ occurs, each closed interval $I'\subset\mcl S$ of length $\geq 1$ must intersects some interval of length $\geq 2\ell(t)$ on which $\wh Y^{\ep_2}(x,t)<0$; otherwise we would have $\wh Y^{\ep_1}(x,t)>0$ on $I'$. For some fixed $x_0\in\mcl S$ the interval $[x_0-1/2,x_0+1/2]$ therefore intersect some interval of length $\geq 2\ell(t)$ on which $\wh Y^{\ep_2}(x,t)<0$. Let $J$ be an interval satisfying this property and which is maximal in the sense that $J$ is not contained inside any other interval satisfying the property. We have either (a) $J\subset (x_0-1,x_0+1)$, (b) $\{x_0-1,x_0+1\}\cap J\neq\emptyset$. If case (a) occurs, then the following holds by the definition of $\ell$
	\eqb
	\lambda( \{ x'\in \mcl N(x_0)\,:\,\wh Y^{\ep_1}(x',t)>0,\,\wh Y^{\ep_2}(x',t)<0 \} )\geq 2\ell(t).
	\label{eq84}
	\eqe
	Next assume case (b) occurs, and without loss of generality assume  $(x_0-1)\in J$. See Figure \ref{fig2} for an illustration. We have $\ell(t)\leq 1/2$; otherwise $E'(\ep_2)^c$ and $E'(\ep_1)^c$ cannot both occur, since $\wh Y^{\ep_1}$ would be positive on any maximal interval of length $<1$ on which $\wh Y^{\ep_2}$ is negative. Let $x'\in[x_0-1/2,x_0)$ be the right end-point of $J$, and observe that $\wh Y^{\ep_1}$ is positive and $\wh Y^{\ep_2}$ negative on the interval $(x'-\ell(t),x')$. Also observe that $(x'-\ell(t),x')\subset\mcl N(x_0)$ since $x'\in[x_0-1/2,x_0)$ and $\ell(t)\leq 1/2$. 
	
	Define $x'':=\inf\{ x>x'\,:\,\wh Y^{\ep_1}(x,t)<0 \}$. We have $x''<x'+1-\ell(t)<x_0+1-\ell(t)$, where the first inequality holds since $\wh Y^{\ep_1}(x',t)>0$ and $E'(\ep_1)^c$ occurs. We also have $x''>x'+2\ell(t)$; otherwise we would have $\wh Y^{\ep_2}<0$ on $(x'-\ell(t),x'+\ell(t))$ by the definition of $\ell(t)$, which contradicts the definition of $x'$. It follows that $\wh Y^{\ep_1}$ is positive and $\wh Y^{\ep_2}$ negative on the interval $(x''-\ell(t),x'')\subset\mcl N(x_0)$. Since the intervals $(x'-\ell(t),x')$ and $(x''-\ell(t),x'')$ are disjoint, we see that \eqref{eq84} holds also in case (b). We obtain \eqref{eq82} by combining \eqref{eq83} and \eqref{eq84}.
	
	Next we argue that $h(t)\geq 2c_0 t$ for all $t\geq 0$. Let $\tau=\inf\{ t\geq 0\,:\, h(t)\leq 2c_0t \}$. We see that $\tau=\infty$, since $h(0)>0$, and since \eqref{eq82} and Lemma \ref{prop32} imply that for $t\in[0,\tau]$,
	\eqbn
	\frac{dh}{dt} \geq 4\min\left\{c_0;\,\frac{h(t)}{2.01t} \right\} \geq \frac{8}{2.01}c_0.
	\eqen
	We conclude that $h(t)\geq 2c_0 t$ for all $t\geq 0$. In particular, we have $\lim_{t\rta\infty} h(t) = \infty$, so by Lemma \ref{prop32} we have $\ell(t)\geq \frac{h(t)-c_1}{2t}\geq \frac{h(t)}{2.01t}$ for all sufficiently large $t$, which implies by \eqref{eq82} that $\frac{dh}{dt}\geq \frac{4h(t)}{2.01t}$ for all sufficiently large $t$. Further we get $h(t)\geq ct^{4/2.01}$ for some random constant $c>0$. The function $h$ cannot grow superlinearly, since $\wh Y^{\ep_2}$ and $\wh Y^{\ep_1}$ grow at most linearly in time. Therefore we obtain a contradiction, which concludes the proof.
\end{proof}

\section{The discrete Schelling model}
\label{sec:discrete}

\subsection{The early phase of the discrete Schelling model}
\label{sec:discrete1}
The main purpose of this section is to prove the following proposition, which says that the solution of the initial value problem \eqref{eq1}, \eqref{eq2} describes the early phase of the discrete Schelling model well for large $w$. Recall that we rescaled time by  $w^{-N/2}$ when we defined $Y^w$. Therefore the proposition only provides information about times up to order $w^{-N/2}$ for the discrete Schelling model.
\begin{prop}
	Let $N\in\N$ and $V=\mcl S$, or let $N=1$ and $V=\R$. Let $M\in\{2,3,\dots\}$, let $\mcl N$ be as defined in Section \ref{sec:intro1}, let $Y$ be the solution of the initial value problem \eqref{eq1}, \eqref{eq2}, and let $Y^w$ be given by \eqref{eq24}. There is a coupling of $Y$ and $Y^w$ for $w\in\N$ such that a.s.\ $Y^w$ converges uniformly to $Y$ on compact subsets of $V^N\times\R_+$ as $w\rta\infty$.	
\label{prop18}
\end{prop}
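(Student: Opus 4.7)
The plan is to decompose $Y^w(x,t) = B^w(x) + y^w(x,t)$, where $B^w := Y^w(\cdot,0)$ and $y^w(\cdot,0) = 0$, mirroring the continuum decomposition $Y = B + y$ from Theorem \ref{prop3}. Because each Poisson ring changes a single summand $\1_{X(j,t)=m}$ by at most $1$ and the expected number of rings of a node over rescaled time $[0,t]$ is $O(t \cdot w^{-N/2})$, summing over $|\frk N(xw)| \asymp w^N$ nodes and dividing by $w^{N/2}$ shows that with high probability $y^w(\cdot, t) \in \mcl L^{Ct}_M(V^N)$ for a universal constant $C$ depending only on $\mcl N$. This is the space in which Theorem \ref{prop3} places the continuum $y$.

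First I would establish initial-data convergence: under a suitable coupling, $B^w \rta B$ a.s.\ in $\mcl C_M(V^N)$. The finite-dimensional distributions converge by the multivariate central limit theorem, since $B^w(x)$ is a normalized sum of $\asymp w^N$ bounded i.i.d.\ centered vector contributions whose empirical covariance converges to \eqref{eq20}. Moment bounds $\E|B^w(x) - B^w(x')|^{2k} \preceq \|x-x'\|^{k}$ combined with Kolmogorov--Chentsov yield tightness in $\mcl C_M$, and Skorokhod embedding yields the a.s.\ coupling; the case $V=\R$ is obtained by taking a compatible limit along successively larger tori.

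Second, I would control the evolution. Writing $\sum_{j\in\frk N(xw)}\1_{X(j,t)=m} = M^{-1}|\frk N(xw)| + w^{N/2}Y^w_m(x,t)$, a direct bookkeeping shows that the expected infinitesimal drift of $Y^w_m(x,t)$ in rescaled time equals
\eqbn
\int_{\mcl N(x)} \left[(1 - M^{-1})\1_{p(Y^w(x', t)) = m} - M^{-1}\1_{p(Y^w(x',t)) \neq m}\right]\,dx' + O(w^{-N/2}),
\eqen
matching the right-hand side of \eqref{eq1}; the error absorbs the ``self-term'' $w^{-N/2} Y^w_m(x,t)$ and the discretization of $\partial \mcl N$. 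The compensated martingale part has predictable quadratic variation $\preceq t \cdot w^{-N/2}$ uniformly in $x$, so Doob's maximal inequality forces its contribution to $y^w$ to vanish uniformly on compacts. Thus $y^w$ a.s.\ satisfies
\eqbn
y^w(\cdot, t) = \int_0^t \phi^{w}\bigl(B^w + y^w(\cdot, s)\bigr)\,ds + \ep^w(t),
\eqen
where $\phi^{w}$ is the Riemann-sum analog of $\phi^\omega$ from \eqref{eq8} over $\mcl N(x)\cap w^{-1}\Z^N$, and $\|\ep^w\|_{L^\infty([0,T]\times V^N)} \rta 0$.

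Finally I would pass to the limit using stability of the continuum integral equation. On the a.s.\ event where $B^w \rta B$ uniformly on compacts and the conclusion of Theorem \ref{prop6} holds for $B$, Proposition \ref{prop1} gives Lipschitz continuity of $\phi^\omega$ on $\mcl L^K_M(V^N)$, and uniform convergence $B^w \rta B$ combined with Theorem \ref{prop6}(II) forces $\phi^w(B^w + y) \rta \phi^\omega(B + y)$ uniformly for $y$ in any Lipschitz ball. A Gronwall argument applied to $\|y^w(\cdot,t) - y(\cdot,t)\|_\infty$ in $\mcl C_M(V^N)$ then yields $\sup_{s\leq T}\|y^w(\cdot,s) - y(\cdot,s)\|_\infty \rta 0$ a.s., and for $V=\R$, $N=1$ the torus case transfers via the finite-speed-of-propagation argument used at the end of the proof of Theorem \ref{prop3}. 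The main obstacle is the convergence $\phi^{w} \rta \phi^\omega$: it fails pointwise on initial data for which two coordinates of $B + y$ are tied on a set of positive measure, and Theorem \ref{prop6} is precisely what guarantees that for Gaussian $B$ such near-ties are uniformly suppressed along Lipschitz perturbations, which is what rescues the argument.
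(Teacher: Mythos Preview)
Your overall strategy matches the paper's: establish initial-data convergence via a CLT (the paper's Lemma~\ref{prop19}), identify the drift of $Y^w$ with a discretization of \eqref{eq1}, show the martingale fluctuations vanish, and propagate the error forward using the Lipschitz bound on $\phi^\omega$ that comes from Theorem~\ref{prop6}/Proposition~\ref{prop1}. The paper organizes the propagation as a discrete time-stepping (Lemma~\ref{prop21b}, with a seven-term decomposition $A_1,\dots,A_7$ in Lemma~\ref{prop21a}) and then iterates; you instead aim for a continuous Gronwall argument. Both routes are reasonable, and the transfer from $\mcl S$ to $\R$ you sketch is indeed how the paper proceeds (via Lemma~\ref{prop28}).

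There is, however, a genuine gap. Your claim that $y^w(\cdot,t)\in\mcl L^{Ct}_M(V^N)$ with high probability is not correct, and your Gronwall step relies on it. The argument you give (count Poisson rings in $\frk N(xw)$, divide by $w^{N/2}$) bounds $\|y^w(\cdot,t)\|_\infty$ by $O(t)$, but $\mcl L^K_M$ also demands a Lipschitz bound. For adjacent lattice points $x,x'$ at distance $w^{-1}$, controlling $|y^w(x,t)-y^w(x',t)|$ requires counting rings in the \emph{symmetric difference} $\frk N(xw)\Delta\frk N(x'w)$, which has $\asymp w^{N-1}$ nodes; over rescaled time $t$ the expected count is $\asymp t\,w^{N/2-1}$, and after dividing by $w^{N/2}$ you would need this to be $O(t\,w^{-1})$. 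For $N=1$ a single ring already violates the bound, and for $N=2$ the Poisson mean is of order~$t$ (independent of $w$), so a union bound over $\asymp w^N$ adjacent pairs fails. Since you then invoke ``$\phi^w(B^w+y)\to\phi^\omega(B+y)$ uniformly for $y$ in any Lipschitz ball'' and plug in $y=y^w$, the argument breaks as written.

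The repair is exactly what the paper does implicitly: anchor the comparison to the \emph{continuum} $y$, which \emph{is} in $\mcl L^t_M$ by Theorem~\ref{prop3}. From \eqref{eq22}--\eqref{eq23} one gets $\|\phi^\omega(y^w)-\phi^\omega(y)\|_\infty\preceq\|y^w-y\|_\infty$ using only that $y_m-y_{m'}\in\mcl L^{2t}$, with no regularity required of $y^w$; that is what should feed Gronwall. The paper's decomposition (notably $A_6$ together with Lemma~\ref{prop22}) carries this out explicitly. A smaller point: Doob's inequality alone does not give uniform-in-$x$ vanishing of the martingale part over $\asymp w^N$ lattice sites; you need subgaussian tails (e.g.\ Azuma--Hoeffding, using that each jump of $Y^w_m$ is bounded by $2w^{-N/2}$) before taking the union bound.
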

First we observe that the initial values for the discrete and continuum Schelling model can be coupled. 
\begin{lem}
	Let $V=\mcl S$ or $V=\R$. There is a uniquely defined continuous centered Gaussian $(N,M)$-random field $B$ on $V^N$ with covariances given by \eqref{eq20}. The initial data $Y^w(\cdot,0)$ of the normalized discrete bias function defined by \eqref{eq24} converges in distribution to $B$.
\label{prop19}
\end{lem}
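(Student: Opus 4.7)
The plan is to construct $B$ explicitly as a multivariate moving-average of i.i.d.\ white noises, verify its covariance and continuity, deduce uniqueness from the fact that a Gaussian field is determined by mean and covariance, and then prove $Y^w(\cdot,0)\Rightarrow B$ by combining (a) convergence of finite-dimensional distributions via the multivariate central limit theorem with (b) tightness via Hoeffding moment bounds and the Kolmogorov-Chentsov criterion.

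For existence and continuity, I would take independent white noises $W_1,\dots,W_M$ on $V^N$, form $\wt W_m := W_m - M^{-1}\sum_{i=1}^M W_i$, and define $B_m(x) := M^{-1/2}\wt W_m(\mcl N(x))$. Writing $\wt W_m = \tfrac{M-1}{M}W_m - \tfrac{1}{M}\sum_{i\neq m}W_i$ and using $\E[W_i(A)W_j(B)] = \delta_{ij}\lambda(A\cap B)$, a direct computation recovers the covariance $\frk C$ in \eqref{eq20}; in particular $\frk C$ is positive semi-definite. The hypothesis $\dim_{\mathrm{Mink}}\partial\mcl N<N$ together with \eqref{eq65} gives $\lambda(\mcl N(x)\triangle\mcl N(x'))\lesssim\|x-x'\|$ for nearby $x,x'$, so $\mathrm{Var}(B_m(x)-B_m(x'))\lesssim\|x-x'\|$. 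Gaussianity then yields moments of all orders, and the Kolmogorov-Chentsov argument used in Lemma \ref{prop15} produces a continuous modification. Uniqueness of the law is immediate since a Gaussian field is determined by its mean and covariance.

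For convergence of finite-dimensional distributions, fix points $x^1,\dots,x^k\in V^N$ and indices $m_1,\dots,m_k$. After resolving the interpolation \eqref{eq60}, the vector $(Y^w_{m_l}(x^l,0))_{l=1}^k$ is a sum of independent centered bounded random vectors indexed by $j\in\Z^N$, each term of size $O(w^{-N/2})$. A direct calculation using the multinomial covariance of a single opinion shows that the $(l,l')$-entry of the covariance matrix equals $c_{m_l,m_{l'}}\,w^{-N}|\frk N(wx^l)\cap\frk N(wx^{l'})|$, with $c_{m,m'}=(M-1)/M^2$ if $m=m'$ and $-M^{-2}$ otherwise. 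By the Minkowski-dimension hypothesis on $\partial\mcl N$, Riemann-sum convergence gives $w^{-N}|\frk N(wx^l)\cap\frk N(wx^{l'})|\to\lambda(\mcl N(x^l)\cap\mcl N(x^{l'}))$, matching \eqref{eq20}. Each summand is bounded by $O(w^{-N/2})\to 0$, so the Lindeberg-Feller CLT applied to this triangular array delivers the claimed Gaussian limit.

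For tightness, the increment $Y^w_m(x,0)-Y^w_m(x',0)$ is a sum of independent centered variables supported on $\frk N(wx)\triangle\frk N(wx')$, each bounded by $O(w^{-N/2})$. Using $w^{-N}|\frk N(wx)\triangle\frk N(wx')|\lesssim\|x-x'\|$ (which is preserved by the Lipschitz interpolation \eqref{eq60}), Hoeffding's inequality yields
\[
\P\bigl[\,|Y^w_m(x,0)-Y^w_m(x',0)|\geq t\,\bigr]\;\leq\;2\exp\!\left(-\frac{t^2}{C\|x-x'\|}\right),
\]
so that $\E\bigl[|Y^w_m(x,0)-Y^w_m(x',0)|^p\bigr]\lesssim_p\|x-x'\|^{p/2}$ uniformly in $w$. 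Choosing $p>2N$ and invoking Kolmogorov-Chentsov gives tightness in $\mcl C_M(V^N)$ under uniform convergence on compact sets. Combined with the preceding paragraph, this gives $Y^w(\cdot,0)\Rightarrow B$. The main technical point is the uniform-in-$w$ moment bound: Gaussian reasoning is unavailable at finite $w$, but Hoeffding's inequality suffices precisely because each increment is a sum of independent bounded centered variables whose total variance scales linearly in $\|x-x'\|$.
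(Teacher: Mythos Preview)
Your argument is correct and complete in outline, but it follows a genuinely different route from the paper's proof. The paper does not argue tightness by hand. Instead it introduces a smoothed variant $\check Y^w$ (replacing the indicator $\1_{j\in\frk N(wx)}$ by the overlap $\lambda(A(j)\cap(w\mcl N+wx))$), invokes a set-indexed functional CLT from \cite{ap-clt} to get $\check Y^w_m\Rightarrow B_m$ in $\mcl C(V^N)$ in one stroke (the Minkowski-dimension hypothesis enters only to make the entropy integral in that reference finite), and then shows $Y^w-\check Y^w\to 0$ by a direct Chernoff bound on the $O(w^{N-\epsilon})$ boundary terms. Your approach---Lindeberg--Feller for finite-dimensional marginals plus Hoeffding moment bounds and Kolmogorov--Chentsov for tightness---is more elementary and entirely self-contained, at the cost of having to track the interpolation \eqref{eq60} at sub-lattice scales explicitly. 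Both approaches use the same Minkowski-dimension input, but in different places: you use it to bound $\lambda(\mcl N(x)\triangle\mcl N(x'))$, the paper uses it for the entropy integral and for the boundary count $q_w$.

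One small correction: under the paper's standing hypothesis you only get $\lambda(\mcl N(x)\triangle\mcl N(x'))\lesssim\|x-x'\|^{N-d'}$ for any $d'$ strictly larger than the upper Minkowski dimension of $\partial\mcl N$, not a linear bound (and \eqref{eq65} gives a \emph{lower} bound on the symmetric difference, so it does not help here). This does not damage your argument: Hoeffding then gives $\E|Y^w_m(x,0)-Y^w_m(x',0)|^p\lesssim\|x-x'\|^{p(N-d')/2}$, and choosing $p>2N/(N-d')$ still yields an exponent $>N$ as Kolmogorov--Chentsov requires. You should also note that the bound for non-lattice $x,x'$ with $\|x-x'\|<w^{-1}$ follows from the multilinear form of \eqref{eq60}: the increment is bounded by $Cw\|x-x'\|$ times a maximum of edge differences, each of which already satisfies the moment bound at scale $w^{-1}$.
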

\begin{proof} 
We first define a smoothed version $\check Y^w\in \mcl C_M(V^N)$ of $Y^w(\cdot,0)$, and prove convergence of $\check Y^w$ to $B$.  For any $i\in \Z^N$ let $A(i)$ denote the square of side length 1 centered at $i$, and for $i\in\Z^N$ let $w\mcl N+i=\{x\in\R^N\,:\, w^{-1}(x-i)\in\mcl N \}$. If $V=\mcl S$ (resp.\ $V=\R$) define $\mcl V=\frk S$ (resp.\ $\mcl V=\Z$). For $m\in\{1,\dots,M \}$ define the smoothed unscaled bias function $\check{\mcl Y}_m:\mcl V\times\R_+\to\R$ by
	\eqbn
	\check{\mcl Y}_m(i,t) = \sum_{j\in\mcl V} \lambda(A(j)\cap (w\mcl N+i)) \left(\1_{X(j,t)=m} - \frac{1}{M}\right),\quad i\in\mcl V,\,\,t\geq 0.
	\eqen
	Then define $Y^w_m$ by \eqref{eq24} and \eqref{eq60}, but using $\check{\mcl Y}$ instead of $\mcl Y$. 
	It follows by e.g.\ \cite{ap-clt} that for each $m\in\{1,\dots,M\}$, $\check Y^w_{m}$ converges in law to $B_{m}$ in $\mcl C(V^N)$. 
	Note in particular that the entropy integral considered in \cite{ap-clt} is finite as required, since $\partial\mcl N$ has an upper Minkowski dimension strictly smaller than 2.
	By this convergence result, we see that the law of $Y^w(\cdot,0)$ is tight in $\mcl C_M(V^N)$. 
	By convergence of the finite dimensional distributions and hence uniqueness of the limit, we get that $(B(x))_{x\in V^N}$ exists and that $Y^w(\cdot,0)$ converges in law to $B$ in $\mcl C_M(V^N)$. Continuity of $B$ follows e.g.\ by applying the Kolmogorov-Chentsov theorem as in the construction of Brownian motion, see the proof of Lemma \ref{prop15}.

To conclude the proof it is sufficient to show that $Y^w_m(\cdot,0)-\check Y^w_m$ converges in law to 0 as $w\rta\infty$ for any $m\in\{1,\dots,M \}$. Let $q_w\in\N$ denote the following measure for the number of lattice points which are near the boundary of $w\mcl N$
\eqbn
q_w = | \{ j\in \Z^N\,:\,  0<\lambda(A(j)\cap (w\mcl N))<1 \} |.
\eqen
Observe that for each $i$, $Y^w_m(i,0)-\check Y^w_m(i)$ is the weighted sum of $q_w$ i.i.d.\ centered random variables with values in $\{-\frac{1}{M}, \frac{M-1}{M} \}$, divided by $w^{N/2}$, where the weights are in $[0,1]$. Since the upper Minkowski dimension of $\partial\mcl N$ is smaller than $N$, it holds that $q_w=O(w^{N-\ep})$ for some $\ep>0$. Therefore, for any $i\in\mcl V$, we have $\P[ |Y^w_m(i,0)-\check Y^w_m(i)|>w^{-\ep/3} ]\preceq \exp(-w^{-\ep/10})$. A union bound now gives that $Y^w_m(\cdot,0)-\check Y^w_m$ converges in law to 0 as $w\rta\infty$.
\end{proof}

The evolution of $Y^w$ is random, while the evolution of $Y$ (given
its random initial conditions) is deterministic. The next lemma states
that if $Y^w$ and $Y$ are coupled in such a way that they are likely
to be close at time $t_0$, then it is likely that they remain
close at time $t_0 + \Delta t$. Informally, this means that the
evolution of $Y^w$ is {\em approximately} deterministic and {\em approximately} follows the same evolution rule as $Y$. One reason the lemma is challenging to prove is that the evolution of $Y^w$ (resp.\ $Y$) may be very sensitive to small perturbations when the bias is approximately as strong towards two different opinions. To bound the effect of this we will use Theorem \ref{prop6}, which will imply that the measure of the set of points at which this happens is not too large, uniformly for $t$ in a compact set.
\begin{lem}
Let $M$, $N$, $R$, $\mcl R$, $w$ and $\mcl N$ be as in Section \ref{sec:intro1}, and consider the Schelling model on $V=\mcl S$. Let $T>0$, $\Delta t>0$ and $t_0\in\{0,\Delta t,2\Delta t,\dots, \lceil T/\Delta t \rceil\Delta t \}$. Consider an arbitrary coupling of $Y^w$ (which is defined by \eqref{eq24}) and $Y$ (which solves \eqref{eq1}, \eqref{eq2}). Let $(\mcl F_t)_{t\geq 0}$ denote the filtration 
\eqbn
\mcl F_t = \sigma\Big(
Y^w|_{\mcl S^N\times[0,t]},
X|_{{\frk S}^N \times[0,tw^{-N/2}]},
\{(j,s)\in\mcl R\,:\,s \in[0,tw^{-N/2}] \}
\Big).
\eqen
For any $\delta>0$ define the event $E^w_\delta:=\{\|Y^w(\cdot,t_0)-Y(\cdot,t_0)\|_{L^\infty(\mcl S^N)}<\delta\}$.
There exists a random function $v:\BB N\rta[0,1]$ and a random constant $c>1$, such that $\lim_{w\rta\infty}v(w)=1$, and such that for all $w\in\BB N$,
\eqbn
\1_{E_\delta^w}\BB P\left[ E(t_0+\Delta t) \leq E(t_0) (1+c\Delta t)+c\Delta t^2 \,|\,\mcl F_{t_0}\right] \geq \1_{E_\delta^w} v(w),\quad
E(t):= \|Y^w(\cdot,t)-Y(\cdot,t)
\|_{L^\infty({\mcl S^N})}.
\eqen
The constant $c$ depends on $T$ and the $\sigma$-algebra generated by $(B(x))_{x\in\mcl S^N}$, and the function $v$ depends on $T,\delta,\Delta t$ and the $\sigma$-algebra generated by $(B(x))_{x\in\mcl S^N}$.
\label{prop21b}
\end{lem}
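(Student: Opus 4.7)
The plan is to decompose the discrete evolution of $Y^w$ on $[t_0,t_0+\Delta t]$ into a predictable drift plus a compensated martingale, identify the drift with the continuum operator $\phi$ up to vanishing corrections, compare with the ODE for $Y$, and close via Gronwall. The key analytic input is the Lipschitz-type bound on $\phi$ coming from Theorem~\ref{prop6}.

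First I would decompose $Y^w_m(x,t)-Y^w_m(x,t_0)=D^w_m(x,t)+M^w_m(x,t)$, where $D^w_m$ is the conditional-expectation (predictable) part and $M^w_m$ is the compensated jump martingale. The single-site update rule together with the unit-rate Poisson clocks gives
\[
\frac{\partial D^w_m}{\partial t}(x,t)=\frac{1}{w^N}\sum_{j\in\frk N(xw)}\bigl(\1_{p(Y^w(jw^{-1},t))=m}-\1_{X(j,tw^{-N/2})=m}\bigr).
\]
Using the identity $\sum_{j\in\frk N(xw)}\1_{X(j,tw^{-N/2})=m}=w^{N/2}Y^w_m(x,t)+|\frk N(xw)|/M$, and replacing the Riemann sum over $\frk N(xw)$ by $w^N$ times the integral over $\mcl N(x)$ (with error $O(w^{-c})$ from the upper Minkowski bound on $\partial\mcl N$), this becomes $\phi_m(Y^w(\cdot,t))(x)+R^w(x,t)$ with $\|R^w\|_{L^\infty}=O(w^{-c})$ a.s.; tied biases (where $p$ is ambiguous) are absorbed into $R^w$ and are negligible by the occupation-kernel estimate below.

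For the martingale $M^w_m(x,\cdot)$, viewed for fixed $x\in w^{-1}\Z^N$ as the compensation of independent Poisson processes with total rate $O(w^{N/2})$ in rescaled time and jump sizes $w^{-N/2}$, Doob's maximal inequality combined with Bernstein's inequality gives $\BB P[\sup_{t\in[t_0,t_0+\Delta t]}|M^w_m(x,t)|>w^{-N/4+\eta}]\le 2\exp(-w^{c\eta})$ for any fixed $\eta>0$. A union bound over the $O(w^N)$ lattice points in $w^{-1}\Z^N\cap\mcl S^N$ together with the multilinear interpolation \eqref{eq60} upgrades this to $\|M^w\|_{L^\infty(\mcl S^N\times[t_0,t_0+\Delta t])}\le w^{-c''}$ with probability at least $v(w)\to 1$; choosing $\eta$ so that $w^{-c''}\le c\Delta t^2/2$ for large $w$ fixes the $\Delta t$-dependence of $v(w)$.

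Subtracting the integral equations yields
\[
(Y^w-Y)(\cdot,t)=(Y^w-Y)(\cdot,t_0)+\int_{t_0}^t\bigl(\phi(Y^w(\cdot,s))-\phi(Y(\cdot,s))\bigr)ds+M^w(\cdot,t)+\int_{t_0}^t R^w(\cdot,s)\,ds,
\]
so the critical estimate is $\|\phi(Y^w(\cdot,s))-\phi(Y(\cdot,s))\|_{L^\infty}\le cE(s)$ for a random $c$ measurable with respect to $\sigma(B)$. Since $p(Y^w(x',s))\ne p(Y(x',s))$ forces $|Y_m(x',s)-Y_{m'}(x',s)|\le 2E(s)$ for some $m\ne m'$, and writing $Y=B+y$ with $y(\cdot,s)\in\mcl L^s_M(\mcl S^N)$ by Theorem~\ref{prop3}, the Lebesgue measure of this set is bounded by $\int_{-2E(s)}^{2E(s)}\wt\alpha(-(y_m-y_{m'})+a)\,da$, where $\wt\alpha$ is the occupation kernel of $\wt B=B_m-B_{m'}$, which by \eqref{eq20}--\eqref{eq68} has the law (up to a scalar) of the field in Theorem~\ref{prop6}. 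Theorem~\ref{prop6}(III) gives $\sup_{f\in\mcl L^{2T+1}(\mcl S^N)}\wt\alpha(f)<\infty$ a.s., yielding the required bound; Gronwall applied to $E(t)\le E(t_0)+c\int_{t_0}^t E(s)\,ds+w^{-c''}$ then produces $E(t_0+\Delta t)\le E(t_0)(1+c\Delta t)+c\Delta t^2$ after absorbing the Taylor remainder $E(t_0)\cdot O(\Delta t^2)$ (using $E(t_0)\le\delta\le 1$) and the term $w^{-c''}(1+c\Delta t)$ into the $c\Delta t^2$ slack. The main obstacle is this final Lipschitz-type estimate: $\phi$ is not pointwise Lipschitz on arbitrary continuous data (Section~\ref{sec:welldef} exhibits explicit non-unique initial data), so the argument crucially exploits the roughness of the Gaussian field through the occupation-kernel control of Theorem~\ref{prop6}.
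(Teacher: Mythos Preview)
Your drift/martingale plus Gronwall organization is a natural and essentially correct alternative to the paper's seven-term telescoping decomposition $A_1,\dots,A_7$, and you have correctly identified Theorem~\ref{prop6} as the key analytic input. The paper freezes everything at the left endpoint $t_0$ and bounds each discrepancy by $O(\Delta t(\Delta t+\delta))$ separately; your approach integrates the comparison over $[t_0,t_0+\Delta t]$ and closes with Gronwall. Either route works, and yours is arguably cleaner.

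There is, however, a genuine gap in the step ``replacing the Riemann sum over $\frk N(xw)$ by $w^N$ times the integral over $\mcl N(x)$ (with error $O(w^{-c})$ from the upper Minkowski bound on $\partial\mcl N$)''. The Minkowski bound controls only the boundary contribution; the interior Riemann-sum error for $\1_{p(Y^w(\cdot,t))=m}$ is governed by the number of lattice cells on which $p(Y^w)$ is not constant. But the interpolated $Y^w$ oscillates by $O(w^{N/2-1})$ across a single cell (since $|\mcl Y_m(i)-\mcl Y_m(i')|\le|\frk N(i)\triangle\frk N(i')|\asymp w^{N-1}$ for adjacent $i,i'$), so for $N\ge 2$ this oscillation is not small and you cannot conclude $R^w=O(w^{-c})$. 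The fix is to bypass $\phi(Y^w)$ entirely: compare the lattice sum $w^{-N}\sum_{j}\1_{p(Y^w(jw^{-1}))=m}$ first to $w^{-N}\sum_{j}\1_{p(Y(jw^{-1}))=m}$, which costs $O(E(s))$ by the discrete occupation bound (Lemma~\ref{prop22} in the paper), and then to $\int_{\mcl N(x)}\1_{p(Y)=m}\,dx'$, which costs $O(w^{-1/2+\epsilon})$ because $Y=B+y$ with $y$ Lipschitz and $B$ H\"older, so the cells where $p(Y)$ changes are controlled by the event $G^1_k$ of Lemma~\ref{prop16}. This is precisely the content of the paper's terms $A_4$--$A_6$. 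With this correction the extra $O(E(s))$ simply enlarges your Gronwall constant and the argument goes through. (A minor point: the Taylor-remainder step at the end is cleaner if you replace Gronwall by the crude a~priori bound $E(s)\le E(t_0)+O(\Delta t)$ for $s\in[t_0,t_0+\Delta t]$, which gives the stated inequality directly without any restriction on $\delta$.)
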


Lemma \ref{prop21b} will follow almost immediately from the following lemma. For any $f\in \mcl C_M(\mcl S^N)$ and $w\in\BB N$ define $\|f\|_{L^{\infty}(\frk S^N)}:=\sup_{m\in\{1,...,M\}}\sup_{i\in\frk S^N} |f_m(w^{-1}i)|$.
\begin{lem}
	The result of Lemma \ref{prop21b}  holds if we replace $L^\infty(\mcl S^N)$ by $L^\infty(\frk S^N)$ in the definition of the event $E_\delta^w$ and in the second indented equation.
	\label{prop21a}
\end{lem}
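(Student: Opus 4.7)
The plan is to decompose the one-step change $Y^w(\cdot,t_0+\Delta t)-Y^w(\cdot,t_0)$ into its $\mcl F_{t_0}$-conditional expectation (the drift) plus a centered martingale fluctuation, identify the drift with an Euler step of the continuum ODE $\partial_t Y=\phi^\omega(Y)$ up to controllable errors, and close a Gronwall-type estimate lattice-point-by-lattice-point using Proposition~\ref{prop1} and Theorem~\ref{prop6}.

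\emph{Drift and continuum step.} In rescaled time each Poisson clock rings at rate $w^{-N/2}$, so on $[t_0,t_0+\Delta t]$ each clock rings more than once with probability $O(\Delta t^2 w^{-N})$, and conditional on a single ring of node $j\in\frk N(i)$ its new opinion equals $p(\mcl Y(j,t_0 w^{-N/2}))$ except when two coordinates of the bias at $j$ lie within $O(\Delta t)$ of each other at some time in the window; by Theorem~\ref{prop6} applied to each centered field $B_m-B_{m'}$ the measure of such ``near-tie'' sites is $O(\Delta t)$ almost surely. Summing over $j\in\frk N(i)$ and rescaling by $w^{-N/2}$, the Riemann-sum error on $\partial\mcl N$ (which is $O(w^{-\eta})$ for some $\eta>0$ since $\partial\mcl N$ has upper Minkowski dimension strictly less than $N$), together with the cancellation of the additive $1/M$ constant against the $-M^{-1}$ in~\eqref{eq8}, gives
\begin{equation*}
\E\big[Y^w_m(w^{-1}i,t_0+\Delta t)-Y^w_m(w^{-1}i,t_0)\,\big|\,\mcl F_{t_0}\big]=\Delta t\,\phi^\omega_m\big(Y^w(\cdot,t_0)\big)(w^{-1}i)+O(\Delta t\,w^{-\eta}).
\end{equation*}
On the continuum side, Theorem~\ref{prop3} places $Y(\cdot,s)-B$ in $\mcl L_M^{s}$, so on $[t_0,t_0+\Delta t]$ all these functions lie in $\mcl L_M^{K'}$ with $K':=t_0+\Delta t$. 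Proposition~\ref{prop1} yields Lipschitz continuity of $\phi^\omega$ along the trajectory with a random constant $c=c(B,T)$, and integrating~\eqref{eq12} gives $Y(\cdot,t_0+\Delta t)-Y(\cdot,t_0)=\Delta t\,\phi^\omega(Y(\cdot,t_0))+O(\Delta t^2)$.

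\emph{Drift comparison on $E_\delta^w$.} Since $\phi^\omega_m(y)(x)$ depends on $y$ only through the signs of the coordinate differences on $\mcl N(x)$,
\begin{equation*}
\big|\phi^\omega_m(Y^w(\cdot,t_0))(x)-\phi^\omega_m(Y(\cdot,t_0))(x)\big|\leq \lambda\!\bigl(\bigl\{x'\in\mcl N(x):p(Y^w(x',t_0))\neq p(Y(x',t_0))\bigr\}\bigr).
\end{equation*}
On $E_\delta^w$ the lattice values of $Y^w$ and $Y$ differ by at most $E(t_0)$; at non-lattice points the multilinear interpolation~\eqref{eq60}, combined with H\"older regularity of $B$ on scale $w^{-1}$ and Lipschitz regularity of $Y-B$ from Theorem~\ref{prop3}, gives $|Y^w-Y|(x',t_0)\leq E(t_0)+O(w^{-\eta})$ after possibly shrinking $\eta$. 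Hence the bad set sits inside $\bigcup_{m\neq m'}\{x':|(B_m-B_{m'})(x')-(y_{m'}-y_m)(x',t_0)|\leq 2E(t_0)+O(w^{-\eta})\}$, a thin tube around a function in $\mcl L^{2K'}(\mcl S^N)$. By Theorem~\ref{prop6} (applied as in the proof of Proposition~\ref{prop1}) this measure is at most $c(E(t_0)+w^{-\eta})$ with a random $c=c(B,T)$.

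\emph{Fluctuations and conclusion.} The martingale increment $F_m^w(i):=Y^w_m(w^{-1}i,t_0+\Delta t)-Y^w_m(w^{-1}i,t_0)-\E[\,\cdot\,|\mcl F_{t_0}]$ is a sum over $j\in\frk N(i)$ of $O(w^N)$ independent bounded summands, each of magnitude $O(w^{-N/2})$ and nonzero with probability $O(\Delta t\,w^{-N/2})$, so its conditional variance is $O(\Delta t\,w^{-N/2})$. Bernstein's inequality gives $\P(|F_m^w(i)|\geq\Delta t^2\mid\mcl F_{t_0})\leq \exp(-c\Delta t^3 w^{N/2})$, and a union bound over the $O(w^N)$ lattice sites and $M$ opinions defines $v(w)\to 1$ on which $\sup_{i,m}|F_m^w(i)|\leq\Delta t^2$. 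Combining the previous steps, on $E_\delta^w$ intersected with this event,
\begin{equation*}
E(t_0+\Delta t)\leq E(t_0)+\Delta t\bigl(cE(t_0)+O(w^{-\eta})\bigr)+c\Delta t^2\leq E(t_0)(1+c\Delta t)+c\Delta t^2
\end{equation*}
for $w$ large, absorbing lower-order terms into $c$ (possibly further shrinking $v(w)$). The main obstacle is the drift comparison: without the uniform tube-measure bound of Theorem~\ref{prop6}, the set where $p(Y^w)$ and $p(Y)$ disagree could a priori have measure much larger than $E(t_0)$, which would prevent the Gronwall closure; Theorem~\ref{prop6} is precisely what delivers the required $O(E(t_0))$ bound uniformly over admissible Lipschitz shifts.
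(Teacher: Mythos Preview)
Your drift--plus--fluctuation organization is close in spirit to the paper's proof, which unwinds the same comparison into an explicit seven-term telescope $A_1,\dots,A_7$ (replacing the ring-time state by the time-$t_0$ state, restricting to first rings, swapping the actual opinion indicator $U_{i,t_0}=\mathbf 1_{X(i,\cdot)\neq m}$ for an independent Bernoulli $V_i$, passing to the integral, then to $Y$, then absorbing the Euler error). The key analytic input---the uniform tube bound from Theorem~\ref{prop6}/Lemma~\ref{prop22}---is used in both.

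There is, however, a genuine gap in your fluctuation step. You assert that $F_m^w(i)$ is a sum over $j\in\frk N(i)$ of conditionally independent summands. That is false for the \emph{actual} increment: the opinion change of node $j$ at a ring time $s$ depends on $X(\cdot,s^-)$, which in turn depends on the rings of all nodes in $\frk N(j)$ during $[t_0,s]$, so the contributions from different $j$ are coupled. Independence holds only for the \emph{approximated} increment (each $j$ rings at most once and adopts the time-$t_0$ plurality). You must therefore separately bound, with high probability and uniformly in $i$, the error of this approximation---multiple rings and mid-window plurality changes. The paper does exactly this in $A_1$, $A_2$, $A_3$: a Chernoff bound on the total rings gives $|Y^w(\cdot,t)-Y^w(\cdot,t_0)|\preceq\Delta t$, the discrete tube count (Lemma~\ref{prop22}) bounds the number of lattice sites with $|Y_m-Y_{m'}|\leq O(\Delta t+\delta)$ by $O((\Delta t+\delta)w^N)$, and a further Chernoff bound controls the number of rings falling on those sites. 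These are not covered by your Bernstein estimate and do not appear in your sketch except as a remark on the \emph{measure} of near-tie sites; the probabilistic control is missing.

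A second, smaller imprecision: the ``cancellation of the additive $1/M$ against the $-M^{-1}$ in~\eqref{eq8}'' is not what happens. Conditionally on $\mcl F_{t_0}$ the current opinions $X(j,t_0w^{-N/2})$ are known, so $w^{-N}\sum_{j\in\frk N(i)}\mathbf 1_{X(j,\cdot)=m}$ is not $M^{-1}\lambda(\mcl N)$; it equals $w^{-N}|\frk N(i)|/M+w^{-N/2}Y^w_m(w^{-1}i,t_0)$. The extra term is $O(\Delta t\,w^{-N/2})$ on bounded time intervals, so your drift formula is ultimately correct, but for the right reason (the normalized bias is $O(1)$), not because of any cancellation. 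The paper handles this same issue via the replacement $U_{i,t_0}\to V_i$ in $A_4$ and a cube-by-cube averaging argument; your conditional-expectation route is a legitimate and somewhat cleaner alternative, provided you account for the $w^{-N/2}Y^w_m$ discrepancy explicitly.
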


The next lemma, which is a discrete version of the estimate \eqref{eq23}, will help us to prove Lemma \ref{prop21a}.
\begin{lem}
	For any $T>0$ and $\Delta t>0$ there are random constants $c$ and $w_0$ such that for any $\delta\in(0,1)$, if
	\eqbn
	\wt A_{t,\delta,w}:=\{i\in\frk S^N\,:\,\exists m,m'\in\{1,...,M\},m\neq m',\text{ such\,\,that }|Y_m(iw^{-1},t)-Y_{m'}(iw^{-1},t)|<\delta\},\quad t\geq 0,
	\eqen
	then $|\wt A_{t,\delta,w}|< c\delta w^N$ for all $t\in \{ 0,\Delta t,2\Delta t,\dots,\lceil T/\Delta t \rceil \Delta t \}$ and all sufficiently large $w\geq w_0$. The constant $c$ satisfies the same properties as in Lemma \ref{prop21b}, and the constant $w_0$ depends on $T$, $\Delta t$, and the $\sigma$-algebra generated by $(B(x))_{x\in\mcl S^N}$.
	\label{prop22}
\end{lem}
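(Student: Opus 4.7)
The plan is to transfer the continuum Lebesgue-measure estimate $\lambda\bigl(\{|Y_m(\cdot,t)-Y_{m'}(\cdot,t)|<\delta\}\bigr)\preceq\delta$ (essentially \eqref{eq23} made uniform in $t$) to a discrete counting estimate on $w^{-1}\frk S^N$, with the passage made possible by the a.s.\ H\"older regularity of $B$.

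First I would verify the continuum input. A direct computation from \eqref{eq20} shows that for $m\neq m'$ the field $\wt B^{m,m'}:=B_m-B_{m'}$ is a centered Gaussian field with covariance $\tfrac{2}{M}\lambda\bigl(\mcl N(x)\cap\mcl N(x')\bigr)$, i.e.\ it has the law of $\sqrt{2/M}$ times the field $B$ of Section~\ref{sec:suploctime}, to which Theorem~\ref{prop6} applies. By Theorem~\ref{prop3} we may write $(Y_m-Y_{m'})(\cdot,t)=\wt B^{m,m'}+\wt y^{m,m'}(\cdot,t)$ with $\wt y^{m,m'}(\cdot,t)\in\mcl L^{2(T+\Delta t)}(\mcl S^N)$ for every relevant $t$. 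Applying the occupation-times identity exactly as in the derivation of \eqref{eq23}, together with Theorem~\ref{prop6}(III) applied to $\wt B^{m,m'}$ for some $K$ large enough that each $\wt y^{m,m'}(\cdot,t)$ lies in $\mcl L^K(\mcl S^N)$, produces an a.s.\ finite random constant $C_1$ (depending on $T$, $\Delta t$ and the $\sigma$-algebra of $B$) with
\begin{equation*}
\lambda\bigl(\{x\in\mcl S^N:|Y_m(x,t)-Y_{m'}(x,t)|<\delta\}\bigr)\leq C_1\delta
\end{equation*}
uniformly in $\delta\in(0,1)$, in $t$ on the discrete time grid, and in the pair $m\neq m'$.

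Second, to pass from Lebesgue measure to lattice count, I would use that $\wt B^{m,m'}$ is a.s.\ $\gamma$-H\"older continuous for any $\gamma\in(0,1/2)$ with some random constant $C_2$, via the quantitative Kolmogorov--Chentsov bound recalled in the proof of Lemma~\ref{prop15}, and that $\wt y^{m,m'}(\cdot,t)$ is Lipschitz with constant at most $2(T+\Delta t)$. Hence $f^{m,m'}_t:=Y_m(\cdot,t)-Y_{m'}(\cdot,t)$ satisfies $|f^{m,m'}_t(x)-f^{m,m'}_t(x')|\leq C_2|x-x'|^{1/3}+2(T+\Delta t)|x-x'|$, uniformly in $t$. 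For a given $\delta\in(0,1)$ I choose $w_0$ large enough that this modulus is $<\delta$ whenever $\|x-x'\|_\infty\leq w_0^{-1}/2$; this is the step in which the threshold $w_0$ acquires its $\delta$-dependence. Then for $w\geq w_0$, any $i\in\wt A_{t,\delta,w}$ witnessed by a pair $(m,m')$ forces the whole cube $Q_i:=w^{-1}i+[-\tfrac{1}{2w},\tfrac{1}{2w})^N$ to lie inside $\{|f^{m,m'}_t|<2\delta\}$.

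Since the cubes $Q_i$ are pairwise disjoint of volume $w^{-N}$ each, summing the Step~1 bound over the $\binom{M}{2}$ unordered pairs yields $|\wt A_{t,\delta,w}|\leq 2M(M-1)\,C_1\,\delta\,w^N$, and taking $c:=2M^2 C_1$ completes the proof. The main obstacle is the discretization--regularity bookkeeping of the second step: in particular, tuning $w_0$ against the H\"older modulus of $\wt B^{m,m'}$ so that cube-restrictions only double the window $\delta$. The first step is essentially a rerun of Proposition~\ref{prop1}, now tracked quantitatively in $\delta$ and made uniform in $t\in[0,T+\Delta t]$ via the single random constant $C_1=\sup_f\wt\alpha(f)$ delivered by Theorem~\ref{prop6}(III).
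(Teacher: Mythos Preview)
Your proof is correct. Both your argument and the paper's share the same first step: the uniform continuum bound $\lambda(\{|Y_m(\cdot,t)-Y_{m'}(\cdot,t)|<\delta\})\leq C_1\delta$ via Theorem~\ref{prop6}(III). The difference is in the discretization step. The paper simply observes that the set $A_{t,\delta}=\{x:|Y_m(x,t)-Y_{m'}(x,t)|<\delta\text{ for some }m\neq m'\}$ is open and invokes a Riemann-sum lemma (\cite[Lemma 4.2.6]{stroock-measure}) to get $w^{-N}|\wt A_{t,\delta,w}|\to\lambda(A_{t,\delta})$ as $w\to\infty$; since the grid of times is finite, this yields the claimed $w_0$. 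You instead use the explicit H\"older modulus of $B$ (plus the Lipschitz bound on $y$) to show that every lattice point in $\wt A_{t,\delta,w}$ carries an entire cube of volume $w^{-N}$ inside the $2\delta$-sublevel set, and then compare volumes. Your route is more quantitative---it gives an explicit scaling $w_0\asymp\delta^{-3}$---at the cost of invoking Kolmogorov--Chentsov; the paper's route is softer and shorter but gives no rate. Both approaches produce a $w_0$ that depends on $\delta$ (you say so explicitly; the paper's Riemann-sum convergence also does, implicitly), which is harmless for how the lemma is actually used downstream. One cosmetic point: the Lipschitz constant of $\wt y^{m,m'}(\cdot,t)$ carries a dimensional factor $2^{N-1}$ from the definition of $\mcl L^K(\mcl S^N)$, so ``$2(T+\Delta t)$'' should be adjusted accordingly; this does not affect the argument.
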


\begin{proof}
	Note that for any $m,m'\in\{1,\dots,M\}$ satisfying $m\neq m'$ the field $Y_m(\cdot,t)-Y_{m'}(\cdot,t)$ has the law of a constant multiple of $B_m$ plus some element of $\mcl L^{2t}(\mcl S^N)$. The estimate \eqref{eq23} implies that 
	\eqb
	\lambda(A_{t,\delta})<\frac 12 c\delta,\qquad A_{t,\delta}:=\{x\in\mcl S^N \,:\, |Y_m(x,t)-Y_{m'}(x,t)|<\delta\}.
	\eqe
	By e.g.\ \cite[Lemma 4.2.6]{stroock-measure} and since $A_{t,\delta}$ is open,
	\eqbn
	\lambda(A_{t,\delta}) = \lim_{w\rta\infty} \sum_{i\in\frk S^N} w^{-N} \bd 1_{|Y_m(iw^{-1},t)-Y_{m'}(iw^{-1},t)|<\delta} = \lim_{w\rta\infty} w^{-N}|\wt A_{t,\delta,w}|.
	\eqen
	Therefore $|\wt A_{t,\delta,w}|<c\delta w^N$ for all sufficiently large values of $w$ for fixed $t\in[0,T]$, so $|\wt A_{t,\delta,w}|<c\delta w^N$ for all sufficiently large values of $w$ and all $t\in \{0,\Delta t,2\Delta t,\dots,\lceil T/\Delta t \rceil\Delta t \}$.
\end{proof}

\begin{proof}[Proof of Lemma \ref{prop21a}]
Throughout the proof the implicit constant of $\preceq$ will depend only on $T$ and $(B(x))_{x\in\mcl S^N}$. The function $v$ will change throughout the proof, but will always satisfy the properties of the function $v$ in the statement of the lemma. The expression "for all sufficiently large $w$" means that a statement is true for all $w\succeq 1$. Fix $m\in\{1,...,M\}$. For any $i\in\wt S$ let $U_{i,t}=\bd 1_{X(i,w^{-N/2}t^-)\neq m}$, and let $V_i$ be i.i.d.\ Bernoulli random variables with $\BB P[V_i=0]=M^{-1}$ and $\BB P[V_i=1]=1-M^{-1}$. Recall the definition \eqref{eq47} of $\mcl R$, and for $i\in\frk S^N$ define the following sets $\mcl R',\mcl R'_0,\mcl R'_i,\mcl R'_{0,i}\subset\mcl R$ 
\eqbn
\begin{split}
\mcl R'&:=\{(j,tw^{-N/2})\in\mcl R\,:\, t\in(t_0,t_0+\Delta t]\},\qquad
\mcl R'_0:=\{(j,t)\in \mcl R'\,:\,\not\exists s\in[t_0,t)\text{ such\,\,that }(j,s)\in \mcl R'\},\\
\mcl R'_i &:= \{(j,t)\in\mcl R'\,:\,j\in\frk N(i)\},\qquad\qquad\,\,\,\,\qquad\quad
\mcl R'_{0,i} := \{(j,t)\in\mcl R'_0\,:\,j\in\frk N(i)\}=\mcl R'_0\cap \mcl R'_i.
\end{split}
\eqen
Note that several of the random variables or sets we have defined above depend on $w$, but we have chosen not to indicate the $w$ dependence in order to simplify notation. We have
\eqbn
\begin{split}
\|(Y^{w}_m(\cdot&,t_0+\Delta t)-Y_m(\cdot,t_0+\Delta t))
- (Y^{w}_m(\cdot,t_0)-Y_m(\cdot,t_0))\|_{L^\infty(\frk S^N)} \\
=&\,\sup_{j\in \frk S^N} \bigg|
w^{-N/2} \sum_{(i,t)\in\mcl R'_{j}} \bd 1_{X(i,t^-)\neq X(i,t)=m} 
- w^{-N/2} \sum_{(i,t)\in\mcl R'_{j}} \bd 1_{m=X(i,t^-)\neq X(i,t)}\\
&-(1-M^{-1})\int_{t_0}^{t_0+\Delta t}
\int_{x\in \mcl N(jw^{-1})} \bd 1_{ p(Y(x,t))=m}\,dx\, dt
+M^{-1}\int_{t_0}^{t_0+\Delta t}
\int_{x\in \mcl N(jw^{-1})} \bd 1_{ p(Y(x,t))\neq m}\,dx
\,dt\bigg|.
\end{split}
\eqen
To conclude the proof of the lemma it is sufficient to show that with probability $>v(w)$, the right side is $\preceq \Delta t(\Delta t+\delta)$. By the triangle inequality,
\eqbn
\|(Y^{w}_m(\cdot,t_0+\Delta t)-Y_m(\cdot,t_0+\Delta t))
- (Y^{w}_m(\cdot,t_0)-Y_m(\cdot,t_0)) 
\|_{L^\infty(\frk S^N)}
\leq  \sum_{v=1}^7 (A_v+ A'_v),
\eqen
where
\begin{align*}
A_1&=\sup_{j\in \frk S^N} \left| w^{-N/2}\sum_{(i,t)\in\mcl R'_{j}} \bd 1_{X(i,t^-)\neq X(i,t)=m} - 
w^{-N/2}\sum_{(i,t)\in\mcl R'_{j}} \bd 1_{ p(Y^{w}(iw^{-1},t))=m} U_{i,t}
\right|,
\displaybreak[3]\\
A_2&=\sup_{j\in \frk S^N} \left|w^{-N/2} \sum_{(i,t)\in\mcl R'_{j}} \bd 1_{p(Y^{w}(iw^{-1},t))=m} U_{i,t} -
w^{-N/2} \sum_{(i,t)\in\mcl R'_{j}} \bd 1_{p(Y^{w}(iw^{-1},t_0))=m} U_{i,t}\right|,
\displaybreak[3]\\
A_3&= \sup_{j\in \frk S^N} \left| {w^{-N/2}} 
\sum_{(i,t)\in \mcl  R'_j} \bd 1_{ p(Y^{w}(iw^{-1},t_0))=m}
U_{i,t} 
- {w^{-N/2}} \sum_{(i,t)\in\mcl R'_{0,j}}
\bd 1_{p(Y^{w}(iw^{-1},t_0))=m} U_{i,t_0}\right|,
\displaybreak[3]\\
A_4&= \sup_{j\in \frk S^N} \left|w^{-N/2} \sum_{(i,t)\in \mcl R'_{0,j}} \bd 1_{p(Y^{w}(iw^{-1},t_0))=m} U_{i,t_0} -w^{-N/2} \sum_{(i,t)\in\mcl R'_{0,j}} \bd 1_{p(Y^{w}(iw^{-1},t_0))=m}V_{i}\right|,
\displaybreak[3]\\
A_5&=\sup_{j\in \frk S^N} A_{5,j},\,A_{5,j}=\left| w^{-N/2}\sum_{(i,t)\in\mcl R'_{0,j}} \bd 1_{ p(Y^{w}(iw^{-1},t_0))=m} V_{i} - 
\Delta t(1-M^{-1}) 
\int_{x\in\mcl N(jw^{-1})} \bd 1_{p(Y^w(x,t_0))=m}\,dx\right|,
\displaybreak[3]\\
A_6&= (1-M^{-1})\Delta t\sup_{j\in \frk S^N} \left|
\int_{x\in\mcl N(jw^{-1})} \bd 1_{p(Y^w(x,t_0))=m}\,dx-
\int_{x\in\mcl N(jw^{-1})} \bd 1_{p(Y(x,t_0))=m}\,dx
\right|,
\displaybreak[3]\\
A_7&= (1-M^{-1})\sup_{j\in \frk S^N}\left|\Delta t \int_{x\in\mcl N(jw^{-1})} \bd 1_{p(Y(x,t_0))=m}\,dx 
- \int_{t_0}^{t_0+\Delta t}
\int_{x\in\mcl N(jw^{-1})} \bd 1_{p(Y(x,t))=m}\,dx
\,dt\right|,
\end{align*}
where we view $iw^{-1}$ as an element of $\mcl S^N$ by identifying $\mcl S$ (resp.\ $\frk S$) with $[0,R)$ (resp.\ $\{0,\dots,Rw-1\}$). Define $A'_v$ exactly as $A_v$, except that $=m$ is replaced by $\neq m$, $1-M^{-1}$ is replaced by $M^{-1}$, $U_{i,t}$ is replaced by $1-U_{i,t}$, and $V_{i}$ is replaced by $1-V_{i}$. 

For each $j\in\{1,...,7\}$ we will show that with probability $>v(w)$ we have $A_j\preceq \Delta t(\Delta t+\delta)$. The term $A'_j$ can be bounded exactly as $A_j$ for all $j$, and the proof of its bound will therefore be omitted.

First we show that $A_1\preceq \Delta t(\delta+\Delta t)$ with probability $>v(w)$. Since $|\mcl R'|$ is a Poisson random variable with parameter $R^N\Delta t w^{N/2}$, we may assume that $|\mcl R'|<2R^N\Delta t w^{N/2}$, since $\BB P[|\mcl R'|<2R^N\Delta t w^{N/2}]\geq v(w)$. This implies $\|Y^w(\cdot,t)-Y(\cdot,t)\|_{L^\infty(\mcl S^N)}\leq \delta + c\Delta t$ for all $t\in[t_0,t_0+\Delta t]$ and some appropriate $c$ as in the statement of the lemma. Under these assumptions,
\eqbn
A_1 
\leq w^{-N/2} \sum_{(i,t)\in\mcl R'} \1_{p(Y^w(iw^{-1},t))=0}
\leq w^{-N/2} \sum_{(i,t)\in\mcl R'} \sum_{1\leq m,m'\leq M,m\neq m'} \1_{|Y_{m'}(iw^{-1},t)-Y_m(iw^{-1},t)|\leq 2\delta+2c\Delta t}.
\eqen
By independence of $\mcl R'$ and $\mcl F_{t_0}$, conditioned on $\mcl F_{t_0}$ the right side has the law of the sum of i.i.d.\ $\{0,1\}$-valued random variables. By Lemma \ref{prop22} the probability that this random variable equals 1 is $\preceq \delta+\Delta t$. The bound for $A_1$ now follows by the assumed upper bound for $|\mcl R'|$ and a Chernoff bound.

We will bound $A_2$ by using \eqref{eq23} and the assumption $\|Y^w(\cdot,t_0)-Y(\cdot,t_0)\|_{L^\infty(\frk S^N)}<\delta$. As above we assume  $|\mcl R'|<2R^N\Delta t w^{N/2}$. Define $\wt A\subset\frk S^N$ by 
\eqbn
\wt A=\{i\in\frk S^N\,:\,\exists m'\in\{1,...,M\}\backslash\{m\}\text{ such\,\,that } |Y^{w}_{m'}(w^{-1}i,t_0)-Y^{w}_m(w^{-1}i,t_0)|\leq 2w^{-N/2}|\mcl R'|\}.
\eqen
We claim that $|\wt A|\preceq (\Delta t+\delta)w^N$ for all sufficiently large $w$. If $i\in\wt A$ the assumption $\|Y(\cdot,t_0)-Y^w(\cdot,t_0)\|_{L^\infty(\frk S^N)}<\delta$ implies that there exists $m'\in\{1,...,M\}\backslash\{m\}$, such that
\eqb
|Y_{m}(iw^{-1},t_0)-Y_{m'}(iw^{-1},t_0)|\leq 2 w^{-N/2}|\mcl R'|+2\delta < 4R^N\Delta t+2\delta.
\label{eq26}
\eqe
It follows by Lemma \ref{prop22} that the set of nodes $i$ satisfying \eqref{eq26} is $\preceq w^N(\Delta t+\delta)$, and our claim follows. 

If $i\in\frk S^N$ is such that there is a $t\in[t_0,t_0+\Delta t]$ for which $p(Y^w(iw^{-1},t))\neq p(Y^w(iw^{-1},t_0))$, then we must have $i\in\wt A$ by the definition of $Y^w$. Therefore
\eqbn
A_2 \leq w^{-N/2} \sum_{(i,t)\in\mcl R'} \bd 1_{i\in\wt A}.
\eqen
We conclude the bound for $A_2$ by using independence of $\mcl R'$ and $\wt A$ and proceeding exactly as in the proof of $A_1$.

Next we claim that with probability $>v(w)$ we have $A_3<(\Delta t)^2$. For each $i\in\frk S^N$ let $P_i$ be the Poisson random variable with parameter $\Delta t w^{-N/2}$ which denotes the number of rings of Poisson clock $i$ during the interval $(t_0,t_0+\Delta t]$. Then 
\eqbn
A_3 \leq w^{-N/2} \sum_{i\in\frk S^N} \bd \max\{P_i-1,0\}.
\eqen
Since $\BB E[\max\{P_i-1,0\}]\preceq (\Delta t)^2 w^{-N}$, we have $\E[A_3]\preceq(\Delta t)^2 w^{-N/2}$. By Chebyshev's inequality  $\BB P[A_3\geq (\Delta t)^2] \preceq w^{-N/2}$, which implies our claim.

We will bound $A_4$ by approximating the region where $p(Y^w(iw^{-1},t_0))=m$ by small cubes of side length $L^{-1}$, and by proving that when a node $i$ is sampled uniformly from one of these cubes and $L\ll w$, then $U_{i,t_0}$ and $V_i$ have approximately the same distribution. As in our proof for the bound of $A_3$ we can assume $|\mcl R'|<2R^N\Delta t w^{N/2}$. Let $L=\lceil w^{1/2}\rceil$ and divide $\mcl S^N$ into $(RL)^N$ disjoint cubes of side length $L^{-1}$. For any $i\in\frk S^N$ let $I^{L}_i$ denote the cube containing $iw^{-1}$. Define $A\subset\mcl{S}$ and $\wt A\subset\frk S^N$ by
\eqbn
A = \{x\in\mcl S^N\,:\,p(Y(x,t_0))=m\},\qquad \wt A = \{i\in\frk S^N\,:\,I^{L}_i\subset A\}.
\eqen
We have
\eqb
A_4 \leq 
w^{-N/2} \sum_{(i,t)\in\mcl R'} |\bd 1_{p(Y^w(iw^{-1},t_0))=m}-\bd 1_{i\in \wt{A}}| + 
\sup_{j\in\frk S^N}
w^{-N/2} \sum_{(i,t)\in\mcl R'_{0,j},i\in\wt A}(U_{i,t_0}-V_i).
\label{eq32}
\eqe
We will prove that $A_4<\delta\Delta t$ with probability $>v(w)$. Any $i\in\frk S^N$ for which $\bd 1_{p(Y^w(iw^{-1},t_0))=m}\neq \bd 1_{i\in \wt{A}}$ must satisfy one of the following conditions: 
(i) $p(Y^w(iw^{-1},t_0))\neq p(Y(iw^{-1},t_0))$, or (ii) $p(Y(iw^{-1},t_0))=m$ and $i\not\in \wt A$. 

We will prove that the number of nodes satisfying one of the conditions (i)-(ii) is $\preceq w^{N}\delta$ with probability $>v(w)$. If $i\in\frk S^N$ satisfies (i), by definition of $E_\delta^w$ there is an $m'\in\{1,...,M\}$, $m'\neq m$, such that $|Y_m(iw^{-1},t_0)-Y_{m'}(iw^{-1},t_0)|<2\delta$, and the wanted result follows by Lemma \ref{prop22}. 
If $i$ satisfies (ii), the function $Y_m(\cdot,t_0)-Y_{m'}(\cdot,t_0)$ intersects zero in $I_i^L$. By our estimates for the event $G_k^1$ in Proposition \ref{prop16}, it holds with probability $>v(w)$ that the number of such cubes is $<L^{N-1/2+1/100}$ for all $t\in[0,T]$. Using $L=\lceil w^{1/2}\rceil$, it follows that for $w>\delta^{-4}$ the number of nodes $i\in\mcl{\wt S}$ satisfying (ii) is $\preceq L^{N-1/2+1/100} (w/L)^N \preceq w^N\delta$ with probability $>v(w)$. This completes the proof that the number of nodes satisfying one of the conditions (i)-(iii) is $\preceq w^{N}\delta$ with probability $>v(w)$.

Given any $i\in\frk S^N$ the events $\{\exists t\in(t_0,t_0+\Delta t]\text{\,\,such\,\,that\,\,} (i,t)\in\mcl R'\}$ and $\bd 1_{p(Y^w(i,t_0))=m}\neq \bd 1_{i\in \wt{A}}$ are independent. Proceeding as when bounding $A_2$ and $A_3$, we see that the first term on the right side of \eqref{eq32} is $\preceq \delta\Delta t$ with probability $>v(w)$.

Next we will prove that the second term on the right side of \eqref{eq32} converges to 0 in probability as $w\rta\infty$. Since $\sup_{x\in\mcl S}|Y^{w}(x,t_0)|<\delta+\sup_{x\in\mcl S}|Y(x,t_0)|<\infty$ on $E_\delta^w$, the difference in probability between the events $\{U_{i,t_0}=1\}$ and $\{V_{i}=1\}$ is $\preceq w^{-N/2}$ when we sample $i$ uniformly from one of the cubes $I^{L}_j$. Since $\mcl R'$ is independent of $U_{i,t_0}$ and $V_i$ for all $i\in\frk N(j)$, the second term on the right side of \eqref{eq32} is stochastically dominated by $w^{-N/2}$ times the sum of $<|\mcl R'|\preceq \Delta t w^{N/2}$ i.i.d.\ random variables taking values in $\{-1,0,1\}$ and with expectation $\preceq w^{-N/2}$. Our claim follows by a Chernoff bound and a union bound. 

Next we claim that $A_5<\delta\Delta t$ with probability $>v(w)$. If $|\mcl R'_{0,j}|\geq \Delta t 2^Nw^{N/2}$ let $\wt{\mcl R'}_{0,j}$ denote the first $\lceil \Delta t2^N w^{N/2}\rceil$ rings of the Poisson clocks during the interval $[t_0,t_0+\Delta t]$, and if $|\mcl R'_{0,j}|< \Delta t 2^N w^{N/2}$ let $\wt{\mcl R'}_{0,j}$ denote the union of $\mcl R'_{0,j}$ and $(\lceil\Delta t2^N w^{N/2}\rceil-|\mcl R'_{0,j}|)$ pairs $(i,t_0+\Delta t)$, where the $i$'s are pairwise different and sampled independently and uniformly from $\frk N(j)$. By the triangle inequality and letting $\Delta$ denote symmetric difference, 
\eqb
\begin{split}
A_{5,j} 
\leq&\,\left( w^{-N/2}\sum_{(i,t)\in\mcl R'_{0,j}\Delta \wt{\mcl R'}_{0,j}}V_i \right)\\
&+ \left|w^{-N/2}\sum_{(i,t)\in\wt{\mcl R'}_{0,j}} \bd 1_{p(Y^w(iw^{-1},t_0))=m}V_i - 
\Delta t(1-M^{-1})\int_{x\in\mcl N(jw^{-1})} \bd 1_{p(Y^w(iw^{-1},t_0))=m}\,dx\right|.
\end{split}
\label{eq33}
\eqe
We will prove that $\BB P[A_{5,j}> w^{-1/100}]$ decays faster than any power of $w$ when $w\rta\infty$, which is sufficient to complete the proof of our bound for $A_5$. We see immediately that the first term on the right side of \eqref{eq33} decays sufficiently fast. By independence of $\mcl R'_{0,j}$ and $\mcl F_{t_0}$, the second sum on the right side of \eqref{eq33} is, conditioned on $\mcl F_{t_0}$, equal in law to $w^{-N/2}$ times the sum of $\preceq \Delta t w^{N/2}$ independent bounded centered random variables. We obtain the desired bound by a Chernoff bound.

Now we will prove that $A_6 \preceq \delta\Delta t$ with probability $>v(w)$. By first using $\|Y^{w}_m(\cdot,t_0)-Y_m(\cdot,t_0)\|_{L^\infty(\mcl S^N)}< \delta$ and \eqref{eq22}, and then using $Y_m-B_m \in \mcl L_M^{t}(\mcl S^N)$ and \eqref{eq23} for all $t\in[t_0,t_0+\Delta t]$, we get
\eqbn
\begin{split}
A_6 &\leq \Delta t \sum_{1\leq m,m'\leq M,m\neq m'}
\sup_{x\in\mcl S^N} \int_{|x-x'|\leq1} \bd 1_
{\left|\left(Y_{m}(t_0,x')-Y_{m'}(t_0,x')\right)\right|\leq 2\delta} \,dx'\\
&\leq \Delta t \sum_{1\leq m,m'\leq M,m\neq m'} \sup_{f\in{\mcl L}^{2t_0}(\mcl S^N)} \int_{\mcl S^N} \bd 1_{|(B_m(x)-B_{m'}(x))-f(x)|<2\delta}\,dx\\
&\preceq \delta \Delta t.
\end{split}
\eqen

Finally we will bound $A_7$. By Lemma \ref{prop1} and $\|Y_m(\cdot,t)-Y_m(\cdot,t_0)\|_{L^\infty(\mcl S^N)}\leq 2\Delta t$ for all $t\in[t_0,t_0+\Delta t]$, we have $A_7\preceq(\Delta t)^2$.
Combining the above estimates for $A_j$, $j=1,...,7$, we obtain the lemma by a union bound.
\end{proof}

\begin{proof}[Proof of Lemma \ref{prop21b}]
	For any $x\in\mcl S^N$ there are $\alpha_i(x)\in[0,1]$ and $\frk x_i(x)\in\frk S^N$ for $i=1,\dots,2^N$ such that $\|\frk x_i(x)-x\|_{\infty}\leq w^{-1}$ and $\sum_{i=1}^{2^N}\alpha_i(x)=1$, and such that for any $t\geq 0$, $Y^w(x,t)=\sum_{i=1}^{2^N} \alpha_i(x) Y^w(\frk x_i(x),t)$. For any $x\in\mcl S^N$ define $\Delta Y^w(x):=Y^w(x,t_0+\Delta t)-Y^w(x,t_0)$ and $\Delta Y(x):=Y(x,t_0+\Delta t)-Y(x,t_0)$.
	Observe that
	\eqb
	\Delta Y^w(x) - \Delta Y(x)
	= \sum_{i=1}^{2^N} \alpha_i(x)(\Delta Y^w(\frk x_i(x)) - \Delta Y(\frk x_i(x))) 
	+ \sum_{i=1}^{2^N} \alpha_i(x)(\Delta Y(\frk x_i(x)) - \Delta Y(x)).
	\label{eq75}
	\eqe
	By uniform continuity of $Y$, which follows from uniform continuity of $B$,
	\eqb
	\sup_{x\in\mcl S^N} \sum_i \alpha_i(x)(\Delta Y(\frk x_j(x)) - \Delta Y(x)) \rta 0\qquad\text{\,\,as\,\,}w\rta\infty,
	\label{eq54}
	\eqe 
	and the rate of convergence depends only on $T$ and the $\sigma$-algebra generated by $(B(x))_{x\in\mcl S^N}$. By Lemma \ref{prop21a},
	\eqb
	\1_{E_\delta^w}\BB P\left[\sup_{x\in\frk S^N} \|
	\Delta Y^w(x)- \Delta Y(x) 
	\| \leq  c\Delta t(\Delta t+\delta)\,|\,\mcl F_{t_0}\right] \geq \1_{E_\delta^w} v(w).
	\label{eq55}
	\eqe
	We obtain the desired bound for $\Delta Y^w(x) - \Delta Y(x)$ by combining \eqref{eq75}, \eqref{eq54} and \eqref{eq55}. 
\end{proof}

The following lemma will be needed to transfer the result of Proposition \ref{prop18} from $\mcl S$ to $\R$. It says that a discrete version of Lemma \ref{prop21c} holds with high probability for large $w$.
\begin{lem}
	Consider the Schelling model on $\frk S$ (resp.\ $\Z$) with (in the notation of Section \ref{sec:intro1}) $N=1$, $M\in\{2,3,\dots\}$ and $\mcl N=\mcl N_\infty$. Let $Y^w\in \mcl C_M({\mcl S}\times\R_+)$ (resp.\ $Y^w\in \mcl C_M({\R}\times\R_+)$) be given by \eqref{eq24}. Fix an interval $I\subset\R$ of length $>1$ and some $m\in\{1,\dots,M\}$. For any $\ep>0$ and $t\geq 0$ define the event $E_t^{\ep}$ by
	\eqbn
	E_t^\ep := \left\{ Y^{w}_m(x,t)> \sup_{m'\in\{1,\dots,M \}\setminus \{m\} } Y^{w}_{m'}(x,t) + \ep,\,\,\forall x\in I \right\}.
	\eqen
	For any $\ep>0$, $\lim_{w\rta\infty}\P\big[E_0^\ep \setminus \bigcup_{t\in[0,\ep^{-1}]} \left(E_t^0\right)^c \big]=0$.
	\label{prop28}
\end{lem}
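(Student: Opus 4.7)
The plan is to deduce this from the continuum analog (Lemma~\ref{prop21c}) via the coupling provided by Proposition~\ref{prop18}. I interpret the stated event as the ``bad'' event that opinion $m$ fails to maintain strict dominance at some $t \in [0, \ep^{-1}]$ despite enjoying the stronger $\ep$-dominance initially; so the goal is $\P[E_0^\ep \cap \bigcup_{t \in [0,\ep^{-1}]} (E_t^0)^c] \to 0$.

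First, I couple $Y^w$ with the continuum solution $Y$ of \eqref{eq1},\eqref{eq2} so that $Y^w \to Y$ uniformly on $I \times [0, \ep^{-1}]$ almost surely. Consider the continuum event
$$E'_0 := \Bigl\{Y_m(x, 0) > \sup_{m' \neq m} Y_{m'}(x, 0) + \ep/2\ \text{for all } x \in I\Bigr\}.$$
On $E'_0$ we have $p(Y(\cdot, 0)) \equiv m$ on $I$, and since $|I| > 1$, Lemma~\ref{prop21c} propagates this in time: $p(Y(\cdot, t)) \equiv m$ on $I$ for every $t \geq 0$. Continuity of $Y$ together with compactness of $I \times [0, \ep^{-1}]$ then make the random continuum gap
$$\delta'(\omega) := \inf_{(x, t) \in I \times [0, \ep^{-1}]} \Bigl(Y_m(x, t) - \sup_{m' \neq m} Y_{m'}(x, t)\Bigr)$$
strictly positive on $E'_0$.

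Given $\eta > 0$, I first choose $\beta > 0$ small enough that $\P[E'_0 \cap \{\delta' \leq \beta\}] < \eta/2$ (possible since $\{\delta' > \beta\} \uparrow \{\delta' > 0\} \supseteq E'_0$ as $\beta \downarrow 0$). Then I use uniform convergence to find $w$ large with $\P[(G^w)^c] < \eta/2$, where $G^w$ is the event $\|Y^w - Y\|_{L^\infty(I \times [0, \ep^{-1}])} < \min(\beta/3, \ep/4)$. Two transfer estimates close things out. At $t = 0$, $\ep/4$-closeness upgrades the discrete $\ep$-gap to the continuum $\ep/2$-gap, so $E_0^\ep \cap G^w \subseteq E'_0$. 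Throughout $I \times [0, \ep^{-1}]$, on $E_0^\ep \cap G^w \cap \{\delta' > \beta\}$, the $\beta/3$-closeness turns a continuum gap of at least $\beta$ into a discrete gap of at least $\beta/3 > 0$, so $E_t^0$ holds for every $t \in [0, \ep^{-1}]$. A union bound yields $\P[E_0^\ep \cap \bigcup_t (E_t^0)^c] < \eta$.

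The main obstacle is establishing that the continuum gap $\delta'$ is strictly positive on $E'_0$, which is exactly where Lemma~\ref{prop21c} is used: it converts pointwise dominance at a single time into pointwise dominance at all later times, and continuity plus compactness then finish the job. Without this propagation step, the continuum trajectory $Y_m - Y_{m'}$ could plausibly drift arbitrarily close to zero on the time interval of interest and spoil the transfer to the discrete model.
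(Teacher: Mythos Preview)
Your argument is correct and rather clean for the torus case $\frk S$, and there it takes a genuinely different route from the paper. The paper proves Lemma~\ref{prop28} by a direct discrete estimate: it discretizes $[0,\ep^{-1}]$ into $\asymp w^{0.1}$ subintervals, controls the number of Poisson rings in each, and shows via large-deviation bounds for Bernoulli sums that the first node in $wI$ to lose its bias towards $m$ cannot do so before time $\ep^{-1}$ except with probability $\exp(-\Theta(w))$. Your approach instead pulls the result back from the continuum via Proposition~\ref{prop18} and the trivial propagation Lemma~\ref{prop21c}, which is shorter and more conceptual. The price is that you only get $o(1)$ rather than the paper's exponential rate, but the rate is not used downstream.

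There is, however, a genuine circularity in the $\Z$ case. Lemma~\ref{prop28} appears in the paper \emph{before} the proof of Proposition~\ref{prop18}, and the text explicitly flags it as ``needed to transfer the result of Proposition~\ref{prop18} from $\mcl S$ to $\R$.'' Concretely, the proof of Proposition~\ref{prop18} for $V=\R$ couples the $\Z$ model with a torus model and invokes Lemma~\ref{prop28} (on $\Z$) to ensure the barrier intervals $[a^--2,a^-]$ and $[a^+,a^++2]$ persist, so that the two discrete models agree on $[a^-,a^+]$. Your proof for $\Z$ invokes exactly the $V=\R$ case of Proposition~\ref{prop18}, which already presupposes Lemma~\ref{prop28} on $\Z$. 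This is not merely a matter of reordering: the only way in the paper to compare the discrete $\Z$ model to the continuum on a compact interval is through those barriers, and establishing that the barriers persist on $\Z$ is precisely the content of the lemma you are trying to prove. So for $\Z$ you cannot avoid a direct discrete argument of the paper's type; your continuum transfer only establishes the $\frk S$ case.
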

\begin{proof}
	Define $\wt Y^{w}_m(x,t):=Y^{w}_m(x,t)- \sup_{m'\in\{1,\dots,M \}\setminus \{m\} } Y^{w}_{m'}(x,t)$.  For any interval $I\subset\R$ and $w\in\N$, let $wI=\{wx\,:\,x\in I \}$. Define stopping times $T$ and $T_i$ for $i\in wI$ by
	\eqbn
	T_i= \inf\{t\geq 0\,:\,\wt Y^{w}_m(w^{-1}i,t)\leq 0 \},\qquad
	T = \inf_{i\in(wI)} T_i.
	\eqen
	Since $\bigcup_{t\in[0,\ep^{-1}]} \left(E_t^0\right)^c \subset \{T\leq\ep^{-1} \}$ it is sufficient by a union bound to prove that for each fixed $i\in wI$
	\eqbn
	\log\P[E_0^\ep;\,T_i=T\leq \ep^{-1}] \preceq -w,
	\eqen
	where the implicit constant can depend on all parameters except $w$ and $i$. Letting $t_n:=\ep^{-1}w^{-0.1}n$ for $n\in\{0,1,\dots,\lceil w^{0.1}\rceil \}$, we observe that
	\eqbn
	\begin{split}
	\{E_0^\ep;\, T_i=T\in[t_n,t_{n+1}] \} \subset &
		\left\{ E_0^\ep;\,t_n\leq T_i=T;\,\wt Y^{w}_m(w^{-1}i,t_n) < \frac 12 \wt Y^{w}_m(w^{-1}i,0) \right\} \\ 
		&\cup
		\left\{ E_0^\ep;\,
			|\{ (j,w^{-1/2}t)\in\mcl R\,:\, j\in\frk N(i) ,\,t\in[t_{n},t_{n+1}]\}| w^{-1/2} 
			>  \frac{1}{100} \wt Y^{w}_m(w^{-1}i,0) \right\}.
	\end{split}
	\eqen
	It follows by a union bound that
	\eqbn
	\begin{split}
	\P[E_0^\ep;\,T_i=\,T\leq& \ep^{-1}]
	\leq \sum_{n=0}^{\lceil w^{0.1}\rceil-1} 
	\P\left[E_0^\ep;\,t_n\leq T_i=T;\,\wt Y^{w}_m(w^{-1}i,t_n) < \frac 12 \wt Y^{w}_m(w^{-1}i,0)\right]\\
	&+ \sum_{n=0}^{\lceil w^{0.1}\rceil-1}  \P\left[E_0^\ep;\,
	|\{ (j,w^{-1/2}t)\in\mcl R\,:\, j\in\frk N(i) ,\,t\in[t_{n},t_{n+1}]\}| w^{-1/2} 
	>  \frac{1}{100} \wt Y^{w}_m(w^{-1}i,0)\right].
	\end{split}
	\eqen
	Since $\wt Y^{w}_m(w^{-1}i,0)>\ep$ on the event $E^\ep_0$, the logarithm of the last sum is $\preceq -w$, so to conclude the proof of the lemma it is sufficient to show that for each fixed $n\in\{0,1,\dots,\lceil w^{-0.1}\rceil-1 \}$,
	\eqb
	\log\P\left[E_0^\ep;\,t_n\leq T_i=T;\,\wt Y^{w}_m(w^{-1}i,t_n) < \frac 12 \wt Y^{w}_m(w^{-1}i,0)\right] \preceq -w,
	\label{eq73}
	\eqe
	where the implicit constant can depend on all parameters except $w$, $i$, and $n$. Fix  $n\in\{0,1,\dots,\lceil w^{-0.1}\rceil-1 \}$, and define
	\eqbn
	\begin{split}
		&\frk N^+ := \{j\in\frk N(i)\,:\,X(j,0)\neq m,\,jw\in I \},\qquad
		\mcl R^+:=\{j\in\frk N^+\,:\,\exists t\in[0,t_n]\text{\,\,such\,\,that\,\,}  (j,tw^{-1/2})\in\mcl R\}, \\
		&\frk N^- := \{j\in\frk N(i)\,:\,X(j,0)=m,\,jw\not\in I\},\qquad
		\mcl R^-:=\{j\in\frk N^-\,:\,\exists t\in[0,t_n]\text{\,\,such\,\,that\,\,}  (j,tw^{-1/2})\in\mcl R\}.
	\end{split}
	\eqen
	By large deviation estimates for Bernoulli random variables,
	\eqb
	\log \P\big[ \wh E^c \big] \preceq -w,\qquad
	\wh E:= \left\{\big| |\mcl R^\pm| - (1-e^{-w^{-1/2}t_n}) |\frk N^\pm| \big|< w^{0.1}\right\}.
	\label{eq72}
	\eqe
	Furthermore, observe that
	\eqb
	\begin{split}
		|\frk N^+|-|\frk N^-|
		&= |\{j\in\frk N(i)\,:\,(jw)\in I \}| -
		|\{j\in\frk N(i)\,:\,X(j,0)= m \}|\\
		&\geq (w+1) - \left( \frac{2w+1}{M} + w^{1/2} Y^w_m(w^{-1}i,0) \right)\\
		&>-w^{1/2} Y^w_m(w^{-1}i,0),
	\end{split}
	\label{eq74}
	\eqe
	where the inequality on the second line follows by the definition of $Y^w$. By \eqref{eq74}, the definition of $\wh E$, $|1-e^{-w^{-1/2}t_n}|\leq 2\ep^{-1} w^{-1/2}$, and
	\eqbn
	\begin{split}
		\1_{t_n\leq T}\wt Y^{w}_m(w^{-1}i,t_n)
		&\geq \1_{t_n\leq T}\left(\wt Y^{w}_m(w^{-1}i,0) + 2w^{-1/2}|\mcl R^+| - 2w^{-1/2}|\mcl R^-|\right),\\
		\end{split}
	\eqen
	it follows that on the event $\{E_0^\ep;t_n\leq T_i=T;\wh E\}$, 
	\eqbn
	\wt Y^{w}_m(w^{-1}i,t_n)\geq \wt Y^w_m(w^{-1}i,0)-2\ep^{-1}w^{-1/2}Y^w_m(w^{-1}i,0)-4w^{-0.4}. 
	\eqen
	Since $\log\P[ 2\ep^{-1}w^{-1/2}Y^w_m(w^{-1}i,0)>\wt Y^w_m(w^{-1}i,0) ]\preceq -w$, this result and \eqref{eq72} implies \eqref{eq73}.
\end{proof}

Proposition \ref{prop18} now follows by iterating the estimate of Lemma \ref{prop21b}.
\begin{proof}[Proof of Proposition \ref{prop18}]
First consider the case $V=\mcl S$ and $N\in\N$. Couple the discrete and continuum Schelling model as in Lemma \ref{prop19}. Let $T,\Delta t>0$. Conditioned on $B$, let $c$ and $v$ be the (random) constant and function, respectively, of Lemma \ref{prop21b}. Recall that $c$ depends on $B$ and $T$, while $v$ depends on $B$, $T$, $\Delta t$, and the error $E(t_0)$ with $E$ as in Lemma \ref{prop21b}. By Lemma \ref{prop21b}, and with $\mcl F_0$ and $E^w_{c\Delta t^2}$ for $t_0=0$ as in that lemma, 
\eqbn
\1_{E^w_{c\Delta t^2}}\P\left[\|Y^w(\cdot,\Delta t)-Y(\cdot,\Delta t)\|_{L^\infty(\mcl S^N)}<c^2(\Delta t)^3+2c(\Delta t)^2\,|\,\mcl F_0\right]>\1_{E^w_{c\Delta t^2}}v(w).
\eqen
Iterating the result of Lemma \ref{prop21b}, we get further that for any $n\in\BB N$, 
\eqbn
\1_{E^w_{c\Delta t^2}}\P\left[\|Y^w(\cdot,\wt n\Delta t)-Y(\cdot,\wt n\Delta t)\|_{L^\infty(\mcl S^N)}<\Delta t(1+c\Delta t)^{\wt n+1}-\Delta t,\quad \wt n\in\{0,...,n\}\,|\,\mcl F_0\right] >
\1_{E^w_{c\Delta t^2}} v(w)^n.
\eqen
We need $n_0:=\lceil T/\Delta t\rceil$ time steps to reach time $T$, so conditioned on $\mcl F_0$ and on the event $E^w_{c\Delta t^2}$, with probability at least $v(w)^{n_0}$ and $\Delta t<1/(100 c)$,
\eqb
\|Y^w(\cdot,\wt n\Delta t)-Y(\cdot,\wt n\Delta t)\|_{L^\infty(\mcl S^N)}
<
\Delta t(1+c\Delta t)^{n_0+1}-\Delta t<(2e^{cT}-1)\Delta t,\qquad \forall \wt n\in\{0,\dots,n_0\}.
\label{eq57}
\eqe
With probability converging to 1 as $w\rta\infty$, for any interval $I=[\Delta t\wt n,\Delta t(\wt n+1)]$ and node $i\in\frk S^N$, the total number of times during $I$ at which the Poisson clock of a node in $\frk N(i)$ rings, is $\leq 10^N w^{N/2}\Delta t$. Therefore, with probability converging to 1 as $w\rta\infty$,
\eqbn
\sup_{i\in\frk S^N} \sup_{0\leq n\leq n_0} \sup_{d\in[0,\Delta t]} 
\|Y^w(iw^{-1},n\Delta t+d) - Y^w(iw^{-1},n\Delta t)\|_{L^\infty(\mcl S^N)} \leq 10^N \Delta t.
\eqen
Combining this estimate with \eqref{eq57}, for any given $\ep_0>0$ and for all $w$ sufficiently large as compared to $\ep_0$,
\eqbn
\1_{E^w_{c\Delta t^2}}
\P\left[\sup_{x\in{\mcl S^N}} \sup_{t\in[0,T]} 
\|Y(x,t) - Y^w(x,t)\|_{L^\infty(\mcl S^N)} \leq (2e^{cT}+10^N) \Delta t\,\Big|\,\mcl F_0\right]
>
\1_{E^w_{c\Delta t^2}}(v(w)^{n_0}-\ep_0).
\eqen
Since $\P[ E^w_{c\Delta t^2} ]\rta 1$ as $w\rta\infty$, for all sufficiently large $w$,
\eqbn
\P\left[\sup_{x\in{\mcl S^N}} \sup_{t\in[0,T]} 
\|Y(x,t) - Y^w(x,t)\|_{L^\infty(\mcl S^N)} \leq (2e^{cT}+10^N) \Delta t\right]
>
v(w)^{n_0}-2\ep_0.
\eqen
We first make $(2e^{cT}+10^N) \Delta t$ arbitrarily small by decreasing $\Delta t$, and then we make $v(w)^{n_0}-2\ep_0$ arbitrarily close to 1 by sending $\ep_0\rta 0$ and $w\rta\infty$. It follows that $\sup_{t\in[0,T]}\|Y^w(\cdot,t)-Y(\cdot,t)\|_{L^\infty(\mcl S^N)}\rta 0$ in probability. By the Skorokhod representation theorem we can couple the model for different values of $w$, such that we obtain almost sure convergence. This concludes the proof in the case $V=\mcl S$.

Now consider the case $V=\R$ and $N=1$. Let $\ep>0$. For $R>2(\ep^{-1}+2)$ define the event $E_R$ by
\eqbn
\begin{split}
	E_R = &\,\{ \exists a^-\in[-R/2+2,-\ep^{-1}], a^+\in[\ep^{-1},R/2-2]\,:\, 
	Y_1(x,0)>\sup_{m'\in\{2,\dots,M \} } Y_{m'}(x,0)+\ep,\\
	&\qquad\forall x\in [a^--2,a^-]\cup [a^+,a^+ + 2] \}.
\end{split}
\eqen
Choose $R$ sufficiently large such that $\P[E_R]>1-\ep/2$. Let $Y$ (resp.\ $\wh Y$) denote the solution of \eqref{eq1}, \eqref{eq2} on $\R$ (resp.\ $\mcl S=[-R/2,R/2]$), and let $Y^w$ (resp.\ $\wh Y^w$) be given by \eqref{eq24} for the Schelling model on $\Z$ (resp.\ $\frk S$). We will argue that we can couple $Y,\wh Y,Y^w$, and $\wh Y^w$ such that with probability at least $1-\ep$, $Y^w\rta Y$ uniformly on $[-\ep^{-1},\ep^{-1}]\times[0,\ep^{-1}]$. This will be sufficient to complete the proof of the proposition since $\ep$ was arbitrary. 

By the convergence result for the torus proved above, we can couple $\wh Y^w$ and $\wh Y$ such that $\wh Y^w|_{[-R/2,R/2]\times[0,\ep^{-1}]}$ converges uniformly to $\wh Y|_{[-R/2,R/2]\times[0,\ep^{-1}]}$. Furthermore, on $E_R$ we can couple $Y$ and $\wh Y$ such that $Y|_{[a^-,a^+]}=\wh Y|_{[a^-,a^+]}$, since the law of the initial conditions are the same, and since Lemma \ref{prop21c} implies that $p(Y(x,t))=p(\wh Y(x,t))=1$ for all $x\in [a^--1,a^-]\cup [a^+,a^+ + 1]$ and $t\geq 0$.  
To complete the proof of the proposition it is sufficient to prove that on $E_R$ we can couple $Y^w$ and $\wh Y^w$ such that $Y^w|_{[a^-,a^+]\times[1,\ep^{-1}]}=\wh Y^w|_{[a^-,a^+]\times[1,\ep^{-1}]}$ with probability at least $1-\ep/2$. 

Consider a coupling of $Y^w$ and $\wh Y^w$ such that the initial opinion of the nodes corresponding to the interval $[a^- -1,a^+ +1]$ is identical for the models on $\Z$ and $\frk S$, and such that the set of rings of Poisson clocks corresponding to this interval, i.e.\ the set $\{ (i,t)\in\mcl R\,:\,(a^- -1)w\leq i\leq (a^++1)w \}$, is the same for the models on $\Z$ and $\frk S$. We also assume that draws as described in (iii) of Section \ref{sec:intro1} are resolved in the same way. By Lemma \ref{prop28}, $p(Y^w(x,t))=p(\wh Y^w(x,t))=1$ for all $x\in [a^--2,a^-]\cup [a^+,a^+ + 2]$ and $t\geq 0$ with probability at least $1-\ep/2$ for sufficiently large $w$. On the event that this happens $Y^w|_{[a^-,a^+]\times [0,\ep^{-1}]}=\wh Y^w|_{[a^-,a^+]\times [0,\ep^{-1}]}$ for all $t\in[0,\ep^{-1}]$, so we have obtained an appropriate coupling. 
\end{proof}

\subsection{Limiting states for the one-dimensional discrete Schelling model}
\label{sec:discrete2}
In this section we will first conclude the proof of Theorem \ref{thm1}. Then we will prove that the opinion of each node in the Schelling model in any dimension converges almost surely, and we will present a result on stable configurations in the higher-dimensional Schelling model. 

The main inputs to our proof of Theorem \ref{thm1} are Propositions \ref{prop20a} and \ref{prop18}. We consider a coupling of the discrete and continuum Schelling model as in Proposition \ref{prop18}, and choose a sufficiently large $t\geq 0$ such that the limiting configuration of the continuum Schelling model described in Proposition \ref{prop20a} is almost obtained; more precisely, we choose $t$ sufficiently large such that with high probability 0 is contained in an interval of length strictly larger than 1 on which $p\circ Y(\cdot,t)$ is constant. Let $m\in\{1,\dots,M \}$ denote the value of $p\circ Y(\cdot,t)$ in this interval. Recall that by the scaling we used when defining $Y^w$ in \eqref{eq24}, a time $t$ for $Y$ corresponds to time $tw^{-1/2}$ for the discrete Schelling model. 

To conclude the proof it will be sufficient to prove that $p\circ Y^w=m$ in the interval of length $>1$ identified above until all nodes in this interval have changed opinion to $m$. We will first prove a lemma (Lemma \ref{prop20b}) which says, roughly speaking, that $p\circ Y=m$ in the interval for a macroscopic time with high probability, and then we prove (Lemma \ref{prop21}) that conditioned on the event of Lemma \ref{prop20b}, $p\circ Y=m$ in the interval throughout $\lceil w^{0.02}\rceil$ time intervals of length $w^{-0.01}$ with very high probability. 

In each step of the proof we allow the interval on which $p\circ Y=m$ to shrink slightly. For nodes $i$ bounded away from the boundary of the interval, we can guarantee that $p\circ Y=m$ by using (among other properties) that the fraction of nodes in $\frk N(i)$ which have a bias towards $m$ is strictly larger than $1/2$ for (almost) the full time interval we consider; therefore the bias of $i$ towards $m$ will have an upwards drift and never become negative. For nodes $i$ near the boundary of our interval, however, up to half of the nodes in $\frk N(i)$ may have a bias towards another opinion than $m$, so we do not necessarily have an upward drift, and the node may eventually get a bias towards another opinion. For such nodes we can guarantee that the bias will not become negative too fast, by using that the node typically has a strong bias towards $m$ at the beginning of the time interval we consider. We show that the interval on which  $p\circ Y=m$ shrinks sufficiently slowly, such that all nodes on a subinterval of length $>1$ get opinion $m$ before the interval vanishes.

Define
\eqbn
\begin{split}
\ol {\mcl Y}^{w}_m(i,t) &:= \left(\sum_{j\in\frk N(i)} \1_{X(j,t)=m}\right) - \sup_{m'\in\{1,\dots,M\}\setminus m} \left( \sum_{j\in\frk N(i)} \1_{X(j,t)= m'}\right)\\
&=w^{1/2} \left({ Y}^{w}_m(i/w,tw^{1/2}) - 
\sup_{m'\in\{1,\dots,M\}\setminus m} Y^{w}_{m'}(i/w,tw^{1/2})\right).
\end{split}
\eqen
Observe that if $\ol {\mcl Y}^{w}_m(i,t)>0$ then $m$ is the most common opinion in the neighborhood of node $i$ at time $t$. In the statement and proof of the following lemma $wI=\{wx\,:\,x\in I \}$ for any interval $I\subset\R$ and $w\in\N$.
\begin{lem}
Couple the discrete and continuum Schelling model on $V$ as described in Proposition \ref{prop18}, where $V=\R$ or $V=\mcl S$, and $N=1$, $M\in\{2,3,\dots\}$, and $\mcl N=\mcl N_\infty$. Let $\{A_1,\dots,A_M\}$ be as defined in Proposition \ref{prop20a}, and define $\wh E$ to be the event that the set $\R\setminus \cup_{1\leq m\leq M}A_m$ has measure zero, and that each set $A_m$ can be written as the union of intervals of length $>1$. If $\wh E$ occurs, choose $a\in V$ in a $\sigma(B)$-measurable way such that $a\in \cup_{m\in\{1,\dots,M\}}A_m$ almost surely, and let $m\in\{1,\dots,M\}$ be such that $a\in A_m$. Let $c_1,c_2\in(0,1/10)$, let $I'$ be the connected component of $A_m$ containing $a$, and let $I\subset I'$ be the open interval with left (resp.\ right) end-point at distance $c_2$ from the left (resp.\ right) end-point of $I'$. Let $E=E^w_{c_1,c_2}$ be the event that $\wh E$ occurs, that $I$ has a length between $1+c_2$ and $c_2^{-1}$, and $\ol {\mcl Y}^{w}_m(i,t)>0$ for all $i\in (wI)\cap \Z$ and $t\in [c_1^{-1/2}w^{-1/2},c_1]$. Then $\lim_{c_2\rta 0} \lim_{c_1\rta 0} \lim_{w\rta \infty}\P[E\cup (\wh E)^c]=1$.
\label{prop20b}
\end{lem}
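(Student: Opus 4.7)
My plan is to transfer the long-time behavior of the continuum $Y$ to the discrete $Y^w$ at the intermediate time $t_0:=c_1^{-1/2}w^{-1/2}$, and then to propagate the resulting positivity of the discrete bias through the remaining macroscopic interval $[t_0,c_1]$ by direct large-deviation estimates. The principal obstacle lies in this second phase: the interval $[t_0,c_1]$ corresponds to $Y^w$-time $[c_1^{-1/2},c_1w^{1/2}]$, which is unbounded as $w\to\infty$ and hence beyond the reach of Proposition \ref{prop18}.

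On the event $\wh E$, a compactness argument combining Lemma \ref{prop24} (applied both to $I'$ and to the maximal components of $\bigcup_{m''\neq m}A_{m''}$ adjacent to $I'$), continuity of $Y$, and Lemma \ref{prop21c} yields a random time $T_0=T_0(B,c_2)$ after which $p(Y(x,t))=m$ holds uniformly for $x\in I'$ at distance $\geq c_2/3$ from $\partial I'$, together with the analogous uniform statement for the ``correct'' opinion on sub-interior cores of each neighbor of $I'$. From the integral form of \eqref{eq1}, for $x\in I$ one has $|\mcl N(x)\cap I'|\geq 1+c_2$, so for any $m'\neq m$ the rate of growth of $Y_m-Y_{m'}$ at $x$ is at least $|\mcl N(x)\cap\text{core of }I'|-|\mcl N(x)\setminus I'|\geq (1+c_2/3)-(1-c_2)>c_2/2$. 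Choosing $c_1$ small enough (depending on $B$) that $c_1^{-1/2}\geq 2T_0$ gives a continuum bias $Y_m-\max_{m'\neq m}Y_{m'}\geq c_2c_1^{-1/2}/4$ on $I$ at time $c_1^{-1/2}$. Applying Proposition \ref{prop18} to couple $Y^w$ and $Y$ uniformly on a compact spatial neighborhood of $I'$ over $Y^w$-time $[0,c_1^{-1/2}]$, with probability tending to $1$ this yields $\ol{\mcl Y}^w_m(i,t_0)\geq c_2c_1^{-1/2}w^{1/2}/8$ for all $i\in (w\wt I)\cap\Z$, where $\wt I$ is a slight enlargement of $I$; symmetrically, $\ol{\mcl Y}^w_{m''}(j,t_0)\geq c_2c_1^{-1/2}w^{1/2}/8$ for lattice points $j$ in the cores of the neighboring components with their respective ``correct'' opinions $m''$.

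For the second phase, I plan to run a direct discrete argument in the spirit of Lemma \ref{prop28}, sustained by a joint induction across $wI$ and the neighboring cores. Define $C_m(i,t):=|\{j\in\frk N(i)\,:\,X(j,t)=m\}|$. Its jumps at neighbor clock-ring times decompose into three groups: rings at $j$ in the $I'$-core, where by the inductive hypothesis $\ol{\mcl Y}^w_m(j,\cdot)>0$ and so $j$ switches to $m$ (contributing non-negatively to $C_m(i,\cdot)$); rings at $j$ in an adjacent core, where by the symmetric inductive hypothesis $j$ switches to an opinion $\neq m$ (contributing non-positively); and rings in the $O(1)$-width boundary strips around $\partial I'$, whose cumulative contribution over $[t_0,c_1]$ is controlled by Poisson concentration. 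Discretizing $[t_0,c_1]$ into $O(w^{1/2})$ windows of length $w^{-1/2}$ and applying Chernoff bounds per window, the probability that the per-window bias fluctuation at any lattice point exceeds $w^{0.4}$ is $\leq e^{-w^{0.1}}$; a union bound over $O(w^{3/2})$ (lattice-point, window) pairs then preserves the induction and yields $\ol{\mcl Y}^w_m(i,t)\geq c_2c_1^{-1/2}w^{1/2}/16>0$ throughout $[t_0,c_1]$ for all $i\in(wI)\cap\Z$.

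The hard part is preventing a cascade in which a fluctuation at one core destroys the positivity hypothesis at another; this is handled by making the initial bias (of order $c_2c_1^{-1/2}w^{1/2}$) much larger than the per-window fluctuation budget ($w^{0.4}$), so that no single window can bring the bias below zero, and the inductive hypothesis survives with overwhelming probability over all $O(w^{3/2})$ stages.
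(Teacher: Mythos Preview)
Your first phase is close to the paper's: both use Proposition \ref{prop18} at rescaled time $c_1^{-1/2}$ to transfer a large continuum bias on $I$ into $\ol{\mcl Y}^w_m(i,t_0)\succeq w^{1/2}$. (The paper actually only needs $A_4>2$, not the stronger $c_2c_1^{-1/2}$ factor you claim.)

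Your second phase, however, diverges from the paper and has a genuine gap. You propose a \emph{joint induction} across $wI$ and the neighboring cores: the positive drift of $\ol{\mcl Y}^w_m$ at a boundary node of $wI$ is supposed to come from knowing that nodes in the adjacent component's core switch to their ``correct'' opinion $m''$. But maintaining \emph{that} hypothesis at the boundary of the adjacent core requires the same information about the next component over, and so on. Your induction cascades across all components, which you never acknowledge; on $\R$ this is an infinite chain, and even on $\mcl S$ it would require uniform control over every component, not just the two neighbors of $I'$. Relatedly, your treatment of the boundary strips is wrong: their lattice width is $\Theta(c_2w)$, so their cumulative ring contribution over $[t_0,c_1]$ is $\Theta(c_1c_2w)$, which dwarfs the initial bias $\Theta(c_2c_1^{-1/2}w^{1/2})$. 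Poisson concentration does not make this small; it can only be absorbed by combining it with the core drifts into a single net-positive drift computation, which you do not carry out.

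The paper avoids the cascade entirely by \emph{not} tracking the adjacent component. It introduces stopping times $T_1,T_2,T_3$ (for the two endpoints $i_1^*,i_2^*$ of $wI$ and for the interior, respectively) and argues only under the hypothesis that nodes in $wI$ still have positive bias. For the endpoint $i_1^*$, the paper makes a worst-case comparison: the increment at each ring is bounded below by $\wt R_j\in\{-2,0,2\}$ defined purely from the \emph{initial} opinions and the location of the ringing node relative to $wI$, so the partial sums are stochastically above an i.i.d.\ walk with drift $-O(c_1^{-1/10}w^{-1/2})$. This walk takes only $O(c_1w)$ steps and starts from $>2w^{1/2}$, so a Brownian-limit argument shows it is unlikely to reach $-w^{1/2}$ when $c_1$ is small. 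For interior nodes ($T_3$), the drift is uniformly $\Theta(c_2w)$ positive since $|\frk N(i)\cap wI|\geq(1+c_2)w$, yielding an exponential bound directly. The key point is that the shortness of $[t_0,c_1]$ does the work at the boundary; no information about what happens outside $wI$ is needed.
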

\begin{proof}
	First we give a brief outline of the proof. For small $c_1$ and large $w$ it holds with high probability (by Proposition \ref{prop18}) that all nodes in $wI$ have a large bias towards opinion $m$ at time $c_1^{-1/2}w^{-1/2}$. In particular, $w^{-1/2}\ol{\mcl Y}_m^w(i,c_1^{-1/2}w^{-1/2})\gg 1$ for nodes $i$ in $wI$. We consider the system until (roughly speaking) the first time $\wh T>c_1^{-1/2}w^{-1/2}$ at which $\ol{\mcl Y}_m^w(i,\wh T)\leq  0$ for some node $i$ in $wI$; note that until this time occurs all nodes in $wI$ will have a bias towards $m$. We show that $\wh T>c_1$ with high probability by arguing that each individual node $i$ in $wI$ is unlikely to be the {\emph{first}} node in $wI$ for which $\ol{\mcl Y}_m^w(i,t)\leq  0$ if $t\in [c_1^{-1/2}w^{-1/2},c_1]$. Let the nodes $i_1^*$ and $i_2^*$ represent the two end-points of $(wI)\cap\Z$. If $i=i_1^*$ and for $t\in[c_1^{-1/2}w^{-1/2},\wh T]$, the evolution of $w^{-1/2}\ol{\mcl Y}_m^w(i,t)$ is (approximately) bounded below by a Brownian motion with a weak downward drift starting from a large positive value, since we know that at least half of the neighbors of $i_1^*$ have a bias towards $m$. This implies that $\ol{\mcl Y}_m^w(i,t)$ will not reach zero before time $c_1\ll 1$ with high probability. We argue similarly for $i=i_2^*$. If $i$ is contained in $wI$ and has distance $\Omega(w)$ from the boundary of $wI$, then $\ol{\mcl Y}_m^w(i,t)$ has an upward drift for $t\in[c_1^{-1/2}w^{-1/2},\wh T]$, since the fraction of neighbors of $i$ which have a bias towards $m$ is uniformly above $1/2$; therefore $\ol{\mcl Y}_m^w(i,t)$ is unlikely to get negative before time $c_1$. If $i$ is close to the boundaries of $wI$, but not equal to $i_1^*$ or $i_2^*$, we conclude that $\ol{\mcl Y}_m^w(i,t)$ is unlikely to get negative by comparing with $i_1^*$ or $i_2^*$.
	
	Note that the lemma clearly holds if $\P[\wh E]=0$, so we may assume $\P[\wh E]>0$. We will condition on the event $\wh E$ throughout the proof of the lemma. In other words, all objects we define are defined conditional on $\wh E$.	Let $\wt I\subset I'$ be the open interval with left (resp.\ right) end-point at distance $c_2/2$ from the left (resp.\ right) end-point of $I'$, and observe that $I\subset \wt I$. Let $i^*_1$ (resp.\ $i^*_2$) be the smallest (resp.\ largest) element of $(wI)\cap\Z$, and let $\mcl R$ denote the set of rings as defined in \eqref{eq47}.

	Define the following random variables $A_1,A_2\in\{0,1,2,\dots \}$, where $|\cdot|$ denotes the number of elements in a set
	\eqb
	\begin{split}
	A_1&:=|\{(j,t)\in\mcl R\,:\, c_1^{-1/2}w^{-1/2}\leq t\leq c_1,j\in\{i_1^*-w,\dots,i_2^*+w  \}|,\\
	A_2&:=|\{ (j,t)\in\mcl R\,:\, j\in \{i^*_1-w,\dots,i^*_2+w\} ,\,
		0\leq t\leq c_1^{-1/2}w^{-1/2},\,\\
		&\qquad\exists (j,t')\in\mcl R\text{\,\,such\,\,that\,\,}c_1^{-1/2}w^{-1/2}\leq t'\leq c_1
		\}|,
	\end{split}
	\label{eq40}
	\eqe
	Let $A_3\in\R$ be a random variable which is equal to the infimum in $[0,1]$ such that the following inequalities are satisfied 
	\eqb
	\begin{split}
		&|\{i\in \mcl N(i^*_k)\cap (wI)\,:\, X(i,0)\neq m \}| w^{-1} \geq \frac 12 - A_3w^{-1/2},\quad k=1,2,\\
		&|\{i\in \mcl N(i^*_k)\setminus (wI)\,:\, X(i,0)= m \}| w^{-1}\leq \frac 12 +A_3w^{-1/2},\quad k=1,2.
	\end{split}
	\label{eq41}
	\eqe
	Define $A_4\in\R$ by
	\eqbn
	A_4 := \inf_{x\in I} \left[ Y^{w}_m(x,c_1^{-1/2}) - \sum_{m'\neq m} Y^{w}_{m'}(x,c_1^{-1/2})\right].
	\eqen
	Let $\ol E$ be the event 
	\eqbn
	\begin{split}
		\ol E:=&\left\{ \forall i\in (wI),\, 
		|\{j\in\mcl N(i)\cap (wI)\,:\,X(i,0)\neq m \}|w^{-1}>\frac 12 - w^{-1/4}\right\}\\
		&\cap\left\{|\{j\in\mcl N(i)\setminus (wI)\,:\,X(i,0) = m \}|w^{-1}<\frac 12 + w^{-1/4}
		 \right\}.
	\end{split}
	\eqen
	Define the event $\wt E=\wt E^w_{c_1,c_2}$ by
	\eqb
	\wt E =
	\{A_1<2 c_1 c_2^{-1}w \} \cap 
	\{A_2< 2 c_1^{1/2} c_2^{-1} w^{1/2} \} \cap 
	\{A_3<c_1^{-1/10} \} \cap 
	\{A_4>2 \} \cap	
	\{1+c_2<\lambda(I)<c_2^{-1} \} \cap 
	\ol E. 
	\label{eq44}
	\eqe
	The probability of the first, second and sixth event on the right side of \eqref{eq44} converge to 1 as $w\rta\infty$ for any fixed $c_1,c_2\in(0,1/10)$ if we condition on the fifth event $\{ 1+c_2<\lambda(I)<c_2^{-1} \}$. For $M=2$ the probability of the third event on the right side of \eqref{eq44} converges to a constant as $w\rta\infty$, and it converges to 1 when first $w\rta\infty$ and then $c_1\rta 0$. For $M>2$ the probability of the third event on the right side of \eqref{eq44} converges to 1 as $w\rta\infty$.  It is immediate from \eqref{eq1} that $Y_m(\cdot,t)-\sum_{m'\neq m} Y_{m'}(\cdot,t)\rta \infty$ uniformly on $I$ as $t\rta\infty$. Therefore it follows from Proposition \ref{prop18} that the probability of the fourth event  on the right side of \eqref{eq44} converges to a constant as $w\rta\infty$, and it converges to 1 as first $w\rta\infty$ and then $c_1\rta 0$. The probability of the fifth event  on the right side of \eqref{eq44} is independent of $w$ and $c_1$, and converges to 1 as $c_2\rta 0$. Therefore 
	\eqb
	\lim_{c_2\rta 0} \lim_{c_1\rta 0} \lim_{w\rta \infty} \P[\wt E]=1.
	\label{eq39}
	\eqe
	
	For any $j\in\N$ let $t^1_j$ be the $j$th smallest element of $\{t\geq c_1^{-1/2}w^{-1/2}\,:\, \exists x\in\mcl N(i^*_1)\text{\,\,such\,\,that\,\,} (x,t)\in\mcl R \}$. Then define $i_j^1\in\mcl N(i^*_1)$ such that $(i^1_j,t^1_j)\in\mcl R$, and define $R^1_j:=(\ol {\mcl Y}^{w}_m(i^*_1,t_j)-\ol {\mcl Y}^{w}_m(i^*_1,t_j^-))\1_{(i^1_j>i^*_1)\vee (\ol {\mcl Y}^{w}_m(i^1_j,t^1_j)<0)}$. Define $(i^2_j,t^2_j)$ (resp.\ $R^2_j$) exactly as $(i^1_j,t^1_j)$ (resp.\ $R^1_j$), but with $i^*_2$ in place of $i^*_1$, and where we require $i_j<i^*_2$ instead of $i_j>i^*_1$ in the indicator in the definition of $R^2_j$. Now define the following stopping times $T_j$ for $j=1,2,3$
\eqb
\begin{split}
	T_1 &:= \inf\left\{t\geq c_1^{-1/2} w^{-1/2}\,:\, 
	\sum_{j\,:\, t^1_j\leq t} R_j^1
	\leq -w^{1/2} \right\},\\
	T_2 &:= \inf\left\{t\geq c_1^{-1/2} w^{-1/2}\,:\, 
	\sum_{j\,:\, t^2_j\leq t} R_j^2
	\leq -w^{1/2} \right\},\\
	T_3 &:= \inf\Big\{t\geq c_1^{-1/2} w^{-1/2}\,:\, \exists i\in wI\text{\,\,such\,\,that\,\,} i\in \{i^*_1+c_2w+1,i^*_1+c_2w+2,\dots,i^*_2-c_2w-1 \}\\
	&\qquad\qquad\text{\,\,and\,\,} \ol {\mcl Y}^{w}_m(i,t)\leq 0 \Big\}.
\end{split}
\label{eq36}
\eqe
We will now argue that 
\eqb
\ol {\mcl Y}^w(i,t)>0\quad\forall i\in wI \qquad\text{if}\qquad t<T_1\wedge T_2\wedge T_3\text{\,\,and\,\,}\wt E\text{\,occurs}.
\label{eq37} 
\eqe
To prove this it is sufficient to show that if $\wt E$ occurs and $t<T_1\wedge T_2\wedge T_3$, then $\ol {\mcl Y}(i,t)>0$ for any  $i\in\{i^*_1,\dots,i^*_1+c_2w \}\cup\{i^*_2-c_2w,\dots,i^*_2 \}$, since the inequality $\ol {\mcl Y}(i,t)>0$ clearly holds for other $i$ by the definition of $T_3$. For $i\in\{i^*_1,\dots,i^*_1+c_2w \}$ this follows by first observing that the bias is positive for all nodes in $\frk N(i)\setminus\frk N(i^*_1)$ throughout $[c_1^{-1/2}w^{-1/2},t]$, so 
\eqbn
\ol {\mcl Y}^{w}_m(i,t)-\ol {\mcl Y}^{w}_m(i,c_1^{-1/2}w^{-1/2})\geq \ol {\mcl Y}^{w}_m(i^*_1,t)-\ol {\mcl Y}^{w}_m(i^*_1,c_1^{-1/2}w^{-1/2})
\geq \sum_{j\,:\, t^1_j\leq t} R_j^1,
\eqen
and then using the definition of $T_1$ and $\ol {\mcl Y}^{w}_m(i^*_1,c_1^{-1/2}w^{-1/2})\geq A_4w^{1/2}>2w^{1/2}$ to conclude that $\ol {\mcl Y}^{w}_m(i,t)>0$.
We argue similarly for $i\in\{i^*_2-c_2w,\dots,i^*_2 \}$, and can conclude that \eqref{eq37} holds.

Since \eqref{eq37} implies that $\wt E\cap\{c_1<T_1\wedge T_2\wedge T_3\}\subset E$,
\eqb
\P[E^c] \leq \P[\wt E^c] + 
\P[\wt E; T_1\leq T_2\wedge T_3; T_1\leq c_1] +
\P[\wt E; T_2\leq T_1\wedge T_3; T_2\leq c_1] +
\P[\wt E; T_3\leq T_1\wedge T_2; T_3\leq c_1].
\label{eq38}
\eqe
We will bound the probability of each term on the right side separately. By \eqref{eq39}, and since the second and third term on the right side have the same probability, it is sufficient to bound the second term and the fourth term on the right side of \eqref{eq38}.

First we bound the second term on the right side of \eqref{eq38}. Recall the definition of $(i^1_j,t^1_j)_{j\in\N}$ above.
Define $\wt R_j$ to be equal to $2$ (resp.\ $-2$) iff both the conditions $X(i^1_j,0)\neq m$ (resp.\ $X(i^1_j,0)= m$) and $i^1_j\in (wI)$ (resp.\ $i^1_j\not\in (wI)$) are satisfied, and define $\wt R_j=0$ otherwise. Observe that $R^1_j,\wt R_j\in\{-2,0,2\}$ for all $j\in\N$, and that $\wt R_j\leq R^1_j$ on the event $\wt E$ if $t^1_j<T_1\wedge T_2\wedge T_3$ and if there is no $t'$ such that $(i^1_j,t')$ is contained in the set considered when defining $A_2$ in \eqref{eq40}. Therefore, using that $A_1<2 c_1 c_2^{-1}w$ and $A_2<2c_1^{1/2}c_2^{-1}w^{1/2}$ on $\wt E$,
\eqb
\begin{split}
\1_{\wt E;T_1\leq T_2\wedge T_3} \inf_{c_1^{-1/2}w^{-1/2}\leq t\leq c_1\wedge T_1} \left( \sum_{j\,:\, t^1_j\leq t} R_j^1 \right)
\geq \1_{\wt E;T_1\leq T_2\wedge T_3} \min_{1\leq k\leq 2c_1 c_2^{-1}w} \sum^{k}_{j=1} \wt R_j -2c_1^{1/2}c_2^{-1}w^{1/2}.
\end{split}
\label{eq42}
\eqe
On the event $\wt E$ (in particular, the requirement on $A_3$) and for $w$ sufficiently large, conditioned on $\sigma((X|_{t=0})$ the sequence $(\wt R_j)_{j\in\N}$ stochastically dominates a sequence $(\wh R_j)_{j\in\N}$ of i.i.d.\ random variables which are equal to 2 (resp.\ $-2$) with probability $\frac 12(\frac 12-c_1^{-1/10}w^{-1/2})$ (resp.\ $\frac 12(\frac 12+c_1^{-1/10}w^{-1/2})$), and equal to 0 otherwise. This follows since the sequence $(i^1_j,t^1_j)$ is independent of $\wt R_j$ conditioned on $i^*_1$. Letting $W=(W_t)_{t\geq 0}$ be a Brownian motion with drift $-2 c_1^{-1/10}$ and $\op{Var}[W_t]=2t$ the process $t\mapsto w^{-1/2}\sum_{j=1}^{\lceil tw\rceil}\wh R_j$ converges in law to $(W_t)_{t\geq 0}$. Therefore
\eqbn
\begin{split}
	\lim_{c_2\rta 0} &\lim_{c_1\rta 0} \lim_{w\rta \infty} 
	\P[\wt E; T_1\leq T_2\wedge T_3; T_1\leq c_1] \\
	&\leq \lim_{c_2\rta 0} \lim_{c_1\rta 0} \lim_{w\rta \infty} \P\left[\wt E;T_1\leq T_2\wedge T_3; \inf_{c_1^{-1/2}w^{-1/2}\leq t\leq c_1\wedge T_1} \left( \sum_{j\,:\, t^1_j\leq t} R_j^1 \right)<-\frac 12 w^{1/2} \right]\\
	&\leq \lim_{c_2\rta 0} \lim_{c_1\rta 0} \lim_{w\rta \infty} \P\left[\wt E;T_1\leq T_2\wedge T_3; \min_{1\leq k\leq 2c_1 c_2^{-1}w} \sum^{k}_{j=1} \wt R_j -2c_1^{1/2}c_2^{-1}w^{1/2}<-\frac 12 w^{1/2} \right]\\
	&\leq \lim_{c_2\rta 0} \lim_{c_1\rta 0} \P\left[\inf_{0\leq t\leq 2c_1 c_2^{-1}} W_t-2c_1^{1/2} c_2^{-1} <-\frac 12\right]\\
	&=0.
\end{split}
\eqen

In order to bound the fourth term on the right side of \eqref{eq38}, a union bound and the upper bound on the length of $I$ on the event $\wt E$, implies that it is sufficient to prove that for fixed $c_1,c_2\in(0,1/10)$ and fixed $i\in [-c_2w,c_2w]\cap\Z$,
\eqb
\begin{split}
	&\log \P[E'; T\leq c_1] \preceq -w,\\
	&E' := \wt E\cap \{T=T_3\leq T_1\wedge T_2 \} \cap \{i\in \{i^*_1+c_2w+1,i^*_1+c_2w+2,\dots,i^*_2-c_2w-1 \}\},\\
	&T := \inf\{t\geq c_1^{-1/2} w^{-1/2}\,:\, \ol {\mcl Y}^{w}_m(i,t)\leq 0\},
\end{split}
\label{eq43}
\eqe
where the implicit constant is independent of $w$, but may depend on $c_1,c_2$. We prove this by a similar approach as our bound for the second term on the right side of 
\eqref{eq38}, and will therefore only give a brief justification. 	For any $j\in\N$ let $t'_j$ be the $j$th smallest element of 
$\{t\geq c_1^{-1/2}w^{-1/2}\,:\, \exists x\in\frk N(i)\text{\,\,such\,\,that\,\,} (x,t)\in\mcl R \}$. 
Then define $i'_j\in\frk N(i)$ such that $(i'_j,t'_j)\in\mcl R$, and define 
$R'_j:=(\ol{\mcl Y}^{w}_m(i,t'_j)-\ol{\mcl Y}^{w}_m(i,(t'_j)^-))$.
Define $\wt R'_j$ to be equal to $2$ (resp. $-2$) iff both the conditions 
$X(i'_j,0)\neq m$ (resp.\ $X(i'_j,0)= m$) and $i'_j\in (wI)$ (resp.\ $i'_j\not\in (wI)$) are satisfied, and define $\wt R'_j=0$ otherwise. 
 As above, using $A_1<2 c_1 c_2^{-1}w$ and $A_2<2 c_1^{1/2}c_2^{-1}w^{1/2}$ on $E'$,
\eqbn
\1_{E'} \inf_{c_1^{-1/2}w^{-1/2}\leq t\leq c_1\wedge T }
\left(\ol{\mcl Y}^{w}_m(i,t)-\ol{\mcl Y}^{w}_m(i,c_1^{-1/2}w^{-1/2}) \right)
\geq \1_{E'} \min_{1\leq k\leq 2c_1c_2^{-1}w} \sum_{j=1}^{k} \wt R'_j - 2 c_1^{1/2}c_2^{-1}w^{1/2}.
\eqen
In this case the sequence $(\wt R'_j)_{j\in\N}$ on the event $E'$, stochastically dominates a sequence $(\wh R'_j)_{j\in\N}$ of i.i.d.\ random variables which are equal to 2 (resp.\ $-2$) with probability $\frac 12(1+c_2)(\frac 12-w^{-1/4})$ (resp.\ $\frac 12(1-c_2)(\frac 12+w^{-1/4})$), and equal to 0 otherwise, where the terms $(1\pm c_2)$ follow from the condition $\lambda(I)>1+c_2$ and the terms $(\frac 12\pm w^{-1/4})$ follow from the definition of the event $\ol E$. For sufficiently large $w$ the process $t\mapsto w^{-1/2}\sum_{j=1}^{\lceil tw\rceil}\wh R_j-\frac 12 c_2\lceil tw\rceil$ stochastically dominates a Brownian motion $W'=(W'_t)_{t\geq 0}$ satisfying $W'_0=0$ and $\op{Var}(W'_t)=2t$. Therefore
\eqbn
\begin{split}
	\lim_{w\rta\infty} \P[E'; T\leq c_1] 
&\leq \lim_{w\rta\infty} \P\left[ E';\, \inf_{c_1^{-1/2}w^{-1/2}\leq t\leq c_1\wedge T }
\left(\ol{\mcl Y}^{w}_m(i,t)-\ol{\mcl Y}^{w}_m(i,c_1^{-1/2}w^{-1/2}) \right)<-w^{-1/2} \right]\\
&\leq \lim_{w\rta\infty}\P\left[\inf_{0\leq t\leq 2c_1c_2^{-1}} \left(W_t-2c_1^{1/2}c_2^{-1}+\frac 12 c_2 tw^{1/2}\right)<-1\right]=0,
\end{split}
\eqen
which implies \eqref{eq43}. 
\end{proof}

\begin{lem}
	Consider the setting described in Lemma \ref{prop20b}, and let $I$, $c_1$, $c_2$, and $E$ be as in that lemma. Define open intervals $I_n$ for $n\in\{0,\dots,\lceil w^{0.02}\rceil \}$ inductively by $I_0:=I$, and by letting $I_n\subset I_{n-1}$ be the open interval such that the left (resp.\ right) end-point of $I_n$ has distance $w^{-0.1}$ from the left (resp.\ right) end-point of $I_{n-1}$. For $n\in \{1,\dots,\lceil w^{0.02}\rceil \}$ define $t_n:=n w^{-0.01}$, and let $E_n$ be the following event
	\eqbn
	E_n = \{ \ol{\mcl Y}^{w}_m(i,t)>0,\,
	\forall i\in (wI_n),\, \forall t \text{\,\,such\,\,that\,\,} c_1^{-1/2} w^{-1/2}\leq t\leq t_n \}.
	\eqen 
	Then $\log\P\big[E\cap E_{n-1} \cap E^c_{n}\big]\preceq -w$ for all $n\in\{1,\dots,\lceil w^{0.02}\rceil \}$, where the implicit constant is independent of $w$ and $n$, but may depend on all other constants, including $c_1$ and $c_2$.
	\label{prop21}
\end{lem}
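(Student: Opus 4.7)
I plan to parallel the proof of Lemma \ref{prop20b}, replacing the initial random data and continuum-approximation input $A_4$ by the configuration at time $t_{n-1}$, which is controlled by the inductive hypothesis $E\cap E_{n-1}$. We work on the shrunken interval $wI_n$ over the shorter window $[t_{n-1},t_n]$ of length $w^{-0.01}$. The key geometric observation is that shrinking $I_{n-1}$ to $I_n$ by $w^{-0.1}$ on each side yields $|\frk N(i)\cap wI_{n-1}|\geq w+w^{0.9}$ for every $i\in wI_n$, playing the role of the macroscopic buffer $c_2w$ in Lemma \ref{prop20b}. Moreover one may assume $t_{n-1}\geq c_1-w^{-0.01}$, since otherwise $t_n\leq c_1$ and then $E\subset E_n$ makes the conclusion trivial.

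The first step introduces a regularity event $\wt E_n$ analogous to $\wt E$ in \eqref{eq44}. By Chernoff bounds on independent Poisson rings and Bernoulli initial opinions, $\wt E_n$ will control: (i) the number of rings in $\{i_1^*-w,\dots,i_2^*+w\}$ during $[t_{n-1},t_n]$; (ii) the number of clocks ringing more than once in that subinterval; (iii) an analog of $A_3$ restricting the initial configuration in $\frk N(i_k^*)$; and (iv) a concentration statement that for every $i\in wI_n$ the number of $j\in\frk N(i)\cap wI_{n-1}$ whose clock rings at least once in $[c_1^{-1/2}w^{-1/2},t_{n-1}]$ is within $w^{1-\eta}$ of its mean $|\frk N(i)\cap wI_{n-1}|(1-e^{-(t_{n-1}-c_1^{-1/2}w^{-1/2})})$ for some small $\eta>0$. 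A union bound gives $\log\P[\wt E_n^c]\preceq -w^\alpha$ for some $\alpha>0$ uniformly in $n$.

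On $E\cap E_{n-1}\cap\wt E_n$, any $j\in wI_{n-1}$ whose clock rings in $[c_1^{-1/2}w^{-1/2},t_{n-1}]$ must have opinion $m$ at time $t_{n-1}$, since the bias at $j$ is toward $m$ at every such ring. Mirroring \eqref{eq36}, I introduce stopping times $T_1,T_2,T_3$, with $T_1,T_2$ monitoring the extreme lattice points $i_1^*,i_2^*\in wI_n$ and $T_3$ monitoring the remaining interior nodes of $wI_n$, all defined relative to the starting time $t_{n-1}$ and the shrunken interval $wI_n$. For interior nodes at distance at least $c_2w/3$ from $i_1^*,i_2^*$, the buffer combined with item (iv) forces a uniformly positive drift in the bias during $[t_{n-1},t_n]$, and a Chernoff bound on the associated sum of $\pm 2$ increments yields $\log\P[T_3\leq t_n]\preceq -w$.

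The main obstacle is the boundary estimate for $T_1,T_2$, since the bias at $i_k^*$ at time $t_{n-1}$ is only known to be positive and not macroscopically large. The plan is to replicate the Brownian-motion comparison from Lemma \ref{prop20b} with $w^{-0.01}$ replacing $c_1$ and the buffer $w^{0.9}$ replacing $c_2 w$. Only rings of nodes in $\frk N(i_k^*)\cap wI_n$ that have not yet adopted $m$ are forced to push the bias up, since on $\{t<T_3\}$ the bias at such nodes is positive, whereas rings of nodes in $\frk N(i_k^*)\setminus wI_n$ can contribute $\pm 2$ adversarially; over the window of length $w^{-0.01}$, the centered walk of $\pm 2$ increments has fluctuations $O(w^{0.495}\sqrt{\log w})$, which is dominated by the positive contribution from the unflipped neighbors guaranteed by item (iv) and the $w^{0.9}$ buffer, and a large-deviation estimate exactly as in \eqref{eq42} produces $\log\P[T_k\leq t_n]\preceq -w$. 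A final union bound over $T_1,T_2,T_3$ and $\wt E_n^c$ then completes the proof.
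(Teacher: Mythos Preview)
There is a genuine gap in your boundary argument. You write that ``the bias at $i_k^*$ at time $t_{n-1}$ is only known to be positive and not macroscopically large,'' and you then try to compensate by treating the rings of nodes in $\frk N(i_k^*)\setminus wI_n$ as a ``centered walk of $\pm 2$ increments'' whose fluctuations are $O(w^{0.495}\sqrt{\log w})$. Both halves of this are wrong. First, the rings outside $wI_{n-1}$ are \emph{not} centered: the opinion evolution of those nodes is completely uncontrolled, so every such ring may well contribute $-2$. Over $[t_{n-1},t_n]$ there are order $w^{0.99}$ such rings, so the adversarial deficit is of order $w^{0.99}$, not $w^{0.495}$. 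Second, if you truly only know the starting bias is positive (say equal to $1$), then a single adversarial ring knocks it to $-1$ with probability bounded away from~$0$; no amount of subsequent positive drift from the $w^{0.9}$ buffer (which is only of order $w^{0.89}$ over this window anyway) can yield $\log\P\preceq -w$.

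The paper's proof does \emph{not} separate boundary from interior. It treats every $i\in wI_n$ the same way, via a three-part decomposition $\frk N(i)=\frk N_1\cup\frk N_2\cup\frk N_3$ with $|\frk N_1|=w$ and $\frk N_1\subset wI_n$, $|\frk N_3|=w^{0.9}$ and $\frk N_3\subset wI_{n-1}$, and $\frk N_2$ the remaining $w-w^{0.9}+1$ nodes with no control on their bias. The crucial step you are missing is to show that $\ol{\mcl Y}^w_m(i,t_{n-1})\gtrsim w^{0.9}$ for every $i\in wI_n$ (for $M=2$; for $M>2$ one even gets order $w$). This comes from combining: (a) on $E\cap E_{n-1}$, all rings in $\frk N_1\cup\frk N_3$ during $[c_1^{-1/2}w^{-1/2},t_{n-1}]$ flip nodes to $m$, giving $\ol{\mcl Y}^{w,1}_m+\ol{\mcl Y}^{w,3}_m\approx (1-e^{-t_{n-1}})(w+w^{0.9})$; and (b) for $\frk N_2$, at most $(1-e^{-t_{n-1}})|\frk N_2|$ clocks have rung, so starting from $\ol{\mcl Y}^{w,2}_m(i,0)\approx 0$ one gets $\ol{\mcl Y}^{w,2}_m(i,t_{n-1})\gtrsim -(1-e^{-t_{n-1}})(w-w^{0.9})$. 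Summing gives $\ol{\mcl Y}^w_m(i,t_{n-1})\gtrsim 2(1-e^{-t_{n-1}})w^{0.9}$. Your item~(iv) contains part~(a), but you have no analog of~(b) controlling the rings in $\frk N(i)\setminus wI_{n-1}$, and you never assemble these into a bias estimate. Once the bias at $t_{n-1}$ is known to exceed $w^{0.85}$, the paper finishes by observing (for $M=2$) that the expected change in bias over $[t_{n-1},s]$ is $-O(w^{-0.01})\cdot\ol{\mcl Y}^w_m(i,t_{n-1})$, i.e., a tiny fraction of the starting bias, with fluctuations of order $w^{0.5}$; a Chernoff bound then rules out the bias halving, uniformly over a grid of times $s\in[t_{n-1},t_n]$.
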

\begin{proof} 
	We first give a brief outline of the proof. Note that in this lemma we consider only times of order 1, rather than times of order $w^{-1/2}$ as in the remainder of the paper; this means that a uniformly positive fraction of the Poisson clocks have been ringing. As in the proof of Lemma \ref{prop20b}, we consider some time $T$ which represents the first time at which $\ol{\mcl Y}^{w}_m(i,t)\leq 0$ for some $i$ in $I_{n}$, and we show that for each $i$ in $I_{n}$ it is very unlikely that $i$ is the \emph{first} node for which this happens for $T\leq t_n$.  We assume the event $E\cap E_{n-1}$ occurs. We divide $\frk N(i)$ into three parts (see $\frk N_1(i)$, $\frk N_2(i)$, $\frk N_3(i)$ below). For each $k\in\{1,2,3 \}$ we can obtain a good (lower) bound on the contribution to $\ol{\mcl Y}^w_m(i,t_{n-1})$ coming from nodes in $\frk N_k(i)$, since we know approximately how many Poisson clocks which have been ringing before time $t_{n-1}$, and since nodes in $I_{n-1}$ (corresponding to $\frk N_1(i)\cup \frk N_3(i)$, plus maybe part of $\frk N_2(i)$) have a bias towards $m$ during $[ c_1^{-1/2} w^{-1/2}, t_{n-1}]$ on the event $E\cap E_{n-1}$. Conditioning on the fraction of nodes in  $\frk N_k(i)$, $k\in\{1,2,3 \}$, of opinion $m$ at time $t_{n-1}$, we compare the evolution of $\ol{\mcl Y}^w_m(i,t)$ on $[t_{n-1},T]$ to a random walk of a certain step size distribution, and we argue that this random walk is unlikely to hit 0 before time $t_n$, which completes the proof. 
	
	As in the proof of Lemma \ref{prop20b} we may assume $\P[\wh E]>0$. We assume throughout the proof that $\wh E$ occurs; in particular, some variables we define may exist only conditioned on $\wh E$. Define the following stopping times for $i\in [a-c_2^{-1}w,a+c_2^{-1}w]\cap\Z$
	\eqbn
	T_i:=\inf\{t\geq t_{n-1}\,:\, \ol{\mcl Y}^{w}_m(i,t)\leq 0 \},\qquad
	T:= \inf\{T_i\,:\, i\in(wI_n) \}, 
	\eqen
	Fix $i\in [a-c_2^{-1}w,a+c_2^{-1}w]\cap\Z$. By a union bound it is sufficient to prove the following estimate, where the implicit constant is independent of $w$, $n$, and $i$	\eqb
	\log\P\left[E_n^i; T_i<t_n \right] \preceq -w,\qquad
	E_n^i:=E \cup E_{n-1}\cup \{i\in (wI_{n})\} \cup \{ T_i=T<t_n\}.
	\label{eq45} 
	\eqe
	Divide the neighborhood $\frk N(i)$ into three disjoint parts $\frk N_k(i)$ for $k=1,2,3$ satisfying the following requirements; existence of appropriate neighborhoods is immediate by using the definition of $I_n$ and $\lambda(I_n)>1$
	\eqbn
	\begin{split}
		&|\frk N_1(i)|=w,\qquad |\frk N_2(i)|=w-w^{0.9}+1,\qquad |\frk N_3(i)|=w^{0.9},\\
		&\frk N_1(i)\subset\{j\in\frk N(i)\,:\, j\in(w I_n) \},\qquad\frk N_3(i)\subset\{j\in\frk N(i)\,:\, j\in (w I_{n-1}) \}.
	\end{split}
	\eqen
	For any $t\geq 0$ define $\ol{\mcl Y}^{w,k}_m(i,t)$ for $k=1,2,3$ by
	\eqbn
	\ol{\mcl Y}^{w,k}_m(i,t) := \left(\sum_{j\in\frk N_k(i)} \1_{X(j,t)=m} - \frac 1M |\frk N_k(i)|\right) - \left(\sum_{j\in\frk N_k(i)} \1_{X(j,t)\neq m}-\frac{M-1}{M} |\frk N_k(i)|\right).
	\eqen
	On $E\cap E_{n-1}$ the nodes in $\frk N_1(i)$ have bias $m$ throughout $[c_1^{-1/2}w^{-1/2},t_{n-1}]$. For each node the probability that its clock rings during $[0,t_{n-1}]$ is $(1-e^{-t_{n-1}})$, and when this happens for the first time for some $t\in[c_1^{-1/2}w^{-1/2},t_{n-1}]$ the node updates its opinion to $m$ iff its current opinion is different. When this happens $\ol{\mcl Y}^{w,1}_m$ increases by 2. Except on an event of exponentially small probability, for large $w$ the number of nodes with an opinion different from $m$ at time $c_1^{-1/2}w^{-1/2}$ differ from $\frac{M-1}{M}w$ by at most $\frac{1}{100} w^{1/2+1/100}$, and the number of nodes in $\frk N_1(i)$ whose Poisson clock rings during $t\in[c_1^{-1/2}w^{-1/2},t_{n-1}]$ differ from $(1-e^{-t_{n-1}})w$ by at most $\frac{1}{100}w^{1/2+1/100}$. Therefore 
	\eqb
	\begin{split}
		&\log\P\left[E;\,E_{n-1};\,\left|\ol{\mcl Y}^{w,1}_m(i,t_{n-1}) - 2\frac{M-1}{M}(1-e^{-t_{n-1}})w\right| > w^{1/2+1/100} \right] \preceq -w,
	\end{split}
	\label{eq48}
	\eqe
	By a similar argument the following inequalities hold. Note that we only get a lower bound for $\ol {\mcl Y}^{w,2}_m(i,t_{n-1})$ since we do not know the bias of the nodes in $\frk N_2(i)$ throughout $[0,t_{n-1}]$
	\eqb
	\begin{split}	
		&\log\P\left[E;\,E_{n-1};\,\ol{\mcl Y}^{w,2}_m(i,t_{n-1}) < -2\frac{1}{M}(1-e^{-t_{n-1}})(w-w^{0.9})-w^{1/2+1/100} \right] \preceq -w,\\
		&\log\P\left[E;\,E_{n-1};\,\left|\ol{\mcl Y}^{w,3}_m(i,t_{n-1}) - 2\frac{M-1}{M}(1-e^{-t_{n-1}})w^{0.9}\right| > w^{1/2+1/100} \right] \preceq -w.
	\end{split}
	\label{eq49}
	\eqe
	
	First assume $M>2$. By \eqref{eq48} and \eqref{eq49}, and by using
	$	\ol{\mcl Y}^{w}_m(i,t_{n-1})
	= \sum_{k=1}^{3} \ol{\mcl Y}^{w,k}_m(i,t_{n-1})$,
	\eqb
	\P\left[E;\,E_{n-1}; i\in(wI_{n}); \ol{\mcl Y}^{w}_m(i,t_{n-1})<\frac{M-2}{M}(1-e^{-t_{n-1}})w\right] \preceq -w.
	\label{eq50}
	\eqe
	Defining
	\eqbn
	A_1 := |\{(j,t)\in\mcl R\,:\, j\in\frk N(i),\,t\in[t_{n-1},t_n] \}|
	\eqen
	we have $\log\P[A_1>\frac{M-2}{4M}(1-e^{-t_{n-1}})w]\preceq -w$. Using this estimate, \eqref{eq50} and $\ol{\mcl Y}^{w}_m(i,t_{n})\geq \ol{\mcl Y}^{w}_m(i,t_{n-1})-2A_1$ we obtain \eqref{eq45} for the case $M>2$.
	
	In the remainder of the proof assume $M=2$.
	Defining $\mcl T^w:= \{t_{n-1}+w^{-0.3},t_{n-1}+2w^{-0.3},\dots, t_{n-1}+\lceil w^{0.3-0.01} \rceil w^{-0.3} \}$, observe by a union bound that for all sufficiently large $w$
	\eqbn
	\begin{split}
		\P\left(E_n^i; \inf_{t_{n-1}\leq t\leq T_i\wedge t_n} \ol {\mcl Y}^{w}_m(i,t)\leq 0 \right)
		\leq&\, 
		\sum_{s\in \mcl T^w} 
		\P\left(
		E_n^i; s<T_i; \ol {\mcl Y}^{w}_m(i,s) < \frac{1}{2} \ol {\mcl Y}^{w}_m(i,t_{n-1});\,
		\ol {\mcl Y}^{w}_m(i,t_{n-1})>w^{0.85}
		\right) \\
		&+\sum_{s\in \mcl T^w} 
		\P(E_n^i;\,|\{ (j,t)\in\mcl R\,:\,j\in\frk N(i), t\in[s-w^{-0.3},s] \}| > w^{0.8})\\
		&+\P\left(E_n^i;\, \ol {\mcl Y}^{w}_m(i,t_{n-1})\leq w^{0.85} \right).
	\end{split}
	\eqen
	The logarithm of the second sum on the right side is $\preceq -w$. The logarithm of the third term on the right side is also $\preceq -w$  by \eqref{eq48}-\eqref{eq49}, so to prove \eqref{eq45}, and thereby complete the proof of the lemma, it is sufficient to prove the following estimate for any $s\in \mcl T^w$
	\eqb
	\log \P\left(
	E_n^i; s<T_i; \ol {\mcl Y}^{w}_m(i,s) < \frac{1}{2} \ol {\mcl Y}^{w}_m(i,t_{n-1});\,
	\ol {\mcl Y}^{w}_m(i,t_{n-1})>w^{0.85}
	\right) \preceq -w.
	\label{eq52}
	\eqe
	Fix $s\in\mcl T^w$, define the following random variables
	\eqbn
	\begin{split}
	&R^+ := |\{ j\in\frk N^+(i)\,:\, \exists t\in(t_{n-1},s] \text{\,\,such\,\,that\,\,}(j,t)\in\mcl R   \}|,\quad\frk N^+(i):=\{j \in\frk N_1(i) \,:\, X(j,t_{m-1})\neq m\},\\
	&R^- := |\{ j\in\frk N^-(i)\,:\, \exists t\in(t_{n-1},s] \text{\,\,such\,\,that\,\,}(j,t)\in\mcl R   \}|,\quad \frk N^-(i):=\{j \in\frk N_2(i)\cup\frk N_3(i) \,:\, X(j,t_{m-1}) = m\},
	\end{split}
	\eqen
	and observe that 
	\eqb
	E_n^i\cap \{s<T_i \} \cap \left\{\ol {\mcl Y}^{w}_m(i,s) < \frac{1}{2} \ol {\mcl Y}^{w}_m(i,t_{n-1}) \right\}
	\subset
	E_n^i\cap \{s<T_i \} \cap \left\{R^+-R^-<-\frac{1}{4} \ol {\mcl Y}^{w}_m(i,s) \right\}.
	\label{eq53}
	\eqe
	By the definition of $\ol {\mcl Y}^{w,k}_m(i,t_{n-1})$,
	\eqb
	|\frk N^+(i)|=\frac{1}{2}(w - \ol {\mcl Y}^{w,1}_m(i,t_{n-1})),\qquad
	|\frk N^-(i)|=\frac{1}{2}(w + \ol {\mcl Y}^{w,2}_m(i,t_{n-1})+\ol {\mcl Y}^{w,3}_m(i,t_{n-1})).
	\label{eq51}
	\eqe
	By large deviation estimates for Bernoulli random variables
	\eqbn
	\begin{split}
		&\log \P[R^+<(1-e^{-(s-t_{n-1})})|\frk N^+(i)|-w^{1/2+1/100}] \preceq -w,\\
		&\log \P[R^->(1-e^{-(s-t_{n-1})})|\frk N^-(i)|+ w^{1/2+1/100}] \preceq -w,
	\end{split}
	\eqen
	so using \eqref{eq51} and $1-e^{-(s-t_{n-1})}<2 w^{-0.01}$, for all sufficiently large $w$,
	\eqbn
	\log \P\left[E_n^i; s<T_i; R^+-R^-< -\frac 14 \ol {\mcl Y}^w_m(i,s) \right] \preceq -w.
	\eqen
	We obtain \eqref{eq52} by using this estimate and \eqref{eq53}.
\end{proof}

\begin{proof}[Proof of Theorem \ref{thm1}]
Couple the discrete and continuum Schelling model as in Proposition \ref{prop18}. Almost surely there is an $m^*\in\{1,2\}$ such that $0\in A_{m^*}$. Let $I'\subset A_{m^*}$ (resp.\ $I^w\subset A^w_{m^*}$) be the connected component of $A_{m^*}$ (resp.\ $A^w_{m^*}$) containing to origin, where $I^w$ is empty if $0\not\in A_{m^*}^w$. Let $\ep>0$. Define $I\subset I'$ by $I:=\{x\in I'\,:\,\op{dist}(x,(I')^c)>\ep \}$. By translation invariance in law of both the discrete and continuum Schelling model it is sufficient to prove that $\P[I\subset I^w]>1-\ep$ for all sufficiently large $w\in\N$.

Consider the objects defined in the statement of Lemma \ref{prop20b} for $a=0$. Since $M=2$ we know by Proposition \ref{prop20a} that $\P[\wh E ]=1$, so $\lim_{c_2\rta 0}\lim_{c_1\rta 0} \lim_{w\rta\infty}\P[E]=1$. Let $c_1,c_2\in(0,1/2)$ be such that $\lim_{w\rta\infty}\P[E]>\ep/100$ and such that $c_2\ll\ep$. Observe that if 
\eqbn
E' := \left\{\forall i\in (wI)\cap\Z,\,\,\exists t\in (c_1^{-1/2}w^{-1/2},w^{0.01}) \text{\,\,such\,\,that\,\,} (i,t)\in\mcl R \right\}
\eqen
and the events $E_n$ for $n\in\{0,\dots, \lceil w^{0.02} \rceil \}$ are defined as in Lemma \ref{prop21}, then $E'\cap E_{\lceil w^{0.02} \rceil} \subset \{I\subset I^w\}$. It follows by a union bound that
\eqb
\P[I\not\subset  I^w ] \leq
\P[(E')^c] + \P[E^c] + \P[E;E_0^c] + \sum_{n=1}^{\lceil w^{0.02} \rceil} \P[E; E_{n-1}; E_n^c].
\label{eq77}
\eqe
The first term on the right side of \eqref{eq77} converges to 0 as $w\rta\infty$, and the second term on the right side of \eqref{eq77} is smaller than $\ep/3$ for all sufficiently large $w$ by our choice of $c_1$ and $c_2$. The third term on the right side of \eqref{eq77} is identically equal to zero. The last term on the right side of \eqref{eq77} is smaller than $\ep/3$ for all sufficiently large $w$ by Lemma \ref{prop21}. Therefore $\P[I\subset  I^w ]>1-\ep$ for all sufficiently large $w$, which concludes the proof.
\end{proof}

The following proposition says that the opinion of the nodes in the discrete Schelling model converge almost surely. For the case $M=2$ it was proved in \cite[Theorem 1.1]{tt14}, and our proof proceeds similarly as the proof for this case. See e.g.\ \cite{moran95,gh00} for other related results. Observe that this result is not used in our proof of our main results, and is included only as a statement of independent interest.
\begin{prop}
Consider the Schelling model on either $\BB Z^N$ or on the torus $\frk S^N$ where (in the notation on Section \ref{sec:intro1}) $\mcl N$ is invariant upon reflection through the origin, $N\in\N$, and $M\in\{2,3,\dots\}$. For each node $i$ the opinion $X(i,t)$ converges almost surely as $t\rightarrow\infty$, i.e., for each node $i$ there is a random time $T>0$ such that $X(i,t)=X(i,T)$ for all $t\geq T$.
\label{prop30}
\end{prop}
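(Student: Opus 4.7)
The plan is to adapt the $M=2$ argument of \cite{tt14}. The core of the proof is a Lyapunov (monovariant) function: on any finite subgraph the number of disagreeing edges is non-increasing under the dynamics and strictly decreases each time a node actually flips its opinion. The reflection symmetry of $\mcl N$ is needed here, as it ensures the underlying graph is undirected (so $j \in \frk N(i)\setminus\{i\} \Leftrightarrow i \in \frk N(j)\setminus\{i\}$), which is what makes an edge-based potential well defined.

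Concretely, for a finite edge set $S \subset E(G)$ put $\Phi_S(t) := \sum_{\{i,j\}\in S}\1_{X(i,t)\neq X(j,t)}$. If the Poisson clock at $i$ rings at time $t$ and $i$ updates from opinion $a$ to opinion $m \neq a$, then rules (i)--(iii) force the number of neighbors $j \in \frk N(i)\setminus\{i\}$ with $X(j,t)=m$ to be strictly larger than the number with $X(j,t)=a$; hence
$$\Delta \Phi_S = \sum_{\substack{j\in\frk N(i)\setminus\{i\}\\ \{i,j\}\in S}}\bigl(\1_{X(j,t)=a}-\1_{X(j,t)=m}\bigr),$$
which is strictly negative (and therefore $\leq -2$, being integer valued) whenever every edge incident to $i$ lies in $S$.

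On the torus $\frk S^N$ we take $S = E(G)$. Then $\Phi_S$ is a non-negative, non-increasing, integer-valued process that decreases by at least $2$ at every flip, so only finitely many flips occur in total almost surely and every opinion stabilizes. On $\Z^N$ the global potential is infinite and we must localize. Fix $i_0$ and, for each large box $B \ni i_0$ with interior $B^\circ$ chosen so that flips at nodes in $B^\circ$ change only edges internal to $B$, apply the monovariant with $S$ = internal edges to obtain, writing $F_V(t)$ for the total number of flips at nodes of $V$ during $[0,t]$ and $\partial B := B\setminus B^\circ$,
$$2\, F_{B^\circ}(t) \;\leq\; |S| + C\,F_{\partial B}(t), \qquad C = C(N,\mcl N).$$
If $\P[F_{i_0}(\infty)=\infty] > 0$, then since the plurality in $\frk N(i_0)$ must change between two consecutive flips at $i_0$, at least one neighbor of $i_0$ also satisfies $F_j(\infty) = \infty$ on this event; iterating yields an infinite connected ``infinitely flipping'' set $\mcl I \ni i_0$. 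Combining this with the finite-light-cone property (\cite[Claim~3.5]{tt14}) and an iteration of the displayed inequality over a nested sequence of boxes gives a contradiction, exactly as in \cite{tt14}.

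The main obstacle is the lattice case: the displayed Lyapunov inequality only controls interior flips modulo boundary flips, so the delicate step is the bootstrap that removes this boundary dependence. This is where one must use the specific structure of the argument in \cite{tt14}---namely, that the infinitely-flipping chain $\mcl I$ would force, within any sufficiently large box $B$, more interior flips than the potential $|S| + C F_{\partial B}(t)$ can absorb, once $F_{\partial B}(t)$ is itself controlled by induction on the box size.
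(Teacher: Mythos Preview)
Your argument works on the torus and is there essentially the same as the paper's. On $\Z^N$, however, there is a real gap.

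The box-iteration you sketch does not close. Chaining the displayed inequality over nested boxes $B_1\subset B_2\subset\cdots$ (with $\partial B_k\subset B_{k+1}^\circ$) gives
\[
F_{B_1^\circ}(t)\ \le\ \sum_{k=1}^n \Big(\tfrac{C}{2}\Big)^{k-1}\tfrac{|S_k|}{2}\ +\ \Big(\tfrac{C}{2}\Big)^{n}F_{\partial B_n}(t),
\]
and since $C$ is of the order of the vertex degree $|\frk N(0)|$, one has $C/2>1$ and the series on the right diverges no matter how the box sizes are chosen. The infinitely-flipping-chain observation does not rescue this: it tells you that some interior node of each box flips infinitely often, but gives no quantitative lower bound that beats the diverging right-hand side. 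Your appeal ``exactly as in \cite{tt14}'' is also misdirected: that paper does \emph{not} iterate over boxes. Its key device---followed verbatim in the paper---is a \emph{weighted} Lyapunov function
\[
L_t=\sum_{(i,j)\in E}w_{ij}\,\1_{X(i,t)\neq X(j,t)},
\]
where the weights are chosen (via \cite[Proposition~3.4]{tt14}) so that simultaneously $\sum_{(i,j)}w_{ij}<\infty$ and all ratios $w_{ij}/w_{ik}$ lie below $\frac{|\frk N(0)|+1}{|\frk N(0)|-1}$. The first condition makes $L_t$ a finite global quantity on $\Z^N$; the second guarantees that the weighted analogue of your $\Delta\Phi$ computation is still strictly negative at every actual flip. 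With such weights in hand the $\Z^N$ argument is the one-line torus argument, and no localization, infinitely-flipping chains, or light cones are needed.

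A smaller point: your justification for $\Delta\Phi_S\le-2$ (``strictly negative and integer, hence $\le-2$'') is wrong as written, since $-1$ is a strictly negative integer. The conclusion is nonetheless correct, because the plurality counts in rules (i) and (iii) include $i$ itself with $X(i,t^-)=a$; removing $i$'s own contribution shows that among the strict neighbours one has $\#\{j:X(j,t^-)=m\}\ge\#\{j:X(j,t^-)=a\}+2$.
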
 
\begin{proof}
The proof is identical for the two cases $\BB Z^N$ and $\frk S^N$. We will only present it for the case of $\BB Z^N$, but the result for $\frk S^N$ follows by replacing $\BB Z^N$ by $\frk S^N$ throughout the proof. Let $E$ be the set of undirected edges of the graph on which the Schelling model takes place, i.e.\ $(i,j)=(j,i)\in E$ for $i,j\in \BB Z^N$ iff $j\in \frk N(i)$ (equivalently, since $\mcl N$ is invariant upon reflection through the origin, $i\in \frk N(j)$). For each $i,j\in \BB Z^N$ such that $(i,j)\in E$,  associate a positive real number $w_{ij}$, such that for any $i,j,k$ for which $(i,j),(i,k)\in E$
\eqb
\frac{w_{ij}}{w_{ik}} < \frac{(2w+1)^N+1}{(2w+1)^N-1}.
\label{eq18}
\eqe
Choose the $w_{ij}$'s such that $\sum_{(i,j)\in E} w_{ij}<\infty.$ The existence of appropriate $w_{ij}$ satisfying these properties follows by \cite[Proposition 3.4]{tt14}, since the degree of each node is bounded by $(2w+1)^N$, and since our graph satisfies the growth criterion considered in \cite{tt14}. For each $i\in\BB Z^N$ and $t\geq 0$ define $J_t^i$ by 
\eqbn
J_t^i := \sum_{j\in \frk N(i)} w_{ij} \bd 1_{X(j,t)\neq X(i,t)} - \sum_{j\in \frk N(i)} w_{ij}\bd 1_{X(j,t^-)\neq X(i,t^-)}.
\eqen
If $t$ is a time at which the Poisson clock of node $i$ rings, the first (resp.\ second) term on the right side expresses how many neighbors of $i$ that disagree with $i$ after (resp.\ before) $i$ updates its opinion at time $t$. Assume the clock of node $i$ rings at time $t$, and consider the three rules (i)-(iii) from Section \ref{sec:intro1} that $i$ follows when updating its opinion. By our constraint \eqref{eq18}, $J_t^i<0$ if one of the rules (i) or (iii) apply, while $J_t^i=0$ if $i$ does not change its opinion, which is the case when (ii) applies. Given a node $i$ and the value of $w_{ij}$ for all $j\in \frk N(i)$, the number of possible values for $J_t^i$ is finite. It follows that we can find a real number $\ep_i>0$, such that we have either $J_t^i<-\ep_i$ or $J_t^i=0$ for all times $t\geq 0$ for which the clock of node $i$ rings.

Next define the Lyapunov function $L:[0,\infty)\rightarrow\R_+$ by
\eqbn
L_t = \sum_{(i,j)\in E} w_{ij} \bd 1_{X(i,t)\neq X(j,t)}.
\eqen
Note that $L_t<\infty$ for all $t\geq 0$ by our assumption of summability of $w_{ij}$. Since $L_t-L_{t^-} = J_t^i$ if the clock of node $i$ rings at time $t$, $L_t$ is decreasing in $t$. It follows that there exists some $L\geq 0$ such that $\lim_{t\rightarrow\infty}L_t = L$. Fix $i\in\BB Z^N$, and let $T>0$ be such that $L_t-L<\ep_i$ for all $t\geq T$. Since $L_t-L_{t^-} = J_t^i$ for any time $t$ at which the clock of node $i$ rings, we see that $J_t^i=0$ for all times $t>T$. It follows that $i$ never updates its opinion after time $T$, which completes the proof of the lemma.
\end{proof}

We end the section with a result which may be related to the limiting opinions of the Schelling model on $\Z^N$ for $N\geq 2$. We say that $A\subset\BB Z^N$ is \emph{connected} if, for any two $i,j\in A$, there is an $n\in\BB N$ and a sequence $\{i^k\}_{0\leq k\leq n}$ such that $i^0=i$, $i^n=j$, $i^k\in A$ and $\|i^{k}-i^{k-1}\|_{1}\leq 1$ for all $k\in\{1,...,n\}$. We say that $A\subset\BB Z^N$ is \emph{stable} if all nodes of $\Z^N$ agree with the most common opinion in their neighborhood when all nodes in $A$ have opinion $1$ and all nodes in $\BB Z^N\backslash A$ have opinion 2. Note that the definition of stability depends on $\mcl N$ and $w$. The \emph{diameter} of a set $A\subset \BB Z^N$ is defined by $\sup_{i,j\in A}\|i-j\|_{\infty}$. We say that $A$ is a \emph{smallest stable shape} for the Schelling model if it is a connected stable subset of $\BB Z^N$ of minimal diameter. 

We observed before the statement of Lemma \ref{prop23} that in the one-dimensional Schelling model on $\Z$ the final configuration of opinions consists of monochromatic intervals of length at least $w+1$. We also observe that the smallest stable shapes for the Schelling model on $\Z$ are sets $A\subset\Z$ consisting of $w+1$ consecutive integers. In particular, this means that all nodes are part of a monochromatic stable shape in the final configuration. By Theorem \ref{thm1} the blocks with constant opinion in the final configuration of the Schelling model on $\Z$ or $\frk S$ have length of order $w$. 

One might guess that the smallest stable shape is related to the diameter of a typical cluster in the limiting configuration of opinions also in higher dimensions. There exist stable configurations where the cluster sizes are smaller than the diameter of a stable shape (e.g.\ a checkerboard configuration when $\mcl N$ has the shape of a cube), but these seem unlikely to occur since they are typically unstable, in the sense that changing the opinion of a small number of nodes may cause a large cluster of nodes to obtain the same opinion. 

We thank Omer Tamuz for suggesting the approach used in the upper bound of the following proposition.

\begin{prop} 
	Let $N>1$, $p\in[1,\infty]$, and assume $\mcl N = \mcl N_p:=\{x\in\R^N\,:\,\|x\|_{p}<1 \}$. The diameter $d$ of a smallest stable shape for the Schelling model satisfies $w^{2}\preceq d \preceq w^{N+1}$, where the implicit constants can be chosen independently of $p$ and $w$, but the constant in the upper bound may depend on $N$.
	\label{prop33}
\end{prop}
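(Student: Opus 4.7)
The proposition contains two independent bounds. The lower bound $d \succeq w^2$ is the easier of the two, and the upper bound $d \preceq w^{N+1}$, attributed to Tamuz, requires an explicit construction.

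Lower bound. For any unit vector $v \in \R^N$, let $p^* \in A$ be an extreme point of $A$ in direction $v$, i.e.\ $v \cdot p^* = \max_{p \in A} v \cdot p$. Since $A \subset \{x : v \cdot x \leq v \cdot p^*\}$ and $\mcl N = \mcl N_p$ is symmetric under $x \mapsto -x$, one has $|\mcl N(p^*) \cap A| \leq \tfrac{1}{2}|\mcl N(p^*)|$, while stability at $p^*$ requires the reverse inequality. Equality must therefore hold, and
\eqbn
A \supset \mcl N(p^*) \cap \{x : v \cdot x \leq v \cdot p^*\},
\eqen
which immediately gives $d \geq w$. To strengthen this to $d \succeq w^2$, I plan to iterate the argument at the point $q := p^* - wv$, which lies at the bottom of the forced half-ball and is thus in $A$. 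The intersection of $\mcl N(q)$ with the already-known half-ball accounts only for some fixed fraction $\alpha < \tfrac{1}{2}$ of $|\mcl N(q)|$, so stability at $q$ forces further $A$-content in the slab $\{x : v \cdot x < v \cdot q\}$. Iterating this chain, together with the analogous chains in the tilted directions $v_\theta := v + \theta u$ for small $\theta$ and $u \perp v$, produces a ``quasi-parabolic'' forced region whose depth in direction $v$ grows quadratically with its lateral width. Requiring this region to fit inside a set of diameter $d$ yields the inequality $d \succeq w^2$. The careful bookkeeping of how much $A$-mass each chain step is forced to contribute is the main work here.

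Upper bound. Following Tamuz's suggestion, I would construct a stable set explicitly by starting from the discrete $\ell^p$-ball
\eqbn
A_0 := \{x \in \Z^N : \|x\|_p \leq R\}, \qquad R := C_N w^{N+1},
\eqen
for a suitable constant $C_N$. Stability is automatic at lattice points $p$ with $\op{dist}(p, \partial A_0) > w$, so only the boundary shell needs attention. For $p$ in the shell, a local quadratic approximation of $\partial A_0$ (whose principal curvatures are $O(1/R)$) yields
\eqbn
g(p) := |\mcl N(p) \cap A_0| - |\mcl N(p) \setminus A_0| = c_N H(p) w^{N+1} + O(1),
\eqen
where $H(p)$ denotes the signed mean curvature of $\partial A_0$ at the nearest boundary point. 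Since $|H(p)| = O(1/R) = O(w^{-(N+1)})$, we have $|g(p)| \leq K$ uniformly on the shell for some absolute constant $K$. I would then repair the at most $K$ stability violations at each boundary point by local modifications: adding or removing lattice points placed on a sparse sublattice of spacing $\Theta(w)$ in a thin band along $\partial A_0$, each flip changing $g$ by $\pm 2$ on the $\mcl N$-neighborhood of the flipped point. The sublattice spacing ensures that the neighborhoods of distinct flips are essentially disjoint, so individual repairs do not cascade. The resulting set $A$ is stable, connected (inherited from $A_0$), and has diameter $O(R) = O(w^{N+1})$.

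The main technical obstacle lies in the upper bound, namely verifying that the local repair pattern is globally consistent and that each flip's $O(w^N)$-range effect on $g$ does not re-introduce defects in neighboring portions of the shell. This reduces to a combinatorial tiling problem on the boundary shell that should be resolvable because the average per-cell deficit is only $O(1)$ while the available flippable-point budget within a cube of side $\sim w$ is $\Theta(w^N)$, leaving ample room for adjustment.
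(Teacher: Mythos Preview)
Your lower-bound sketch starts correctly: at an extreme point the half-neighborhood must lie in $A$. But the continuation via the point $q=p^*-wv$ and ``tilted directions'' producing a ``quasi-parabolic forced region'' is not an argument yet; you yourself flag the bookkeeping as the main work, and it is not clear the iteration closes. The paper's approach is more concrete and avoids this vagueness: for $N=2$ it tracks, row by row above the bottom row of $A$, the leftmost point $\frk i^k$ in row $k$, and shows by a short induction that $\frk i_1^k-\frk i_1^{k+1}\ge w-k$ for $k\le \lfloor w/2\rfloor$. Summing gives horizontal extent $\succeq w^2$. The passage to $N>2$ is by projecting onto the bottom hyperplane and observing that the bottom slice of $A$ is itself a stable shape in dimension $N-1$. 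This is essentially your extreme-point idea, but carried out along coordinate directions with a clean inductive step rather than a heuristic curvature picture.

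Your upper-bound plan is where the real gap is, and it differs completely from the paper. The paper does \emph{not} construct a stable set directly. Instead it starts from the cube $A_0=\{i:\|i\|_\infty\le r\}$ with $r\asymp w^{N+1}$ and runs the greedy erosion: repeatedly delete any point of $A_n$ that is unstable, obtaining $A_{n+1}$. The key observation is that each deletion strictly decreases the number of disagreement edges $|E_n|=|\{(i,j):i\in A_n,\,j\notin A_n,\,j\in\frk N(i)\}|$, so the process can run at most $|E_0|$ steps. Since $|E_0|\preceq r^{N-1}w\cdot w^N$ while $|A_0|\asymp r^N$, choosing $r\asymp w^{N+1}$ makes $|A_0|>|E_0|$, hence the erosion cannot empty $A_0$, and whatever remains is a stable set of diameter $\le 2r$. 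This is a four-line energy argument with no curvature analysis and no repairs.

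By contrast, your construction has two fragile steps. First, the formula $g(p)=c_N H(p)w^{N+1}+O(1)$ is not justified: even granting the continuum curvature heuristic, the lattice discretization error in counting $|\mcl N(p)\cap A_0|$ near a curved boundary is generically of order $w^{N-1}$, not $O(1)$. Second, the repair scheme (flipping points on a sublattice of spacing $\Theta(w)$) changes $g$ by $\pm 2$ on an entire $w$-neighborhood simultaneously; you have not explained why this corrects the specific pointwise defects without creating new ones elsewhere on the shell, and the ``combinatorial tiling problem'' you defer to is exactly the hard part. The paper's erosion argument sidesteps all of this.
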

\begin{proof}
	We will prove the lower bound $w^2 \preceq d$ by induction on the dimension $N$. We will only do the case $p<\infty$, but the case $p=\infty$ can be done in exactly the same way. We start with the case $N=2$. See Figure \ref{fig-stableshape} for an illustration. 
	
	\begin{figure}[ht!]
		\begin{center}
			\includegraphics[scale=1]{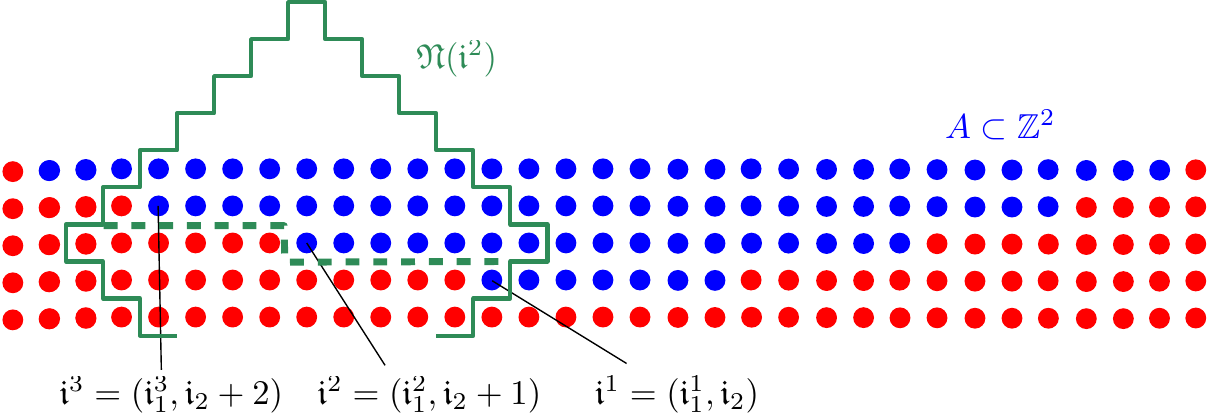} 
			\caption{Illustration of the lower bound in Proposition \ref{prop33} for $N=2$, $w=6$, and $p=1$. The points of $\Z^2$ marked in blue is a subset of a stable shape $A$, while elements of $\BB Z\setminus A$ are shown in red. The green dotted line separates $\frk N^-(\frk i^2)$ (lower part) and $\frk N^+(\frk i^2)$ (upper part). 
			} 
			\label{fig-stableshape}
		\end{center}
	\end{figure}
	Assume $A$ is a stable shape, and define $\frk i_1^k$ and $\frk i_2$ as follows for $k=\{1,\dots,\lfloor w/2 \rfloor \}$ 
	\eqbn
	\begin{split}
		\frk i_2 := \min\{i_2\in\Z\,:\,\exists i_1\in\Z \text{\,\,such\,\,that\,\,}(i_1,i_2)\in A \},\qquad
		\frk i_1^k := \min\{i_1\in\Z\,:\, (i_1,\frk i_2+k-1)\in A \}.
	\end{split}
	\eqen
	Then define $\frk i^k:=(\frk i_1^k,\frk i_2+k-1)\in A$.
	We will prove by induction on $k$ that $\frk i_1^{k}-\frk i_1^{k+1}\geq w-k$ for all $k\in \{1,\dots,\lfloor w/2 \rfloor \}$, which is sufficient to obtain the lower bound $w^2 \preceq d$ since it implies that $\frk i_1^{1}-\frk i_1^{\lfloor w/2\rfloor}\succeq w^2$. 
	For any $i=(i_1,i_2)\in\Z^2$ define
	\eqbn
	\begin{split}
		\frk N^-(i) = \{ (i'_1,i'_2)\in\frk N(i)\,:\,i'_2<i_2 \vee ((i'_2=i_2)\wedge (i'_1<i_1)) \},\qquad
		\frk N^+(i) = \frk N(i) \setminus \frk N^-(i).
	\end{split}
	\eqen
	First let $k=1$. By definition of $\frk i_2$ and $\frk i_1^1$, $A\cap\frk N^-(\frk i^1)=\emptyset$. Since $\frk i^1$ agrees with the most common opinion in its neighborhood this implies $A\cap\frk N(\frk i^1)=\frk N^+(\frk i^1)$. In particular, this implies by the definition of $\frk i_1^2$ and since $(\frk i_1^1-w+1,\frk i^2+1)\in \frk N^+(\frk i^1)$ that $\frk i_1^{1}-\frk i_1^2\geq w-1$. 
	
	Now assume $k>1$ and that $\frk i_1^{\ell}-\frk i_1^{\ell+1}\geq w-\ell$ for $\ell\in\{1,\dots,k-1\}$. This assumption implies by the definition of $\frk i_1^\ell$ that 
	\eqbn
	A\cap\frk N^-(\frk i^k) \subset \{(i_1^k+w-k+1, i_2+k-2), 
	(i_1^k+w-k+2, i_2+k-2),
	\dots,
	(i_1^k+w-1, i_2+k-2) \},
	\eqen
	in particular, $|A\cap\frk N^-(\frk i^k)|\leq k$. Since $\frk i^{k}\in A$ agrees with the most common opinion in its neighborhood by stability of $A$, we must have $|\frk N^+(\frk i^k) \setminus A|\leq k-1$. By $\mcl N=\mcl N_p$ for $p\in[1,\infty)$,
	\eqbn
	\{
	(\frk i^k_1-w+1, \frk i_2+k),
	(\frk i^k_1-w+2, \frk i_2+k),
	\dots,
	(\frk i^k_1-w+k, \frk i_2+k)
	\}\subset \frk N^+(\frk i^k),
	\eqen
	where we note that the set on the left side has $k$ elements. Since $|\frk N^+(\frk i^k) \setminus A|\leq k-1$ this implies $\frk i_1^{k}-\frk i_1^{k+1}\geq w-k$. This completes our proof by induction, and hence completes the proof of the lower bound for $d$ in the case when $N=2$.
	
	Now assume the lower bound $w^2\preceq d$ has been proved for dimension $2,\dots,N-1$ for some $N>2$. We want to show that it also holds in dimension $N$. Define 
	\eqbn
	\frk i_N:= \min\{i_N\in\Z\,:\,\exists i_1,\dots i_{N-1}\in\Z \text{\,\,such\,\,that\,\,}(i_1,i_2,\dots,i_N)\in A \}.
	\eqen
	Then 
	\eqbn
	\{i=(i_1,\dots,i_N)\in A\,:\, i_N<\frk i_N \}=\emptyset, 
	\eqen
	so since all elements of $A\cap\{ i=(i_1,\dots,i_N)\in\Z^N\,:\,i_N=\frk i_N \}$ agree with the most common opinion in its neighborhood and by, for any $i'=(i'_1,\dots,i'_N)\in\Z^N$, symmetry of $\frk N(i')$ upon reflection through the plane $i_N=i'_N$, the following $(N-1)$-dimensional set must be stable if the neighborhood of any $j\in\Z^{N-1}$ is given by $\{j'\in\Z^{N-1}\,:\,\|j-j'\|_{p}<w\}$
	\eqbn
	\{(i_1,\dots,i_{N-1})\in \Z^{N-1} \,:\,  (i_1,\dots,i_{N-1},\frk i_N)\in A \}.
	\eqen
	By the induction hypothesis this set has diameter $\succeq w^2$, so $A$ also has diameter $\succeq w^2$. This concludes the proof of the lower bound by induction.
	
	Now we will prove the upper bound $d\preceq w^{N+1}$. Let $r=2^{2N}w^{N+1}$, and define $A_0=\{i\in\BB Z^N\,:\,\|i\|_\infty\leq r$. Let all nodes in $A_0$ have opinion 1, and let all nodes in $\BB Z^N\backslash A_0$ have opinion 2. We define decreasing sets $A_n\subset\Z^N$, $n\in\N$, by induction as follows. For each $n\in\BB N$ we choose one element of $i\in A_{n-1}$ which does not agree with the most common opinion in its neighborhood, we change the opinion of this node to 2, and we define $A_n=A_{n-1}\backslash\{i\}$. We continue this procedure until $A_n=\emptyset$ or until all nodes in $A_n$ agree with the most common opinion in its neighborhood. Let $\wt n$ denote the time at which the process terminates. We will prove by contradiction that $A_{\wt n}\neq\emptyset$. This will imply the existence of a stable shape of diameter $\preceq r\asymp w^{N+1}$.
	
	Define 
	\eqbn
	E_n = \{(i,j)\,:\,i\in A_n,\,j\in \BB Z^N\backslash A_n\}.
	\eqen
	By our choice of the node $i$ in each step, the sequence $(|E_n|)_{1\leq n\leq \wt n}$ is strictly decreasing. Assuming $A_{\wt n}=\emptyset$ this implies $|A_0|\leq|E_0|$. We have $|A_0|=(2r+1)^{N}$. We can find a constant $C_N>0$ depending on $N$, such that there are $<C_N(2r+1)^{N-1}w$ nodes in $A_0$ which have a neighbor in $\BB Z^N\backslash A_0$. By using this and that $|\frk N(i)|\leq (2w+1)^N$ for any $i\in\frk S^N$, we get $|E_0|\leq C_N(2r+1)^{N-1}w(2w+1)^N$. Using $|A_0|\leq|E_0|$ these estimates imply $(2r+1)\leq C_N w(2w+1)^N$. This is a contradiction to our definition of $r$, and we conclude that $A_{\wt n}\neq\emptyset$. 
\end{proof}

\section{Open problems}
\label{sec:open}
This paper explains the limiting configuration of opinions in the Schelling model when $N=1$ and $M=2$ (see Theorem \ref{thm1}). One open problem is to understand the limiting configuration of opinions in cases where $N\geq 2$ and/or $M>2$.  In particular, it remains an open question to understand the following situations: 
$(i)$ $\frk S^N$ for $N\geq 2$, 
$(ii)$ $\Z^N$ for $N\geq 2$, and 
$(iii)$ $\frk S$ or $\Z$ for $M>2$.
	
Case $(i)$ could possibly be understood by studying the long-time behavior of solutions $Y$ of the initial value problem \eqref{eq1}, \eqref{eq2} for $N\geq 2$. If we knew that the limit $Y(x):=\lim_{t\rta\infty}p(Y(x,t))$ exists for almost every $x\in\mcl S^N$ a.s., the field $( Y(x) )_{x\in\mcl S^N}$ would likely describe law of the limiting opinions in the discrete model. Observe that non-trivial limiting configurations (i.e., limiting configurations with more than one limiting opinion) happen with positive probability, e.g.\ if the torus width $R$ is at least 3, and the initial data are such that $p(Y((x_1,\dots,x_N),0))$ equals 1 (resp.\ 2) for $x_1\in[0,1]$ (resp.\ $x_1\in[R-5/4,R-1/4]$). For such initial data we will have $p(Y((x_1,\dots,x_N),t))$ equal to 1 (resp.\ 2) for $x_1\in[0,1]$ (resp.\ $x_1\in[R-5/4,R-1/4]$) and all $t\geq 0$.

The continuum Schelling model \eqref{eq1}, \eqref{eq2} may be less helpful for understanding case $(ii)$. Even if we had established existence and uniqueness of solutions of \eqref{eq1}, \eqref{eq2} on $\R^N$ for $N\geq 2$ (see Theorem \ref{prop3}), the solution $Y$ may be of only limited help for understanding the final configurations of opinions in the discrete model. Observe that there are no bounded continuum stable shapes, where a continuum stable shape is a set $D\subset\R^N$ which is such that if $m\in\{1,\dots,M\}$, $t_0\geq 0$ and $p(Y(x,t_0))=m$ for all $x\in D$ then $p(Y(x,t))=m$ for all $t\geq t_0$ (see above Proposition \ref{prop33} for the discrete definition). Since there are no bounded continuum stable shapes, we expect that the limit $\lim_{t\rta\infty}p(Y(x,t))$ a.s.\ does not exist for any fixed $x\in\R^N$, at least when $M=2$. The existence of this limit is necessary in order for the continuum Schelling model to describe the limiting configurations of opinions in the discrete model.

The continuum Schelling model \eqref{eq1}, \eqref{eq2} is related to the discrete Schelling model upon rescaling the lattice by $w^{-1}$. If we proved a scaling limit result for the discrete model by proving convergence of the opinions in the continuum model, the diameter of a typical cluster in the discrete model would therefore be of order $w$. Figure \ref{fig-limit} suggests that the diameter of the limiting clusters on $\Z^2$ grow superlinearly in $w$. The typical cluster size may be related to the size of the smallest (discrete) stable shape for the model; see Proposition \ref{prop33} for upper and lower bounds on the diameter of the smallest stable shape. An independently interesting problem (which involves no probability) is to resolve the sizable discrepancy between the upper and lower bounds in Proposition \ref{prop33}. One could try to explicitly construct the minimal stable shape for each given $w$, and compute its size.

Case $(iii)$ could be understood by studying the long-time behavior of the solutions of \eqref{eq1}, \eqref{eq2} for $N=1$. We believe Theorem \ref{thm1} also holds for $M>2$, i.e., the limiting opinions in the Schelling model have a scaling limit upon rescaling the lattice by $w^{-1}$, and the limiting law can be described by an $M$-tuple $(A_1,\dots,A_M)$, where the sets $A_m$ have a.s.\ disjoint interior and can be written as the union of intervals each of length larger than 1 a.s. This version of Theorem \ref{thm1} with $M>2$ would be immediate from the approach in Section \ref{sec:discrete2} if we had established the corresponding version of Proposition \ref{prop20a} (see Remark \ref{remark1}).

\bibliography{mybib} 
\bibliographystyle{alpha}

\end{document}